\date{December 16, 2025}
\definecolor{myseagreen}{HTML}{3FBC9D}
\setlist[enumerate]{nosep}
\definecolor{labelkey}{rgb}{0,0.08,0.45}
\definecolor{refkey}{rgb}{0,0.6,0.0}
\definecolor{Brown}{rgb}{0.45,0.0,0.05}
\definecolor{lime}{rgb}{0.00,0.8,0.0}
\definecolor{lblue}{rgb}{0.5,0.5,0.99}
\definecolor{OliveGreen}{rgb}{0,0.6,0}
\definecolor{tyrianpurple}{rgb}{0.4, 0.01, 0.24}
\definecolor{darkgreen}{RGB}{0,100,0}
\colorlet{hlcyan}{cyan!30}
\def\namedlabel#1#2{\begingroup
   \def\@currentlabel{#2}%
   \label{#1}\endgroup
}
\newcommand{\seppthree}{\setlength{\itemsep}{-3pt}}
\newcommand{\weakly}{\ensuremath{\:{\rightharpoonup}\:}}
\newcommand{\nnn}{\ensuremath{{n\in{\mathbb N}}}}
\newcommand{\thalb}{\ensuremath{\tfrac{1}{2}}}
\newcommand{\menge}[2]{\big\{{#1}~\big |~{#2}\big\}}
\newcommand{\fenv}[1]%
{\ensuremath{\,\overrightarrow{\operatorname{env}}_{#1}}}
\newcommand{\benv}[1]%
{\ensuremath{\,\overleftarrow{\operatorname{env}}_{#1}}}
\newcommand{\scal}[2]{\left\langle{#1},{#2}  \right\rangle}
\newcommand{\RR}{\ensuremath{\mathbb R}}
\newcommand{\NN}{\ensuremath{\mathbb N}}
\newcommand{\inte}{\ensuremath{\operatorname{int}}}
\newcommand{\conv}{\ensuremath{\operatorname{conv}\,}}
\newcommand{\lspan}{\ensuremath{\operatorname{span}\,}}
\newcommand{\aff}{\ensuremath{\operatorname{aff}\,}}
\newcommand{\caff}{\ensuremath{\overline{\operatorname{aff}}\,}}
\providecommand{\fejer}{Fej\'{e}r}
{\begin{list}{}{%
\settowidth{\labelwidth}{\textrm{#1~}}%
\setlength{\leftmargin}{\labelwidth+\labelsep}}}%requires macro calc.sty
{\end{list}}
\def\th@plain{%
	\thm@notefont{}% same as heading font
	\itshape % body font
}
\def\th@definition{%
	\thm@notefont{}% same as heading font
	\normalfont % body font
}
\crefname{equation}{}{equations}
\crefname{chapter}{Appendix}{chapters}
\crefname{item}{}{items}
\crefname{enumi}{}{}
\newtheorem{theorem}{Theorem}[section]
\newtheorem{lemma}[theorem]{Lemma}
\newtheorem{corollary}[theorem]{Corollary}
\newtheorem{proposition}[theorem]{Proposition}
\newtheorem{definition}[theorem]{Definition}
\newtheorem{example}[theorem]{Example}
\newtheorem{fact}[theorem]{Fact}
\newtheorem{remark}[theorem]{Remark}
\providecommand{\RR}{\mathbb{R}}
\providecommand{\aff}{\operatorname{aff}}
\providecommand{\conv}{\operatorname{conv}}
\providecommand{\NN}{\mathbb{N}}
\providecommand{\ri}{\operatorname{ri}}
\providecommand{\RR}{\mathbb{R}}
\providecommand{\NN}{\mathbb{N}}
\definecolor{myblue}{rgb}{0.9,0.9,0.98}
  \newcommand*\mybluebox[1]{%
    \colorbox{myblue}{\hspace{1em}#1\hspace{1em}}}
\newcommand{\crefpart}[2]{%
  \hyperref[#2]{\namecref{#1}~\labelcref*{#1}~\ref*{#2}}%
}
\author{
Aleksandr\ Arakcheev\thanks{
Mathematics, University
of British Columbia,
Kelowna, B.C.\ V1V~1V7, Canada. E-mail:
 \texttt{aleksandr.arakcheev@ubc.ca}.}~
 \;\;and\;\;
Heinz H.\ Bauschke\thanks{
Mathematics, University
of British Columbia,
Kelowna, B.C.\ V1V~1V7, Canada. E-mail:
\texttt{heinz.bauschke@ubc.ca}.}
}
\title{\textsf{
  \fejer\ and \fejer* Monotonicity: 
  New Results and Limiting Examples 
}
}
\let\orig@label\label
\renewcommand{\label}[1]{%
  \begingroup
  \def\@currentlabelname{}%
  \ifx\current@theorem\relax\else
    \def\@currentlabelname{\current@theorem}%
  \fi
  \ifx\cref@currentlabel\undefined\else
    \let\@currentlabelname\cref@currentlabel
  \fi
  \orig@label{#1}%
  \endgroup
}
\begin{document}

\maketitle

\begin{abstract}

Many algorithms in convex optimization and variational analysis 
can be analyzed using Fej\'er monotone sequences. 
In 2024, Behling, Bello-Cruz, Iusem, Alves Ribeiro, and Santos 
introduced a new, more general, notion: Fej\'er* monotonicity. 
They obtained basic results and discussed applications in 
optimization. 

In this work, we complement Behling et al.'s work by presenting  
a thorough study of \fejer* monotonicity. 
We reveal striking similarities and differences between
these notions, including descriptions of the maximal \fejer* set. 
Moreover, we also touch upon Opial sequences and quasi-Fej\'er monotonicity. 
Throughout this paper, we provide numerous limiting examples 
and counterexamples. 

\end{abstract}
{ 
\small
\noindent
{\bfseries 2020 Mathematics Subject Classification:}
{Primary 
47H09, % Contraction-type mappings, nonexpansive mappings, A-proper mappings, etc.
47J26, % fixed point iterations
90C25; % Convex programming
Secondary 
47H05, % Monotone operators and generalizations
65K05. % Numerical optimization methods
%47N10, % Applications of operator theory in optimization, convex analysis, mathematical programming, economics
%49M27, % Decomposition method
%49M29, % Numerical methods involving duality
% 47H04, % Set-Valued Operators
% 47H14, % Perturbations of nonlinear operators
%49N15, % Duality Theory
%90C46, % Optimality conditions and duality in mathematical programming
% 
}

\noindent {\bfseries Keywords:}
\fejer\ monotone sequence,
\fejer* monotone sequence,
Opial sequence, 
quasi \fejer\ monotone sequence. 
}

\section{Introduction}

\label{section1}

\subsection*{Motivation} 

Throughout, we assume that 
\begin{empheq}[box=\mybluebox]{equation*}
X \; \text{is a real Hilbert space}
\end{empheq} with inner product $\langle \cdot,\cdot \rangle$ and induced norm $\|\cdot\|$. 

Many convex optimization and monotone inclusion problems formulated 
in $X$ are solved by some (often splitting) algorithms that 
approximate a solution by generating a 
\emph{\fejer\ monotone sequence} (see, e.g., \cite{BC2017} 
and \cite{Comb}). 
Let us recall the relevant definition: 

\begin{definition}[\fejer\ monotonicity] 
\label{def-classic} Let $C$ be a nonempty subset of $X$. Then $(x_n)_{n \in \NN}$ is \emph{\fejer\ monotone with respect to} $C$ if
\begin{equation*}
    (\forall y\in C) (\forall n \in \NN) \quad
        \|x_{n+1} - y\| \leq \|x_n - y\|.
\end{equation*}
\end{definition}

Very recently, again in the study of optimization algorithms (see 
\cite{BBCIARS} and \cite{BBCILS}), 
Behling et al.\ proposed the following relaxed version: 

\begin{definition}[\fejer* monotonicity] \label{def} Let $M$ be a nonempty subset of $X$. We say that $(x_n)_{n \in \NN}$ is \emph{\fejer* monotone with respect to} $M$ if
\begin{equation*}
    (\forall y\in M) (\exists N(y) \in \NN) (\forall n \geq N(y)) \quad
        \|x_{n+1} - y\| \leq \|x_n - y\|.
\end{equation*}
\end{definition} 

\begin{remark}\label{sub-def}
According to \cref{def} (resp.\ \cref{def-classic}), if the sequence $(x_n)_{n \in \mathbb{N}}$ is \fejer* monotone with respect to a nonempty set, then every subsequence $(x_{n_k})_{k \in \mathbb{N}}$ also retains \fejer* (resp.\ \fejer) monotonicity with respect to the same set.
\end{remark} 

While \cite{BBCIARS} and \cite{BBCILS} contain some important results 
concerning \fejer* monotone sequence, a systematic comparison 
between the standard and the new notion is still missing, and our aim
is to close this gap: 

\emph{The goal of this paper is to systematically analyze \fejer* monotonicity, with a particular focus on comparison to the standard 
\fejer\ monotonicity. We also discuss quasi \fejer\ monotonicity as 
well as the recently introduced Opial property. Throughout, we 
present limiting examples that illustrate the sharpness of our results.}

\subsection*{Standing notation}

We now introduce notation that will be used consistently throughout the entire paper.
Given a sequence of points $(x_n)_{n \in \NN}$ in $X$, 
define a sequence of closed convex sets
by 
\begin{empheq}[box=\mybluebox]{equation}
\label{hs}
(\forall\nnn)\quad    C_n := \{y \in X \, | \, \|x_{n+1} - y\| \leq \|x_n - y\| \}. 
\end{empheq} 
For this sequence of closed convex sets $(C_n)_{n\in\NN}$, consider the  
\emph{limit inferior} of this sequence, defined by 
\begin{empheq}[box=\mybluebox]{equation}\label{linf}
    \underline{\lim} \, C_\NN  := 
    \bigcup_{N \in \NN}\Big(\bigcap_{n\geq N} C_n \Big).
\end{empheq} 
For a sequence $(x_n)_{n \in \mathbb{N}}$ of points in $X$, its sets of weak and strong cluster points are denoted by $\mathcal{W}((x_n)_{n \in \mathbb{N}})$ and $\mathcal{S}((x_n)_{n \in \mathbb{N}})$, respectively.

\subsection*{Summary of main results and limiting examples}

Let us now highlight some of the 
\textbf{new results and examples} obtained in this paper:

\begin{description}[labelindent=5pt, itemsep=3pt]
\item[(largest possible \fejer* set)] 
In \cref{eq-def}, we demonstrate that 
$\underline{\lim} \, C_\NN $
is the largest possible set with respect to which $(x_n)_\nnn$ 
is \fejer* monotone. 
(In contrast, for a standard \fejer\ monotone sequence, the largest 
possible set is $\bigcap_\nnn C_n$, which is closed.)
The set $\underline{\lim} \, C_\NN $ may fail to be closed 
(see \cref{closure_example}). 
\item[(striking differences between \fejer\ and \fejer* sequences)]
It is known that a \fejer\ monotone sequence $(x_n)_\nnn$ 
with respect to a nonempty closed convex subset $C$ of $X$ has 
a \emph{decreasing} sequence of distances $(d_C(x_n))_\nnn$, 
the shadow sequence $(P_C(x_n))_\nnn$ is \emph{strongly convergent}, 
and  that if $C$ is a closed affine subspace and $(x_n)_\nnn$ 
converges weakly to a point in $C$, 
then $x_n\weakly P_C(x_0)$. 
All this goes spectacularly wrong for \fejer* sequences:
suppose that $(x_n)_\nnn$ is \fejer* monotone with respect to 
a convex subset $M$ of $X$. 
Then $(d_{\overline{M}}(x_n))_\nnn$ may 
eventually be \emph{strictly increasing}  
(see \cref{example-non-decr}), 
$(P_{\overline{M}}(x_n))_\nnn$ may fail 
to converge weakly (see \cref{infdim-countex}\cref{infdim-countex-prop3}), 
and even if $\overline{M}$ is affine and $(x_n)_\nnn$ converges weakly
to some point in $\overline{M}$, 
then $(x_n)_\nnn$ may fail to converge weakly to 
$P_{\overline{M}}(x_0)$ 
(see \cref{aff-count}). 
\item[(\fejer* sequences behave better when the \fejer* set has nonempty 
relative interior)]\ \\ 
Suppose that $(x_n)_\nnn$ is \fejer* monotone with respect to 
a nonempty convex subset $M$ of $X$. 
We assume that furthermore $\ri(M)\neq\varnothing$, 
which turns out to lead 
to much better properties of $(x_n)_\nnn$: 
If $K$ is a nonempty compact subset of $\ri(M)$, 
then $(x_n)_\nnn$ is eventually \fejer\ monotone 
with respect to $K$ (see \cref{th-regular}); 
however, this fails if $K$ is assumed to be merely 
weakly compact (see \cref{int-nonempty-inf-example}). 
If $C$ is a nonempty closed convex subset of $\caff(M)$, 
then the shadow sequence $(P_C(x_n))_\nnn$ has a 
finite-length trajectory and thus converges strongly 
(see \cref{raik-*}) --- if $\ri(M)=\varnothing$, 
then the conclusion may fail to hold (see \cref{infdim-countex}). 
We stress that \cref{raik-*} appears to be new even in the standard \fejer\ setting! 
If $x_n\to z\in\ri(M)$, then the sequence of distances 
$(d_{\overline{M}}(x_n))_\nnn$ \emph{almost} behaves like 
the standard \fejer\ setting in the sense that 
$(d_{\overline{M}}(x_n))_\nnn$ is \emph{eventually} decreasing 
(see  \cref{c:251213b}). 
Relatedly, if $x_n\to z\in\caff(M)$, then 
eventually either $x_n$ is never equal to $z$ 
or always equal to $z$ (see \cref{cycl-prop-inf*-2cases}). 
\item[(directional asymptotic results for \fejer* sequences)] 
Suppose that $(x_n)_\nnn$ is \fejer* monotone with respect to a 
nonempty convex subset and suppose that 
$x_n\to z\in X$. 
Set $\mathcal{K} = \overline{\operatorname{cone}}(\overline{M} - z)$.  
If $(x_n)_\nnn$ has distinct consecutive terms, 
then $\mathcal{W}((x_n-x_{n+1})/\|x_n-x_{n+1}\|) 
\subseteq \mathcal{K}^\ominus$; 
while if $x_n\neq z$ for all $n$, we have 
then $\mathcal{W}((x_n-z)/\|x_n-z\|) 
\subseteq \mathcal{K}^\ominus$
(see \cref{direct-th}), which extends a known result 
in the \fejer\ setting. 
If $X$ is finite-dimensional 
and $z\in\operatorname{aff}(M)$, 
then $x_n\neq z$ eventually and 
$d_{\mathcal{K}}(x_n-z)/\|x_n-z\|\to 1$ 
(see \cref{equivalency-th}). 
If $z\in\caff(M)$ and $\ri(M)\neq\varnothing$, 
then there exists a constant $\Gamma>0$ such that 
eventually $\|x_n-z\|\leq\Gamma d_{\mathcal{K}}(x_n-z)$
 (see \cref{equiv-th-inf}). 
These results are fairly tight as the limiting examples 
\cref{int-nonempty-inf-example} and 
\cref{example-no_ineq} illustrate. 
We also present an ``exclusion'' result: 
if $(x_n)_\nnn$ has distinct consecutive terms, 
$\ri(M)\neq\varnothing$, and $z\in\caff(M)$, then
eventually 
$x_n\notin z+\mathcal{K}$ 
(see \cref{cycl-prop-inf*2}). 
These techniques can be put to good use to 
obtain linear convergence results (see \cref{QR-lin-remark}), 
and to 
provide a justification for a result published in the literature
with a somewhat obscure proof (see \cref{r:patch}).  
\item[(descriptions of the maximal \fejer* and Opial sets)] 
Suppose that $(x_n)_\nnn$ is \fejer* monotone 
with respect to some convex subset $M$ of $X$, 
and with distinct consecutive terms. 
If $\inte(M)\neq\varnothing$, then 
it is possible to describe $ \underline{\lim} \, C_\NN$, 
up to closure and inteior,
through the polar cone of all weak limits of 
the normalized sequence $((x_n-x_{n+1})/\|x_n-x_{n+1}\|)_\nnn$
(see \cref{lemma-cone-inf}). 
A similar result is presented for the maximal Opial set 
(see \cref{max-opial-set}). 
\item[(\fejer* vs quasi-\fejer\ types)] 
Let $(x_n)_\nnn$ be \fejer* monotone with respect to some 
nonempty convex subset $M$. 
It was previously known that $(x_n)_\nnn$ is automatically 
quasi-\fejer\ of Type~III. 
When $M$ is bounded and $\ri(M)\neq\varnothing$, 
then we show that $(x_n)_\nnn$ is 
quasi-\fejer\ of Type~II (see \cref{*implies2}); moreover, 
this result is sharp 
(see \cref{count-type2-prop} and \cref{prop-ind-count-quasi}). 
If $\inte(M)\neq\varnothing$, then
$(x_n)_\nnn$ is 
quasi-\fejer\ of Type~I (see \cref{nonep-int-type1}).
In fact, \fejer* monotonicity implies Type I under substantially more general assumptions (see \cref{*implies1}); in particular, there is only a very narrow window in which a \fejer*  monotone sequence fails to be quasi-\fejer\ Type I provided that $\ri(M) \neq \varnothing$. As \cref{c:*implies1}, we highlight that
if $M$ is closed and $\ri(M)\neq\varnothing$, then
$(x_n)_\nnn$ is 
quasi-\fejer\ of Type~I. 
\end{description}

\subsection*{Organization of the paper and notation}

The remainder of this paper is organized as follows. 
In \cref{s:aux}, we review Opial sequences, present the proof 
for the maximality of $\underline{\lim} \, C_\NN$ in the \fejer* context,
 discuss the relative interior, and recall properties of (solid and pointed) cones.
We construct various examples highlighting the stark differences between \fejer\ and \fejer* sequences in \cref{section2-f}. 
If the underlying \fejer* set has \emph{nonempty} 
relative interior, then we are able to obtain various 
results more in line with the standard \fejer\ setting in 
\cref{section2}. 
The topic of 
\cref{section4} are various quantitative estimates involving 
\fejer* monotone sequences. 
\cref{section3} 
presents descriptions of maximal \fejer* and Opial sets. 
In \cref{section6}, we discuss the relationships between 
\fejer, \fejer*, and quasi-\fejer\ monotonicity. 
Finally, we include some appendices containing details for some of the examples 
presented to make the manuscript more concise and readable. 

Our notation is fairly standard and follows largely \cite{BC2017}. 
We point out that 
the closed ball of radius $\rho \in \mathbb{R}_{++}$ centered at $x \in X$ is denoted by $B[x, \rho]$. 

\section{Auxiliary results}

\label{s:aux}

In this section, we briefly review properties of Opial sequences 
(introduced recently in \cite{Opial}), 
compare these to \fejer* monotone sequence, 
 present the largest possible \fejer* and \fejer\ sets, 
 discuss the relative interior as well as 
 (solid and pointed) cones. 

\subsection*{Opial sequences}

\begin{definition}[Opial sequence and Opial set]
    Let $C$ be a nonempty subset of $X$. We say that a sequence $(x_n)_{n \in \NN}$ in $X$ is Opial with respect to $C$ if \begin{equation*}
        (\forall y \in C) \quad \lim_{n \to \infty} \|x_n - y\| \; \text{exists}.
    \end{equation*} We occasionally refer to C as the corresponding Opial set. 
\end{definition}

\begin{proposition}[basic properties] \label{b-prop}
    Suppose that $(x_n)_{n \in \mathbb{N}}$ is \fejer* monotone with respect to a nonempty subset $M$ of $X$. Then the following hold: \begin{enumerate}
        \item\label{b-prop1} $(x_n)_{n \in \mathbb{N}}$ is bounded.
        \item\label{b-prop2} $(x_n)_{n \in \mathbb{N}}$ is Opial with respect to $M$.
        \item $(x_n)_{n \in \mathbb{N}}$ is \fejer* monotone with respect to $\operatorname{conv}(M)$.
    \end{enumerate}
\end{proposition}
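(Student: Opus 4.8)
The plan is to derive all three parts from a single elementary observation: for each fixed $y\in M$, \cref{def} makes the tail $(\|x_n-y\|)_{n\ge N(y)}$ a nonincreasing sequence of nonnegative reals, where $N(y)$ is the threshold furnished by \cref{def}. For \cref{b-prop1}, pick any $y\in M$ (possible since $M\neq\varnothing$); then $(\|x_n-y\|)_{n\ge N(y)}$ is bounded above by $\|x_{N(y)}-y\|$, and since the finitely many initial terms $x_0,\dots,x_{N(y)-1}$ are trivially bounded, $(x_n)_{n\in\NN}$ is bounded. For \cref{b-prop2}, fix $y\in M$; the tail $(\|x_n-y\|)_{n\ge N(y)}$, being nonincreasing and bounded below by $0$, converges, so $\lim_n\|x_n-y\|$ exists; as $y\in M$ was arbitrary, $(x_n)_{n\in\NN}$ is Opial with respect to $M$.

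For the third assertion, the key point is that each set $C_n$ of \eqref{hs} is convex. Let $y\in\conv(M)$ and write $y=\sum_{i=1}^{k}\lambda_i y_i$ with $y_i\in M$, $\lambda_i\ge 0$, and $\sum_{i=1}^{k}\lambda_i=1$. For each $i$, \cref{def} supplies $N(y_i)\in\NN$ with $y_i\in C_n$ for all $n\ge N(y_i)$. Put $N(y):=\max_{1\le i\le k}N(y_i)$. Then for every $n\ge N(y)$ we have $y_1,\dots,y_k\in C_n$, hence $y=\sum_{i=1}^{k}\lambda_i y_i\in C_n$ by convexity of $C_n$; that is, $\|x_{n+1}-y\|\le\|x_n-y\|$. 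Since $y\in\conv(M)$ was arbitrary, $(x_n)_{n\in\NN}$ is \fejer* monotone with respect to $\conv(M)$.

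I expect no genuine obstacle here: the only subtlety is whether a threshold $N(y)$ really exists for a point of $\conv(M)$, and it does --- it is simply the maximum of the finitely many thresholds attached to the points in one chosen representation of $y$, and since \cref{def} only demands the existence of some threshold, nothing forces this choice to be canonical. (One could equally phrase part~(iii) by observing that each $C_n$ is a closed half-space or all of $X$, but plain convexity is all that is used.)
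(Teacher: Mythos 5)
Your proof is correct. The paper itself does not prove this proposition but defers to \cite[Proposition~3.1]{BBCIARS}; your argument is the standard one that the cited reference uses (monotone bounded tails for \cref{b-prop1} and \cref{b-prop2}, and convexity of the sets $C_n$ together with taking the maximum of finitely many thresholds for the convex-hull statement), and it is complete as written.
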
 \begin{proof}
    See \cite[Proposition~3.1]{BBCIARS}.
\end{proof}

\begin{remark}\label{opial-prop}
   According to \cref{b-prop}\cref{b-prop2}, all the Opial properties discussed in \cite{Opial} are naturally inherited by \fejer* sequences. Specifically, both Opial's Lemma and its strong convergence counterpart (see \cref{opial-lemma-weak} and \cref{opial-lemma-strong} below) hold in the context of \fejer* monotonicity, just as they are known to apply in the classical \fejer\ setting (see \cite[Theorem 5.5]{BC2017} and \cite[Theorem 5.11]{BC2017}, respectively).
\end{remark}

We now briefly review some properties of Opial sequences relevant to the analysis in this paper. For more details on Opial sequences, see \cite{Opial}.

\begin{fact}[Opial's Lemma]\label{opial-lemma-weak}
    Let $(x_n)_{n \in \NN}$ be a sequence in $X$ that is Opial with respect to a nonempty subset $C$ of $X$. Then $(x_n)_{n \in \NN}$ converges weakly to some point in $C$ if and only if $$\mathcal{W}((x_n)_{n \in \mathbb{N}}) \subseteq C.$$
\end{fact}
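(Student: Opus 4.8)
The plan is to prove Opial's Lemma as a clean ``if and only if'' by unwinding the definition of an Opial sequence with respect to $C$, i.e.\ the hypothesis that $\lim_{n\to\infty}\|x_n-y\|$ exists for every $y\in C$. Note that by \cref{b-prop}\cref{b-prop1} the sequence $(x_n)_{n\in\NN}$ is bounded (or this can be obtained directly: if $C\ne\varnothing$, pick $y_0\in C$; then $(\|x_n-y_0\|)_{n\in\NN}$ converges, hence is bounded). So every subsequence has a weakly convergent sub-subsequence, and $(x_n)_{n\in\NN}$ converges weakly if and only if it has a unique weak cluster point.

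\textbf{The ``only if'' direction.} Suppose $x_n\weakly x$ with $x\in C$. Let $w\in\mathcal{W}((x_n)_{n\in\NN})$, say $x_{n_k}\weakly w$. The aim is to show $w=x$. For every $y\in C$ we have the standard identity
\begin{equation*}
\|x_n-y\|^2=\|x_n-x\|^2+2\langle x_n-x,\,x-y\rangle+\|x-y\|^2.
\end{equation*}
Since $\lim_n\|x_n-y\|$ and $\lim_n\|x_n-x\|$ both exist (the former by the Opial hypothesis since $y\in C$, the latter since $x\in C$), the limit $\lim_n\langle x_n-x,\,x-y\rangle$ exists; because $x_n\weakly x$ this limit is $0$. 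Passing to the subsequence $(x_{n_k})$, which has the same limit, and using $x_{n_k}\weakly w$, we get $\langle w-x,\,x-y\rangle=0$ for all $y\in C$. Taking $y=x$ is not enough, so instead choose $y\in C$ arbitrary and then also apply the argument with the roles set up so that one obtains $\langle w-x,\,x-w\rangle=0$; concretely, since $w\in\mathcal{W}((x_n)_{n\in\NN})\subseteq C$ is itself a point of $C$ (here we would need $w\in C$ — but that is exactly the hypothesis on the other side, so instead we argue as follows). Cleaner: from $\langle w-x,\,x-y\rangle=0$ for all $y\in C$ and, symmetrically (repeating the computation with $x$ replaced by $w$, which is legitimate only if $\lim_n\|x_n-w\|$ exists), one would need $w\in C$. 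To avoid circularity, instead use the two-point version: for any $u,v\in C$, expanding $\|x_n-u\|^2-\|x_n-v\|^2$ shows $\lim_n\langle x_n,\,v-u\rangle$ exists; evaluating along $x_n\weakly x$ and along $x_{n_k}\weakly w$ forces $\langle x-w,\,v-u\rangle=0$ for all $u,v\in C$, i.e.\ $x-w\perp(C-C)$. Combined with $\langle w-x,\,x-y\rangle=0$ for all $y\in C$ — rewrite as $\langle w-x,\,x-u\rangle=0$ and subtract the instance with $u$ replaced by $v$ to recover the same orthogonality, and then use any single $y\in C$: $\langle w-x,\,x-y\rangle=0$ together with $\langle w-x,\,w-y\rangle = \langle w-x,\,x-y\rangle+\langle w-x,\,w-x\rangle = \|w-x\|^2$; but also by the symmetric computation $\langle w-x,\,w-y\rangle=0$, whence $\|w-x\|^2=0$. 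Thus $w=x$, so $\mathcal{W}((x_n)_{n\in\NN})=\{x\}\subseteq C$.

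\textbf{The ``if'' direction.} Suppose $\mathcal{W}((x_n)_{n\in\NN})\subseteq C$. Since $(x_n)_{n\in\NN}$ is bounded, $\mathcal{W}((x_n)_{n\in\NN})\ne\varnothing$; it suffices to show this set is a singleton. Let $w_1,w_2\in\mathcal{W}((x_n)_{n\in\NN})$, with $x_{n_k}\weakly w_1$ and $x_{m_\ell}\weakly w_2$. Both $w_1,w_2\in C$, so by the Opial hypothesis $\lim_n\|x_n-w_1\|$ and $\lim_n\|x_n-w_2\|$ exist; call them $a_1,a_2$. Using the identity above with $y=w_2$, $x=w_1$: the existence of $\lim_n\langle x_n-w_1,\,w_1-w_2\rangle$ (which equals $\tfrac12(a_2^2-a_1^2-\|w_1-w_2\|^2)$, a constant) lets us evaluate along $(x_{n_k})$ to get $\langle w_1-w_1,\,w_1-w_2\rangle=0$ trivially, and along $(x_{m_\ell})$ to get $\langle w_2-w_1,\,w_1-w_2\rangle=\tfrac12(a_2^2-a_1^2-\|w_1-w_2\|^2)$, i.e.\ $-\|w_1-w_2\|^2=\tfrac12(a_2^2-a_1^2-\|w_1-w_2\|^2)$. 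By symmetry (swapping $w_1\leftrightarrow w_2$), $-\|w_1-w_2\|^2=\tfrac12(a_1^2-a_2^2-\|w_1-w_2\|^2)$. Adding these two equations gives $-2\|w_1-w_2\|^2=-\|w_1-w_2\|^2$, hence $\|w_1-w_2\|=0$, so $w_1=w_2$. Therefore $(x_n)_{n\in\NN}$ has a unique weak cluster point $w$, so $x_n\weakly w$, and $w\in\mathcal{W}((x_n)_{n\in\NN})\subseteq C$.

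\textbf{Main obstacle.} The genuine subtlety is the ``only if'' direction: one must extract, from the mere existence of the distance limits, an orthogonality relation strong enough to pin down every weak cluster point, without prematurely assuming the cluster point lies in $C$. The fix is to exploit the Opial hypothesis at \emph{two} points of $C$ simultaneously so that the problematic $\|x_n\|^2$ terms cancel, yielding $\lim_n\langle x_n,\,v-u\rangle$ exists for all $u,v\in C$; this is exactly the information needed. Everything else is routine: boundedness, the parallelogram/cosine identity, and the reduction ``unique weak cluster point $\Leftrightarrow$ weak convergence'' for bounded sequences in Hilbert space.
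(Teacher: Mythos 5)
The paper does not prove this fact; it simply cites \cite[Corollary~2.3]{Opial}, so your argument has to stand on its own. Your ``if'' direction is correct and is the standard Opial argument: for $w_1,w_2\in\mathcal{W}((x_n)_{n\in\NN})\subseteq C$ the Opial hypothesis makes $\lim_n\langle x_n-w_1,\,w_1-w_2\rangle$ exist, and evaluating that single limit along the two subsequences forces $0=-\|w_1-w_2\|^2$; since the sequence is bounded, a unique weak cluster point gives weak convergence, and the limit lies in $\mathcal{W}((x_n)_{n\in\NN})\subseteq C$.

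The ``only if'' direction as written has a genuine gap --- and, ironically, it is the direction that needs no work at all. Your final step invokes ``the symmetric computation $\langle w-x,\,w-y\rangle=0$'', but that computation expands $\|x_n-y\|^2$ around $w$ and therefore needs $\lim_n\|x_n-w\|$ to exist, i.e.\ $w\in C$ --- exactly the circularity you flagged two sentences earlier and claimed to avoid. The two-point substitute only yields $x-w\perp(C-C)$, and combined with $\langle w-x,\,x-y\rangle=0$ it gives $\langle w-x,\,w-y\rangle=\|w-x\|^2$, not $0$; nothing you have established closes the loop. The repair is to delete that entire paragraph: if $x_n\weakly x$ with $x\in C$, then every subsequence converges weakly to the same limit $x$ by uniqueness of weak limits, so $\mathcal{W}((x_n)_{n\in\NN})=\{x\}\subseteq C$, with no use of the Opial hypothesis whatsoever.
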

\begin{proof}
    See \cite[Corollary 2.3]{Opial}.
\end{proof}

\begin{fact}[strong convergence of Opial sequences] 
\label{opial-lemma-strong}
    Let $(x_n)_{n \in \NN}$ be a sequence in $X$ that is Opial with respect to a nonempty subset $C$ of $X$. Then $(x_n)_{n \in \NN}$ converges strongly to some point in $C$ if and only if $$\mathcal{S}((x_n)_{n \in \mathbb{N}}) \cap C \neq \varnothing.$$
\end{fact}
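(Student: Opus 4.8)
The plan is to establish the two implications separately; the forward one is immediate from the definition of a strong cluster point, while the reverse one follows by invoking the Opial property at a single, carefully chosen point. For the ``only if'' direction, suppose $(x_n)_{n\in\NN}$ converges strongly to some $x\in C$. Then $x$ is, by definition, a strong cluster point of $(x_n)_{n\in\NN}$, so $x\in\mathcal{S}((x_n)_{n\in\mathbb{N}})\cap C$; in particular, this set is nonempty.

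For the ``if'' direction, pick $z\in\mathcal{S}((x_n)_{n\in\mathbb{N}})\cap C$. Since $z\in C$ and $(x_n)_{n\in\NN}$ is Opial with respect to $C$, the limit $\ell:=\lim_{n\to\infty}\|x_n-z\|$ exists in $\RP$. On the other hand, since $z$ is a strong cluster point of $(x_n)_{n\in\NN}$, there is a subsequence $(x_{n_k})_{k\in\NN}$ with $\|x_{n_k}-z\|\to 0$. As every subsequence of a convergent real sequence has the same limit, we conclude that $\ell=0$, hence $\|x_n-z\|\to 0$, i.e.\ $x_n\to z$ strongly; and $z\in C$, which is precisely the assertion.

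I do not foresee any genuine obstacle: the only point worth noticing is that the Opial hypothesis is used only at the point $z$ supplied by the cluster-point assumption, after which the conclusion is a one-line fact about real sequences. This parallels the classical \fejer\ situation, where monotonicity is likewise not needed once a strong cluster point lying in the target set is available (compare \cite[Theorem~5.11]{BC2017}); correspondingly, one could also deduce this statement by combining \cref{b-prop}\cref{b-prop2} with that classical argument, but the direct proof above is self-contained.
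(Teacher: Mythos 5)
Your proof is correct. Note that the paper does not actually supply an argument for this statement: it is recorded as a Fact and the proof is simply a citation to \cite[Proposition~2.4]{Opial}, so there is no in-paper proof to compare against. Your self-contained argument is the standard one (forward direction trivial; reverse direction combines the existence of $\lim_n\|x_n-z\|$ at the cluster point $z\in C$ with the vanishing of that limit along the strongly convergent subsequence) and is exactly what one would expect the cited reference to contain.
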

\begin{proof}
    See \cite[Proposition 2.4]{Opial}.
\end{proof}

\begin{fact}[extending the Opial set] \label{opial-ext-aff}
    Let $(x_n)_{n \in \NN}$ be a sequence in $X$ that is Opial with respect to a nonempty subset $C$ of $X$. Then $(x_n)_{n \in \NN}$ is also Opial with respect to $\overline{\aff}(C)$.
\end{fact}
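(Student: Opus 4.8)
The plan is to establish the conclusion in two stages: first upgrade the Opial set from $C$ to its affine hull $\aff(C)$, and then from $\aff(C)$ to its closure $\overline{\aff}(C)$. Fix once and for all a point $y_0\in C$, which is possible since $C\neq\varnothing$.

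For the first stage, I would use the polarization identity: for every $y\in X$ and every $n\in\NN$,
\[
\|x_n-y\|^2 = \|x_n-y_0\|^2 - 2\langle x_n,\, y-y_0\rangle + \|y\|^2 - \|y_0\|^2 .
\]
Since $(x_n)_{n\in\NN}$ is Opial with respect to $C$, the limit $\lim_n\|x_n-y_0\|^2$ exists; hence, reading this identity for a given $y$, the limit $\lim_n\|x_n-y\|$ exists if and only if $\lim_n\langle x_n,\,y-y_0\rangle$ exists. Applying this with $y$ ranging over $C$ shows that $\lim_n\langle x_n,\,y-y_0\rangle$ exists for every $y\in C$. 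Now take any $y\in\aff(C)$ and write $y=\sum_{i=1}^m\lambda_i y_i$ with $y_i\in C$ and $\sum_{i=1}^m\lambda_i=1$; then $y-y_0=\sum_{i=1}^m\lambda_i(y_i-y_0)$, so by linearity of the inner product $\langle x_n,\,y-y_0\rangle=\sum_{i=1}^m\lambda_i\langle x_n,\,y_i-y_0\rangle$ is a finite linear combination of sequences each of which converges, hence converges. By the equivalence above, $\lim_n\|x_n-y\|$ exists. Therefore $(x_n)_{n\in\NN}$ is Opial with respect to $\aff(C)$.

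For the second stage, observe that the functions $y\mapsto\liminf_n\|x_n-y\|$ and $y\mapsto\limsup_n\|x_n-y\|$ are both $1$-Lipschitz on $X$, because $\big|\,\|x_n-y\|-\|x_n-y'\|\,\big|\leq\|y-y'\|$ for every $n$; in particular they are continuous. By the first stage these two functions coincide on $\aff(C)$, hence by continuity they coincide on $\overline{\aff(C)}=\overline{\aff}(C)$. For any $y$ in this set, equality of the $\liminf$ and the $\limsup$ says precisely that $\lim_n\|x_n-y\|$ exists, which is the desired conclusion.

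The only genuinely substantive point is the passage to affine combinations in the first stage: it is exactly here that the Hilbert-space structure enters, through the polarization identity and the bilinearity of $\langle\cdot,\cdot\rangle$, and the argument would break down in a general normed space. The closure step is a routine continuity argument and presents no difficulty; alternatively, one could phrase the whole proof via the affine functional $\ell(y):=\lim_n\langle x_n,\,y-y_0\rangle$ on $\aff(C)$, but the computation is identical.
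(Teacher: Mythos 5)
Your proof is correct. The paper itself gives no argument for this fact --- it simply cites \cite[Proposition~2.8]{Opial} --- so there is no in-text proof to compare against; your two-stage argument (polarization identity to pass from $C$ to $\aff(C)$, then $1$-Lipschitz continuity of $y\mapsto\liminf_n\|x_n-y\|$ and $y\mapsto\limsup_n\|x_n-y\|$ to pass to the closure) is the standard and complete way to establish it, and every step checks out.
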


\begin{proof}
See \cite[Proposition 2.8]{Opial}.
\end{proof}

\subsection*{Alternative description of $C_n$}

We now provide a result that yields alternative 
descriptions of the sets $C_n$ defined in \cref{hs}: 

\begin{lemma}\label{hsp-lemma}
    Let $x, x_+ \in X$. Define the set \begin{align*}
    C(x, x_+) :&= \{ y \in X \mid \|x_+ - y\| \leq \|x - y\| \} \\  &= \{ y \in X \mid \langle x_+ - x, x_+ + x - 2y \rangle \leq 0 \} \\ &= \{ y \in X \mid \langle y, x_+ - x \rangle \geq (\|x_+\|^2 - \|x\|^2)/2\}
    \end{align*} If $x \neq x_+$, then $C(x, x_+)$ is a closed halfspace with its boundary given as the hyperplane \begin{align*} \operatorname{bdry}(C(x, x_+)) &= \{ y \in X \mid \|x_+ - y\| = \|x - y\| \} \\ &= \{ y \in X \mid \langle x_+ - x, x_+ + x - 2y \rangle = 0 \} \\ &= \{ y \in X \mid \langle y, x_+ - x \rangle = (\|x_+\|^2 - \|x\|^2)/2\},
    \end{align*} which passes through the midpoint $(x + x_+)/2$ and is orthogonal to the difference $x_+ - x$. If $x = x_+$, then $C(x, x_+) = X$.
\end{lemma}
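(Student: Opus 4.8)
The plan is to collapse all three descriptions of $C(x,x_+)$ into a single chain of equivalences obtained by squaring norms and expanding. Since $\|x_+-y\|$ and $\|x-y\|$ are nonnegative and $t\mapsto t^2$ is strictly increasing on $\RP$, we have $\|x_+-y\|\le\|x-y\|$ if and only if $\|x_+-y\|^2\le\|x-y\|^2$. Applying the elementary identity $\|a\|^2-\|b\|^2=\langle a-b,\,a+b\rangle$ with $a=x_+-y$ and $b=x-y$ gives $\|x_+-y\|^2-\|x-y\|^2=\langle x_+-x,\,x_++x-2y\rangle$, which is exactly the second description. Expanding this inner product, $\langle x_+-x,\,x_++x-2y\rangle=\|x_+\|^2-\|x\|^2-2\langle y,\,x_+-x\rangle$, so rearranging the inequality $\langle x_+-x,\,x_++x-2y\rangle\le0$ yields $\langle y,\,x_+-x\rangle\ge(\|x_+\|^2-\|x\|^2)/2$, the third description. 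The same computation with ``$\le$'' replaced by ``$=$'' throughout shows that the three stated equalities describing $\operatorname{bdry}(C(x,x_+))$ are equivalent to one another and to $\{y\mid\|x_+-y\|=\|x-y\|\}$.

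Next I would treat the nondegenerate case. Put $u:=x_+-x$ and $\beta:=(\|x_+\|^2-\|x\|^2)/2$. If $x\neq x_+$, then $u\neq0$, so $C(x,x_+)=\{y\mid\langle y,u\rangle\ge\beta\}$ is a nontrivial closed halfspace, and its boundary is the hyperplane $\{y\mid\langle y,u\rangle=\beta\}$ with normal vector $u=x_+-x$. To see that this hyperplane passes through the midpoint $m:=(x+x_+)/2$, it is fastest to observe directly that $\|x_+-m\|=\tfrac12\|x_+-x\|=\|x-m\|$, whence $m\in\operatorname{bdry}(C(x,x_+))$; alternatively one checks $\langle m,u\rangle=\tfrac12(\|x_+\|^2-\|x\|^2)=\beta$, the cross terms $\langle x,x_+\rangle$ and $\langle x_+,x\rangle$ cancelling. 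Orthogonality of the hyperplane to $x_+-x$ is then immediate from its normal vector being $u$.

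Finally, if $x=x_+$ then $\|x_+-y\|=\|x-y\|$ for every $y\in X$, so $C(x,x_+)=X$; consistently, in this case all three inequality descriptions collapse to the trivially true statement $0\le0$. I do not anticipate any genuine obstacle here: the only points deserving a word of care are the legitimacy of passing between norms and squared norms (monotonicity of squaring on $\RP$) and the recognition of the polarization-type identity $\|a\|^2-\|b\|^2=\langle a-b,\,a+b\rangle$, which is what produces the bracket form of the second description without any further algebra.
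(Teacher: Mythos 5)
Your proposal is correct and follows essentially the same route as the paper: both hinge on the identity $\|v\|^2-\|u\|^2=\langle v-u,\,v+u\rangle$ applied to $v=x_+-y$, $u=x-y$, from which all three descriptions and the boundary statements follow by rearrangement. Your additional explicit verifications (monotonicity of squaring, the midpoint computation) are details the paper leaves implicit but are entirely consistent with its argument.
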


\begin{proof}
    Note that for all $v, u \in X$, we have $\|v\|^2 - \|u\|^2 = \langle v - u, v + u \rangle$. In particular, we derive \begin{align*}
        \|x_+ - y\|^2 - \|x - y\|^2 &= \langle (x_+ - y) - (x - y), x_+ - y + x - y \rangle \\ &= \langle x_+ - x, x_+ + x - 2y \rangle.
    \end{align*} Due to that, we conclude \begin{equation*}
        C(x, x_+) = \{ y \in X \mid \langle x_+ - x, x_+ + x - 2y \rangle \leq 0 \},
    \end{equation*} which consequently implies all the other conclusions. \end{proof}

\begin{remark}\label{hsp-lemma-remark}
    For a given sequence $(x_n)_{n \in \NN}$ and $(C_n)_{n \in \NN}$ as in \cref{hs}, we have $C_n = C(x_n, x_{n+1})$ for all $n \in \NN$ 
    using the notation of \cref{hsp-lemma}. 
\end{remark}

\subsection*{Maximal \fejer* and \fejer\ sets}

\begin{proposition}[the maximal \fejer* set]
\label{eq-def} Let $(x_n)_{n \in \NN}$ be a sequence of points in $X$ and $M$ be a nonempty subset of $X$. Then, $(x_n)_{n \in \NN}$ is \fejer* monotone with respect to $M$ if and only if \begin{equation}\label{maxset_inclusion}
    M \subseteq \underline{\lim} \, C_\NN.
\end{equation} 
Consequently, $\underline{\lim} \, C_\NN$ is the largest possible set with respect to which $(x_n)_{n \in \NN}$ is \fejer* monotone.
\end{proposition}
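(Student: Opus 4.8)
The plan is to prove the stated equivalence by a direct translation of the $\varepsilon$-free quantifier statement in \cref{def} into set-theoretic language, using the sets $C_n$ from \cref{hs}. The first step is the elementary observation that, for a fixed $y\in X$ and a fixed $n\in\NN$, the inequality $\|x_{n+1}-y\|\leq\|x_n-y\|$ is \emph{by definition} the assertion $y\in C_n$. Consequently, for a fixed $y$, the tail statement ``$(\exists N\in\NN)(\forall n\geq N)\ \|x_{n+1}-y\|\leq\|x_n-y\|$'' is equivalent to ``$(\exists N\in\NN)\ y\in\bigcap_{n\geq N}C_n$'', which in turn is exactly ``$y\in\bigcup_{N\in\NN}\bigcap_{n\geq N}C_n=\underline{\lim}\,C_\NN$''.

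Now I would simply quantify over $y\in M$: by \cref{def}, $(x_n)_{n\in\NN}$ is \fejer* monotone with respect to $M$ if and only if every $y\in M$ satisfies the tail statement above, i.e.\ if and only if every $y\in M$ lies in $\underline{\lim}\,C_\NN$, i.e.\ if and only if $M\subseteq\underline{\lim}\,C_\NN$. This establishes \cref{maxset_inclusion}. No estimates are needed; the only point requiring a little care is bookkeeping the order of the quantifiers ($N=N(y)$ may depend on $y$, which is precisely what the union over $N$ in \cref{linf} encodes), and this is exactly where \fejer* monotonicity differs from the classical notion, where $\bigcap_{n\in\NN}C_n$ (no union, hence a closed set) plays this role.

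For the ``consequently'' clause I would argue as follows. If $\underline{\lim}\,C_\NN\neq\varnothing$, apply the just-proved equivalence with $M:=\underline{\lim}\,C_\NN$: since trivially $\underline{\lim}\,C_\NN\subseteq\underline{\lim}\,C_\NN$, the sequence $(x_n)_{n\in\NN}$ is \fejer* monotone with respect to $\underline{\lim}\,C_\NN$; and for \emph{any} nonempty $M$ with respect to which $(x_n)_{n\in\NN}$ is \fejer* monotone, the equivalence gives $M\subseteq\underline{\lim}\,C_\NN$, so $\underline{\lim}\,C_\NN$ is the largest such set. If instead $\underline{\lim}\,C_\NN=\varnothing$, then by \cref{maxset_inclusion} there is no nonempty set with respect to which $(x_n)_{n\in\NN}$ is \fejer* monotone, and the maximality statement holds vacuously.

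I do not anticipate any genuine obstacle: the entire argument is an unwinding of definitions together with the set-theoretic identity ``$\exists N\,\forall n\geq N$'' $\leftrightarrow$ ``membership in the $\liminf$ of sets.'' The only subtlety worth flagging explicitly in the write-up is the empty-set edge case treated in the last paragraph, so that the word ``largest'' is unambiguous.
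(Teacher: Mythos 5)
Your argument is exactly the paper's proof, merely written out in full: the paper's one-line proof says the result "follows from rewriting \cref{def} in terms of the sequence $(C_n)_{n\in\NN}$ and the set $\underline{\lim}\,C_\NN$," which is precisely your quantifier-unwinding. Your explicit treatment of the maximality clause and the empty-set edge case is correct and consistent with the remark following the proposition in the paper.
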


\begin{proof}
    \cref{maxset_inclusion} follows from rewriting \cref{def} in terms of the sequence $(C_n)_{n\in\NN}$ and the set $\underline{\lim} \, C_\NN$ introduced above in \cref{hs} and \cref{linf}, respectively. 
\end{proof}

\begin{remark}
    We occasionally refer to the set $\underline{\lim} \, C_\NN$ as the \emph{maximal \fejer* set} for the sequence $(x_n)_{n \in \NN}$. If $\underline{\lim} \, C_\NN = \varnothing$, then $(x_n)_{n \in \NN}$ is not \fejer* monotone with respect to any nonempty subset of $X$. Comparing to the standard \fejer\ setting, we note that 
    \begin{equation*} 
    \bigcap_{\nnn} C_n
    \end{equation*} 
    is the largest possible set with respect to which $(x_n)_\nnn$ 
    is \fejer\ monotone (see \cite[Lemma~2.1.(i)]{BKX}).
\end{remark}

\subsection*{Relative interior}

We now recall some definitions of relative interiors and general topological facts about complete metric spaces that become essential when examining the relationship between \fejer* monotone sequences and standard \fejer\ monotonicity in infinite-dimensional spaces.

\begin{definition}[relative interior] \label{ri-defs}
    Let $A$ be a nonempty affine subspace of $X$ and take a convex subset $M$ of $A$. Then denote the interior of $M$ in $A$ induced with the strong topology from $X$ by \begin{equation*}
        \operatorname{int}_{A}(M) := \{x \in M \mid B[x, \rho] \cap A \subseteq M \text{ for some } \rho \in \mathbb{R}_{++}\}.
    \end{equation*} We then define the \emph{relative interior} of $M$ by     \begin{equation*}\ri(M) := \operatorname{int}_{\overline{\aff}(M)}(M).
    \end{equation*} 
    For more details on generalized interiors, see \cite{LoMordNam} and \cite{Zalinescu}.
\end{definition}

\begin{remark}
    If $X$ is finite-dimensional, then, because any affine subspace $A$ of $X$ is closed, $\operatorname{ri}(M)$ coincides with the standard \emph{relative interior} of $M$ given by \begin{equation*}
        \ri(M) = \operatorname{int}_{\aff(M)}(M) = \{x \in M \mid B[x, \rho] \cap \aff(M) \subseteq M \text{ for some } \rho \in \mathbb{R}_{++}\};
    \end{equation*} 
    moreover, $\ri(M)\neq \varnothing$ provided that $M$ is convex 
    (see \cite[Theorem 6.2]{Rockafellar}).
\end{remark}

\begin{fact}[Ursescu]\label{ursescu}
    Let $Y$ be a complete metric space. Suppose that $(Q_n)_{n \in \NN}$ is a sequence of closed subsets of $Y$. Then \begin{equation*}
        \overline{\bigcup_{n \in \NN} \operatorname{int}(Q_n)} = \overline{\operatorname{int}\Big(\bigcup_{n \in \NN}Q_n\Big)}.
    \end{equation*}
\end{fact}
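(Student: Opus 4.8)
The plan is to recognize this as a Baire category statement (an Ursescu-type result) and prove it by establishing the two inclusions separately, the trivial one being immediate and the other one being where the completeness of $Y$ is used.

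\textbf{The easy inclusion.} Since $\operatorname{int}(Q_n)\subseteq Q_n$ for each $n$, the set $G:=\bigcup_{n\in\NN}\operatorname{int}(Q_n)$ is open and contained in $\bigcup_{n\in\NN}Q_n$, hence $G\subseteq\operatorname{int}\big(\bigcup_{n\in\NN}Q_n\big)=:U$. Passing to closures gives $\overline G\subseteq\overline U$, which is one of the two inclusions.

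\textbf{The reverse inclusion.} It suffices to show $U\subseteq\overline G$, for then taking closures yields $\overline U\subseteq\overline G$ and we are done. Suppose not, and consider the open set $W:=U\setminus\overline G=U\cap(Y\setminus\overline G)$, which is then nonempty. Since $W\subseteq U\subseteq\bigcup_{n\in\NN}Q_n$, we can write $W=\bigcup_{n\in\NN}(Q_n\cap W)$, a countable union of sets each closed in $W$. An open subspace of a complete metric space is a Baire space, so the Baire category theorem applied inside $W$ produces an index $n_0$ for which $Q_{n_0}\cap W$ has nonempty interior relative to $W$. Because $W$ is open in $Y$, that relative interior coincides with $\operatorname{int}(Q_{n_0})\cap W$, so $\operatorname{int}(Q_{n_0})\cap W\neq\varnothing$. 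But $\operatorname{int}(Q_{n_0})\subseteq G\subseteq\overline G$, and $\overline G$ is disjoint from $W$ by construction --- a contradiction. Hence $W=\varnothing$, i.e.\ $U\subseteq\overline G$.

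\textbf{Main obstacle.} The only delicate step, and the one I would be careful to justify, is invoking the Baire property for the open set $W$: an open subset of a complete metric space need not be complete in the inherited metric, but it is completely metrizable (Alexandrov), hence a Baire space. If one prefers to avoid this, the Cantor-intersection argument can be run directly inside a closed ball $B[x_0,\rho]\subseteq W$ (a genuine complete metric space since it is closed in $Y$), extracting a point and radius witnessing nonempty interior of some $Q_{n_0}$ and then shrinking $\rho$ slightly so the resulting interior is open in $Y$. Everything else is routine bookkeeping with interiors, closures, and the fact that for $W$ open in $Y$ one has $\operatorname{int}_W(A\cap W)=\operatorname{int}_Y(A)\cap W$.
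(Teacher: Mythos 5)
Your proof is correct. The paper does not prove this fact itself --- it simply cites \cite[Lemma~1.44(i)]{BC2017} --- and your argument is the standard Baire-category proof of that lemma: the easy inclusion is bookkeeping, and the reverse inclusion follows by applying Baire's theorem to the open set $W=\operatorname{int}\big(\bigcup_{n}Q_n\big)\setminus\overline{\bigcup_n\operatorname{int}(Q_n)}$ written as a countable union of relatively closed sets. Your handling of the one delicate point (that an open subset of a complete metric space is a Baire space, either via Alexandrov or by running the Cantor-intersection argument inside a closed ball) is sound.
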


\begin{proof}
    See \cite[Lemma 1.44(i)]{BC2017}. 
\end{proof}

\begin{corollary}\label{incr_inter_l-inf}
    Let $(x_n)_{n \in \NN}$ to be a sequence of points and $A$ be a closed affine subspace of $X$. Then 
    \begin{equation}\label{incr_inter_l-inf-eq}
        \bigcup_{n \in \NN} \operatorname{int}_{A}\Big(\bigcap_{n \geq N} C_n\Big) = \operatorname{int}_{A}\big(\underline{\lim} \, C_\NN\big).
    \end{equation}
\end{corollary}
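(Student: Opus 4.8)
The plan is to reduce the identity to a Baire–category statement about an increasing sequence of closed \emph{convex} sets, invoke Ursescu's result (\cref{ursescu}), and then finish with the line–segment principle for convex sets. First I would set $Q_N:=A\cap\bigcap_{n\geq N}C_n$ for $N\in\NN$. By \cref{hs} (equivalently \cref{hsp-lemma}), each $C_n$ is a closed halfspace or all of $X$, hence closed and convex; therefore each $Q_N$ is a closed convex subset of $A$, and the sequence $(Q_N)_{N\in\NN}$ is increasing. Since $\underline{\lim}\,C_\NN=\bigcup_{N\in\NN}\bigcap_{n\geq N}C_n$ by \cref{linf}, we get $A\cap\underline{\lim}\,C_\NN=\bigcup_{N\in\NN}Q_N$; moreover, reading $\operatorname{int}_A(M)=\{x\in M\cap A\mid B[x,\rho]\cap A\subseteq M\text{ for some }\rho\in\RPP\}$ for an arbitrary $M\subseteq X$ shows $\operatorname{int}_A(M)=\operatorname{int}_A(M\cap A)$. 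Hence \cref{incr_inter_l-inf-eq} becomes the identity $\bigcup_{N\in\NN}\operatorname{int}_A(Q_N)=\operatorname{int}_A\bigl(\bigcup_{N\in\NN}Q_N\bigr)$, whose inclusion ``$\subseteq$'' is immediate because each $\operatorname{int}_A(Q_N)$ is an $A$-open subset of $\bigcup_N Q_N$.

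For ``$\supseteq$'', I would fix $x\in\operatorname{int}_A\bigl(\bigcup_N Q_N\bigr)$, so that $x\in A$ and $B[x,\rho]\cap A\subseteq\bigcup_N Q_N$ for some $\rho\in\RPP$. Put $Y:=B[x,\rho]\cap A$, a closed subset of the complete metric space $A$ (recall $A$ is closed in $X$), hence itself complete. The sets $Q_N\cap Y$ form an increasing sequence of closed subsets of $Y$ with $\bigcup_N(Q_N\cap Y)=Y$, so \cref{ursescu} applied in $Y$ gives that $\bigcup_N\operatorname{int}_Y(Q_N\cap Y)$ is dense in $Y$. Since $B(x,\rho)\cap A$ is a nonempty relatively open subset of $Y$, there exist $N_0\in\NN$ and $y_0\in B(x,\rho)\cap A$ with $y_0\in\operatorname{int}_Y(Q_{N_0}\cap Y)$; as $B(x,\rho)\cap A$ is $A$-open and contained in $Y$, shrinking the ball radius upgrades this to $y_0\in\operatorname{int}_A(Q_{N_0})$.

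If $y_0=x$ we are done; otherwise I would ``reflect $x$ through $y_0$'': set $t:=\rho/\|x-y_0\|>0$ and $y_1:=(1+t)x-ty_0$. Then $y_1$ is an affine combination of $x,y_0\in A$, so $y_1\in A$, and $\|y_1-x\|=\rho$, so $y_1\in B[x,\rho]\cap A\subseteq\bigcup_N Q_N$, say $y_1\in Q_{N_1}$. With $N:=\max\{N_0,N_1\}$, monotonicity of $(Q_N)_{N\in\NN}$ gives $y_0\in\operatorname{int}_A(Q_N)$ and $y_1\in Q_N$, and since $x=\tfrac{t}{1+t}y_0+\tfrac{1}{1+t}y_1$ is a proper convex combination while $Q_N$ is convex, the standard segment principle yields $x\in\operatorname{int}_A(Q_N)\subseteq\bigcup_N\operatorname{int}_A(Q_N)$. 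This establishes ``$\supseteq$'' and hence \cref{incr_inter_l-inf-eq}.

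The main obstacle is precisely the reverse inclusion in infinite dimensions: the naive argument of enclosing $x$ among finitely many points, one from each of several $Q_{N_i}$, and taking $N=\max_i N_i$, breaks down when $A$ is infinite-dimensional. The Baire-category content of \cref{ursescu} is exactly what produces a single $Q_{N_0}$ that already possesses relative interior near $x$, after which convexity of the $Q_N$'s is used to slide an interior point of $Q_{N_0}$ through $x$ itself. The remaining points — that $\operatorname{int}_Y(Q_{N_0}\cap Y)$ agrees with $\operatorname{int}_A(Q_{N_0})$ at points lying in the open ball $B(x,\rho)$, and that all auxiliary points stay in $A$ — are routine and I would verify them only briefly.
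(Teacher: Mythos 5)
Your proof is correct and follows essentially the same route as the paper's: both introduce the increasing sequence $Q_N = A \cap \bigcap_{n\geq N}C_n$ of closed convex subsets of the complete metric space $A$ and invoke Ursescu's lemma (\cref{ursescu}) to get past the failure of the naive ``take the max of finitely many indices'' argument in infinite dimensions. The only substantive difference is in the finishing step: the paper applies \cref{ursescu} globally in $A$ and concludes from the fact that two (open) convex sets with equal closures have equal interiors, whereas you localize \cref{ursescu} to the ball $B[x,\rho]\cap A$ and close with the line-segment principle; these are two packagings of the same convexity input, and your version has the minor merit of making explicit exactly where convexity and the monotonicity of $(Q_N)_{N\in\NN}$ are used.
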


\begin{proof}
    Indeed, $A$, with the induced strong topology from $X$, represents a complete metric space, whereas $(Q_n)_{n \in \NN}$, where $Q_N := (\cap_{n \geq N} C_n) \cap A$, is an increasing sequence of embedded closed convex subset of $A$. That said, due to \cref{linf}, we have \begin{equation*}
        \cup_{n \in \NN} Q_n = \underline{\lim} \, C_\NN \cap A.
    \end{equation*} However, in light of \cref{ursescu}, we conclude \begin{equation*}
        \overline{\cup_{n \in \NN} \operatorname{int}_{A}(Q_n)} = \overline{\operatorname{int}_{A}(\underline{\lim} \, C_\NN \cap A)}.
    \end{equation*} Finally, \cref{incr_inter_l-inf-eq} follows since $\operatorname{int}_{A}(Q \cap A) = \operatorname{int}_{A}(Q)$ for any $Q \subseteq X$ and the equality of the closures of two convex sets necessarily implies that their interiors coincide.
\end{proof}

\subsection*{Cones}

\begin{fact}[Moreau]\label{moreau_decomp}
    Let $K$ be a nonempty closed convex cone in $X$ and let $x \in X$. Then the following hold: \begin{enumerate}
        \item $x = P_{K}(x) + P_{{K}^{\ominus}}(x)$.
        \item $P_{{K}}(x) \perp P_{{K}^{\ominus}}(x)$.
    \end{enumerate}
\end{fact}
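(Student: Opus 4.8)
The plan is to reduce everything to the variational characterization of the metric projection onto a nonempty closed convex set and then exploit the conical structure of $K$. First I would recall that for a nonempty closed convex set $C\subseteq X$ and $x\in X$, the point $p=P_C(x)$ is characterized by $p\in C$ together with $\langle x-p,\,c-p\rangle\le 0$ for every $c\in C$. Specializing this to $C=K$: since $K$ is a cone, both $0$ and $2p$ lie in $K$, and substituting $c=0$ and then $c=2p$ forces $\langle x-p,\,p\rangle=0$; the inequality then collapses to $\langle x-p,\,k\rangle\le 0$ for all $k\in K$, i.e.\ $x-p\in K^{\ominus}$. Thus, writing $p:=P_K(x)$ and $q:=x-p$, we obtain $p\in K$, $q\in K^{\ominus}$, $x=p+q$, and $\langle p,\,q\rangle=0$.

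It remains to identify $q$ with $P_{K^{\ominus}}(x)$, after which part~(i) is immediate and part~(ii) is precisely $\langle p,\,q\rangle=0$. For this I would verify the projection characterization for the nonempty closed convex cone $K^{\ominus}$ (it is automatically closed and convex, being an intersection of closed half-spaces through the origin): we need $q\in K^{\ominus}$ (already shown), $\langle x-q,\,q\rangle=0$ (this is $\langle p,\,q\rangle=0$), and $\langle x-q,\,u\rangle\le 0$ for all $u\in K^{\ominus}$, i.e.\ $x-q=p\in K^{\ominus\ominus}$. Here I would invoke the bipolar theorem for cones: since $K$ is a nonempty closed convex cone, $K^{\ominus\ominus}=K$, so $p\in K$ delivers exactly what is required. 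Hence $q=P_{K^{\ominus}}(x)$, which completes the proof.

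The only ingredient beyond routine inner-product bookkeeping is the bipolar identity $K^{\ominus\ominus}=K$, which is where the substantive work sits: it follows from the Hilbert-space separation theorem applied to the closed convex set $K$ (if $p\notin K$, strictly separate $p$ from $K$, and use positive homogeneity of $K$ to normalize the separating functional so that it lies in $K^{\ominus}$, contradicting $p\in K^{\ominus\ominus}$). In keeping with the style of the paper, in the actual write-up I would simply cite the corresponding classical statement in \cite{BC2017}.
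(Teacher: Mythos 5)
Your proposal is correct. Note, however, that the paper does not prove this fact at all: it simply cites \cite[Theorem~6.30]{BC2017}, so there is no in-paper argument to compare against, and your closing remark that you would do the same in the write-up matches the paper exactly. Your self-contained argument is the standard one: the variational characterization of $P_K(x)$, specialized with $c=0$ and $c=2P_K(x)$, gives $\langle x-P_K(x),P_K(x)\rangle=0$ and $x-P_K(x)\in K^\ominus$, and then one verifies the same characterization for $K^\ominus$. One small simplification: in the last step you invoke the full bipolar theorem $K^{\ominus\ominus}=K$, but all you actually use is the inclusion $K\subseteq K^{\ominus\ominus}$, i.e.\ that $p\in K$ and $u\in K^\ominus$ imply $\langle p,u\rangle\le 0$ --- this is immediate from the definition of the polar cone and requires no separation argument. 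So the ``substantive work'' you flag is not needed; the whole proof is elementary inner-product bookkeeping plus the projection theorem.
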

\begin{proof}
    See \cite[Theorem~6.30]{BC2017}.
\end{proof}

\begin{definition}[solid and pointed cones]
Let $K$ be a nonempty closed convex cone in $X$. We say that 
\begin{enumerate}
\item $K$ is \emph{solid} if $\inte K \neq\varnothing$;
%\item $K$ is \emph{generating} if $K-K = X$;
\item $K$ is \emph{pointed} if $K \cap (-K)=\{0\}$. 
\end{enumerate}
\end{definition}

The following result is part of the folklore; for the reader's convenience, 
we include the short proof.
\begin{fact}
Let $K$ be a nonempty closed convex cone in $X$. 
Then we have the implications
\begin{equation}
\label{e:251211b}
K \text{ is solid }
\Rightarrow 
K-K = X
\Rightarrow 
\overline{K-K}=X 
\Rightarrow 
K^\ominus \text{ is pointed,} 
\end{equation}
and 
\begin{equation}
\label{e:251211c}
K \text{ is pointed}
\Rightarrow 
\overline{K^\ominus-K^\ominus} = X, 
\end{equation}
and 
\begin{equation}
\label{e:251212a}
\text{if $X$ is finite-dimensional, then: }
K \text{ is solid }
\Leftrightarrow 
K-K = X
\Leftrightarrow 
\overline{K-K}=X. 
\end{equation}
\end{fact}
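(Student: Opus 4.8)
The plan is to establish the three displays in order, relying throughout on the standard duality facts $\overline{K-K} = (K^\ominus \cap (-K^\ominus))^\perp$ and, dually, that $K^\ominus$ pointed is equivalent to a density statement about $K - K$. Concretely, for a nonempty closed convex cone $L$ one has $\overline{L - L} = \overline{\lspan} L = (L^\ominus)^\perp$ (since $L - L$ is the smallest subspace containing $L$, up to closure), and $L^\ominus \cap (-L^\ominus) = (\overline{\lspan} L)^\perp$. Applying this with $L = K$ gives $\overline{K-K} = (K^\ominus \cap (-K^\ominus))^\perp$, and applying it with $L = K^\ominus$ (using $K^{\ominus\ominus} = K$ by the bipolar theorem for closed convex cones) gives $\overline{K^\ominus - K^\ominus} = (K \cap (-K))^\perp$. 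These two identities are the engine for everything.

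For \cref{e:251211b}: if $K$ is solid, pick $x_0 \in \inte K$ and $\rho > 0$ with $B[x_0,\rho] \subseteq K$; then for any $x \in X$ we have $x_0 \pm \tfrac{\rho}{\|x\|} x \in K$ (when $x\neq 0$), so $x$ is a positive multiple of the difference of two elements of $K$, whence $K - K = X$. The implication $K - K = X \Rightarrow \overline{K-K} = X$ is trivial. Finally $\overline{K-K} = X$ forces $(K^\ominus \cap (-K^\ominus))^\perp = X$ by the first identity above, i.e.\ $K^\ominus \cap (-K^\ominus) = \{0\}$, which is exactly pointedness of $K^\ominus$. For \cref{e:251211c}: if $K$ is pointed, then $K \cap (-K) = \{0\}$, so by the second identity $\overline{K^\ominus - K^\ominus} = \{0\}^\perp = X$.

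For \cref{e:251212a}: the forward chain of implications is already contained in \cref{e:251211b}, so only $\overline{K-K} = X \Rightarrow K \text{ solid}$ needs the finite-dimensionality. Here $\overline{K-K} = X$ together with the fact that $K - K$ is a linear subspace (hence closed in finite dimensions) gives $K - K = X$; then $K - K = X$ means $K$ has full dimension $\dim X$, and a full-dimensional convex set in $\RR^n$ has nonempty interior (its affine hull is all of $X$, so $\ri K = \inte K \neq \varnothing$), so $K$ is solid. This last step uses precisely the finite-dimensional fact recalled after \cref{ri-defs} that a convex set has nonempty relative interior, plus the coincidence of relative interior and interior when the affine hull is the whole space.

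The only genuine obstacle is making sure the polarity identities are invoked in the correct (infinite-dimensional, Hilbert-space) form: one must use closures of $K - K$ and of $K^\ominus - K^\ominus$ rather than the raw subspaces, since neither need be closed when $X$ is infinite-dimensional, and one must cite the bipolar theorem $K^{\ominus\ominus} = K$ (valid because $K$ is a \emph{closed} convex cone). Everything else is a short unwinding of definitions; no delicate estimates are needed.
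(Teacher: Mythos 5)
Your proposal is correct in substance and reaches all three displays, but it takes a partly different route from the paper, so a comparison is worth recording. For the last implication of \cref{e:251211b} you pass through the duality identity $\overline{K-K}=\overline{\lspan}\,K=(K^\ominus\cap(-K^\ominus))^\perp$ and simply take orthogonal complements; the paper instead argues directly, taking $u\in K^\ominus\cap(-K^\ominus)=K^\perp$, approximating $u$ by $k_1-k_2$ with $k_1,k_2\in K$, and estimating $\|u\|^2\leq\varepsilon\|u\|$. Your version is shorter and more structural; the paper's avoids invoking the biorthogonality of subspaces explicitly. For \cref{e:251211c} the two arguments coincide (both reduce to $(K^\ominus)^\perp=K\cap(-K)=\{0\}$). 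For \cref{e:251212a} you observe that $K-K$ is a linear subspace, hence closed in finite dimensions, so $\overline{K-K}=X$ already gives $K-K=X$, and then conclude via $\aff(K)=X$ and the nonemptiness of the relative interior; the paper instead gets $K-K=X$ from Rockafellar's calculus $\ri(K)-\ri(K)=\ri(K-K)$. Your route is the more elementary of the two and is perfectly valid. The solid $\Rightarrow$ $K-K=X$ step is essentially identical in both proofs.

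One slip to fix: the parenthetical identity ``$\overline{\lspan}\,L=(L^\ominus)^\perp$'' is false in general --- for a closed convex cone $L$ one has $(L^\ominus)^\perp=L^{\ominus\ominus}\cap(-L^{\ominus\ominus})=L\cap(-L)$, the lineality space of $L$, not its closed span (take $L=\RR_+$ in $X=\RR$: then $(L^\ominus)^\perp=\{0\}$ while $\overline{\lspan}\,L=\RR$). The correct statement is $\overline{\lspan}\,L=(L^\perp)^\perp=(L^\ominus\cap(-L^\ominus))^\perp$, which is in fact the form you use in both applications, so nothing downstream breaks; but the displayed general identity should be corrected before the argument is considered complete.
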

\begin{proof}
Suppose first that $K$ is solid. 
Let $x\in X$ and $k_0\in\inte(K)$. 
Then there exists $\delta>0$ such that 
$k_0 + \delta x =: k_1 \in K$. 
Hence $x = k_1/\delta - k_0/\delta \in K-K$ and 
thus $X\subseteq K-K\subseteq X$. 

Now suppose that $K-K=X$.  
Then $\overline{K-K}=\overline{X}=X$.

Now suppose that $\overline{K-K}=X$.
Let $u\in K^\ominus \cap (-K^\ominus) = K^\perp$. 
Let $\varepsilon>0$. Then 
there exists $k_1,k_2$ in $K$ such that 
$\|u-(k_1-k_2)\| < \varepsilon$. 
Then $\|u\|^2=\scal{u}{u} = 
\scal{u-(k_1-k_2)}{u} + \scal{k_1-k_2}{u} 
\leq \|u-(k_1-k_2)\|\|u\|\leq \varepsilon\|u\|$.
Because $\varepsilon>0$ was chosen arbitrarily, we deduce that 
$u=0$. Therefore, $K^\ominus$ is pointed. 
We have thus proved \cref{e:251211b}.

Now assume that $K$ is pointed.
Then $(K^\ominus)^\perp = K^{\ominus\ominus} \cap -K^{\ominus\ominus} 
= K \cap -K = \{0\}$. 
Hence 
$(K^\ominus)^{\perp\perp} = X$ 
$\Leftrightarrow$
$\overline{\lspan(K^\ominus)} = X$
$\Leftrightarrow$
$\overline{K^\ominus-K^\ominus}=X$. 
We've verified \cref{e:251211c}. 

Finally, assume that $X$ is finite-dimensional and 
$\overline{K-K} = X$. 
By \cite[Corolllary~6.6.2 and Corollary~6.3.1]{Rockafellar}, 
$\ri(K)-\ri(K)=\ri(K-K)=\ri(X)=X$. 
Hence $K-K=X$. 
Furthermore, letting $A$ be the affine hull of $K$, we have 
$X = \ri(K)-\ri(K)\subseteq A-A \subseteq X$.
Thus $A=X$ and $\varnothing \neq \ri(K)=\inte(K)$, i.e., $K$ is solid.
\end{proof}

Next, we provide a characterization of the interior of 
the dual cone $K^\oplus = -K^\ominus$:

\begin{proposition}[interior of the dual cone]\label{int-dual-cone-prop}
Let $K$ be a nonempty closed convex cone in $X$. 
Then 
\begin{equation}\label{int-dual-cone}
\inte(K^\oplus) = 
\menge{u\in X}{(\exists \delta>0)(\forall k\in K)\;\; 
\scal{k}{u} \geq \delta\|k\|}.
\end{equation}
\end{proposition}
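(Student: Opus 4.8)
The plan is to prove the set equality \cref{int-dual-cone} by verifying the two inclusions directly. Write $S$ for the right-hand side of \cref{int-dual-cone}, and recall that $K^\oplus = -K^\ominus = \menge{u\in X}{(\forall k\in K)\ \scal{k}{u}\geq 0}$, so that $S$ is simply a ``uniform'' strengthening of membership in $K^\oplus$. The only tool needed is the Cauchy--Schwarz inequality.

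For the inclusion $S\subseteq\inte(K^\oplus)$, I would take $u\in S$ with a witnessing constant $\delta>0$ (so $\scal{k}{u}\geq\delta\norm{k}$ for all $k\in K$) and show that the \emph{open} ball of radius $\delta$ around $u$ is contained in $K^\oplus$: whenever $\norm{v}<\delta$ and $k\in K$, Cauchy--Schwarz gives $\scal{k}{u+v}\geq\delta\norm{k}-\norm{k}\norm{v} = (\delta-\norm{v})\norm{k}\geq 0$, whence $u+v\in K^\oplus$. Thus $u\in\inte(K^\oplus)$.

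For the reverse inclusion $\inte(K^\oplus)\subseteq S$, I would take $u\in\inte(K^\oplus)$, choose $\delta>0$ with the \emph{closed} ball $B[u,\delta]\subseteq K^\oplus$, and then test against the directions pointing from $u$ toward the boundary sphere: for $k\in K\setminus\{0\}$, the point $u-\delta k/\norm{k}$ lies in $B[u,\delta]\subseteq K^\oplus$, so $0\leq\scal{k}{u-\delta k/\norm{k}} = \scal{k}{u}-\delta\norm{k}$, i.e.\ $\scal{k}{u}\geq\delta\norm{k}$; for $k=0$ this holds trivially. Hence $u\in S$, and combining the two inclusions proves \cref{int-dual-cone}.

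I do not anticipate a genuine obstacle here: the argument is elementary. The only things requiring care are using an open ball in the first inclusion versus a closed ball in the second, and separating off the trivial case $k=0$ in the second inclusion. (In fact neither closedness nor convexity of $K$ is used; only $K\neq\varnothing$ matters, and $K$ being a cone is irrelevant for this particular statement.)
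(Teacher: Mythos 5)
Your proof is correct and is essentially the same argument as the paper's: the paper packages it as a single chain of equivalences ending with the observation that $\sup_{b\in B[0,\delta]}\scal{k}{-b}=\delta\|k\|$, while you split it into two inclusions using the same Cauchy--Schwarz estimate and the same extremal test vector $u-\delta k/\|k\|$. No gap; the open-ball/closed-ball bookkeeping you flag is handled implicitly in the paper's version and causes no issue either way.
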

\begin{proof}
Indeed, for $u\in X$, we obtain 
\begin{align*}
u\in \inte(K^\oplus)
&\Leftrightarrow 
(\exists \delta>0)\;\; B[u,\delta]\subseteq K^\oplus\\
&\Leftrightarrow 
(\exists \delta>0)\;\; u+B[0,\delta]\subseteq K^\oplus\\
&\Leftrightarrow 
(\exists \delta>0)(\forall b\in B[0,\delta])(\forall k\in K)\;\;
\scal{k}{u+b}\geq 0\\
&\Leftrightarrow 
(\exists \delta>0)
(\forall k\in K)
(\forall b\in B[0,\delta])
\;\;
\scal{k}{u}\geq \scal{k}{-b}\\
&\Leftrightarrow 
(\exists \delta>0)
(\forall k\in K)
\;\;
\scal{k}{u}\geq \delta\|k\|,
\end{align*}
as claimed.
\end{proof}

Pointedness and solidity 
can be characterized using the unit sphere, which 
we denote by $S_X$:

\begin{proposition}\label{pointed-sph-cond}
\label{p:251214a}
Let $K$ be a nonempty closed convex cone in $X$. 
Then we have the equivalences:
\begin{enumerate}
\item 
\label{p:251214a1}
$K$ is pointed
$\Leftrightarrow$
$0\notin \conv(K\cap S_X)$. 
\item 
\label{p:251214a2}
$K^{\oplus}$ is solid $\Leftrightarrow$ $0 \not \in \overline{\conv}(K \cap S_X)$.
\end{enumerate}
\end{proposition}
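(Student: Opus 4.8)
The plan is to prove the two equivalences separately, but both will rest on the same underlying idea: relate pointedness/solidity to a strict-separation statement about the compact-ish set $\conv(K \cap S_X)$ (or its closure) and the origin. Throughout I will write $B = K \cap S_X$, a closed subset of the unit sphere, and use that $K = \cone(B) = \RP B$ because $K$ is a closed convex cone (every nonzero element of $K$ normalizes into $B$, and $0 \in K$). I expect to invoke basic separation facts and, for part (ii), the characterization of $\inte(K^\oplus)$ from \cref{int-dual-cone-prop} together with \cref{p:251214a1}.

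For \crefpart{p:251214a1}{p:251214a1}: First I would argue the contrapositive of ``$\Leftarrow$''. If $K$ is not pointed, pick $0 \neq k \in K \cap (-K)$; then $k/\|k\|$ and $-k/\|k\|$ both lie in $B$, so their midpoint $0 = \tfrac12(k/\|k\|) + \tfrac12(-k/\|k\|)$ lies in $\conv(B)$. Conversely, suppose $0 \in \conv(B)$, i.e. $0 = \sum_{i=1}^m \lambda_i b_i$ with $\lambda_i > 0$, $\sum \lambda_i = 1$, $b_i \in B \subseteq K$. Set $k := \lambda_1 b_1 \in K$ (note $k \neq 0$ since $\|k\| = \lambda_1 > 0$ as $b_1 \in S_X$); then $-k = \sum_{i \geq 2} \lambda_i b_i \in K$ because $K$ is a convex cone (it is closed under addition and nonnegative scaling), so $k \in K \cap (-K) \setminus \{0\}$ and $K$ is not pointed. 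This finishes (i); it is elementary and the only subtlety is remembering that all the $b_i$ have norm $1$, which is what forces $\|k\| > 0$.

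For \crefpart{p:251214a2}{p:251214a2}: By \cref{int-dual-cone-prop}, $K^\oplus$ is solid iff there exist $u \in X$ and $\delta > 0$ with $\scal{k}{u} \geq \delta \|k\|$ for all $k \in K$; dividing by $\|k\|$ this is equivalent to $\scal{b}{u} \geq \delta$ for all $b \in B$, hence (by convexity and continuity of $\scal{\cdot}{u}$) to $\scal{y}{u} \geq \delta$ for all $y \in \overline{\conv}(B)$. So $K^\oplus$ is solid iff there is a continuous linear functional bounded below by a positive constant on $\overline{\conv}(B)$, which says precisely that $\overline{\conv}(B)$ and $\{0\}$ can be strictly separated by a hyperplane. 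Now invoke the standard separation theorem: a nonempty closed convex set $D$ in a Hilbert space is strictly separated from a point $p \notin D$ iff $p \notin D$ (take $p = P_D(0)$-direction; concretely, if $0 \notin D$ then $u := P_D(0)$ satisfies $\scal{y}{u} \geq \|u\|^2 =: \delta > 0$ for all $y \in D$). Hence $K^\oplus$ is solid iff $0 \notin \overline{\conv}(B) = \overline{\conv}(K \cap S_X)$, which is the claim.

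**Main obstacle.** Nothing here is deep; the only point requiring care is the passage from ``$\scal{b}{u} \geq \delta$ for all $b \in B$'' to ``$\scal{y}{u} \geq \delta$ for all $y \in \overline{\conv}(B)$'' and back — i.e. making sure the quantifier over the generating set $B$ and over its closed convex hull really are equivalent for a (weakly continuous) linear inequality, and correctly matching the strict/non-strict and open/closed distinctions so that ``solid'' (open interior nonempty) lines up with the \emph{closed} convex hull in (ii) while ``pointed'' lines up with the plain convex hull in (i). I would double-check that asymmetry explicitly, since it is exactly the place where the two parts genuinely differ: in (i) an exact convex combination equal to $0$ is produced, whereas in (ii) one only gets $0$ in the closure, and it is \cref{int-dual-cone-prop}'s uniform $\delta$ that accounts for taking the closure.
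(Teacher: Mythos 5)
Your proposal is correct and follows essentially the same route as the paper: part (i) by the same contrapositive argument splitting off one term of the convex combination, and part (ii) via \cref{int-dual-cone-prop} together with strict separation of $\overline{\conv}(K\cap S_X)$ from the origin (the paper cites the strong separation theorem where you instead exhibit the separating vector explicitly as the projection of $0$, which amounts to the same thing). No gaps.
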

\begin{proof}
Abbreviate $S_X$ by $S$. 

\cref{p:251214a1}:
``$\Rightarrow$'': 
We show the contrapositive and thus assume that 
$0\in \conv(K\cap S)$, say 
$0 = \sum_{i\in I} \lambda_i k_i$,
where the $\lambda_i$ are positive and sum up to $1$, and 
each $k_i\in K\cap S$. 
Note that $I$ contains at least two elements.
Now partition $I$ into a singleton subset $I_1$ and 
$I_2 := I\setminus I_1$. 
Then 
$K \ni \sum_{i\in I_1}\lambda_i k_i = -\sum_{i\in I_2}\lambda_i k_i 
\in -K$; 
thus, $K\cap (-K)$ contains the nonzero element $\sum_{i\in I_1}\lambda_i k_i$. Hence $K$ is not pointed. 
``$\Leftarrow$'': We again show the contrapositive and assume that 
$K$ is not pointed, say $0\neq k\in K\cap (-K)$.
WLOG $k\in S$ (after scaling if necessary). 
But then 
$0 = \thalb k + \thalb(-k) \in \conv(K\cap S)$. 

\cref{p:251214a2}:
``$\Rightarrow$'': 
Let $u \in \operatorname{int}(K^{\oplus})$. 
Then \cref{int-dual-cone} implies that \begin{equation}\label{pointed_cones-eqint1}
    (\exists \delta>0)(\forall k\in K) \quad  
\scal{k}{u} \geq \delta\|k\|.
\end{equation} 
However, note that, for any convex combination $k = \sum^N_{i = 1} \lambda_i k_i$ with $k_i \in K \cap S$, \cref{pointed_cones-eqint1} yields: \begin{equation}\label{pointed_cones-eqint2}
    \scal{k}{u} = \sum^N_{i = 1} \lambda_i \scal{k_i}{u} \geq \sum^N_{i = 1} \lambda_i \delta = \delta > 0.
\end{equation} 
Furthermore, the inequality \cref{pointed_cones-eqint2} extends to all $k \in \overline{\conv}(K \cap S)$; therefore, 
$0 \notin \overline{\conv}(K \cap S)$.
``$\Leftarrow$'': Note that $\overline{\conv}(K \cap S)$ is 
a nonempty bounded closed convex (weakly compact) subset of $X$ that does not contain 
$0$. By the strong separation theorem 
(see, e.g., \cite[Theorem~3.50]{BC2017}), 
there exists $\delta > 0$ such that 
$0 = \scal{0}{u} < \delta < \min\scal{\overline{\conv}(K \cap S)}{u}$. 
Consequently, 
$(\forall k\in K)$
$\delta\|k\| \leq \scal{k}{u}$. 
Therefore, $u\in\inte(K^{\oplus})$ by  \cref{int-dual-cone-prop}.
\end{proof}

\begin{remark}[finite vs infinite dimensions]\label{pointed-sph-cond-remark}
If $X$ is finite-dimensional, then $\operatorname{conv}(C)$ is closed for any nonempty compact subset C (see \cite[Theorem~17.2]{Rockafellar}), and \cref{pointed-sph-cond} yields the classical duality (see also, e.g., \cite[Theorem~2.3]{Berman}): $K$ is pointed if and only if $K^{\oplus}$ is solid. This equivalence is attributed to Krein and Rutman. 
In contrast, this duality generally fails in infinite-dimensional spaces. Specifically, while the solidness of $K^{\oplus}$ still implies that $K$ is pointed, the converse need not hold. As a counterexample, let $X=\ell_2(\mathbb{N})$ and define
$$
K := \overline{\operatorname{cone}}\operatorname{conv}\{e_i\}_{i\in\mathbb{N}} .
$$
Then $0 \in \overline{\operatorname{conv}}(K \cap S_X)\setminus \operatorname{conv}(K \cap S_X)$; in particular, $K$ is pointed, but $K^{\oplus} = K$ is \emph{not} solid. Also, if we set 
$$K := \menge{(x_n)_\nnn}{x_0\geq x_1\geq \cdots \geq x_n \geq \cdots 
\geq 0},$$ then $K$ is a nonempty closed convex cone in $X$ with 
$\overline{K-K}=X$ but $K-K\neq X$.
\end{remark}

    % \begin{remark}
    %     A closed, convex cone $\mathcal{K} \subseteq X$ is referred to as \emph{pointed} if $\mathcal{K} \cap (-\mathcal{K}) = \{0\}$, that is, if it contains no lines. One can manually verify that pointedness of $\mathcal{K}$ is equivalent to the assumption that $0 \not \in \operatorname{conv}(\mathcal{K} \cap S).$ Hence, whenever $X$ is finite-dimensional, $\mathcal{K}$ is pointed if and only if $\mathcal{K}^{\oplus}$ is solid. This observation, however, does not extend to infinite-dimensional spaces, and a possible counterexample is given in $X = \ell_2(\NN)$ by $$\mathcal{K} = \overline{\operatorname{cone}}\operatorname{conv}\{e_i\}_{i \in \NN}$$ where $0 \in \overline{\operatorname{conv}}(\mathcal{K} \cap S) \setminus \operatorname{conv}(\mathcal{K} \cap S)$. For more details, see \cite[Chapter 6]{BC2017}.
    % \end{remark}

\section{Counterexamples illustrating the failure of standard properties} 

\label{section2-f}

In this section, we provide various examples highlighting 
the stark differences between \fejer\ and \fejer* sequences.

In preparation the first example, we introduce the following notation: \begin{align*}
    \mathbf{e}(\alpha) &:= (\cos(\alpha), \sin(\alpha)), \quad \alpha \in \mathbb{R} \\
    \mathbf{e}(I) &:= \{\mathbf{e}(\alpha) \mid \alpha \in I\}, \quad I \subseteq \mathbb{R} \\
    K(I) &:= \mathbb{R}_{+} \mathbf{e}(I) = \{\rho\,\mathbf{e}(\alpha) \mid \rho \geq 0, \, \alpha \in I\}.
\end{align*}

\begin{example}
    Suppose $X = \mathbb{R}^2$ and let $\alpha < \alpha_+ < \alpha + \pi$. Set $\beta = (\alpha + \alpha_+)/2$. Then \begin{equation*}
        C(\mathbf{e}(\alpha), \mathbf{e}(\alpha_+)) = K([\beta, \beta + \pi]).
    \end{equation*}
\end{example}

\begin{example}\label{closure_example}
    Suppose $X = \mathbb{R}^2$, and pick any scalar sequence $(\alpha_n)_{n \in \NN}$ such that \begin{equation*}
        -\frac{\pi}{2} < \alpha_0 < \alpha_1 < \cdots < \alpha_n < \alpha_{n+1} \rightarrow 0^-.
    \end{equation*} For $n \in \NN$, set $\beta_n := (\alpha_n + \alpha_{n+1})/2$, which is strictly increasing and $\beta_n \to 0^-$. Finally, define \begin{equation*}
        x_n := \mathbf{e}(\alpha_n), \quad n \in \NN.
    \end{equation*} Then the maximal \fejer* set for the sequence $(x_n)_{n \in \NN}$ is given by \begin{equation*}
        \underline{\lim} \, C_\NN = K\left(\left[0, \pi\right)\right) = (\mathbb{R}\times\mathbb{R}_{++}) \cup (\mathbb{R}_+ \times\{0\}),
    \end{equation*} which is not closed.
\end{example}

\begin{proof}
    Indeed, we have \begin{equation*}
        (\forall n \in \NN) \quad C(x_n ,x_{n+1}) = K([\beta_n, \beta_n + \pi]) = K\left(\left[\beta_n, \pi/2 \right]\right) \cup K\left(\left[\pi/2, \beta_n + \pi\right]\right).
    \end{equation*} Here $\left( K\left(\left[\beta_n, \pi/2\right]\right) \right)_{n \in \NN}$ decreases, while $\left( K\left(\left[\pi/2, \beta_n + \pi\right]\right) \right)_{n \in \NN}$ increases; see \cref{fig:examples1a-pic}. Then, for all $N \in \NN$, it follows that \begin{align*}
        \bigcap_{n \geq N} C(x_n, x_{n+1}) &= \bigcap_{n \geq N} K\left(\left[\beta_n, \pi/2\right]\right) \quad \cup \quad  \bigcap_{n \geq N} K\left(\left[\pi/2, \beta_n + \pi\right]\right) \\ &= K\left(\left[0, \pi/2\right]\right) \; \cup \; K\left(\left[\pi/2, \beta_N + \pi\right]\right) \\ &= K\left(\left[0, \beta_N + \pi\right]\right).
    \end{align*} Hence \begin{equation*}
        \underline{\lim} \, C_\NN = \liminf C(x_n, x_{n+1}) = \bigcup_{N \in \NN} K\left(\left[0, \beta_N + \pi\right]\right) = K\left(\left[0, \pi\right)\right),
    \end{equation*} which completes the proof. \end{proof}

    \begin{figure}[t]
    \centering
    \includegraphics[width=.50\linewidth]{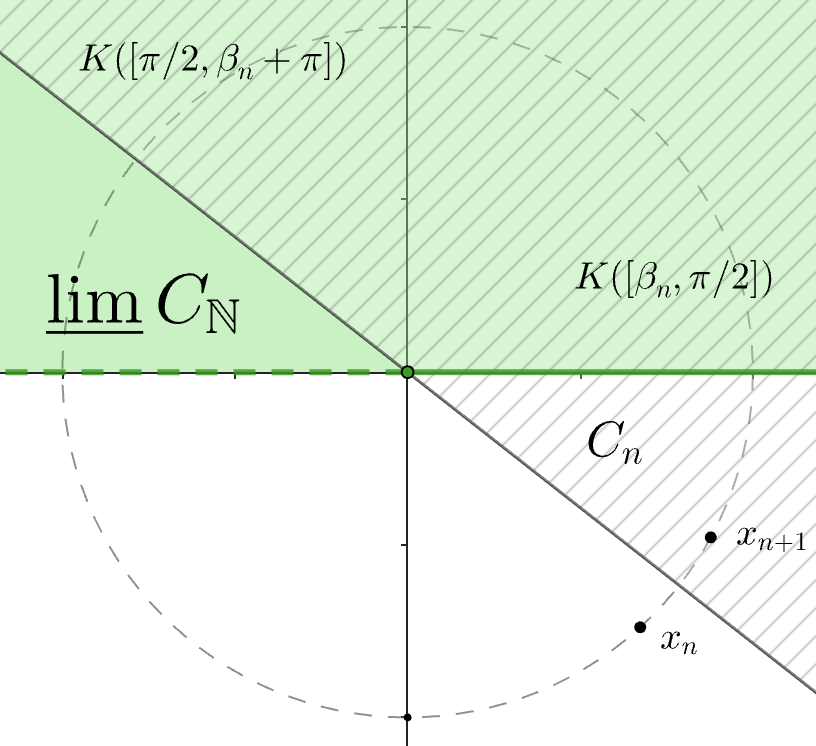}
    \caption{A single iteration step for the sequence $(x_n)_{n \in \NN}$ defined in \cref{example-non-decr}.}
    \label{fig:examples1a-pic}
\end{figure}

% \begin{proof}
%     Indeed, the proposed sequence $(x^n)_{n \in \mathbb{N}}$ represents the points of the unit sphere in $\mathbb{R}^2$ with the arguments equal to $-\pi/2(n + 1)$. Due to the fact that the negative slopes of the lines (hyperplanes) \begin{equation}\label{lines}
%         \operatorname{bdry}(C_n) = \left\{(x, y) \in \mathbb{R}^2 \mid y = \tan\left(\frac{\alpha_n + \alpha_{n + 1}}{2}\right)x \right\}
%     \end{equation} associated with $C_n$ are strictly increasing and convergent to $0$, we conclude that the whole open upper plane \begin{equation*} \{(x, y) \in \mathbb{R}^2 \mid y > 0\} \subsetneq \liminf C_k\end{equation*} is included into the maximal Fejér* set $\liminf C_n$, whereas on the boundary, which is the x-axis in this case, only nonnegative points should be included.
% \end{proof}

\begin{remark}[\fejer* only extends to the convex hull, not to the closed
convex hull]
\label{problem-closure}
    In particular, unlike in the standard \fejer\ setting, the \fejer* property cannot be extended to $\overline{\operatorname{conv}}(M)$, but only to $\operatorname{conv}(M)$. Henceforth, we assume that the set $M$ is always convex. Behling, Bello-Cruz, Iusem, Alves Ribeiro, and Santos provided a more complicated example in \cite{BBCIARS} to demonstrate this issue, and their construction inspired all finite-dimensional counterexamples developed in this paper. For more details, see \cref{authors-example} below.
\end{remark}

To further emphasize the difference between the standard \fejer\ and \fejer* settings, we review other established properties implied by \fejer\ monotonicity when the set in question is a closed affine subspace.

\begin{fact}\label{Proposition 5.9}
    Let $C$ be a closed affine subspace of $X$ and let $(x_n)_{n \in \NN}$ be a sequence in $X$. Suppose that $(x_n)_{n \in \NN}$ is \fejer\ monotone with respect to $C$. Then the following hold: \begin{enumerate}
        \item\label{Proposition 5.9(1)} $(\forall n \in \NN)$ $P_C(x_n) = P_C(x_0)$.
        \item\label{Proposition 5.9(2)} Suppose that every weak sequential cluster point of the sequence $(x_n)_{n \in \NN}$ belongs to $C$. Then $x_n \rightharpoonup P_C(x_0)$.
    \end{enumerate}
\end{fact}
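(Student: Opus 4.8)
The plan is to prove the two parts in sequence, with the Pythagorean identity for affine subspaces doing essentially all the work. Write $C = c_0 + L$, where $L$ is the closed linear subspace parallel to $C$, and recall that $x - P_C(x) \in L^{\perp}$ while $P_C(x) - y \in L$ for every $x \in X$ and $y \in C$, so that
$$
\|x - y\|^2 = \|x - P_C(x)\|^2 + \|P_C(x) - y\|^2 = d_C(x)^2 + \|P_C(x) - y\|^2 .
$$
Abbreviating $p_n := P_C(x_n)$ and substituting this identity into the defining inequality $\|x_{n+1} - y\| \le \|x_n - y\|$ for an arbitrary $y \in C$ gives
$$
d_C(x_{n+1})^2 + \|p_{n+1} - y\|^2 \;\le\; d_C(x_n)^2 + \|p_n - y\|^2 \qquad (y \in C).
$$

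For part (i), I would fix $n$ and argue by contradiction, supposing $p_{n+1} \neq p_n$. Since $p_n, p_{n+1} \in C$, the difference $p_n - p_{n+1}$ lies in $L$, so the ray $y_t := p_n + t(p_n - p_{n+1})$ lies in $C$ for every $t \ge 0$. Plugging $y = y_t$ into the displayed inequality and expanding the two squared norms (which equal $(1+t)^2$ and $t^2$ times $\|p_n - p_{n+1}\|^2$, respectively) leaves, after cancellation,
$$
(1 + 2t)\,\|p_n - p_{n+1}\|^2 \;\le\; d_C(x_n)^2 - d_C(x_{n+1})^2 ,
$$
whose right-hand side is a fixed finite number; letting $t \to +\infty$ is absurd. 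Hence $p_{n+1} = p_n$ for every $n$, and an immediate induction yields $P_C(x_n) = P_C(x_0)$ for all $n$.

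For part (ii), first note that $(x_n)_{n\in\NN}$ is bounded (for any fixed $y \in C$, $\|x_n - y\| \le \|x_0 - y\|$) and that, for each $y \in C$, the sequence $(\|x_n - y\|)_{n\in\NN}$ is nonincreasing, hence convergent; so $(x_n)_{n\in\NN}$ is Opial with respect to $C$. Since by hypothesis $\mathcal{W}((x_n)_{n\in\NN}) \subseteq C$, \cref{opial-lemma-weak} gives $x_n \rightharpoonup \bar x$ for some $\bar x \in C$. To identify $\bar x$, I would use part (i): $x_n - P_C(x_0) = x_n - p_n \in L^{\perp}$ for all $n$, and $L^{\perp}$ is a closed subspace, hence weakly closed, so the weak limit satisfies $\bar x - P_C(x_0) \in L^{\perp}$; on the other hand $\bar x \in C$ forces $\bar x - P_C(x_0) \in L$. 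Therefore $\bar x - P_C(x_0) \in L \cap L^{\perp} = \{0\}$, i.e., $x_n \rightharpoonup P_C(x_0)$.

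The only delicate point is part (i): the whole argument hinges on choosing the test points $y_t$ along the line through $p_n$ in the direction $p_n - p_{n+1}$, which is exactly what makes the Pythagorean bound blow up; everything else is bookkeeping. (An alternative for the final identification in part (ii) would be to invoke the weak continuity of the affine map $P_C$, but the $L^{\perp}$-argument sidesteps that.)
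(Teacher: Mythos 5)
Your proof is correct; the paper gives no argument of its own here, simply citing \cite[Proposition~5.9]{BC2017}. Your argument — the Pythagorean decomposition for a closed affine subspace, testing the \fejer\ inequality along an unbounded ray inside $C$ to force $P_C(x_{n+1})=P_C(x_n)$, and then identifying the weak limit via Opial's Lemma and the weak closedness of $L^\perp$ — is essentially the standard proof from that reference.
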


\begin{proof}
    See \cite[Proposition 5.9]{BC2017}.    
\end{proof}

Unfortunately, the analogue of \cref{Proposition 5.9} fails to hold for a \fejer* monotone sequence $(x_n)_{n \in \NN}$ in general, as the following example shows.

\begin{example}[\fejer* is not nice with respect to closed affine subspaces]
\label{aff-count}
Suppose $X = \mathbb{R}^2$ and set \begin{equation}\label{aff-count-defM}
    M := \mathbb{R}\times\{0\}, \quad x_0 := (0, 1).
\end{equation} Then, for all $n \in \NN$, define $x_{n+1}$ inductively as follows: 
    
    Construct the circle $S_n$ centered at the point $z_n :=(-n, 0)$ passing through the point $x_n$. Denote by $y_n$ the rightmost point where $S_n$ intersects $M$. Then, define $x_{n+1} \in S_n$ as the unique point of $S_n$ lying on the internal angle bisector of the angle $\angle x_nz_ny_n$. Equivalently, $x_{n+1}$ is the midpoint of the minor arc connecting $x_n$ and $y_n$ on $S_n$.

    Then the following hold: \begin{enumerate}
        \item\label{aff-count-p1} $(P_M(x_n))_{n \in \NN}$ is strictly increasing in $M$, where $M$ is given by \cref{aff-count-defM} and identified with $\RR$.
        \item\label{aff-count-p2} $(x_n)_{n \in \NN}$ is \fejer* monotone with respect to $M$ and $x_n \to z = (\zeta,0) \in M$, where $\zeta>0$. 
        \item\label{aff-count-p3} $\underline{\lim} \, C_\NN = \mathbb{R}\times\mathbb{R_-}$.
        % \item\label{aff-count-p3} $(\exists y \in M)$ $x_n \to y$.
    \end{enumerate}
\end{example}

\begin{figure}[t]
    \centering
    \includegraphics[width=.60\linewidth]{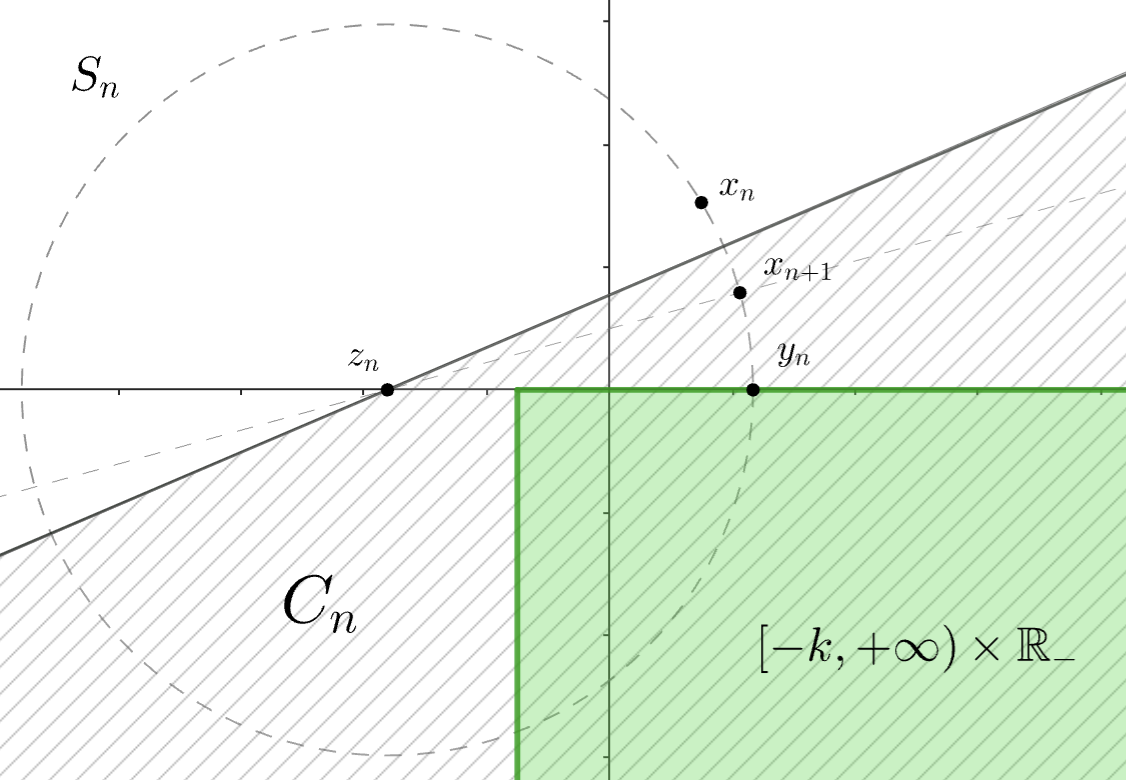}
    \caption{A schematic iteration step for the sequence $(x_n)_{n \in \NN}$ given by \cref{example-non-decr}.}
    \label{fig:examples1b-pic}
\end{figure}

\begin{proof}
    See Appendix~\ref{app-aff-count-p}.
\end{proof}
    
\begin{remark}\label{fig1-remark}
    \cref{fig:examples1a-pic} and \cref{fig:examples1b-pic} illustrate single iteration steps for the two previously defined sequences, as introduced in \cref{closure_example} and \cref{aff-count}, respectively. The hatched region indicates the halfspace $C_n$ corresponding to a given index $n \in \mathbb{N}$. In \cref{fig:examples1a-pic}, the green region illustrates the maximal \fejer* set $\underline{\lim} \, C_\NN$, while the solid and dashed dark green lines on its boundary indicate which boundary points are included in the sets, as specified in \cref{closure_example}. By contrast, in \cref{fig:examples1b-pic}, the green region denotes the auxiliary set $[-k, +\infty)\times\mathbb{R}_-$, used in the proof for \cref{aff-count}, for some $k \in \mathbb{N}$ with $k \leq n$.
\end{remark}

\begin{remark}
    Although, the affine projections $P_{\overline{\operatorname{aff}}(M)}(x_n)$ may shift and are not necessarily fixed in the \fejer* framework, the fact of their weak convergence, in light of \cref{b-prop}\cref{b-prop2} and \cref{opial-prop}, is quite general and holds for Opial sequences as \cref{OPL} below demonstrates.
\end{remark}

\begin{definition}[asymptotic center]
    Let $(u_n)_{n \in \NN}$ be a bounded sequence in $X$ and let $C$ be a nonempty closed convex subset of $X$. Then there is a unique point $\hat{c} \in C$ such that \begin{equation*}
        (\forall c \in C\setminus\{\hat{c}\}) \quad \overline{\lim_{n}} \|u_n - \hat{c}\| < \overline{\lim_{n}} \|u_n - c\|.
    \end{equation*} The point $\hat{c}$ is called the \emph{asymptotic center} of $(u_n)_{n \in \NN}$ with respect to $C$, and we write $\hat{c} = A_C(u_n)_{n \in \NN}$, or, more succinctly, $\hat{c} = A_C(u_\NN)$.
\end{definition}

\begin{fact}\label{OPL}
    Let $(x_n)_{n \in \NN}$ be an Opial sequence in $X$ with respect to a nonempty subset $M$ of $X$. Then $P_{\overline{\operatorname{aff}}(M)}(x_n)$ is weakly convergent to $A_{\overline{\operatorname{aff}}(M)}(x_\NN)$.
\end{fact}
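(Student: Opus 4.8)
The plan is to pass to the closed affine subspace $A := \overline{\aff}(M)$ and to exploit that the Opial property supplies the \emph{existence} of the scalar limits $\lim_n\|x_n-a\|$ for $a\in A$. By \cref{opial-ext-aff}, $(x_n)_\nnn$ is Opial with respect to $A$, which is a nonempty closed affine subspace of $X$; hence $\lim_n\|x_n-a\|$ exists and is finite for every $a\in A$, so $(x_n)_\nnn$ is bounded and consequently so is the shadow sequence $p_n := P_A(x_n)$ (as $P_A$ is nonexpansive). Write $V := A-A$ for the closed linear subspace parallel to $A$, so that $x_n - p_n \in V^{\perp}$ for all $n$, and define $\phi\colon A\to\RR$ by $\phi(a):=\lim_n\|x_n-a\|^2$, which is well defined. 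For $a\in A$ one has $\overline{\lim_{n}}\|x_n-a\| = \lim_n\|x_n-a\| = \sqrt{\phi(a)}$, so the asymptotic center $\hat c := A_A(x_\NN)$ --- which exists since $(x_n)_\nnn$ is bounded and $A$ is nonempty closed convex --- is precisely the unique strict minimizer of $\phi$ over $A$.

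Next, I show $(p_n)_\nnn$ converges weakly by a polarization argument. Fix $a,b\in A$; since $b-a\in V$, we have $\scal{x_n-p_n}{b-a}=0$, i.e.\ $\scal{x_n}{b-a}=\scal{p_n}{b-a}$, and expanding the squared norms yields
\[
\|x_n-a\|^2 - \|x_n-b\|^2 = 2\scal{p_n}{b-a} + \|a\|^2 - \|b\|^2 .
\]
As $n\to\infty$ the left side tends to $\phi(a)-\phi(b)$, so $\lim_n\scal{p_n}{b-a}$ exists for all $a,b\in A$; since $\{\,b-a \mid a\in A\,\} = V$, it follows that $\lim_n\scal{p_n}{v}$ exists for every $v\in V$. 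For arbitrary $w\in X$, split $w = P_Vw + P_{V^{\perp}}w$: the term $\scal{p_n}{P_Vw}$ converges by the above, while $\scal{p_n}{P_{V^{\perp}}w} = \scal{p_0}{P_{V^{\perp}}w}$ is constant in $n$ because $p_n - p_0 \in V \perp V^{\perp}$. Hence $\lim_n\scal{p_n}{w}$ exists for every $w\in X$, and combined with boundedness of $(p_n)_\nnn$ this gives $p_n \weakly p$ for some $p\in X$; moreover $p\in A$ since $A$ is weakly closed.

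Finally, passing to the limit in the displayed identity gives $\phi(a)-\phi(b) = 2\scal{p}{b-a} + \|a\|^2 - \|b\|^2$ for all $a,b\in A$; fixing $b$, this rearranges to $\phi(a) = \|a-p\|^2 + \gamma$ with $\gamma$ independent of $a\in A$. Since $p\in A$, the map $a\mapsto\|a-p\|^2$ has unique strict minimizer $a=p$ over $A$, so $\phi$ has unique strict minimizer $p$ over $A$; by the characterization of $\hat c$ from the first paragraph, $p=\hat c$, and therefore $P_A(x_n)=p_n\weakly\hat c = A_{\overline{\aff}(M)}(x_\NN)$. I expect the main obstacle to be the second step --- turning the \emph{mere existence} of the limits $\lim_n\|x_n-a\|$ into the existence of all the inner-product limits $\lim_n\scal{p_n}{w}$ via polarization, and thereby extracting weak convergence of the shadow sequence --- whereas the first and third steps are routine Hilbert-space computations together with the strict convexity underlying uniqueness of the asymptotic center.
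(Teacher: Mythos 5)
Your proof is correct. Note, however, that the paper does not actually prove this statement: it is recorded as a \emph{Fact} whose ``proof'' consists of combining two results from the companion paper \cite{Opial}, namely the extension of the Opial property to $\overline{\aff}(M)$ (which is also \cref{opial-ext-aff} here) and \cite[Theorem~3.12]{Opial}. Your first step coincides with that reduction, and the remainder of your argument supplies a self-contained proof of the content of the cited Theorem~3.12: you convert the existence of the limits $\lim_n\|x_n-a\|^2$ for $a\in A$ into the existence of $\lim_n\scal{p_n}{v}$ for all $v$ in the parallel subspace $V$ via the polarization identity, handle $V^\perp$ by noting that $\scal{p_n}{\cdot}$ is constant there, deduce weak convergence $p_n\weakly p\in A$ from boundedness plus Riesz representation, and then identify $p$ with the asymptotic center by showing $\phi(a)=\|a-p\|^2+\gamma$ on $A$. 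Each of these steps is sound (in particular, the identification of $A_A(x_\NN)$ as the unique strict minimizer of $\phi$ over $A$ is legitimate because the limsup in the definition of the asymptotic center is an honest limit for an Opial sequence). What your route buys is independence from the external reference; what the paper's route buys is brevity, since the heavy lifting is delegated to \cite{Opial}.
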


\begin{proof}
    Combine \cite[Proposition 2.8]{Opial} and \cite[Theorem 3.12]{Opial}.
\end{proof}

\begin{corollary}\label{OPL-cor}
    Let $(x_n)_{n \in \NN}$ be an Opial sequence with respect to a nonempty subset $M$ of $X$. Suppose that every weak sequential cluster point of the sequence $(x_n)_{n \in \NN}$ belongs to $M$. Then $x_n \rightharpoonup A_{\overline{\operatorname{aff}}(M)}(x_\NN)$.
\end{corollary}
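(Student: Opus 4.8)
The plan is to deduce this directly from Opial's Lemma together with the behavior of the projected sequence recorded in \cref{OPL}. The key observation is that once we know $(x_n)_\nnn$ converges weakly at all, its weak limit must lie in the affine hull and hence be fixed by the affine projection, which forces it to coincide with the asymptotic center produced by \cref{OPL}.

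First I would upgrade the Opial set. Since $(x_n)_\nnn$ is Opial with respect to $M$, \cref{opial-ext-aff} shows that $(x_n)_\nnn$ is also Opial with respect to the closed affine subspace $\overline{\aff}(M)$. Because $M\subseteq\overline{\aff}(M)$, the hypothesis that every weak sequential cluster point of $(x_n)_\nnn$ belongs to $M$ gives $\mathcal{W}((x_n)_\nnn)\subseteq\overline{\aff}(M)$. Applying \cref{opial-lemma-weak} with $C=\overline{\aff}(M)$, we conclude that $x_n\weakly\bar{x}$ for some $\bar{x}\in\overline{\aff}(M)$.

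Next I would identify $\bar{x}$. Writing $\overline{\aff}(M)=a+V$ for a closed linear subspace $V$, the projection $P_{\overline{\aff}(M)}=a+P_V(\cdot-a)$ is affine with $P_V$ a bounded linear operator, hence weakly continuous; therefore $P_{\overline{\aff}(M)}(x_n)\weakly P_{\overline{\aff}(M)}(\bar{x})=\bar{x}$, the last equality because $\bar{x}\in\overline{\aff}(M)$. On the other hand, \cref{OPL} asserts $P_{\overline{\aff}(M)}(x_n)\weakly A_{\overline{\aff}(M)}(x_\NN)$. By uniqueness of weak limits, $\bar{x}=A_{\overline{\aff}(M)}(x_\NN)$, and hence $x_n\weakly A_{\overline{\aff}(M)}(x_\NN)$, as claimed.

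The argument is essentially immediate once the cited facts are in place, so I do not anticipate a real obstacle; the only point needing a brief justification is the weak continuity of the projection onto a closed affine subspace used in the last step. If one prefers to avoid invoking \cref{OPL}, an equally short alternative is to verify directly that a weak limit $\bar{x}$ lying in the nonempty closed convex set $\overline{\aff}(M)$ must be the asymptotic center: for any $c\in\overline{\aff}(M)$ the Opial property ensures $\lim_n\|x_n-c\|^2$ exists and, by weak convergence, equals $\lim_n\|x_n-\bar{x}\|^2+\|\bar{x}-c\|^2$, which is strictly larger than $\lim_n\|x_n-\bar{x}\|^2$ whenever $c\neq\bar{x}$, so $\bar{x}=A_{\overline{\aff}(M)}(x_\NN)$ by the defining (uniqueness) property of the asymptotic center.
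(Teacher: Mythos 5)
Your argument is correct and matches the paper's proof in all essentials: both deduce weak convergence from Opial's Lemma (the paper applies it with $C=M$ directly, you with $C=\overline{\aff}(M)$, an immaterial difference) and then identify the limit by combining weak continuity of $P_{\overline{\aff}(M)}$ with the conclusion of \cref{OPL}. Your closing remark giving a direct verification that the weak limit is the asymptotic center is a nice self-contained alternative to invoking \cref{OPL}, but the main line of reasoning is the same as the paper's.
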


\begin{proof}
    Due to \cref{opial-lemma-weak}, we conclude $x_n \rightharpoonup z$ for some $z \in M$. Since the projection $P_{\overline{\operatorname{aff}}(M)}(\cdot)$ is weakly continuous on $X$, we derive that $$P_{\overline{\operatorname{aff}}(M)}(x_n) \rightharpoonup P_{\overline{\operatorname{aff}}(M)}(z) = z.$$ However, due to \cref{OPL}, it is known that $P_{\overline{\operatorname{aff}}(M)}(x_n) \rightharpoonup A_{\overline{\operatorname{aff}}(M)}(x_\NN)$.
\end{proof}

Next, we summarize key facts about the convergence and monotonicity of the distance sequence $(d_{C}(x_n))_{n \in \mathbb{N}}$ for standard \fejer\ and Opial sequences. Afterwards, we highlight what kinds of behaviors may occur, and what can plausibly be anticipated in the \fejer* framework.

\begin{fact}
    Let $C$ be a nonempty convex subset of $X$ and let $(x_n)_{n \in \NN}$ be a \fejer\ monotone sequence with respect to $C$. Then the distances $(d_{C}(x_n))_{n \in \mathbb{N}}$ decrease and, therefore, always converge.
\end{fact}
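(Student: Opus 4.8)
The plan is to simply pass to the infimum in the defining inequality of \fejer\ monotonicity. Fix $n \in \NN$. By \cref{def-classic}, for every $y \in C$ we have $\|x_{n+1} - y\| \leq \|x_n - y\|$. Since $d_C(x_{n+1}) = \inf_{y \in C} \|x_{n+1} - y\| \leq \|x_{n+1} - y\|$, combining these gives
\begin{equation*}
(\forall y \in C) \quad d_C(x_{n+1}) \leq \|x_{n+1} - y\| \leq \|x_n - y\|.
\end{equation*}
Now I would take the infimum over $y \in C$ on the right-hand side, which yields $d_C(x_{n+1}) \leq \inf_{y \in C} \|x_n - y\| = d_C(x_n)$. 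As $n$ was arbitrary, the sequence $(d_C(x_n))_{n \in \NN}$ is nonincreasing.

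For the final assertion, $(d_C(x_n))_{n \in \NN}$ is a nonincreasing sequence in $\RR$ that is bounded below by $0$ (distances are nonnegative), so by the monotone convergence theorem it converges (to $\inf_{n \in \NN} d_C(x_n) \geq 0$).

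I do not anticipate any real obstacle here: the only ingredients are the definition of \fejer\ monotonicity, the definition of the distance function, and monotonicity of the infimum; in particular, neither convexity nor closedness of $C$ is actually used for this statement (they are standing hypotheses that matter elsewhere, e.g.\ for the behaviour of the shadow sequence $(P_C(x_n))_{n \in \NN}$). If anything, I would only be careful to phrase the infimum step correctly --- one cannot ``take $\inf$ on both sides'' of a pointwise inequality naively, but the detour through $d_C(x_{n+1}) \leq \|x_{n+1}-y\|$ makes the argument clean.
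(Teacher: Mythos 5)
Your proof is correct and complete: passing to the infimum over $y\in C$ via the intermediate bound $d_C(x_{n+1})\leq\|x_{n+1}-y\|\leq\|x_n-y\|$ is exactly the standard argument, and your observation that neither convexity nor closedness of $C$ is needed here is also accurate. The paper does not prove this statement itself but simply cites \cite[Proposition~5.4(iii)]{BC2017}, where the same elementary argument is given, so there is nothing to compare beyond noting that your write-up supplies the proof the paper delegates to the literature.
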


\begin{proof}
    See \cite[Proposition 5.4(iii)]{BC2017}.
\end{proof}

\begin{fact}\label{conver-dist}
Let $(x_n)_{n \in \NN}$ be a sequence in $X$ that is Opial with respect to a nonempty closed, convex subset $C$ of X. Suppose 
$\{P_{C}(x_n)\}_{n \in \NN}$ is relatively compact. Then $(d_C(x_n))_{n \in \NN}$ is convergent and
\begin{equation*}\label{conver-dist-details}
    \lim_{n \to \infty}d_C(x_n) = \lim_{n \to \infty}\|x_n - A_C(x_{\NN})\| = \min_{c \in C}\lim_{n \to \infty}\|x_n - c\|.
\end{equation*}
\end{fact}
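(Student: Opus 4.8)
The plan is to analyze the limit function
$\varphi\colon C\to\RR$ given by $\varphi(c):=\lim_{n}\|x_n-c\|$,
which is well defined precisely because $(x_n)_\nnn$ is Opial
with respect to $C$; fixing a single $c\in C$ also shows that
$(x_n)_\nnn$ is bounded, so that the asymptotic center
$\hat c:=A_{C}(x_{\NN})$ is available. First I would observe that,
since the Opial property gives $\limsup_{n}\|x_n-c\|=\varphi(c)$
for every $c\in C$, the defining property of the asymptotic center
reads $\varphi(\hat c)<\varphi(c)$ for all $c\in C\setminus\{\hat c\}$;
in particular $\hat c$ is the unique minimizer of $\varphi$ over $C$
and $\varphi(\hat c)=\min_{c\in C}\varphi(c)$. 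Since
$\varphi(\hat c)=\lim_{n}\|x_n-\hat c\|$, this already establishes
the second claimed equality.

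It remains to prove that $d_C(x_n)\to\varphi(\hat c)$.
For the upper estimate, $d_C(x_n)\leq\|x_n-\hat c\|$ for all $\nnn$
yields $\limsup_{n}d_C(x_n)\leq\varphi(\hat c)$. For the lower
estimate I would argue by contradiction: assume there is a
subsequence with $d_C(x_{n_k})\to\ell<\varphi(\hat c)$. Here the
relative compactness of $\{P_{C}(x_n)\}_\nnn$ enters --- after
passing to a further subsequence we may assume $P_{C}(x_{n_k})\to p$,
and $p\in C$ because $C$ is closed. Then
\[
\|x_{n_k}-p\|\leq\|x_{n_k}-P_{C}(x_{n_k})\|+\|P_{C}(x_{n_k})-p\|
= d_C(x_{n_k})+\|P_{C}(x_{n_k})-p\|\longrightarrow\ell,
\]
while at the same time $\|x_{n_k}-p\|\to\varphi(p)$ by the Opial
property; hence $\varphi(p)=\ell<\varphi(\hat c)=\min_{c\in C}\varphi(c)$,
which is impossible. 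Therefore $\liminf_{n}d_C(x_n)\geq\varphi(\hat c)$,
so $d_C(x_n)\to\varphi(\hat c)=\lim_{n}\|x_n-\hat c\|$, which is the
first claimed equality and in particular shows that
$(d_C(x_n))_\nnn$ converges.

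The one point that needs care --- and the place I would double-check ---
is the interaction of the asymptotic center with the Opial hypothesis:
one must use the Opial property twice, once to upgrade the $\limsup$
in the definition of $\hat c$ to a genuine limit so that $\hat c$
minimizes $\varphi$ on all of $C$, and once (for a subsequential limit
$p$ of the projections) to identify $\lim_{k}\|x_{n_k}-p\|$ with
$\varphi(p)$. It is also worth recording why \emph{relative compactness}
of $\{P_C(x_n)\}_\nnn$, rather than mere boundedness, is essential:
it guarantees that the subsequential limit $p$ lies back in $C$, so
that $\varphi(p)$ is defined and the contradiction can be reached.
Everything else is the triangle inequality and routine extraction of
subsequences.
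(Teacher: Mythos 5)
Your proof is correct. Note that the paper does not prove this Fact itself --- it simply cites \cite[Proposition~3.10]{Opial} --- so there is no in-paper argument to compare against; what you have written is a clean, self-contained derivation. The two uses of the Opial hypothesis you flag are exactly the right pressure points: upgrading the $\limsup$ in the definition of $A_C(x_\NN)$ to a limit makes $\hat c$ the (attained) minimizer of $\varphi$, which gives the second equality, and applying the Opial property to the subsequential limit $p$ of the projections closes the contradiction for the lower bound on $d_C(x_n)$. One cosmetic remark: the triangle inequality as displayed only yields $\limsup_k\|x_{n_k}-p\|\leq\ell$, hence $\varphi(p)\leq\ell$ rather than $\varphi(p)=\ell$; this is all the contradiction needs (and the reverse triangle inequality would give equality anyway), so nothing is lost.
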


\begin{proof}
    See \cite[Proposition 3.10]{Opial}.
\end{proof}

\begin{remark} As \cref{example-non-decr} below demonstrates, the scalar sequence $(d_{\overline{M}}(x_n))_{n \in \NN}$ may fail to be eventually decreasing in the \fejer* framework, even in finite dimensions.
\end{remark}

\begin{example}[distances may strictly \emph{increase} for \fejer*]
\label{example-non-decr}
    Pick a positive, strictly decreasing scalar sequence $(a_n)_{n \in \NN}$ such that $a_n \to 0$. Choose any $x_0 \in \mathbb{R}_{++}^2$. Then, depending on the given index $n \in \NN$, define recurrently \begin{equation}\label{example-non-decr-def}
        R_n := \|x_n + (a_n, 0)\| \quad \text{and} \quad x_{n + 1} := \left(a_n2^{-n}, \sqrt{R^2_n - a^2_n2^{-2n}}\right).
    \end{equation} Equivalently, we define $x_{n+1}$ to be the unique point located in $\mathbb{R}_{++}^2$ on the circle $S_n$, which is centered at $z_n := (-a_n, 0)$ and passes through $x_n$, so that $d_{\{0\}\times\mathbb{R}}(x_{n+1}) = a_n 2^{-n}$ (see \cref{fig:examples5-pic}).

    Define $M:=\mathbb{R}_{--}^2$. Then the following hold: \begin{enumerate}
        \item\label{example-non-decr-prop1} $(x_n)_{n \in \NN}$ is \fejer* monotone with respect to $M$. Moreover, \begin{equation*}
            x_n \to z \in \{0\}\times\mathbb{R}_{++} \quad \text{and} \quad\underline{\lim} \, C_\NN = \left(\mathbb{R}_{--}\times\mathbb{R}\right) \, \cup \, \left(\{0\}\times[\|z\|, +\infty)\right).
        \end{equation*}
        \item\label{example-non-decr-prop2} $(d_{\overline{M}}(x_n))_{n \in \NN}$ is strictly increasing. More specifically, $d_{\overline{M}}(x_n) = \|x_n\| \nearrow \|z\|$.
    \end{enumerate}
\end{example}

\begin{figure}[t]
    \centering
    \includegraphics[width=.60\linewidth]{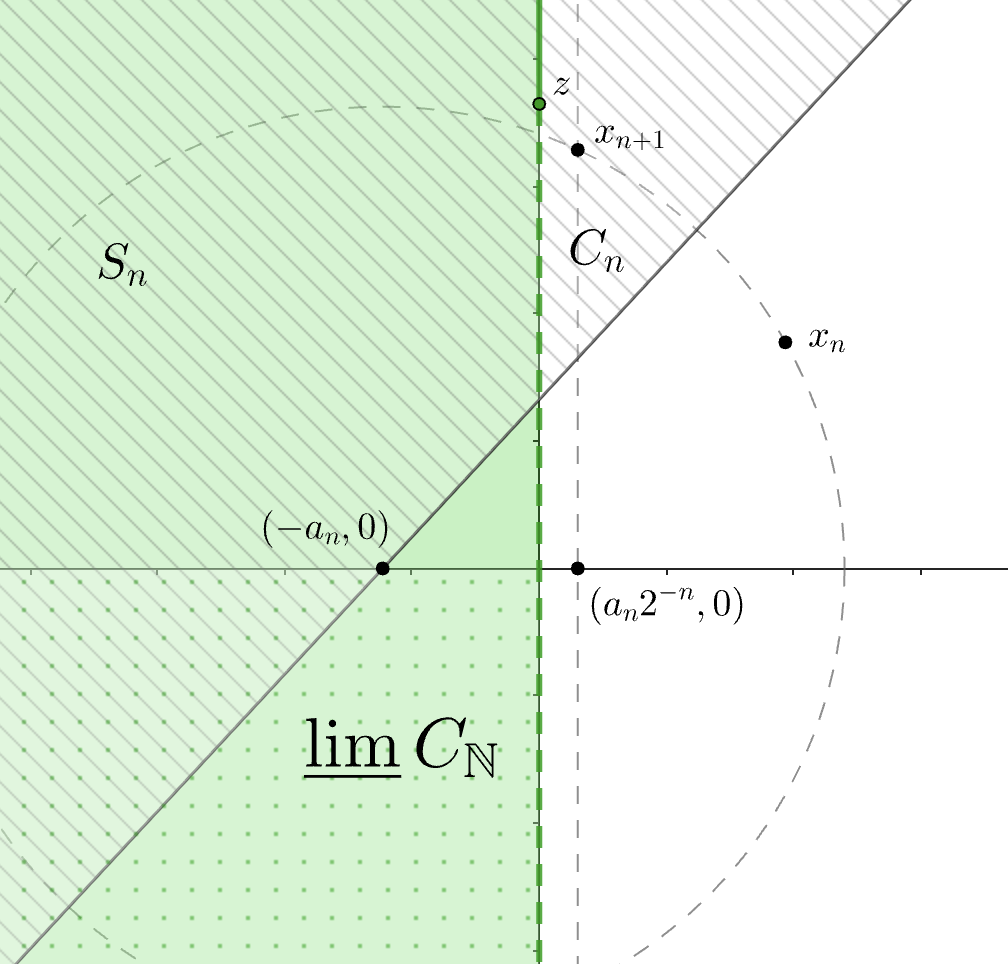}
    \caption{Intermediate iterations of the sequence $(x_n)_{n \in \NN}$ defined in \cref{example-non-decr}.}
    \label{fig:examples5-pic}
\end{figure}

\begin{proof}
    See Appendix~\ref{app-example-non-decr-prop}.
\end{proof}

\begin{remark}
    \cref{fig:examples5-pic} shows two iterations of $(x_n)_{n \in \NN}$, as introduced in \cref{example-non-decr}. Similar to \cref{fig1-remark}, the hatched region represents the halfspace $C_{n}$. As in \cref{fig:examples1a-pic}, the green region illustrates the maximal \fejer* set $\underline{\lim} \, C_\NN$, while the solid and dashed dark green lines on its boundary indicate which boundary points are included in the sets, as specified in \cref{example-non-decr}. The area filled with green dots represents the chosen set $M = \mathbb{R}_{--}^2$.
\end{remark}

\begin{remark}
    \cref{example-non-decr} can be adapted by intermittently introducing descending terms of the form $x_{n+1} = \delta x_n$, where $\delta \in (0, 1)$ is suitably chosen. Doing so enables the construction of a sequence $(x_n)_{n \in \mathbb{N}}$ converging to the origin $(0, 0) \in \overline{M} \setminus M$ with the distances $(d_{\overline{M}}(x_n))_{n \in \NN}$ failing to be eventually neither monotone increasing nor decreasing. 
\end{remark}

If the orbit of the shadow sequence $(P_{C}(x_n))_{n \in \mathbb{N}}$ is not relatively compact in the setting of \cref{conver-dist}, then, as shown by \cref{infdim-countex}, the distances $(d_C(x_n))_{n \in \mathbb{N}}$ may fail to converge in general. This infinite-dimensional example, which is essentially a variant of Zarantonello’s example \cite{Zarantonello} adjusted to our purposes, was originally introduced in \cite{Opial} to demonstrate the failure of $(P_{C}(x_n))_{n \in \mathbb{N}}$ to converge even weakly for Opial sequences. But prior to \cref{infdim-countex}, to additionally approach this issue in the \fejer* case, we highlight \cref{fejer-strong-shadow} and \cref{opial-weak-or-strong-shadow}; also, 
recall previously-mentioned \cref{OPL}.

\begin{fact}\label{fejer-strong-shadow}
    Let $(x_n)_{n \in \NN}$ be a sequence of points in $X$ that is \fejer\ monotone with respect to a nonempty closed convex subset $C$ of $X$. Then the shadow sequence $(P_C(x_n))_{n \in \NN}$ converges strongly to a point $z$ in $C$.
\end{fact}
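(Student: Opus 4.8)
The plan is to show that the shadow sequence $(p_n)_\nnn := (P_C(x_n))_\nnn$ is a Cauchy sequence in $X$; since $C$ is closed and each $p_n\in C$, the strong limit will then automatically belong to $C$, which is all that is claimed.

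First I would record two preliminary facts. (a) The distance sequence $(d_C(x_n))_\nnn$ is decreasing: since $p_n\in C$, \fejer\ monotonicity gives $\|x_{n+1}-p_n\|\le\|x_n-p_n\|=d_C(x_n)$, whence $d_C(x_{n+1})\le\|x_{n+1}-p_n\|\le d_C(x_n)$; being nonincreasing and bounded below by $0$, it converges to some $d\ge 0$, and hence $d_C(x_n)^2\to d^2$. (b) The projection onto a nonempty closed convex set satisfies the Pythagorean-type inequality $\|x-c\|^2\ge\|x-P_C(x)\|^2+\|P_C(x)-c\|^2$ for every $x\in X$ and every $c\in C$; this follows by expanding $\|x-c\|^2=\|x-P_C(x)\|^2+2\scal{x-P_C(x)}{P_C(x)-c}+\|P_C(x)-c\|^2$ and invoking the obtuse-angle characterization $\scal{x-P_C(x)}{P_C(x)-c}\ge 0$ of the projection.

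Next, fix indices $m\ge n$. Applying (b) with $x=x_m$ and $c=p_n\in C$ gives
\[
\|x_m-p_n\|^2\ \ge\ d_C(x_m)^2+\|p_m-p_n\|^2 .
\]
On the other hand, \fejer\ monotonicity makes $(\|x_k-p_n\|)_k$ nonincreasing, so $\|x_m-p_n\|\le\|x_n-p_n\|=d_C(x_n)$. Combining the two estimates,
\[
\|p_m-p_n\|^2\ \le\ \|x_m-p_n\|^2-d_C(x_m)^2\ \le\ d_C(x_n)^2-d_C(x_m)^2 .
\]
By (a), $d_C(x_n)^2-d_C(x_m)^2\to 0$ as $m\ge n\to\infty$, so $(p_n)_\nnn$ is Cauchy and converges strongly to some $z\in X$; closedness of $C$ forces $z\in C$, completing the argument.

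I do not expect a genuine obstacle here: the proof is a short classical computation. The only point requiring care is that one must compare $x_m$ with the \emph{earlier} shadow $p_n$ (not with $p_m$), so that \fejer\ monotonicity can bound $\|x_m-p_n\|$ by $d_C(x_n)$, and then feed this into the Pythagorean inequality for $P_C$; getting this pairing right is the crux.
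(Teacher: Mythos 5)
Your proof is correct and is precisely the classical argument behind the citation the paper gives (the paper does not prove this Fact itself but refers to \cite[Proposition~5.7]{BC2017}, whose proof is the same Cauchy estimate $\|P_C(x_m)-P_C(x_n)\|^2\leq d_C^2(x_n)-d_C^2(x_m)$ obtained from the projection inequality and the decreasing distance sequence). No gaps; the pairing of $x_m$ with the earlier shadow $P_C(x_n)$ is indeed the key step, and you have it right.
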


\begin{proof}
    See \cite[Proposition 5.7]{BC2017}.
\end{proof}

\begin{fact}\label{opial-weak-or-strong-shadow}
    Let $(x_n)_{n \in \NN}$ be a sequence in $X$ that is Opial with respect to a nonempty closed convex subset $C$ of $X$. Suppose that $(P_C(x_n))_{n \in \NN}$ is relatively compact. Then $P_C(x_n) \to A_C(x_\NN)$.
\end{fact}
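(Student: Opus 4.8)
The plan is to establish strong convergence by showing that every strong cluster point of the relatively compact sequence $(P_C(x_n))_\nnn$ must equal the asymptotic center $A_C(x_\NN)$; relative compactness then automatically promotes this to norm convergence of the whole sequence to that common value. (Note that $(x_n)_\nnn$ is automatically bounded: for any fixed $y_0\in C$ the limit $\lim_n\|x_n-y_0\|$ exists by the Opial property, so $A_C(x_\NN)$ is well defined.)

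First I would use the Opial property to replace upper limits by genuine limits: the quantity $\ell(c):=\lim_n\|x_n-c\|$ is well defined for every $c\in C$ and coincides with $\overline{\lim_n}\|x_n-c\|$. Consequently, the definition of the asymptotic center says precisely that $A_C(x_\NN)$ is the unique \emph{strict} minimizer of $c\mapsto\ell(c)$ over $C$; in particular, any $p\in C$ with $\ell(p)=\min_{c\in C}\ell(c)$ must equal $A_C(x_\NN)$. Next, since $(P_C(x_n))_\nnn$ is relatively compact, \cref{conver-dist} applies and yields that $(d_C(x_n))_\nnn$ converges, with $\lim_n d_C(x_n)=\ell(A_C(x_\NN))=\min_{c\in C}\ell(c)$.

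Now let $p$ be a strong cluster point of $(P_C(x_n))_\nnn$, say $P_C(x_{n_k})\to p$ as $k\to\infty$; since $C$ is closed, $p\in C$. From the triangle inequality,
\[
\|x_{n_k}-p\|\le\|x_{n_k}-P_C(x_{n_k})\|+\|P_C(x_{n_k})-p\|=d_C(x_{n_k})+\|P_C(x_{n_k})-p\|,
\]
and letting $k\to\infty$ the left side tends to $\ell(p)$ (using $p\in C$ and the Opial property), while the right side tends to $\lim_n d_C(x_n)=\min_{c\in C}\ell(c)$. Hence $\ell(p)\le\min_{c\in C}\ell(c)$, so $p$ attains the minimum of $\ell$ over $C$, and by the strict-minimality characterization of the asymptotic center $p=A_C(x_\NN)$. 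Finally, a relatively compact sequence all of whose strong cluster points coincide with a single point converges strongly to that point, so $P_C(x_n)\to A_C(x_\NN)$.

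I do not expect a genuine obstacle here; the argument is short and structural. The one point worth watching is the reliance on \emph{strict} (rather than plain) minimality of the asymptotic center: it is exactly this that allows us to conclude $p=A_C(x_\NN)$ from the mere fact that $p$ attains the minimal value of $\ell$, and it is available only because the Opial property turns the upper limits appearing in the definition of $A_C(x_\NN)$ into honest limits. Alternatively, the statement can simply be cited from \cite{Opial}.
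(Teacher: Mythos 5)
Your argument is correct. The paper itself does not prove this Fact at all --- it simply cites \cite[Corollary~3.8]{Opial} --- so your proposal supplies a genuine derivation where the paper has only a pointer. Your route is to deduce the shadow convergence from \cref{conver-dist}: the Opial property turns the $\overline{\lim}$ in the definition of the asymptotic center into an honest limit $\ell(c)=\lim_n\|x_n-c\|$, making $A_C(x_\NN)$ the unique strict minimizer of $\ell$ over $C$; \cref{conver-dist} gives $\lim_n d_C(x_n)=\min_{c\in C}\ell(c)$; and the triangle inequality then forces every strong cluster point $p$ of the relatively compact shadow sequence to satisfy $\ell(p)\leq\min_{c\in C}\ell(c)$, hence $p=A_C(x_\NN)$, and relative compactness upgrades the unique cluster point to norm convergence. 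Each step checks out, including the use of closedness of $C$ to place $p$ in $C$ and the passage from subsequential to full limits via the Opial property. The one caveat worth stating is logical rather than mathematical: within this paper both \cref{conver-dist} and the present Fact are imported as black boxes from \cite{Opial}, and in that source the distance-convergence result is the later-numbered statement, so it may well be proved \emph{from} the shadow-convergence result; your derivation reverses that dependency. That is harmless here, since \cref{conver-dist} is available as a standalone fact, but if one wanted a proof from first principles one would need to establish the distance convergence independently. As you note, simply citing \cite{Opial} is what the authors actually do.
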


\begin{proof}
    See \cite[Corollary 3.8]{Opial}.
\end{proof}

\begin{example}[shadows and distance may fail to converge for \fejer*]
\label{infdim-countex}
    Suppose that $X = \ell^2$ with the standard basis of unit vectors $(e_n)_{n \in \NN}$. Define the sequence $(x_n)_{n \in \NN}$ in $X$ by \begin{equation}
    \label{e:251213a}
        (\forall n\in \NN) \quad x_n := \begin{cases} e_1 + e_{n/2}, & \text { if } n \text{ is even}; \\ e_0 + e_1, & \text { if } n \text{ is odd}.\end{cases}
    \end{equation}
    Define $Y := \{ e_0 \}^{\perp}$, $C:= B[0, 1]\cap Y$ and set \begin{equation*}
        M := \operatorname{span}((e_{n+1})_{n \in \NN}) = \{\lambda_1e_1 + \lambda_2e_2 + \cdots + \lambda_me_m \mid m \in \NN_{++}, \; \lambda_1, \ldots , \lambda_m \in \mathbb{R}\}.
    \end{equation*} Then, for the sequence $(x_n)_{n \in \NN}$ given by \cref{e:251213a}, the following hold: 
    \begin{enumerate}
        \item\label{infdim-countex-prop0} $M = \underline{\lim} \, C_\NN$; 
        consequently, $(x_n)_{\nnn}$ is \fejer* with respect to $M$. 
        \item\label{infdim-countex-prop1} $Y = \overline{M}$ is the maximal Opial set for the sequence $(x_n)_{n \in \NN}$, i.e. \begin{equation*}
            Y = \{y \in X \mid \lim_{n \to \infty} \|x_n - y\| \; \text{exists}\}.
        \end{equation*}
        \item\label{infdim-countex-prop2} $(P_Y(x_n))_{n \in \NN} = (e_1, e_1, 2e_1, e_1, e_1 + e_2, e_1, e_1 + e_3, \ldots)$ converges weakly to $e_1 = A_Y(x_{\NN})$, but the scalar sequence $(d_Y(x_n))_{n \in \NN} = (1, 1, 1, 1, 0, 1, 0, \ldots)$ is not convergent.
        \item\label{infdim-countex-prop3} $(P_C(x_n))_{n \in \NN} = (e_1, e_1, e_1, e_1, 2^{-1/2}(e_1 + e_2), e_1, 2^{-1/2}(e_1 + e_3), \ldots)$ does not converge weakly, and the distances $(d_C(x_n))_{n \in \NN} = (1, 1, 1, 1, \sqrt{2} - 1, 1, \sqrt{2} - 1, \ldots)$ are not convergent either.
    \end{enumerate}
\end{example}

\begin{proof}
    Although the sequence \((x_n)_{n \in \mathbb{N}}\) presented in \cref{infdim-countex} differs slightly from the one in \cite[Example 3.13]{Opial}, the two are essentially the same. Consequently, the arguments used in \cite{Opial} can be similarly applied to establish \cref{infdim-countex-prop1}, \cref{infdim-countex-prop2}, and \cref{infdim-countex-prop3}.

    Therefore, it remains only to prove \cref{infdim-countex-prop0} to complete the proof. That said, pick any $y \in Y$. Then, as $x_{2k + 1} = e_0 + e_1$ for all $k \in \NN$, we have \begin{equation}\label{infdim-countex-prop-claim}
        y \in \underline{\lim} \, C_\NN \; \Longleftrightarrow \; \|x_n - y\| = \|e_0 + e_1 - y\| \; \text{for all $n$ sufficiently large.}
    \end{equation} But, if $y = (0, y_1, y_2, \ldots)$ and $k \geq 2$, we have \begin{equation*}
    \begin{aligned}
        &\|x_{2k} - y\|^2 = \|e_1 + e_k - y\|^2 = (1 - y_1)^2 + \sum_{\substack{m = 2}}^{k - 1} y_m^2 + (1 - y_k)^2 + \sum_{\substack{m \geq k + 1}} y_m^2, \\
        &\|x_{2k + 1} - y\|^2 = \|e_0 + e_1 - y\|^2 = 1 + (1 - y_1)^2 + \sum_{\substack{m = 2}}^{k - 1} y_m^2 + y_k^2 + \sum_{\substack{m \geq k + 1}} y_m^2.
    \end{aligned}
    \end{equation*} Thus \cref{infdim-countex-prop-claim} holds if and only if $$\|x_{2k} - y\|^2 = \|x_{2k + 1} - y\|^2 \quad \text{for all $k$ sufficiently large,}$$ but the last happens only when $(1 - y_k)^2 = 1 + y_k^2$, i.e., when $y_k = 0$. To summarize, we have shown that $M = \underline{\lim} \, C_\NN \cap Y$, but, in light of \cref{infdim-countex-prop1} and \cref{b-prop}\cref{b-prop2}, it is known that $\underline{\lim} \, C_\NN \subseteq Y$, which is why \cref{infdim-countex-prop0} follows. \end{proof}

\begin{remark}
    To summarize, the introduced \fejer* monotonicity does not, on its own, yield any improvements in \cref{infdim-countex} regarding the convergence of the shadow sequence $(P_C(x_n))_{n \in \NN}$ or the distances $(d_C(x_n))_{n \in \NN}$ and $(d_Y(x_n))_{n \in \NN}$ compared to the inherited Opial properties. However, as will become clear in \cref{section2}, one of the possible issues with \cref{infdim-countex} is that $\ri(M) = \varnothing$ (see \cref{ri-defs}). 
    If this relative interior is nonempty, then many irregularities no longer occur and various standard \fejer\ properties continue to hold in slightly relaxed forms; see, e.g., 
    \cref{raik-*} and \cref{fejer-opial-lemma}.
\end{remark}

\section{Comparison to standard \fejer\ monotonicity}

\label{section2}

In this section, we demonstrate that various positive results are 
possible provided that the underlying 
\fejer* set has \emph{nonempty} relative interior 
(see \cref{ri-defs}).

\begin{theorem}\label{th-regular}
    Let $M$ be a convex subset of $X$ and $(x_n)_{n \in \NN}$ be \fejer* monotone with respect to $M$. Suppose that $\ri(M) \neq \varnothing$ and $K$ is a nonempty compact subset of $\ri(M)$. Then \begin{equation}\label{eventual-fejer-mean}
        (\exists N \in \NN)
        \quad (x_{n + N})_{n \in \NN}\text{ is \fejer\ monotone with respect to } K.
    \end{equation}
\end{theorem}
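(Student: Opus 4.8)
The plan is to reduce the claim to a single inclusion: we seek one index $N$ with $K \subseteq \bigcap_{n \geq N} C_n$, where $(C_n)_\nnn$ is the sequence of closed halfspaces from \cref{hs}. Indeed, $y \in \bigcap_{n \geq N} C_n$ says precisely that $\|x_{n+1}-y\| \leq \|x_n-y\|$ for every $n \geq N$, so $K \subseteq \bigcap_{n\geq N} C_n$ is exactly the statement that the shifted sequence $(x_{n+N})_\nnn$ is \fejer\ monotone with respect to $K$. Thus \cref{eventual-fejer-mean} is equivalent to producing such an $N$.

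First I would set $A := \caff(M)$, which is a closed affine subspace of $X$, and abbreviate $D_N := \bigcap_{n \geq N} C_n$; each $D_N$ is closed and convex, and $(D_N)_\nnn$ is increasing. By \cref{eq-def}, the \fejer* hypothesis gives $M \subseteq \underline{\lim} \, C_\NN = \bigcup_{N \in \NN} D_N$. Since $M \subseteq A$ and $\operatorname{int}_A(\cdot)$ is monotone, this yields $\operatorname{int}_A(M) \subseteq \operatorname{int}_A\big(\underline{\lim} \, C_\NN\big)$, and by \cref{ri-defs} the left-hand side is $\ri(M)$ because $A = \caff(M)$. Next I would invoke \cref{incr_inter_l-inf} — this is the step where Ursescu's theorem (\cref{ursescu}) does the real work — to rewrite $\operatorname{int}_A\big(\underline{\lim} \, C_\NN\big) = \bigcup_{N \in \NN} \operatorname{int}_A(D_N)$.

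Combining these gives $K \subseteq \ri(M) \subseteq \bigcup_{N \in \NN} \operatorname{int}_A(D_N)$. Each set $\operatorname{int}_A(D_N)$ is open in the complete metric space $A$, and the family $(\operatorname{int}_A(D_N))_\nnn$ is increasing since $(D_N)_\nnn$ is. Because $K$ is a norm-compact subset of $X$ contained in $A$, it is compact in $A$ with the induced topology, so it is covered by finitely many members of this family, and by monotonicity a single one suffices: $K \subseteq \operatorname{int}_A(D_N) \subseteq D_N$ for some $N \in \NN$. This is the desired inclusion $K \subseteq \bigcap_{n \geq N} C_n$, completing the argument.

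The heart of the proof — and the reason the theorem is nontrivial — is the passage from the pointwise threshold $N(y)$ in \cref{def} to one threshold $N$ valid for all $y \in K$ at once. Compactness of $K$ delivers this, but only once $K$ has been placed inside an increasing union of \emph{relatively open} sets; securing that is exactly \cref{incr_inter_l-inf}, and it is also why the hypotheses $\ri(M) \neq \varnothing$ and $K \subseteq \ri(M)$ (rather than merely $K \subseteq M$) are indispensable, and why norm-compactness of $K$ — not weak compactness — is what is needed, since the sets $\operatorname{int}_A(D_N)$ are open only in the strong topology.
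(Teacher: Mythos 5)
Your proposal is correct and follows essentially the same route as the paper's proof: both reduce the claim to finding a single $N$ with $K \subseteq \bigcap_{n \geq N} C_n$, both pass through \cref{eq-def} and \cref{incr_inter_l-inf} (hence Ursescu) to place $\ri(M)$ inside the increasing union $\bigcup_N \operatorname{int}_A(\bigcap_{n\geq N} C_n)$, and both finish by compactness of $K$. The only cosmetic difference is that the paper extracts a neighborhood $U(y)$ for each $y\in K$ and takes a finite subcover of those, whereas you cover $K$ directly by the increasing relatively open sets and invoke monotonicity to collapse the finite subcover to a single index.
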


\begin{proof}
    For brevity, denote $A := \overline{\aff}(M)$. Fix any $y \in \ri(M)$. Then, due to \cref{eq-def} and \cref{incr_inter_l-inf}, we conclude that $M \subseteq \underline{\lim} \, C_\NN$ and \begin{equation*}
        y \in \ri(M) \subseteq \operatorname{int}_{A}(\underline{\lim} \, C_\NN) = \cup_{n \in \NN} \operatorname{int}_{A}(\cap_{n \geq N} C_n).
    \end{equation*} Hence $y \in \operatorname{int}_{A}(\cap_{n \geq N(y)} C_n)$ for some $N(y) \in \NN$, i.e. there exists some open neighborhood $U(y)$ of $y$ in $A$ such that $U(y) \subseteq C_n$ for all $n \geq N(y)$. That said, we cover the compact subset $K$ of $\ri(M)$ with the introduced family of open sets $(U(y))_{y \in \ri (M)}$: \begin{equation*}
        K \subseteq \cup_{y \in K} \ U(y).
    \end{equation*} Choosing a finite subcover implies that, for some $k \in \NN$, \begin{equation*}
        (\exists y_1, \ldots, y_k \in K) \quad K \subseteq \cup_{i = 1}^k \ U(y_i) \subseteq C_n
    \end{equation*} for all $n \geq N:= \max_{1 \leq i \leq k}\{N(y_i)\}$. In particular, $K \subseteq \cap_{n \geq N} C_n$ which literally means eventual \fejer\ monotonicity in the sense of \cref{eventual-fejer-mean}.
\end{proof}

\begin{remark}
    Although the weak topology is metrizable on bounded sets when $X$ is separable, this fact does not assist in extending \cref{th-regular} to the case where $K \subseteq \ri(M)$ is only \emph{weakly} compact, as demonstrated in \cref{int-nonempty-inf-example}.
\end{remark}

\begin{example}\label{int-nonempty-inf-example}
    Let $X = \ell_2(\NN)$. Pick any sequence of positive scalars $(\alpha_n)_{n \in \NN}$ so that $\alpha_n \to 0$ linearly, i.e., $\alpha_{n+1} \leq r\alpha_n$ for some $0 < r< 1$ and all $n \in \NN$, and define $(x_n)_{n \in \NN}$ by \begin{equation*}
        x_n := \alpha_n(e_0 + \gamma e_n),
\end{equation*} where $\gamma > 2$. Then the following hold: \begin{enumerate}
        \item\label{int-nonempty-inf-example-1} $(x_n)_{n \in \NN}$ is \fejer* monotone with respect to $M := \{y \in X \mid \langle y, e_0\rangle < 0\}$.
        \item\label{int-nonempty-inf-example-2} However, $(x_n)_{n \in \NN}$ is \emph{not} eventually \fejer\ monotone with respect to $B[-2e_0, 1] \subseteq M$ in the sense of \cref{eventual-fejer-mean}.
    \end{enumerate}
\end{example}

\begin{proof}
    ``\cref{int-nonempty-inf-example-1}'': Indeed, take any $y \in M$, i.e., $y = (\beta_0, \beta_1, \beta_2, \ldots) \in \ell_2(\NN)$ with $\beta_0 < 0$. Then \begin{equation*}
        \|x_{n+1} - y\|^2 - \|x_n - y\|^2 = (\gamma^2 + 1)(\alpha_{n+1}^2 - \alpha_n^2) + 2((-\beta_0 - \gamma\beta_{n+1})\alpha_{n+1} - (-\beta_0 - \gamma\beta_{n})\alpha_n),
    \end{equation*} where both terms are strictly negative for all $n$ sufficiently large as $\alpha_n \to 0$ geometrically and $-\beta_0 > 0$.

    ``\cref{int-nonempty-inf-example-2}'': Let $y_n := -2e_0 + e_n \in B[-2e_0, 1]$ for $n \in \NN$. Then, since $\alpha_n \downarrow 0$, \begin{equation*}
        \|x_n - y_n\|^2 = (\gamma^2 + 1)\alpha_n^2 + (4 - 2\gamma)\alpha_n + 5
    \end{equation*} strictly increases for all $n$ sufficiently large provided that $\gamma > 2$. However, this contradicts eventual \fejer\ monotonicity of $(x_n)_{n \in \NN}$ with respect to $B[-2e_0, 1]$ as it have to ensure the eventual decrease of this sequence instead.
\end{proof}

Next, we present a slightly more general analogue of the known Raik's proposition \cite[Lemma 1]{Raik} in the context of \fejer* monotone sequences. The proof is mostly analogous to the one given in \cite{Raik}.

\begin{lemma}\label{lemma4raik}
    Let $A$ be a closed nonempty affine subspace of $X$ and $x, x_+ \in X$, $y \in A$. Then TFAE: \begin{enumerate}
        \item\label{lemma4raik-1} $B[y, \rho] \cap A \subseteq C(x, x_+)$ where $C(x, x_+)$ is given as in \cref{hsp-lemma}.
        \item\label{lemma4raik-2} $\|x_+ - y\|^2 \leq \|x - y\|^2 - 2\rho\|P_A(x) - P_A(x_+)\|$.
    \end{enumerate}
\end{lemma}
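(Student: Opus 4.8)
The plan is to reduce the set inclusion in \cref{lemma4raik-1} to a single scalar inequality, using the halfspace description of $C(x,x_+)$ from \cref{hsp-lemma} together with the affine structure of $A$, and then to recognize that scalar inequality as \cref{lemma4raik-2}.

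First I would dispose of the degenerate case $x = x_+$: then $C(x,x_+) = X$ by \cref{hsp-lemma}, so \cref{lemma4raik-1} holds trivially, while $P_A(x) = P_A(x_+)$ turns \cref{lemma4raik-2} into $\|x_+ - y\|^2 \leq \|x - y\|^2$, which also holds (with equality). So assume $x \neq x_+$ and set $v := x_+ - x \neq 0$ and $c := (\|x_+\|^2 - \|x\|^2)/2$; by \cref{hsp-lemma}, $C(x,x_+) = \{ w \in X \mid \scal{w}{v} \geq c \}$. Since $y \in A$, write $A = y + V$ with $V$ a closed linear subspace of $X$ (so that $P_V$, $P_A$ are well-defined single-valued projections and $P_A(u) = y + P_V(u - y)$). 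Then $B[y,\rho] \cap A = \{ y + h \mid h \in V,\ \|h\| \leq \rho \}$, so \cref{lemma4raik-1} is equivalent to $\scal{y}{v} + \scal{h}{v} \geq c$ for every $h \in V$ with $\|h\| \leq \rho$, i.e.\ to $\scal{y}{v} + \min_{h \in V,\ \|h\| \leq \rho} \scal{h}{v} \geq c$.

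The key computation is the evaluation of this minimum. For $h \in V$ we have $\scal{h}{v} = \scal{h}{P_V v}$, and by Cauchy--Schwarz $\min_{h \in V,\ \|h\| \leq \rho} \scal{h}{P_V v} = -\rho \|P_V v\|$ (attained at $h = -\rho\, P_V v / \|P_V v\|$ when $P_V v \neq 0$, and at $h = 0$ otherwise). Hence \cref{lemma4raik-1} is equivalent to $\scal{y}{v} - c \geq \rho \|P_V v\|$.

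It remains to translate this back into distances. From the identity established in the proof of \cref{hsp-lemma}, $\|x_+ - y\|^2 - \|x - y\|^2 = \scal{v}{x + x_+} - 2\scal{v}{y}$, and since $\tfrac12 \scal{v}{x + x_+} = \tfrac12(\|x_+\|^2 - \|x\|^2) = c$, this gives $\scal{y}{v} - c = \tfrac12(\|x - y\|^2 - \|x_+ - y\|^2)$. Moreover $P_A(x_+) - P_A(x) = P_V(x_+ - x) = P_V v$, so $\|P_V v\| = \|P_A(x) - P_A(x_+)\|$. Substituting both, \cref{lemma4raik-1} becomes $\tfrac12(\|x - y\|^2 - \|x_+ - y\|^2) \geq \rho \|P_A(x) - P_A(x_+)\|$, which is exactly \cref{lemma4raik-2}. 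The whole argument is elementary; the only spots needing a touch of care are the extremization over $B[0,\rho] \cap V$ (via $\scal{h}{v} = \scal{h}{P_V v}$ for $h \in V$) and, in infinite dimensions, the routine fact that a closed affine subspace is Chebyshev with $P_A(u) = y + P_V(u - y)$ — neither is a real obstacle.
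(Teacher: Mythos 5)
Your proposal is correct and follows essentially the same route as the paper: both reduce the inclusion to a scalar inequality by translating to the parallel linear subspace, compute the extremum of the linear functional over $B[0,\rho]$ intersected with that subspace via $\scal{h}{v}=\scal{h}{P_V v}$ and Cauchy--Schwarz, and identify $\|P_V(x_+-x)\|$ with $\|P_A(x_+)-P_A(x)\|$. Your explicit treatment of the degenerate case $x=x_+$ is a minor (welcome) addition to the paper's ``without loss of generality.''
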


\begin{proof}
    Indeed, without loss of generality, assume $x \neq x_+$. Denote $L:= A - y$ which is the closed linear subspace of $X$ parallel to $A$. Then the inclusion \cref{lemma4raik-1} can be rewritten as \begin{equation*}
        (B[y, \rho] \cap A) - y = B[0, \rho] \cap L \subseteq C(x - y, x_+ - y) = C(x, x_+) - y.
    \end{equation*} Therefore, item \cref{lemma4raik-1} takes place if and only if, for all $z \in B[0, \rho] \cap L$, we have \begin{align*}
        0 \geq \|x_+ - y - z\|^2 - \|x - y - z\|^2 &= \langle x_+ - x, x_+ + x - 2y - 2 z\rangle \\ &= \langle x_+ - x, x_+ + x - 2y\rangle - 2 \langle x_+ - x, z\rangle \\ &= \|x_+ - y\|^2 - \|x - y\|^2 - 2 \langle x_+ - x, z\rangle.
    \end{align*} Finally, note that the last inequality holds if and only if it holds for $$z_0 := -\rho \frac{P_L(x_+ - x)}{\|P_L(x_+ - x)\|} \in B[0, \rho] \cap L$$ as, due to Cauchy-Schwarz, \begin{equation*}
        -2 \langle x_+ - x, z\rangle = -2 \langle P_L(x_+ - x), z\rangle\leq 2\rho \|P_L(x_+ - x)\|\ = -2 \langle x_+ - x, z_0\rangle.
    \end{equation*} Thus, after rearranging the terms, we derive \begin{align*}
        \|x_+ - y\|^2 \leq \|x - y\|^2 + 2 \langle x_+ - x, z_0\rangle &= \|x - y\|^2 - 2\rho \|P_L(x_+ - x)\| \\ &= \|x - y\|^2 - 2\rho \|P_A(x_+) - P_A(x)\|,
    \end{align*} which is exactly what item \cref{lemma4raik-2} states.
\end{proof}

\begin{proposition}
\label{raiks-ineqs-statem}
    Let $(x_n)_{n \in \NN}$ be \fejer* monotone with respect to a nonempty convex subset $M$ of $X$. Then, for any $y \in \ri(M)$, there exists some $N(y) \in \NN$ and some $\rho > 0$ such that \begin{equation}\label{raiks-ineqs}\begin{aligned}
         \|x_n-y\|^2 - \|x_{n+m + 1}-y\|^2 &\geq  2\rho\sum_{k = 0}^m \|P_{\overline{\aff}(M)}(x_{n+k + 1})-P_{\overline{\aff}(M)}(x_{n+k}) \| \\ &\geq 2\rho\|P_{\overline{\aff}(M)}(x_{n+m+1})-P_{\overline{\aff}(M)}(x_{n}) \|
    \end{aligned}
    \end{equation} for all $n \geq N(y)$, $m \in \NN$.
\end{proposition}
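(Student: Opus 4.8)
The plan is to translate the hypothesis $y\in\ri(M)$ into the statement that a relatively open ball about $y$ in the affine hull of $M$ lies inside $C_n$ for all large $n$, and then to invoke \cref{lemma4raik} and telescope.

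Write $A:=\overline{\aff}(M)$, a closed affine subspace of $X$ (cf.\ \cref{ri-defs}), so that $P_A=P_{\overline{\aff}(M)}$. First I would note that \fejer* monotonicity of $(x_n)_{n\in\NN}$ combined with \cref{eq-def} gives $M\subseteq\underline{\lim}\,C_\NN$, whence, since $\ri(M)=\operatorname{int}_A(M)$ by \cref{ri-defs} and $\operatorname{int}_A(\cdot)$ is monotone under inclusion,
\[
y\in\ri(M)=\operatorname{int}_A(M)\subseteq\operatorname{int}_A\big(\underline{\lim}\,C_\NN\big).
\]
Applying \cref{incr_inter_l-inf} to the closed affine subspace $A$ yields $\operatorname{int}_A(\underline{\lim}\,C_\NN)=\bigcup_{N\in\NN}\operatorname{int}_A\big(\bigcap_{n\ge N}C_n\big)$, so there exist $N(y)\in\NN$ and $\rho>0$ with $B[y,\rho]\cap A\subseteq\bigcap_{n\ge N(y)}C_n$. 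By \cref{hsp-lemma-remark} this means $B[y,\rho]\cap A\subseteq C(x_n,x_{n+1})$ for every $n\ge N(y)$.

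Next I would feed this into \cref{lemma4raik}, used with $x=x_n$, $x_+=x_{n+1}$, and the above $A$ and $\rho$: for each $n\ge N(y)$, item \cref{lemma4raik-1} holds, hence so does item \cref{lemma4raik-2}, i.e.
\[
\|x_{n+1}-y\|^2\le\|x_n-y\|^2-2\rho\,\|P_A(x_{n+1})-P_A(x_n)\|\qquad(n\ge N(y)).
\]
Fixing $n\ge N(y)$ and $m\in\NN$, I would sum this inequality over the indices $n,n+1,\dots,n+m$ (each $\ge N(y)$); the left-hand side telescopes, giving
\[
\|x_n-y\|^2-\|x_{n+m+1}-y\|^2=\sum_{k=0}^m\big(\|x_{n+k}-y\|^2-\|x_{n+k+1}-y\|^2\big)\ge 2\rho\sum_{k=0}^m\|P_A(x_{n+k+1})-P_A(x_{n+k})\|,
\]
which is the first inequality in \cref{raiks-ineqs}. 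The second inequality is then immediate from the triangle inequality, since $\sum_{k=0}^m\big(P_A(x_{n+k+1})-P_A(x_{n+k})\big)=P_A(x_{n+m+1})-P_A(x_n)$.

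The only genuinely delicate point is the first paragraph: one must pass from ``$y$ lies in the relative interior of $M$'' to ``a relatively open $A$-ball about $y$ lies inside $\bigcap_{n\ge N}C_n$ for some $N$'', and it is essential here that the sets $\bigcap_{n\ge N}C_n$ form an increasing sequence of closed convex sets whose union has relative interior containing $M$ even though no single one of them need contain $M$; this is exactly the Ursescu-type phenomenon captured by \cref{incr_inter_l-inf}. Everything after that is routine: a direct application of \cref{lemma4raik} followed by a telescoping sum and one use of the triangle inequality. The degenerate case $x_n=x_{n+1}$ needs no separate treatment, since both \cref{lemma4raik} and the displayed inequalities hold trivially then.
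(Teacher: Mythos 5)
Your proposal is correct and follows essentially the same route as the paper's own proof: both pass from $y\in\ri(M)$ to $B[y,\rho]\cap A\subseteq\bigcap_{n\ge N(y)}C_n$ via \cref{eq-def} and \cref{incr_inter_l-inf}, then apply \cref{lemma4raik} and sum over $0\le k\le m$. Your explicit remarks on the telescoping, the triangle inequality for the second estimate, and the degenerate case $x_n=x_{n+1}$ are details the paper leaves implicit, but the argument is the same.
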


\begin{proof}
    Similarly to the proof of \cref{th-regular}, denote $A := \overline{\aff}(M)$ and fix $y \in \ri(M)$. Due to \cref{incr_inter_l-inf}, it follows that $y \in \operatorname{int}_{A}(\cap_{n \geq N(y)} C_n)$ for some $N(y) \in \NN$, i.e., there exists some $\rho \in \mathbb{R}_{++}$ such that \begin{equation*}\label{raiks-ineqs-1}
        B[y , \rho] \cap A \subseteq \cap_{n \geq N(y)} C_n.
    \end{equation*} Consequently, \cref{lemma4raik}\cref{lemma4raik-2} implies:
    \begin{equation*}
        (\forall n \geq N(y))(\forall k \in \NN) \quad \|x_{n+k+1}-y\|^2 \leq \|x_{n + k}-y\|^2-2 \rho\|P_{A}(x_{n+k+1})-P_{A}(x_{n+k}) \|,
    \end{equation*} where summing over $0 \leq k \leq m$ gives the inequalities in \cref{raiks-ineqs}.
\end{proof}

\begin{remark}\label{raiks-ineqs-statem-remark}
    If the sequence $(x_n)_{n \in \NN}$ is \fejer\ monotone in the classical sense of \cref{def-classic}, then the inequalities \cref{raiks-ineqs} hold with $N(y) \equiv 0$ as $\ri(M) \subseteq \cap_{n \in \NN} C_n$. Furthermore, \cref{lemma4raik} implies that $\rho = \rho(y) > 0$ can be chosen for a standard \fejer\ sequence as 
    $$\rho(y) = d_{(X\setminus M)\cap \overline{\aff}(M)}(y) > 0,$$ 
    i.e., the distance between the given $y \in \ri(M)$ and the boundary of the set $M$ in $\overline{\aff}(M)$.
\end{remark}

The following result appears to be new even in the standard \fejer\ setting:

\begin{corollary}[Raik -- an affine extension]
\label{raik-*}
    Let $(x_n)_{n \in \NN}$ be \fejer* monotone with respect to a nonempty convex subset $M$ of $X$ with $\ri(M) \neq \varnothing$. Then $(P_{\overline{\aff}(M)}(x_n))_{n \in \NN}$ converges strongly to a point in $\overline{\aff}(M)$ and 
    \begin{equation}\label{raik-*-series}
        \sum_{n \in \NN}\|P_{\overline{\aff}(M)}(x_{n+1}) - P_{\overline{\aff}(M)}(x_n)\|<+\infty.
    \end{equation} 
    Hence the shadow sequence $(P_{C}(x_n))_{n \in \NN}$ also converges strongly with a finite-length trajectory for any nonempty closed convex
    subset $C$ of $\overline{\aff}(M)$; in particular, the conclusions hold if let $C = \overline{M}$.
    
\end{corollary}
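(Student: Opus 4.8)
The plan is to read everything off the chain of inequalities in \cref{raiks-ineqs-statem}. Write $A := \overline{\aff}(M)$ and fix any $y \in \ri(M)$ (which lies in $M$). By \cref{raiks-ineqs-statem} there are $N(y) \in \NN$ and $\rho > 0$ with
\begin{equation*}
\|x_{N(y)} - y\|^2 - \|x_{N(y)+m+1} - y\|^2 \;\geq\; 2\rho \sum_{k=0}^{m} \big\|P_A(x_{N(y)+k+1}) - P_A(x_{N(y)+k})\big\|
\end{equation*}
for every $m \in \NN$. By \cref{b-prop}\cref{b-prop1} the sequence $(x_n)_\nnn$ is bounded, and by \cref{b-prop}\cref{b-prop2} it is Opial with respect to $M$, so $\lim_n \|x_n - y\|$ exists; hence the left-hand side is bounded above by $\|x_{N(y)} - y\|^2$ uniformly in $m$. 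Letting $m \to \infty$ then forces $\sum_{k \geq 0}\|P_A(x_{N(y)+k+1}) - P_A(x_{N(y)+k})\| < +\infty$, and appending the finitely many remaining terms with index $n < N(y)$ yields \cref{raik-*-series}. A sequence with summable consecutive increments is Cauchy, and $A$ is closed, so $(P_A(x_n))_\nnn$ converges strongly to a point of $A$; this is the first assertion.

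For the transfer to an arbitrary nonempty closed convex $C \subseteq A$, I would first record the elementary fact $P_C = P_C \circ P_A$, valid because $A$ is affine and $C \subseteq A$: for $x \in X$ and $c \in C$ one has $x - P_A(x) \perp c - P_A(x)$ (the latter lying in the direction space of $A$), so Pythagoras gives $\|x - c\|^2 = \|x - P_A(x)\|^2 + \|P_A(x) - c\|^2$, and minimizing over $c \in C$ reduces to minimizing $\|P_A(x) - c\|$. Since $P_C$ is nonexpansive, this gives $\|P_C(x_{n+1}) - P_C(x_n)\| = \|P_C(P_A(x_{n+1})) - P_C(P_A(x_n))\| \leq \|P_A(x_{n+1}) - P_A(x_n)\|$ for every $n$, whence $\sum_n \|P_C(x_{n+1}) - P_C(x_n)\| < +\infty$ by \cref{raik-*-series}. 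Thus $(P_C(x_n))_\nnn$ has a finite-length trajectory, is therefore Cauchy, and converges strongly to a point of $C$ (which is closed). Applying this to $C = \overline{M}$, which is a nonempty closed convex subset of $\overline{\aff}(M) = A$, gives the final claim.

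No step here is genuinely hard --- \cref{raiks-ineqs-statem} already contains the real content. The two points that need a little care are: (i) fixing $n = N(y)$ and sending $m \to \infty$ (rather than letting $n \to \infty$), so that the already-established convergence of $(\|x_n - y\|)_\nnn$ can be used to bound the telescoped left-hand side, together with handling the initial segment $n < N(y)$ by hand; and (ii) the identity $P_C = P_C \circ P_A$ for $C$ contained in the affine set $A$, which is exactly what lets the finite-length estimate on $A$ descend to $C$ via nonexpansiveness of the projection.
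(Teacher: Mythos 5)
Your proposal is correct and follows essentially the same route as the paper: both derive \cref{raik-*-series} from the telescoped Raik inequalities of \cref{raiks-ineqs-statem} with a fixed $y\in\ri(M)$, then transfer the finite-length estimate to $C\subseteq\overline{\aff}(M)$ via $P_C=P_C\circ P_A$ and nonexpansiveness. Your explicit justification of $P_C=P_C\circ P_A$ is a nice touch (the paper merely recalls it), and invoking the Opial property to bound the telescoped sum is slightly more than needed since $-\|x_{N(y)+m+1}-y\|^2\leq 0$ already suffices.
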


\begin{proof}
    Indeed, denote $A := \overline{\aff}(M)$ and take any $y \in \ri(M)$. Then the series in \cref{raik-*-series} converges because 
    \cref{raiks-ineqs} yields $$\sum_{n \geq N(y)} \|P_{A}(x_{n+1})-P_{A}(x_{n}) \| \leq \frac{1}{2\rho}\|x_{N(y)} - y\|^2 < +\infty.$$
    Now let $C$ be a nonempty closed convex subset of $A$. 
    Recall that $P_{C}$ is a nonexpansive operator on $X$ and $P_{C} \circ P_{A} = P_{C}$. Then \begin{equation*}\begin{aligned}
        (\forall n \in \NN) \quad \|P_{C}(x_{n+1}) - P_{C}(x_{n})\| &= \|P_{C}(P_{A}(x_{n+1})) - P_{C}(P_{A}(x_{n+1}))\| \\ &\leq \|P_{A}(x_{n+1}) - P_{A}(x_n)\|
    \end{aligned}
    \end{equation*} implying that $(P_{C}(x_n))_{n \in \NN}$ similarly has a finite-length trajectory in the sense of \cref{raik-*-series}. \end{proof}

\begin{remark}\label{*-shadow_convergence-remark}
    As established in \cref{raik-*}, the \fejer* property ensures the strong convergence of the shadow sequence $(P_{\overline{M}}(x_n))_{n \in \NN}$ whenever $\ri(M) \neq \varnothing$. (In the standard \fejer\ setting, the shadow sequence always converges
    strongly.) However, if this condition does not hold, the conclusions of \cref{raik-*} may completely break down as mentioned earlier in \cref{infdim-countex}.
    Furthermore, the same \cref{infdim-countex} shows that the sequence $(P_{\overline{M}}(x_n))_{n \in \NN}$ might even fail to converge weakly if $\ri(M) = \varnothing$, while the Opial property alone still guarantees the weak convergence of $(P_{\overline{\aff}(M)}(x_n))_{n \in \NN}$ accordingly to \cref{OPL}.
\end{remark}

Next, we provide the following technical observation, which on its own represents a good \fejer\ improvement of \cref{opial-lemma-strong}. The standard \fejer\ version of this statement was briefly mentioned in \cite{Raik} and can be traced back at least to \cite[Lemma~6]{GubPolRaik}.

\begin{lemma}\label{fejer-opial-lemma}
    Let $(x_n)_{n \in \NN}$ be a \fejer* monotone sequence with respect to a nonempty convex set $M$ with $\ri(M) \neq \varnothing$. 
    Let $C$ be a nonempty closed convex subset of\, $\overline{\aff}(M)$. Then $(d_C(x_n))_{n \in \NN}$ converges, and \begin{equation*}
        \text{either} \quad \lim_{n \to \infty} d_{C}(x_n) > 0 \quad \text{or} \quad (x_n)_{n\in\NN} \text{ converges to some } z \in C.
    \end{equation*}
\end{lemma}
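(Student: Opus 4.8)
The plan is to reduce everything to the one-dimensional behaviour of the scalar sequence $(d_C(x_n))_{n\in\NN}$ and then invoke \cref{opial-lemma-strong}. First I would establish convergence of $(d_C(x_n))_{n\in\NN}$: by \cref{raik-*} the shadow sequence $(P_C(x_n))_{n\in\NN}$ converges strongly to some point, say $p\in C$, so in particular $(P_C(x_n))_{n\in\NN}$ is relatively compact; since $(x_n)_{n\in\NN}$ is Opial with respect to $M$ (by \cref{b-prop}\cref{b-prop2}) it is Opial with respect to $\overline{\aff}(M)\supseteq C$, and hence \cref{conver-dist} applies to give that $(d_C(x_n))_{n\in\NN}$ converges, with $\lim_n d_C(x_n)=\lim_n\|x_n-A_C(x_\NN)\|$.

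Next I would dichotomize on $\ell:=\lim_n d_C(x_n)$. If $\ell>0$ there is nothing more to prove, so suppose $\ell=0$. The goal is then to show $(x_n)_{n\in\NN}$ converges strongly to a point of $C$, and the natural way is via \cref{opial-lemma-strong}: it suffices to produce a strong cluster point of $(x_n)_{n\in\NN}$ lying in $C$. Here is where I would use $\ell=0$ together with strong convergence of the shadows: since $d_C(x_n)=\|x_n-P_C(x_n)\|\to 0$ and $P_C(x_n)\to p\in C$ strongly, the triangle inequality gives $\|x_n-p\|\le d_C(x_n)+\|P_C(x_n)-p\|\to 0$, so in fact $x_n\to p\in C$ strongly — even better than just a cluster point. (One could instead phrase it as: $\mathcal{S}((x_n)_{n\in\NN})$ contains $p\in C$, so $\mathcal{S}((x_n)_{n\in\NN})\cap C\neq\varnothing$, and \cref{opial-lemma-strong} delivers strong convergence to a point of $C$; either route works, and the direct triangle-inequality argument is cleaner.)

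The main obstacle is really the bookkeeping about which ambient set the Opial/shadow machinery is applied to: \cref{raik-*} and \cref{conver-dist} require $C$ to be a nonempty closed convex subset of $\overline{\aff}(M)$ and the sequence to be Opial with respect to $C$ (or a set whose closed affine hull contains $C$); I would make sure to note explicitly that Opial monotonicity with respect to $M$ upgrades to Opial monotonicity with respect to $\overline{\aff}(M)$ via \cref{opial-ext-aff}, and hence with respect to any subset of $\overline{\aff}(M)$, in particular $C$. Once that is in place the rest is immediate, and the proof is short: convergence of $(d_C(x_n))_{n\in\NN}$ from \cref{conver-dist} plus \cref{raik-*}, then the triangle-inequality collapse in the case $\ell=0$.
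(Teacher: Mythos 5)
Your proposal is correct and follows essentially the same route as the paper: Opial with respect to $M$ upgraded to $\overline{\aff}(M)$ (hence to $C$) via \cref{b-prop} and \cref{opial-ext-aff}, relative compactness of the shadows from \cref{raik-*}, convergence of $(d_C(x_n))_{n\in\NN}$ from \cref{conver-dist}, and then a triangle-inequality collapse when the limit is zero. The only (harmless) difference is that in the zero-limit case you work directly with $P_C(x_n)\to p$, whereas the paper passes through $P_{\overline{\aff}(M)}(x_n)$; both close the argument in one line.
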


\begin{proof}
    Due to \cref{b-prop}\cref{b-prop2} and \cref{opial-ext-aff} combined, $(x_n)_{n \in \NN}$ is Opial with respect to $C$. 
    However, \cref{raik-*} yields that the orbit of the corresponding shadow sequence $(P_{C}(x_n))_{n \in \NN}$ is relatively compact; 
    consequently, \cref{conver-dist} implies the convergence of the distances $(d_C(x_n))_{n \in \NN}$.
    
    If $\lim_{n\to\infty} d_C(x_n)>0$, then we are done.
    So assume that $\lim_{n\to\infty} d_C(x_n)=0$, which yields
    $ 0 = \lim_{n \to \infty}d_{C}(x_{n}) \geq  \lim_{n \to \infty}d_{\overline{\aff}(M)}(x_{n}) \geq  0$. Thus,
    \begin{equation}\label{fejer-opial-lemma-1}
      0 = \lim_{n \to \infty}d_{C}(x_{n}) = \lim_{n \to \infty}d_{\overline{\aff}(M)}(x_{n}). 
    \end{equation} Finally, due to \cref{raik-*}, $$P_{\overline{\aff}(M)} (x_n)\, \to\, z \in \overline{\aff}(M)$$ which combined with \cref{fejer-opial-lemma-1} yields that $x_{n} \to z \in C$.
\end{proof}

\begin{remark}
    The assumption $\ri(M) \neq \varnothing$ is essential for the conclusions of \cref{fejer-opial-lemma} to hold; a counterexample is given by \cref{infdim-countex}.
\end{remark}

As another consequence of \cref{incr_inter_l-inf}, we highlight the following observation complementing \cref{example-non-decr} as it provides a sufficient condition for the eventual decrease of distances $(d_{\overline{M}}(x_n))_{n \in \NN}$.

\begin{corollary}
\label{c:251213b}
    Let $(x_n)_{n \in \NN}$ be \fejer* monotone with respect to a nonempty convex subset $M$ of $X$. Suppose that $x_n \to z \in \ri(M)$. Then the distances $(d_{\overline{M}}(x_n))_{n \in \NN}$ are eventually decreasing, i.e., it holds that $d_{\overline{M}}(x_{n + 1}) \leq d_{\overline{M}}(x_{n})$ for some $N \in \NN$ and all $n \geq N$.
\end{corollary}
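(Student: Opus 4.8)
The plan is to leverage the uniform-over-a-neighborhood form of the Fej\'er* property that becomes available once $\ri(M)\neq\varnothing$, which is exactly what \cref{incr_inter_l-inf} encodes. First I would set $A := \overline{\aff}(M)$. Since $(x_n)_\nnn$ is Fej\'er* with respect to $M$, \cref{eq-def} gives $M\subseteq\underline{\lim}\,C_\NN$, and hence $z\in\ri(M)=\operatorname{int}_A(M)\subseteq\operatorname{int}_A(\underline{\lim}\,C_\NN)$. Applying \cref{incr_inter_l-inf} with this $A$, we get $\operatorname{int}_A(\underline{\lim}\,C_\NN)=\bigcup_{N\in\NN}\operatorname{int}_A\big(\bigcap_{n\geq N}C_n\big)$, so there exist $N_0\in\NN$ and $\rho>0$ with $B[z,\rho]\cap A\subseteq C_n$ for all $n\geq N_0$.

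Next I would control the shadow sequence. Because $z\in M\subseteq\overline M$, we have $d_{\overline M}(x_n)\leq\|x_n-z\|\to 0$, and therefore $\|P_{\overline M}(x_n)-z\|\leq d_{\overline M}(x_n)+\|x_n-z\|\leq 2\|x_n-z\|\to 0$; in particular $P_{\overline M}(x_n)\to z$. Since $P_{\overline M}(x_n)\in\overline M\subseteq A$ for every $n$, there is $N_1\geq N_0$ with $P_{\overline M}(x_n)\in B[z,\rho]\cap A$ whenever $n\geq N_1$.

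Finally, for $n\geq N_1$ I would test the halfspace $C_n=C(x_n,x_{n+1})$ (recall \cref{hsp-lemma-remark}) at the point $y_n:=P_{\overline M}(x_n)$: as $y_n\in B[z,\rho]\cap A\subseteq C_n$, the definition of $C_n$ yields $\|x_{n+1}-y_n\|\leq\|x_n-y_n\|$, and hence
$$d_{\overline M}(x_{n+1})\leq\|x_{n+1}-y_n\|\leq\|x_n-y_n\|=d_{\overline M}(x_n),$$
which is the asserted eventual decrease with $N:=N_1$.

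I do not expect a genuine obstacle here; the one conceptual point is to realize that the hypothesis $\ri(M)\neq\varnothing$ must be used to pass from the pointwise Fej\'er* inequalities (whose index $N(y)$ depends on $y$) to a single ball $B[z,\rho]\cap A$ on which all these inequalities hold from a common index $N_0$ on. The only item needing a little care is that one must project onto $\overline M$ rather than onto $M$ (which may not be closed), and then check --- immediately from $x_n\to z$ --- that these shadows eventually fall inside the fixed ball $B[z,\rho]$.
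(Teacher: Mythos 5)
Your proposal is correct and follows essentially the same route as the paper's own proof: both use \cref{incr_inter_l-inf} to obtain a ball $B[z,\rho]\cap \overline{\aff}(M)$ contained in $C_n$ for all large $n$, observe that $P_{\overline M}(x_n)\to z$ so these shadows eventually land in that ball, and then conclude via $d_{\overline M}(x_{n+1})\leq\|x_{n+1}-P_{\overline M}(x_n)\|\leq\|x_n-P_{\overline M}(x_n)\|=d_{\overline M}(x_n)$. Your write-up is, if anything, slightly more explicit than the paper's in justifying why the shadows converge to $z$.
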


\begin{proof}
    Indeed, by \cref{incr_inter_l-inf}, pick any neighborhood $U(z) \subseteq \ri(M)$ of the limit point $z \in \ri(M)$ in $\overline{\aff}(M)$ so that $U(z) \subseteq C_n$ for all $n$ sufficiently large, say $n \geq N$. By setting $N \in \NN$ even larger if necessary, assume that $P_{\overline{M}}(x_n) \in U(z) \subseteq C_n$ for all $n \geq N$. 
    Then for some $n \geq N$, we have 
    \begin{equation*}
        d_{\overline{M}}(x_{n + 1}) = \|x_{n+1} - P_{\overline{M}}(x_{n+1})\| \leq \|x_{n+1} - P_{\overline{M}}(x_n)\| \leq \|x_n - P_{\overline{M}}(x_n)\| = d_{\overline{M}}(x_{n}),
    \end{equation*} where the first inequality is due to the definition of the projection $P_{\overline{M}}(x_{n+1})$ and the second 
    inequality holds because $P_{\overline{M}}(x_n) \in C_n$ 
    (recall \cref{hs}).
\end{proof}

To conclude \cref{section2}, we address the "non-cyclicity" property about the closed affine hull of the target set for \fejer* and, consequently, \fejer\ monotone sequences. These statements will play an important role later (see \cref{cycl-prop-inf*2} and \cref{closure-cor-rem}).

\begin{proposition}\label{cycl-prop*}
    Let $(x_n)_{n \in \NN}$ be \fejer* monotone with respect to a nonempty convex subset $M$ of $X$ with $\ri(M) \neq \varnothing$. Then, for some $N \in \NN$, the following hold: \begin{enumerate}[itemsep=0.4em]
        \item\label{cycl-prop*-1} Suppose that $x_n = x_{n + k}$ for some $n \geq N$ and some $k \in \NN$. Then \begin{equation*}
            P_{\overline{\aff}(M)}(x_n) = \cdots = P_{\overline{\aff}(M)}(x_{n+k}) \quad \text{and} \quad d_{\overline{\aff}(M)}(x_n) = \cdots = d_{\overline{\aff}(M)}(x_{n+k}).
        \end{equation*}

        In particular, if $x_n \in \overline{\aff}(M)$, then $x_n = x_{n+1} = \cdots = x_{n+k}$.

        \item\label{cycl-prop*-2} Suppose that $\lim_{n \to \infty} x_n$ exists and equals $z \in X$. If $x_n = z$ for some $n \geq N$, then \begin{equation*}
            (\forall m \in \NN) \quad P_{\overline{\aff}(M)}(x_{n+m}) = P_{\overline{\aff}(M)}(z), \;\; d_{\overline{\aff}(M)}(x_{n+m}) = d_{\overline{\aff}(M)}(z).
        \end{equation*}  Consequently, if $z \in \overline{\aff}(M)$, then $x_{n+m} = z$ for all $m \in \NN$.
    \end{enumerate}
\end{proposition}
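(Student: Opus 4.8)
The plan is to fix, once and for all, a point $y_0 \in \ri(M)$ (nonempty by hypothesis), write $A := \overline{\aff}(M)$, and invoke \cref{raiks-ineqs-statem} to obtain an index $N := N(y_0) \in \NN$ and a constant $\rho = \rho(y_0) > 0$ for which \cref{raiks-ineqs} holds; this single $N$ will serve for both \cref{cycl-prop*-1} and \cref{cycl-prop*-2}. Two consequences of \cref{raiks-ineqs} are used throughout. Taking $m = 0$ gives $\|x_j - y_0\|^2 - \|x_{j+1} - y_0\|^2 \geq 2\rho\|P_A(x_{j+1}) - P_A(x_j)\| \geq 0$ for $j \geq N$, so $(\|x_j - y_0\|)_{j \geq N}$ is non-increasing; and, on any block $n \leq j \leq n+k$ with $n \geq N$, the telescoped form $\|x_n - y_0\|^2 - \|x_{n+k} - y_0\|^2 \geq 2\rho\sum_{l=0}^{k-1}\|P_A(x_{n+l+1}) - P_A(x_{n+l})\| \geq 0$ shows that if the two end distances to $y_0$ coincide then every shadow increment in between vanishes, i.e.\ $P_A(x_n) = \cdots = P_A(x_{n+k})$. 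Finally, since $y_0 \in A$, the Pythagorean identity $\|x - y_0\|^2 = d_A(x)^2 + \|P_A(x) - y_0\|^2$ holds for every $x \in X$ (orthogonality of $x - P_A(x)$ to the subspace parallel to $A$).

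For \cref{cycl-prop*-1}: if $x_n = x_{n+k}$ with $n \geq N$, then $\|x_n - y_0\| = \|x_{n+k} - y_0\|$, so by monotonicity $\|x_j - y_0\|$ is constant for $n \leq j \leq n+k$; the telescoped inequality then forces $P_A(x_n) = \cdots = P_A(x_{n+k}) =: p$. Substituting this common value into the Pythagorean identity turns the constancy of $\|x_j - y_0\|$ into constancy of $d_A(x_j)$ on the block, which is the first claim. If in addition $x_n \in A$ then $d_A(x_n) = 0$, whence $d_A(x_j) = 0$ and $x_j = P_A(x_j) = p = x_n$ for $n \leq j \leq n+k$.

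For \cref{cycl-prop*-2}: because $x_j \to z$, the non-increasing sequence $(\|x_j - y_0\|)_{j \geq N}$ converges to $\|z - y_0\|$, hence stays $\geq \|z - y_0\|$; since $x_n = z$ makes $\|x_n - y_0\| = \|z - y_0\|$, monotonicity pins $\|x_j - y_0\| = \|z - y_0\|$ for all $j \geq n$. Applying the telescoped inequality on each $[n, n+m]$ yields $P_A(x_{n+m}) = P_A(x_n) = P_A(z)$ for every $m \in \NN$, and the Pythagorean identity converts this into $d_A(x_{n+m}) = d_A(z)$. The final assertion is the special case $z \in A$: then $d_A(z) = 0$, so $d_A(x_{n+m}) = 0$ and $x_{n+m} = P_A(x_{n+m}) = P_A(z) = z$ for all $m$.

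There is no genuinely hard step — the argument is bookkeeping around \cref{raiks-ineqs} — but two points require attention: committing to a single $y_0$ (and the associated $N$, $\rho$) so that one $N$ works for the whole statement; and, in \cref{cycl-prop*-2}, correctly marrying the \emph{eventual} monotonicity of $(\|x_j - y_0\|)_{j \geq N}$ with the convergence $x_j \to z$ to deduce that the distances to $y_0$ freeze from index $n$ on — without the limit, $x_n = z$ by itself would not propagate forward. It is also worth recalling that \cref{raiks-ineqs} (and with it this whole proof) genuinely uses $\ri(M) \neq \varnothing$, which is why that hypothesis cannot be dropped here.
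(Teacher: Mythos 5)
Your proof is correct and follows essentially the same route as the paper: fix $y_0\in\ri(M)$, invoke \cref{raiks-ineqs-statem} to get a single $N$ and $\rho>0$, and combine the telescoped inequality \cref{raiks-ineqs} with the Pythagorean identity for the closed affine subspace $\overline{\aff}(M)$. The only (cosmetic) difference is organizational: the paper proves \cref{cycl-prop*-2} first by letting $m\to\infty$ in \cref{raiks-ineqs} and then deduces \cref{cycl-prop*-1} via the auxiliary eventually-constant sequence $\tilde{x}_m := x_{\min\{m,n+k\}}$, whereas you prove \cref{cycl-prop*-1} directly from the monotonicity of $(\|x_j-y_0\|)_{j\geq N}$ on the finite block, which is slightly more self-contained.
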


\begin{proof}
    Again, we make use of \cref{raiks-ineqs-statem}: take any $y \in \operatorname{ri}(M)$ and fix $N := N(y) \in \NN$ and $\rho > 0$ so that the inequalities in \cref{raiks-ineqs} hold. 
    For brevity, set $A := \overline{\operatorname{aff}}(M)$.
    
    ``\cref{cycl-prop*-2}'': Suppose $x_n = z$ for some $n \geq N$. Then, by taking a limit over $m \to \infty$ in \cref{raiks-ineqs} and recalling that $\lim_{m \to \infty} x_{n + m} = x_n =  z$, we obtain \begin{equation*}
        \sum_{m = 0}^{\infty} \, 2 \rho \|P_{A}(x_{n+m+1})-P_{A}(x_{n+m})\| \leq \|x_n - y\|^2 - \|z - y\|^2 = 0, 
    \end{equation*} which yields $P_{A}(x_{n+m}) = P_{A}(z)$ for all $m \in \NN$. Furthermore, the equality of the affine projections $P_{A}(x_{n+1}) = P_{A}(x_n)$ immediately implies: \begin{equation*}\begin{aligned}
        d^2_A(x_{n+1}) = \|x_{n+1} - &P_A(x_{n+1})\|^2 = \|x_{n+1} - y\|^2 - \|P_A(x_{n+1}) - y\|^2 \\ & \leq \|x_{n} - y\|^2 - \|P_A(x_{n}) - y\|^2 = \|x_{n} - P_A(x_{n})\|^2 = d^2_A(x_{n}).
    \end{aligned}\end{equation*} Finally, repeating this argument iteratively for all $m \in \NN$ gives: \begin{equation*}
        d_A(x_{n}) \geq \cdots \geq d_A(x_{n+m}) \geq \cdots \geq d_A(z).
    \end{equation*} Hence all the conclusions in \cref{cycl-prop*-2} follow as $x_n = z$.

    ``\cref{cycl-prop*-1}'': Now, suppose $x_n = x_{n + k}$ for some $n \geq N$ and some $k \in \NN$. Then, define a new sequence $(\tilde{x}_n)_{n \in \NN}$ as \begin{equation*}
        (\forall m \in \NN) \quad \tilde{x}_m := x_{\min\{m, n+k\}}.
    \end{equation*} Hence $(\tilde{x}_n)_{n \in \NN}$ is \fejer* monotone with respect to the same set $M$, and $\lim_{n \to \infty} \tilde{x}_n = x_{n + k}$. In particular, \cref{cycl-prop*-1} follows directly as a special case of \cref{cycl-prop*-2}, completing the proof.
\end{proof}

\begin{remark}
    \cref{raiks-ineqs-statem-remark} implies that if the sequence $(x_n)_{n \in \NN}$ is \fejer\ monotone in the classical sense of \cref{def-classic}, then he conclusion of \cref{cycl-prop*} holds with $N = 0$.
    \end{remark}

\begin{corollary}[dichotomy] \label{cycl-prop-inf*-2cases}
    Let $(x_n)_{n \in \NN}$ be \fejer* monotone with respect to a nonempty convex subset $M$ of $X$. Suppose that $\ri(M) \neq \varnothing$ and $x_n \to z \in \overline{\aff}(M)$. Then, for some $N \in \NN$, it holds that \begin{equation*}
        \text{either} \quad \{n \geq N \mid x_n = z\} = \varnothing \quad \text{or} \quad \{n \geq N \mid x_n = z\} = \{N, N+1, \ldots\}.
    \end{equation*}
\end{corollary}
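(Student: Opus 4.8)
The plan is to read this off \cref{cycl-prop*}\cref{cycl-prop*-2} essentially verbatim, with one small bookkeeping adjustment. Let $N_0\in\NN$ be the index produced by \cref{cycl-prop*} for the sequence $(x_n)_\nnn$ and the set $M$ (recall $\ri(M)\neq\varnothing$ is in force). I would then split into two cases according to whether the sequence ever equals $z$ from index $N_0$ on.

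If $\{n\geq N_0\mid x_n=z\}=\varnothing$, then taking $N:=N_0$ puts us in the first alternative and there is nothing more to do. Otherwise, set $n_0:=\min\{n\geq N_0\mid x_n=z\}$, which is well-defined since $\NN$ is well-ordered. Now $x_{n_0}=z$, $\lim_n x_n=z$, $n_0\geq N_0$, and $z\in\overline{\aff}(M)$ by hypothesis, so \cref{cycl-prop*}\cref{cycl-prop*-2} (its ``consequently'' clause) applies at index $n_0$ and gives $x_{n_0+m}=z$ for every $m\in\NN$. Hence $\{n\geq n_0\mid x_n=z\}=\{n_0,n_0+1,\dots\}$, and choosing $N:=n_0$ yields the second alternative. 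Since exactly one of the two cases occurs, the dichotomy follows.

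There is no real obstacle here: the content is entirely carried by \cref{cycl-prop*}, which in turn rests on the Raik-type summability estimate \cref{raiks-ineqs-statem}. The only point worth care is that in the second case one must re-center $N$ at the \emph{first hitting time} $n_0$ rather than keeping $N_0$, since \cref{cycl-prop*}\cref{cycl-prop*-2} only guarantees that the sequence \emph{stays} at $z$ once it reaches it, not that it has already reached it by time $N_0$.
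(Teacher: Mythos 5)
Your proposal is correct and matches the paper's proof essentially verbatim: both reduce the statement to \cref{cycl-prop*}\cref{cycl-prop*-2}, split on whether the sequence hits $z$ after the index furnished by that proposition, and in the nonempty case re-center $N$ at the first hitting time. No issues.
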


\begin{proof}
    \cref{cycl-prop*}\cref{cycl-prop*-2} yields that there exists some $\widetilde{N} \in \NN$ such that if $x_n = z$ for some $n \geq \widetilde{N}$, then $x_{n + m} = z$ for all $m \in \mathbb{N}$. That said, set $N:= \widetilde{N}$ if $\{n \geq \widetilde{N} \mid x_n = z\} = \varnothing$. Otherwise, pick $N:= \min \{n \geq \widetilde{N} \mid x_n = z\}$.
\end{proof}

\section{Directional asymptotics for \fejer* monotone sequences}

\label{section4}

In this section, we derive \fejer* results that mirror the \fejer\ results of \cite[Theorem 3.1]{BKX}.

\begin{proposition}\label{aux-direct-th}
    Let $M$ be a nonempty convex subset of $X$, $z \in X$, and $(x_n)_{n \in \mathbb{N}}$ be a sequence in $X$. Suppose that $(x_n)_{n \in \mathbb{N}}$ is \fejer* monotone with respect to $M$. Then for all $y \in M$ there exists $N(y) \in \NN$ such that the following hold: \begin{equation*}
        \begin{aligned}
            (\forall n \geq N(y))(\forall m \geq n+1) \quad \; \langle x_n-x_m, y-z\rangle &\leq 
            \tfrac{1}{2}\big(\|x_n-z\|^2-\|x_m-z\|^2\big) \\ &\leq \|x_n - x_m\|\|x_m - z\| + \tfrac{1}{2}\|x_n - x_m\|^2.
        \end{aligned}
    \end{equation*}
\end{proposition}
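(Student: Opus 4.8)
The plan is to prove the two inequalities separately; neither needs anything beyond expanding squared norms and one use of Cauchy--Schwarz, so the only genuine input from \fejer* monotonicity is a short telescoping argument. First I would fix $y\in M$ and invoke \cref{def} to obtain $N(y)\in\NN$ with $\|x_{k+1}-y\|\le\|x_k-y\|$ for all $k\ge N(y)$. Chaining this over $k=n,n+1,\dots,m-1$ shows $\|x_m-y\|\le\|x_n-y\|$ whenever $m\ge n+1$ and $n\ge N(y)$, and hence $\|x_n-y\|^2-\|x_m-y\|^2\ge 0$ for every such pair $(n,m)$. This nonnegativity is the only place the hypothesis enters.

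For the left-hand inequality I would write $x_j-y=(x_j-z)-(y-z)$ for $j\in\{n,m\}$, expand both squared norms, and subtract; the $\|y-z\|^2$ terms cancel and the cross terms combine to give
\[
\|x_n-y\|^2-\|x_m-y\|^2=\|x_n-z\|^2-\|x_m-z\|^2-2\langle x_n-x_m,\,y-z\rangle .
\]
Since the left side is $\ge 0$ for $n\ge N(y)$ and $m\ge n+1$, rearranging yields $\langle x_n-x_m,\,y-z\rangle\le\tfrac12\bigl(\|x_n-z\|^2-\|x_m-z\|^2\bigr)$, which is the first claimed bound.

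For the right-hand inequality---which is purely algebraic and valid for any pair of indices---I would instead decompose $x_n-z=(x_n-x_m)+(x_m-z)$, obtaining
\[
\|x_n-z\|^2-\|x_m-z\|^2=\|x_n-x_m\|^2+2\langle x_n-x_m,\,x_m-z\rangle ,
\]
and then bound $\langle x_n-x_m,\,x_m-z\rangle\le\|x_n-x_m\|\,\|x_m-z\|$ by Cauchy--Schwarz; dividing by $2$ reproduces $\tfrac12\bigl(\|x_n-z\|^2-\|x_m-z\|^2\bigr)\le\|x_n-x_m\|\,\|x_m-z\|+\tfrac12\|x_n-x_m\|^2$.

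I do not expect a real obstacle: the argument is routine Hilbert-space algebra, and the only care needed is that $N(y)$ depends on $y$ and that the telescoping requires $m>n$, which is guaranteed by $m\ge n+1$. Assembling the two inequalities then gives the displayed chain for all $n\ge N(y)$ and $m\ge n+1$.
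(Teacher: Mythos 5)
Your proposal is correct and follows essentially the same route as the paper's proof: both extract the single nonnegativity $\|x_n-y\|^2-\|x_m-y\|^2\ge 0$ for $n\ge N(y)$, $m\ge n+1$ from the definition of \fejer* monotonicity, and then obtain both inequalities by expanding squared norms and applying Cauchy--Schwarz to $\langle x_n-x_m, x_m-z\rangle$. The only difference is cosmetic (you expand around $x_j-z$ and $y-z$, whereas the paper expands around $x_n-x_m$ and $x_m-y$), and your observation that the second inequality is purely algebraic is accurate.
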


\begin{proof}
    Indeed, assume $y \in M$. Then, due to \cref{def}, we have \begin{equation*}
        (\exists N(y) \in \NN)(\forall n \geq N(y)) \quad \|x_{n+1} - y\| \leq \|x_n - y\|.
    \end{equation*}  Fix any $n \geq N(y)$ and $m \geq n+1$. Then 
    \begin{equation*}
        \begin{aligned}
            0 & \leq \|x_n-y\|^2-\|x_{m}-y\|^2 
            = \|x_n-x_{m}\|^2+2\langle x_n-x_{m}, x_{m}-y \rangle \\
            & = \|x_n-x_{m}\|^2+2\langle x_n-x_{m},(x_{m}-z)-(y-z)\rangle .
        \end{aligned}
    \end{equation*} 
    Hence 
    \begin{equation*}
    \begin{aligned}
        \langle x_n-x_{m}, y-z\rangle & \leq \tfrac{1}{2}\|x_n-x_{m}\|^2+\langle x_n-x_{m}, x_{m}-z\rangle 
        =\tfrac{1}{2}\big(\|x_n-z\|^2-\|x_m-z\|^2\big) \\ 
        & \leq \|x_n - x_m\|\|x_m - z\| + \tfrac{1}{2}\|x_n - x_m\|^2.
    \end{aligned}
\end{equation*} which completes the proof. \end{proof}

\begin{proposition}\label{limsup-sup-prop}
    Let $(v_n)_{n \in \NN}$ be a bounded sequence in $X$ and define $f: X \mapsto \mathbb{R}$ by \begin{equation*}
        f(x) := \limsup_{n \to \infty} \langle x, v_n\rangle.
    \end{equation*} 
    Then $f$ is a real-valued, sublinear, convex, and continuous function.
    For $x\in X$, we have 
    \begin{equation*}
        f(x) = \sigma_\mathcal{D} (x) = \sup_{w \in \mathcal{D}} \langle w, x\rangle, \quad \text{where } \; \mathcal{D} := \mathcal{W}((v_n)_{n \in \mathbb{N}}).
    \end{equation*}
\end{proposition}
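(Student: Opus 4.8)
The plan is to show the four asserted properties of $f$ in turn, deriving them all from the identification $f = \sigma_{\mathcal{D}}$ with $\mathcal{D} = \mathcal{W}((v_n)_{n\in\NN})$. First I would establish that $f$ is real-valued: since $(v_n)_{n\in\NN}$ is bounded, say $\|v_n\| \leq \beta$ for all $n$, Cauchy--Schwarz gives $|\langle x, v_n\rangle| \leq \beta\|x\|$, so the $\limsup$ defining $f(x)$ is a finite real number for every $x \in X$. Sublinearity is immediate from the elementary properties of $\limsup$: positive homogeneity, $f(\lambda x) = \lambda f(x)$ for $\lambda \geq 0$, follows since $\langle \lambda x, v_n\rangle = \lambda\langle x, v_n\rangle$; and subadditivity, $f(x+y) \leq f(x) + f(y)$, follows from $\limsup_n(a_n + b_n) \leq \limsup_n a_n + \limsup_n b_n$ applied to $a_n = \langle x, v_n\rangle$ and $b_n = \langle y, v_n\rangle$. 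Sublinearity yields convexity for free, and the bound $|f(x) - f(y)| \leq \beta\|x - y\|$ (again from Cauchy--Schwarz and subadditivity, writing $f(x) \leq f(y) + f(x-y) \leq f(y) + \beta\|x-y\|$ and symmetrically) gives Lipschitz continuity, hence continuity.

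The substantive part is the formula $f(x) = \sup_{w \in \mathcal{D}}\langle w, x\rangle$. For the inequality $f(x) \geq \sigma_{\mathcal{D}}(x)$: let $w \in \mathcal{D}$, so there is a subsequence $v_{n_k} \weak w$; then $\langle x, v_{n_k}\rangle \to \langle x, w\rangle$, and since this is the limit along a subsequence it is at most the $\limsup$ along the full sequence, i.e. $\langle x, w\rangle \leq f(x)$; taking the supremum over $w \in \mathcal{D}$ gives $\sigma_{\mathcal{D}}(x) \leq f(x)$. For the reverse inequality $f(x) \leq \sigma_{\mathcal{D}}(x)$: choose a subsequence $(v_{n_k})_k$ along which $\langle x, v_{n_k}\rangle \to f(x)$. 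Since $(v_{n_k})_k$ is bounded, by weak sequential compactness of bounded sets in a Hilbert space it has a further subsequence $(v_{n_{k_j}})_j$ converging weakly to some $w$; this $w$ is a weak cluster point of $(v_n)_{n\in\NN}$, so $w \in \mathcal{D}$. Then $\langle x, v_{n_{k_j}}\rangle \to \langle x, w\rangle$, but this subsubsequence still converges to $f(x)$, so $\langle x, w\rangle = f(x) \leq \sigma_{\mathcal{D}}(x)$.

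The main obstacle --- really the only point requiring care --- is confirming that $\mathcal{D}$ is nonempty and that the supremum $\sigma_{\mathcal{D}}(x)$ is attained (so the argument above does not degenerate): nonemptiness is exactly weak sequential compactness of the bounded sequence $(v_n)_{n\in\NN}$, which holds since $X$ is a Hilbert space, and the reverse-inequality argument simultaneously shows the $\limsup$ is achieved as a genuine limit of $\langle x, v_{n_{k_j}}\rangle$ toward $\langle x, w\rangle$ for some $w \in \mathcal{D}$, so the supremum defining $\sigma_{\mathcal{D}}(x)$ is in fact a maximum. Everything else is bookkeeping with $\limsup$ and Cauchy--Schwarz.
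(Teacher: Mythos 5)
Your proof is correct and follows essentially the same route as the paper: both directions of $f=\sigma_{\mathcal{D}}$ are obtained by extracting weakly convergent subsequences from the bounded sequence $(v_n)_{n\in\NN}$, exactly as in the paper's argument. The only cosmetic difference is that you verify sublinearity, convexity, and Lipschitz continuity of $f$ directly from properties of $\limsup$ and Cauchy--Schwarz, whereas the paper reads them off from the identification with the support function of the bounded set $\mathcal{D}$; both are fine.
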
 

\begin{proof}
    Indeed, $f(x)$ and $\sigma_\mathcal{D}(x)$ are real numbers for all $x \in X$, since $(v_n)_{n \in \NN}$ is bounded.

    Now, let $x \in X$. Get $(v_{n_k})_{k \in \NN}$ such that $\langle x, v_{n_k} \rangle \to f(x)$. Additionally, assume that $v_{n_k} \rightharpoonup v \in \mathcal{D}$. Then \begin{equation*}f(x) \leftarrow \langle x, v_{n_k}\rangle \rightarrow \langle x, v\rangle \leq \sigma_\mathcal{D} (x).
    \end{equation*} So, $f(x) \leq \sigma_\mathcal{D} (x)$. However, choosing $(v_{n_k})_{k \in \NN}$ so that $\langle x, v_{n_k}\rangle \rightarrow \sigma_\mathcal{D} (x)$ ensures: \begin{equation*}
        \sigma_\mathcal{D} (x) = \lim_{k \to \infty} \langle x, v_{n_k}\rangle \leq \limsup_{n \to \infty} \,\langle x, v_{n}\rangle = f(x).
    \end{equation*} In particular, $f(x) \geq \sigma_\mathcal{D} (x)$. Altogether, $f(x) = \sigma_\mathcal{D} (x)$ for all $x \in X$. Finally, note that the support function $\sigma_{\mathcal{D}}(\cdot)$ is sublinear, convex, and continuous.
\end{proof}

\begin{definition}[distinct consecutive terms]
\label{DCT-def}
   From this point forward, we will sometimes refer to $(x_n)_{n \in \NN}$ as  having \emph{distinct consecutive terms} if it satisfies \begin{equation*}
        (\forall n \in \NN) \quad x_{n+1} \neq x_n.
    \end{equation*}
\end{definition}

\begin{remark}\label{dist-remark}
    For any given sequence $(x_n)_{n \in \NN}$, consider a (finite or infinite) subsequence of indices $(n_k)$ defined recurrently as follows: 
    
    Let $n_0 := 0$. For $k \in \NN$, assuming that $(n_\ell)_{\ell = 0}^k$ were previously chosen, set $$I_k := \{j > n_k \mid x_j \neq x_{n_k}\}.$$ Then, if $I_k \neq \varnothing$, let $n_{k+1} := \min I_k$.

    One can notice that, if the new derived subsequence $(n_k)_{k \in \NN}$ is infinite, then $(x_{n_k})_{k \in \NN}$ has distinct consecutive terms, and its maximal \fejer* set coincides with the one given by the initial sequence $(x_n)_{n \in \NN}$ (see \cref{hsp-lemma}). However, if $(n_k)$ is finite, then, for the initial sequence $(x_n)_{n \in \NN}$, it means that \begin{equation*}
        (\exists N \in \NN)(\forall n \geq \NN) \quad x_n = x_N,
    \end{equation*} making the sequence straightforward to analyze. Therefore, in what follows, we can always assume that $(x_n)_{n \in \NN}$ is a sequence with distinct consecutive terms without any loss of generality. Such an assumption will be useful later as it simplifies our notes to some extent (see \cref{cycl-prop-inf*2} below).
\end{remark}

For the remainder of this section, 
if $x_n\to z\in X$, then we introduce the nonempty closed convex cone 
\begin{empheq}[box=\mybluebox]{equation}\label{earlierK}
\mathcal{K} := \overline{\operatorname{cone}}(\overline{M} - z). 
\end{empheq} 

\begin{theorem}\label{direct-th}
    Let $(x_n)_{n \in \mathbb{N}}$ be a sequence of points in $X$ that is \fejer* monotone with respect to a nonempty convex subset $M$ of $X$. Suppose that $x_n \to z \in X$. Then the following hold: \begin{enumerate}
        \item\label{direct-th-1} If $(x_{n})_{n \in \mathbb{N}}$ has distinct consecutive terms, then \begin{equation}\label{direct-formula}
        \mathcal{W}\left(\left(\frac{x_{n}-x_{n+1}}{\left\|x_{n}-x_{n+1}\right\|}\right)_{n \in \mathbb{N}}\right) \subseteq (\overline{M}-z)^{\ominus} = \mathcal{K}^\ominus.
    \end{equation}
        \item\label{direct-th-2} If $x_{n} \neq z$ for all $n \in \NN$, then \begin{equation}\label{direct-formula-1}
        \mathcal{W}\left(\left(\frac{x_{n}-z}{\left\|x_{n}-z\right\|}\right)_{n \in \mathbb{N}}\right) \subseteq (\overline{M}-z)^{\ominus} = \mathcal{K}^\ominus.
    \end{equation}
    \end{enumerate} In particular, if $z \in \overline{M}$, then we may replace $(\overline{M}-z)^{\ominus}$ by $N_{\overline{M}}(z)$ in \cref{direct-formula} and \cref{direct-formula-1}.
\end{theorem}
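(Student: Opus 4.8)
The plan is to deduce both inclusions directly from \cref{aux-direct-th}, dividing its key inequality by a suitable positive norm and then passing to the limit along a weakly convergent subsequence. The quantifier order matters, so I would first fix an arbitrary $y \in M$, let $N(y) \in \NN$ be the index furnished by \cref{aux-direct-th}, and only afterwards extract subsequences, so that eventually the running index exceeds $N(y)$. I would also record once and for all the elementary fact that for a fixed vector $a$ the map $w \mapsto \langle w, a\rangle$ is weakly continuous, so that $v_{n_k} \weakly u$ together with $\langle v_{n_k}, a\rangle \le \varepsilon_k \to 0$ forces $\langle u, a\rangle \le 0$.

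For \cref{direct-th-1}, specialize \cref{aux-direct-th} to $m = n+1$; since the terms are distinct, $\|x_n - x_{n+1}\| > 0$, and dividing gives, for all $n \ge N(y)$,
\[
\Big\langle \frac{x_n - x_{n+1}}{\|x_n - x_{n+1}\|},\, y - z\Big\rangle \le \|x_{n+1} - z\| + \tfrac12\|x_n - x_{n+1}\|,
\]
whose right-hand side tends to $0$ because $x_n \to z$. If $u$ is any weak cluster point of the normalized differences, say $(x_{n_k}-x_{n_k+1})/\|x_{n_k}-x_{n_k+1}\| \weakly u$, then for $k$ large $n_k \ge N(y)$, and testing against $y-z$ yields $\langle u, y-z\rangle \le 0$; as $y \in M$ was arbitrary, $u \in (M-z)^\ominus$. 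For \cref{direct-th-2}, instead hold $n \ge N(y)$ fixed and let $m \to \infty$ in the middle expression of \cref{aux-direct-th}: since $x_m \to z$ we get $\langle x_n - z, y-z\rangle \le \tfrac12\|x_n - z\|^2$; now divide by $\|x_n - z\| > 0$ and use $x_n \to z$ to obtain a bound with vanishing right-hand side, then pass to a weakly convergent subsequence of $(x_n-z)/\|x_n-z\|$ exactly as before to conclude every weak cluster point lies in $(M-z)^\ominus$. (If the relevant set of weak cluster points is empty the inclusion is vacuous; otherwise the normalized sequences are bounded, hence have weak cluster points, and the argument applies to each.)

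It remains to identify the target sets. For any $S \subseteq X$ one has $S^\ominus = (\overline{\operatorname{cone}}\, S)^\ominus$, since scaling and continuity do not change which $u$ satisfy $\langle u, \cdot\rangle \le 0$; applying this with $S = \overline{M} - z$ and noting $\overline{M - z} = \overline{M} - z$ gives $(M-z)^\ominus = (\overline{M}-z)^\ominus = \mathcal{K}^\ominus$, which is the equality asserted in \cref{direct-formula} and \cref{direct-formula-1}. When $z \in \overline{M}$, the set $(\overline{M}-z)^\ominus = \{u : \langle u, y - z\rangle \le 0 \ \forall y \in \overline{M}\}$ is precisely the normal cone $N_{\overline{M}}(z)$ (and $\overline{M}$ is convex since $M$ is), which gives the final assertion.

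I do not expect a genuine obstacle here: the entire content sits in \cref{aux-direct-th}, and the remaining work is bookkeeping. The only two points that need a little care are, first, taking $N(y)$ before the subsequence so that eventually $n_k \ge N(y)$; and second, in \cref{direct-th-2}, taking $m \to \infty$ with $n$ fixed and checking that the upper bound $\|x_n - x_m\|\|x_m - z\| + \tfrac12\|x_n - x_m\|^2$ collapses to $\tfrac12\|x_n - z\|^2$, which after dividing by $\|x_n - z\|$ still tends to $0$.
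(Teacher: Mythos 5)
Your proof is correct and follows essentially the same route as the paper: both arguments rest entirely on \cref{aux-direct-th}, specializing to $m=n+1$ for \cref{direct-th-1} and letting $m\to\infty$ for \cref{direct-th-2}, then dividing by the relevant positive norm and passing to the limit. The only cosmetic differences are that the paper channels the weak-cluster-point step through \cref{limsup-sup-prop} and extends from $M$ to $\overline{M}$ by approximating $y\in\overline{M}$ with points of $M$, whereas you use weak continuity of $\scal{\cdot}{y-z}$ directly and the identity $(M-z)^\ominus=(\overline{\operatorname{cone}}(\overline{M}-z))^\ominus$ — both handled correctly.
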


\begin{proof}
    To simplify our notes, we introduce the following notation: \begin{equation*}
        \mathcal{D}_1 := \mathcal{W}\left(\left(\frac{x_n-x_{n+1}}{\left\|x_n-x_{n+1}\right\|}\right)_{n \in \mathbb{N}}\right), \quad \mathcal{D}_2 := \mathcal{W}\left(\left(\frac{x_n-z}{\left\|x_n-z\right\|}\right)_{n \in \mathbb{N}}\right).
    \end{equation*} According to \cref{aux-direct-th}, for any chosen $z \in M$, there exists $N(z) \in \NN$ such that the following hold: \begin{equation}\label{aux-direct-ineq}
    \begin{aligned}
        (\forall n \geq N(y))(\forall m \geq n+1) \quad \; \langle x_n-x_m, y-z\rangle &\leq \frac{1}{2}(\|x_n-z\|^2-\|x_m-z\|^2) \\ &\leq \|x_{m}-z\| \|x_n-x_{m}\|+\frac{1}{2}\|x_n-x_{m}\|^2.
    \end{aligned}
    \end{equation} ``\cref{direct-th-1}'': For $m = n+1$, dividing by $\|x_n - x_{n+1}\| > 0$ gives: \begin{equation}\label{aux-direct-ineq-3}
        \left\langle \frac{x_n-x_{n+1}}{\|x_n-x_{n+1}\|}, y-z\right\rangle \leq \|x_{n+1}-z\|+\frac{1}{2}\|x_n-x_{n+1}\|. 
    \end{equation} Consequently, by taking $\limsup$ of both sides in \cref{aux-direct-ineq-3}, we derive \begin{equation*}
        (\forall y \in M) \quad \limsup_{n \to \infty} \left\langle \frac{x_n-x_{n+1}}{\|x_n-x_{n+1}\|}, y-z \right\rangle \leq 0,
    \end{equation*} or, equivalently, due to \cref{limsup-sup-prop}, \begin{equation}\label{aux-direct-ineq-5-1}
        (\forall y \in M) \quad \sup_{w \in \mathcal{D}_1} \left\langle w, y-z \right\rangle \leq 0.
    \end{equation} However, by considering the limits over the sequences $z_n \to y \in \overline{M}$, where $z_n \in M$, we get \cref{aux-direct-ineq-5-1} for all $y \in \overline{M}$. 
    
    ``\cref{direct-th-2}'': Similarly, letting $m \to \infty$ in \cref{aux-direct-ineq} and dividing by $\|x_n - z\| > 0$ implies: \begin{equation}\label{aux-direct-ineq-4}
        \left\langle \frac{x_n-z}{\|x_n-z\|}, y-z\right\rangle \leq \frac{1}{2}\|x_n-z\|.
    \end{equation} Therefore, by taking $\limsup$ of both sides in \cref{aux-direct-ineq-4}, for all $y \in M$, we derive \begin{equation}\label{aux-direct-ineq-5-2}
        \limsup_{n \to \infty}  \left\langle \frac{x_n-z}{\|x_n-z\|}, y-z \right\rangle \leq 0 \quad \text{or, equivalently,} \quad \sup_{w \in \mathcal{D}_2} \left\langle w, y-z \right\rangle \leq 0.
    \end{equation} Likewise, by considering the limits over the sequences $z_n \to y \in \overline{M}$, where $z_n \in M$, we get \cref{aux-direct-ineq-5-2} for all $y \in \overline{M}$ completing the proof.
\end{proof}

\begin{remark}
    In view of \cref{sub-def}, we are able to apply \cref{direct-th} to any chosen subsequence $(x_{n_k})_{k \in \mathbb{N}}$ instead. 
    For instance, if a subsequence $(x_{n_k})_{k \in \NN}$ has distinct consecutive terms, then, due to \cref{direct-th}\cref{direct-th-1}, we have \begin{equation*}
        \mathcal{W}\left(\left(\frac{x_{n_k}-x_{n_{k+1}}}{\left\|x_{n_k}-x_{n_{k+1}}\right\|}\right)_{k \in \mathbb{N}}\right) \subseteq (\overline{M}-z)^{\ominus} = \mathcal{K}^\ominus.
    \end{equation*}
\end{remark}

Next, we establish asymptotic inequalities similar to those presented in \cite[Proposition 5.4(iv)]{BC2017}. We begin by examining the finite-dimensional setting and then discuss extensions to the infinite-dimensional case.

\begin{corollary}\label{equivalency-th}
    Assume that $X$ is finite-dimensional and let $(x_n)_{n \in \NN}$ be a \fejer* monotone with respect to a nonempty convex subset $M$ of $X$. Suppose $x_n \to z \in \aff(M)$. Then \begin{equation*}
        (\forall \varepsilon > 0)(\exists N(\varepsilon) \in \NN)(\forall n \geq N(\varepsilon)) \quad \|x_n - z\| \leq (1 + \varepsilon)d_{\mathcal{K}}(x_n - z),
    \end{equation*} where $\mathcal{K} = \overline{\operatorname{cone}}(\overline{M} - z)$. 
    Moreover, if $(x_n)_{n \in \NN}$ has distinct consecutive terms, then $x_n \neq z$ eventually, and \begin{equation*}
        \lim_{n \to \infty} \frac{d_{\mathcal{K}}(x_n - z)}{\|x_n - z\|} = 1.
    \end{equation*}
\end{corollary}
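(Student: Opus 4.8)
The plan is to reduce the entire statement to a single compactness argument about the normalized directions $u_n:=(x_n-z)/\norm{x_n-z}$, leaning on the finite dimensionality of $X$ (the unit sphere $S_X$ is compact, weak and strong cluster points of bounded sequences coincide, and $\ri(M)\neq\varnothing$ automatically). I will use three elementary observations about the closed convex cone $\mathcal{K}=\overline{\operatorname{cone}}(\overline{M}-z)$: since $0\in\mathcal{K}$, $d_{\mathcal{K}}(u)\le\norm{u}$ for all $u$; for $u\in\mathcal{K}^{\ominus}$ and any $k\in\mathcal{K}$, $\norm{u-k}^2=\norm{u}^2-2\scal{u}{k}+\norm{k}^2\ge\norm{u}^2$, so $d_{\mathcal{K}}(u)=\norm{u}$; and $d_{\mathcal{K}}$ is $1$-Lipschitz and positively homogeneous, whence $d_{\mathcal{K}}(x_n-z)=\norm{x_n-z}\,d_{\mathcal{K}}(u_n)$ whenever $x_n\neq z$.

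The heart of the proof I would isolate as a lemma: if $(y_k)_{k\in\NN}$ is \fejer* monotone with respect to $M$, $y_k\to z$, and $y_k\neq z$ for every $k$, then $d_{\mathcal{K}}\big((y_k-z)/\norm{y_k-z}\big)\to 1$. To prove it, set $w_k:=(y_k-z)/\norm{y_k-z}\in S_X$. By \cref{direct-th}\cref{direct-th-2} applied to $(y_k)$ (which satisfies $y_k\neq z$ for all $k$ and $y_k\to z$), $\mathcal{W}((w_k)_{k\in\NN})=\mathcal{S}((w_k)_{k\in\NN})\subseteq(\overline{M}-z)^{\ominus}=\mathcal{K}^{\ominus}$, so every cluster point $w$ of $(w_k)$ has $\norm{w}=1$ and $w\in\mathcal{K}^{\ominus}$, hence $d_{\mathcal{K}}(w)=1$. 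If $d_{\mathcal{K}}(w_k)\not\to 1$, then, using $d_{\mathcal{K}}(w_k)\le 1$, some subsequence obeys $d_{\mathcal{K}}(w_{k_j})\le 1-\delta$ for a fixed $\delta>0$; compactness of $S_X$ extracts a convergent sub-subsequence $w_{k_{j_i}}\to w$ with $\norm{w}=1$, and continuity of $d_{\mathcal{K}}$ forces $d_{\mathcal{K}}(w)\le 1-\delta<1$, contradicting $d_{\mathcal{K}}(w)=1$.

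With the lemma, both conclusions follow quickly. For the first inequality, fix $\varepsilon>0$. If $x_n=z$ for all large $n$ the inequality is eventually $0\le 0$; otherwise $I:=\{n\in\NN\mid x_n\neq z\}$ is infinite, the subsequence $(x_n)_{n\in I}$ remains \fejer* monotone with respect to $M$ (\cref{sub-def}) and converges to $z$, so the lemma gives $d_{\mathcal{K}}(u_n)\to 1$ along $I$; thus $d_{\mathcal{K}}(u_n)>1/(1+\varepsilon)$ for $n\in I$ large, which rearranges (via positive homogeneity) to $\norm{x_n-z}\le(1+\varepsilon)d_{\mathcal{K}}(x_n-z)$, the remaining indices with $x_n=z$ being trivial. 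For the ``moreover'' part, I would first show $x_n\neq z$ eventually: \cref{cycl-prop-inf*-2cases} applies because $\ri(M)\neq\varnothing$ and $z\in\aff(M)=\overline{\aff}(M)$ in finite dimensions, giving some $N$ with $\{n\ge N\mid x_n=z\}$ either empty or all of $\{N,N+1,\dots\}$; the second option would force $x_N=x_{N+1}=z$, contradicting distinct consecutive terms, so $x_n\neq z$ for all $n\ge N$. Applying the lemma to the tail $(x_n)_{n\ge N}$ then gives $d_{\mathcal{K}}(x_n-z)/\norm{x_n-z}=d_{\mathcal{K}}(u_n)\to 1$.

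I expect the only genuine obstacle to be bookkeeping around the hypothesis of \cref{direct-th}\cref{direct-th-2}: that result is stated under ``$x_n\neq z$ for all $n$'', so it must be invoked on an appropriate subsequence or tail rather than on $(x_n)_{n\in\NN}$ itself, and one must separately dispose of indices with $x_n=z$ --- trivially for the first inequality, and, for the ``moreover'' statement, by excluding them from a tail using \cref{cycl-prop-inf*-2cases} together with the distinct-consecutive-terms hypothesis. The cone identities, the positive homogeneity of $d_{\mathcal{K}}$, and the subsequence extractions are all routine finite-dimensional manipulations.
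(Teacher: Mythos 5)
Your proposal is correct and follows essentially the same route as the paper: both rest on \cref{direct-th}\cref{direct-th-2} to place the cluster points of the normalized directions $(x_n-z)/\|x_n-z\|$ in $\mathcal{K}^\ominus$, on finite-dimensional compactness of the unit sphere, and on \cref{cycl-prop-inf*-2cases} (via \cref{dist-remark}) to secure $x_n\neq z$ eventually. The only cosmetic difference is the concluding step --- the paper extracts $P_{\mathcal{K}}(x_n)/\|x_n\|\to 0$ from the Moreau decomposition, whereas you use the equivalent identity $d_{\mathcal{K}}(u)=\|u\|$ for $u\in\mathcal{K}^\ominus$ together with the $1$-Lipschitz continuity of $d_{\mathcal{K}}$ --- and your explicit splitting of the indices into $\{n \mid x_n=z\}$ and its complement is, if anything, slightly more careful than the paper's terse appeal to \cref{dist-remark}.
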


\begin{proof}
    For notational convenience, we assume that $z = 0$. 
    In view of \cref{dist-remark}, we also assume that $(x_n)_{n \in \mathbb{N}}$ has distinct consecutive terms (otherwise, $x_n=z$ eventually and the first inequality 
    turns into $0=0$). Combined with \cref{cycl-prop-inf*-2cases}, it follows that, for some $N \in \NN$, \begin{equation*}
          (\forall n \geq N) \quad x_n \neq 0. 
    \end{equation*} Suppose that, after possibly passing to a subsequence, $$\frac{x_{n_k}}{\|x_{n_k}\|} \to w \in \mathcal{K}^{\ominus} \quad \text{and} \quad \frac{P_{\mathcal{K}}(x_{n_k})}{\|x_{n_k}\|} \to v \in \mathcal{K},$$ where the first inclusion is due to \cref{direct-th}\cref{direct-th-2}. 
    That said, \cref{moreau_decomp} yields that 
    \begin{equation*}
        \|v\|^2 = \lim_{k \to \infty}\left\langle \frac{P_{\mathcal{K}}(x_{n_k})}{\|x_{n_k}\|}, \frac{P_{\mathcal{K}}(x_{n_k})}{\|x_{n_k}\|} \right\rangle = \lim_{k \to \infty}\left\langle \frac{x_{n_k}}{\|x_{n_k}\|}, \frac{P_{\mathcal{K}}(x_{n_k})}{\|x_{n_k}\|} \right\rangle = \langle w, v\rangle \leq 0.
    \end{equation*} In particular, $v = 0$ and $P_{\mathcal{K}}(x_n)/\|x_n\| \to 0$. Finally, \begin{equation*}
        1 \geq \frac{d_{\mathcal{K}}(x_n)}{\|x_n\|} 
        = \frac{\|x_n-P_{\mathcal{K}}(x_n)\|}{\|x_n\|}
        \geq  
        1 - \frac{\|P_{\mathcal{K}}(x_n)\|}{\|x_n\|}
        \to 1
    \end{equation*} 
    as $n\to\infty$. 
\end{proof}

\begin{remark}
    In the setting of \cref{equivalency-th}, 
    as $\overline{M} \subseteq z + \overline{\operatorname{cone}}(\overline{M} - z) = z + \mathcal{K}$, we have 
    \begin{equation}
    \label{e:251210a}
        d_{\mathcal{K}}(x_n - z) = d_{z + \mathcal{K}}(x_n) \leq d_{\overline{M}}(x_n).
    \end{equation} Now assume that $z \in \overline{M}$. Then $\mathcal{K} = T_{\overline{M}}(z)$ and \begin{equation*}
        \|x_n - z\| \leq (1 + \varepsilon) d_{T_{\overline{M}}(z)}(x_n - z) \leq (1 + \varepsilon) d_{\overline{M}}(x_n)
    \end{equation*} for any $\varepsilon > 0$ fixed and all $n$ sufficiently large. Moreover, because $z \in \overline{M} \subseteq z + \mathcal{K}$, 
    we estimate 
     \begin{align*}
        \|P_{\overline{M}}(x_n) - z\|^2 &\leq \|x_n - z\|^2 - \|x_n - P_{\overline{M}}(x_n)\|^2 \leq \|x_n - z\|^2 - \|x_n - P_{z + \mathcal{K}}(x_n)\|^2\\ 
        &= \|P_{z + \mathcal{K}}(x_n) - z\|^2,
    \end{align*} 
    where the first inequality is due to the firm nonexpansiveness 
    of $P_{\overline{M}}$, the second follows from \cref{e:251210a}, and the last equality follows 
    from the Moreau decomposition (\cref{moreau_decomp}). Finally, 
    \begin{subequations}
    \label{add-equiv-1}
    \begin{align}
            \|P_{\overline{M}}(x_n) - z\| 
            &\leq \|P_{z + \mathcal{K}}(x_n) - z\| = \|P_{\mathcal{K}}(x_n - z)\| = \left(\|x_n - z\|^2 -d^2_{\mathcal{K}}(x_n - z)\right)^{\frac{1}{2}} \\ 
            &= o(\|x_n - z\|) = o(d_{\mathcal{K}}(x_n - z)) = o(d_{\overline{M}}(x_n)),
    \end{align} 
    \end{subequations}
    which explicitly demonstrates how the shadow sequence $(P_{\overline{M}}(x_n))_{n \in \NN}$ outperforms the initial one if the latter has a limit $x_n \to z \in \overline{M}$.
\end{remark}

\begin{remark}
    Since $\langle \cdot, \cdot \rangle$ is not weakly–weakly continuous, \cref{equivalency-th} cannot be directly extended to infinite-dimensional spaces unless additional restrictive assumptions are imposed on the directional sequences — for instance, one might require that \begin{equation*}
        % \text{either} \quad 
        \left\{\frac{x_n - z}{\|x_n - z\|}\right\}_{n \in \NN} \quad \text{or} \quad \left\{\frac{P_{\mathcal{K}}(x_n - z)}{\|x_n - z\|}\right\}_{n \in \NN}
    \end{equation*} be relatively compact. As a matter of fact, the conclusions of \cref{equivalency-th} generally do not hold in the infinite-dimensional case. To see this, consider the \fejer* monotone sequence $(x_n)_{n \in \NN}$ in $\ell_2(\NN)$ given by \cref{int-nonempty-inf-example} and take the same target set $$M = \{y \in X \mid \langle y, e_0 \rangle < 0\} \subseteq \underline{\lim} \, C_\NN.$$ Then $x_n \to 0$, $\overline{M} = T_{\overline{M}}(0)$, and $P_{\overline{M}}(x_n) = \alpha_n \gamma \,e_n$ for $n\geq 1$. In particular, \begin{equation}\label{int-nonempty-inf-example-dist}
        (\forall n \geq 1) \quad \|x_n\| = \alpha_n \sqrt{1 + \gamma^2} = \sqrt{1 + \gamma^2} \,d_{\overline{M}}(x_n).
    \end{equation} 
    Therefore, the constant $\Gamma := \sqrt{1 + \gamma^2}$ in \cref{int-nonempty-inf-example-dist} depends explicitly on $\gamma$ and can be arbitrarily large. Additionally, note that changing $\gamma$ alters the sequence $(x_n)_{n \in \NN}$ but no the target set $M$; in particular, if there exists a constant $\Gamma \geq 1$ providing the inequality in \cref{int-nonempty-inf-example-dist}, 
    then $\Gamma$ must depend on the target set $M$ and on the sequence $(x_n)_{n \in \NN}$ as well. As it turns out, such a constant 
    $\Gamma = \Gamma(x_\NN, M)$ always exists provided that $\ri(M) \neq \varnothing$ as we show next.
\end{remark}

\begin{theorem}\label{equiv-th-inf}
    Let $(x_n)_{n \in \NN}$ be \fejer* monotone with respect to a nonempty convex subset $M$ of $X$ and $x_n \to z \in \overline{\aff}(M)$. Assume $\ri(M) \neq \varnothing$. 
    Then, for some constant $\Gamma = \Gamma(x_\NN, M) \geq1$, \begin{equation}\label{equiv-th-inf-not-a-goal}
        (\exists N \in \NN)(\forall n \geq N) \quad \|x_n - z\| \leq \Gamma d_{\mathcal{K}}(x_n - z),
    \end{equation} where $\mathcal{K}=\overline{\operatorname{cone}}(\overline{M} - z)$. Similarly, if $(x_n)_{n \in \NN}$ has distinct consecutive terms, then $x_n \neq z$ eventually, and \begin{equation}\label{equiv-th-inf-goal}
        \liminf_{n \to \infty} \frac{d_{\mathcal{K}}(x_n - z)}{\|x_n - z\|} > 0.
    \end{equation}
\end{theorem}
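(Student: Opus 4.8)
We may translate so that $z=0$; then $A:=\overline{\aff}(M)$ is a closed linear subspace containing $0$, and $\mathcal{K}=\overline{\operatorname{cone}}(\overline{M})\subseteq A$ (since $\overline{M}\subseteq A$ and $A$ is a linear subspace). First I would dispose of the reductions. By \cref{dist-remark}, either $(x_n)_\nnn$ is eventually constant --- in which case $x_n\to 0$ forces $x_n=0$ eventually, so \cref{equiv-th-inf-not-a-goal} holds trivially --- or it has a subsequence $(x_{n_k})_{k\in\NN}$ with distinct consecutive terms; since $x_j=x_{n_k}$ whenever $n_k\le j<n_{k+1}$, an estimate $\|x_{n_k}\|\le\Gamma\,d_{\mathcal{K}}(x_{n_k})$ valid for all large $k$ propagates verbatim to the full sequence. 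Moreover, when $(x_n)_\nnn$ itself has distinct consecutive terms, \cref{cycl-prop-inf*-2cases} forces $x_n\neq 0$ eventually (the alternative ``$x_n=0$ eventually'' contradicts distinctness), which is precisely ``$x_n\neq z$ eventually'', and then \cref{equiv-th-inf-not-a-goal} is merely a restatement of \cref{equiv-th-inf-goal}. So it suffices to prove \cref{equiv-th-inf-goal} assuming distinct consecutive terms; discarding finitely many terms I may assume $x_n\neq 0$ for all $n$ and set $e_n:=x_n/\|x_n\|$ and $f_n:=P_{\mathcal{K}}(e_n)=P_{\mathcal{K}}(x_n)/\|x_n\|$ (positive homogeneity of $P_{\mathcal{K}}$), so that $\|e_n-f_n\|=d_{\mathcal{K}}(e_n)=d_{\mathcal{K}}(x_n)/\|x_n\|$.

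The first ingredient is a quantitative consequence of $\ri(M)\neq\varnothing$. Fix $y_*\in\ri(M)$. Since $M\subseteq\underline{\lim} \, C_\NN$ by \cref{eq-def}, we have $y_*\in\ri(M)=\operatorname{int}_{A}(M)\subseteq\operatorname{int}_{A}(\underline{\lim} \, C_\NN)$, so \cref{incr_inter_l-inf} provides $N_1\in\NN$ and $\rho>0$ with $B[y_*,\rho]\cap A\subseteq\bigcap_{n\ge N_1}C_n$. For $w\in B[y_*,\rho]\cap A$ and $n\ge N_1$, the sequence $(\|x_j-w\|)_{j\ge N_1}$ is non-increasing (by the definition of $C_j$) and converges to $\|w\|$ (as $x_j\to 0$), hence $\|w\|\le\|x_n-w\|$, i.e. $2\langle x_n,w\rangle\le\|x_n\|^2$. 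Taking the supremum over $w\in B[y_*,\rho]\cap A$ --- and using $\langle x_n,w\rangle=\langle P_A x_n,w\rangle$ since $w\in A$ --- yields
\begin{equation*}
2\langle x_n,y_*\rangle+2\rho\,\|P_A x_n\|\ \le\ \|x_n\|^2\qquad(n\ge N_1),
\end{equation*}
and therefore, dividing by $\|x_n\|>0$, $2\langle e_n,y_*\rangle+2\rho\,\|P_A e_n\|\le\|x_n\|\to 0$.

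Now I would argue by contradiction. Suppose $\liminf_n d_{\mathcal{K}}(x_n)/\|x_n\|=0$ and pass to a subsequence along which $\|e_n-f_n\|=d_{\mathcal{K}}(x_n)/\|x_n\|\to 0$; then $\|f_n\|\to 1$, and since $f_n\in\mathcal{K}\subseteq A$ we get $\|P_A e_n\|\to 1$ and $\langle e_n,y_*\rangle=\langle f_n,y_*\rangle+o(1)$. Feeding this into the displayed inequality gives $\limsup_n\langle f_n,y_*\rangle\le-\rho<0$. On the other hand, $(f_n)$ is bounded, $\mathcal{K}$ is closed and convex hence weakly closed, and $\|e_n-f_n\|\to 0$ combined with \cref{direct-th}\cref{direct-th-2} (which gives $\mathcal{W}((e_n)_\nnn)\subseteq(\overline{M})^\ominus=\mathcal{K}^\ominus$, using that every term is nonzero) shows that every weak cluster point of $(f_n)$ lies in $\mathcal{K}\cap\mathcal{K}^\ominus=\{0\}$; hence $f_n\rightharpoonup 0$ and $\langle f_n,y_*\rangle\to 0$, a contradiction. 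Thus $\liminf_n d_{\mathcal{K}}(x_n)/\|x_n\|>0$, which is \cref{equiv-th-inf-goal}, and by the opening paragraph this also yields \cref{equiv-th-inf-not-a-goal}.

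The main obstacle is that, unlike in the finite-dimensional \cref{equivalency-th}, one cannot extract norm-convergent subsequences of $(e_n)$. The quantitative estimate of the second paragraph, pushed on its own, only controls $x_n$ against the cone generated by $B[y_*,\rho]\cap A$ and leads to a spurious comparison between $\rho$ and $\|y_*\|$; the crux is to \emph{combine} it with the purely directional information of \cref{direct-th} --- which forces the $\mathcal{K}$-parts $f_n$ to converge weakly to $0$ --- so that the two conclusions $\limsup_n\langle f_n,y_*\rangle<0$ and $\langle f_n,y_*\rangle\to 0$ collide.
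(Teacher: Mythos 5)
Your proof is correct and follows essentially the same route as the paper: reduce to distinct consecutive terms, obtain the quantitative Raik-type inequality $2\langle x_n,y_*\rangle+2\rho\|P_{A}(x_n)\|\le\|x_n\|^2$ from $\ri(M)\neq\varnothing$ via \cref{incr_inter_l-inf} (the paper cites \cref{raiks-ineqs-statem} for this, you re-derive it), and combine it with \cref{direct-th}\cref{direct-th-2} to force a contradiction with $\|P_{\mathcal K}(x_n)\|/\|x_n\|\to 1$. The only (cosmetic) difference is the endgame: the paper establishes $x_n/\|x_n\|\rightharpoonup 0$ and then strong convergence $P_A(x_n)/\|x_n\|\to 0$, whereas you extract the contradiction by pairing the normalized cone components with the single vector $y_*$.
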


\begin{proof}
    Once again, following \cref{dist-remark}, we assume that $(x_n)_{n \in \NN}$ has distinct consecutive terms (otherwise, $x_n=z$ eventually and the first inequality turns into $0=0$). Then, \cref{cycl-prop-inf*-2cases} implies the existence of some $N \in \NN$ such that $x_n \neq z$ for all $n \geq N$. 
    For simplicity’s sake, we assume that $z = 0$ and that $N = 0$.
    It suffices to verify that \cref{equiv-th-inf-goal} holds with $z = 0$. To this end, assume to the contrary, and after passing to a subsequence if needed, that 
    \begin{equation}\label{equiv-th-inf-1}
        \lim_{n \to \infty} \frac{d_{\mathcal{K} }(x_n)}{\|x_n\|} = 0.
    \end{equation} Then, \cref{moreau_decomp} yields that $d_{\mathcal{K}}(x_n) = \|x_n - P_{\mathcal{K}}(x_n)\| = \|P_{\mathcal{K}^{\ominus}}(x_n)\|$ and \begin{equation}\label{equiv-th-inf-2}
        \frac{\|P_{\mathcal{K} }(x_n)\|^2}{\|x_n\|^2} = 1 - \frac{d^2_{\mathcal{K} }(x_n)}{\|x_n\|^2} \to 1 
        \quad \text{as } n \to \infty.
    \end{equation} Now, assume that, along some subsequence $(n_k)_{k \in \NN}$, $P_{\mathcal{K} }(x_{n_k})/\|x_{n_k}\| \rightharpoonup w$. Then $w \in \mathcal{K}$ and \begin{equation*}
    \left\langle \frac{P_{\mathcal{K} }(x_{n_k})}{\|x_{n_k}\|},  w\right\rangle = \left\langle \frac{P_{\mathcal{K} }(x_{n_k}) - x_{n_k}}{\|x_{n_k}\|},  w\right\rangle + \left\langle \frac{x_{n_k}}{\|x_{n_k}\|},  w\right\rangle, 
\end{equation*} where, due to \cref{equiv-th-inf-1}, the first term in the right hand side tends to $0$, and, for the second one, \cref{direct-th}\cref{direct-th-2} implies: \begin{equation*}
    \limsup_{k \to \infty}\left\langle\frac{x_{n_k}}{\|x_{n_k}\|},  w\right\rangle \leq 0.
\end{equation*} To summarize, \begin{equation*}
    \|w\|^2 = \lim_{k \to \infty}\left\langle \frac{P_{\mathcal{K} }(x_{n_k})}{\|x_{n_k}\|},  w\right\rangle = \limsup_{k \to \infty}\left\langle\frac{x_{n_k}}{\|x_{n_k}\|},  w\right\rangle \leq 0.
\end{equation*} In particular, $w = 0$ and $P_{\mathcal{K} }(x_{n})/\|x_{n}\| \rightharpoonup 0$ as $n \to \infty$. Due to \cref{equiv-th-inf-1}, we conclude that \begin{equation}\label{equiv-th-inf-3}
    \frac{x_n}{\|x_n\|} = \frac{P_{\mathcal{K}}(x_n)}{\|x_n\|} + \frac{P_{\mathcal{K}^{\ominus}}(x_n)}{\|x_n\|}\rightharpoonup 0.
\end{equation}

To conclude, pick any $y \in \ri(M)$ and abbreviate $A := \overline{\aff}(M)$. Then, since $x_n \to 0$, taking the limit over $m \to \infty$ in \cref{raiks-ineqs} and dividing by $\|x_n\| > 0$ yields for some $\rho > 0$ and all $n$ sufficiently large: \begin{equation}\label{equiv-th-inf-4}
    \left\langle \frac{x_n}{\|x_n\|}, x_n - 2y\right\rangle = \frac{1}{\|x_n\|}(\|x_n - y\|^2 - \|y\|^2) \geq 2\rho \frac{\|P_{A}(x_n)\|}{\|x_n\|}.
\end{equation} 
However, due to \cref{equiv-th-inf-3} and the fact that $x_n \to 0$, the left hand side of \cref{equiv-th-inf-4} converges to $0$; thus 
$P_A(x_n)/\|x_n\|\to 0$ as $n\to\infty$. Because $0\in \mathcal{K} \subseteq A$ and hence 
$P_{\mathcal{K}} = P_{\mathcal{K}}\circ P_A$, 
we have 
$\|P_{\mathcal{K}}(x_n)\| 
= \|P_{\mathcal{K}}(P_Ax_n)\|
\leq \|P_A(x_n)\|$.
Thus 
\begin{equation*}
0 \leq \frac{\|P_{\mathcal{K}}(x_n)\|}{\|x_n\|}
\leq \frac{\|P_{A}(x_n)\|}{\|x_n\|} \to 0
\end{equation*}
as $n\to \infty$. 
Hence $P_{\mathcal{K}}(x_n)/\|x_n\|\to 0$
as $n\to\infty$; however, this 
contradicts \cref{equiv-th-inf-2}.
\end{proof}

\begin{remark}
    If the assumption $\ri(M) \neq \varnothing$ is not satisfied, the conclusions of \cref{equiv-th-inf} may fail to hold in general as the following \cref{example-no_ineq} demonstrates.
\end{remark}

\begin{example}\label{example-no_ineq}
    Set $X = \ell_2(\NN)$. Pick any sequence of positive scalars $(\alpha_n)_{n \in \NN}$ so that $\alpha_n \to 0$ decreases and define $(x_n)_{n \in \NN}$ by $x_n := \alpha_n e_n$. Then the following hold: \begin{enumerate}
        \item\label{example-no_ineq-1} $(x_n)_{n \in \NN}$ is \fejer* monotone with respect to 
        $$M := \operatorname{span}\{e_n\}_{n\in \NN} = \{\lambda_0 e_0 + \ldots + \lambda_n e_n \mid \lambda_i \in \mathbb{R}, n \in \NN\}.$$
        \item\label{example-no_ineq-2} However, $(x_n)_{n \in \NN}$ has distinct consecutive terms, $x_n \to 0$, and $x_n \in M$ for all $n \in \NN$.
    \end{enumerate}
\end{example}

\begin{proof}
    Indeed, for any chosen $y \in M$, $x_n - x_{n+1} \perp y$ for all $n$ sufficiently large; hence, \begin{equation*}\begin{aligned}
        \|x_n - y\|^2 - \|x_{n+1} - y&\|^2 = \langle x_n - x_{n+1}, x_n + x_{n+1} - 2y\rangle \\ &= \langle x_n - x_{n+1}, x_n + x_{n+1} \rangle = \|x_n\|^2 - \|x_{n+1}\|^2 = \alpha^2_n - \alpha^2_{n+1} \geq 0,
    \end{aligned}
    \end{equation*} where the last inequality holds since $\alpha_n \to 0$ decreases.
\end{proof}

Next, we point out some ``exclusion'' properties of \fejer* monotone sequences with distinct consecutive terms, complementing previously given \cref{cycl-prop-inf*-2cases}.

\begin{corollary}[exclusion properties]
\label{cycl-prop-inf*2}
    Let $(x_n)_{n \in \NN}$ be a sequence with distinct consecutive terms that is \fejer* monotone with respect to a nonempty convex set $M$ with $\ri(M) \neq \varnothing$. Then the following hold: \begin{enumerate}
        \item\label{cycl-prop-inf*2-1} If $x_n \rightarrow z \in \overline{\aff}(M)$, then $x_n \not \in z + \overline{\operatorname{cone}}(\overline{M} - z)\, $ for all $n$ sufficiently large.
        \item\label{cycl-prop-inf*2-2} $x_n \not \in \overline{M}\,$ for all $n$ large enough.
    \end{enumerate}
\end{corollary}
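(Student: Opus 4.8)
The plan is to deduce \crefpart{cycl-prop-inf*2}{cycl-prop-inf*2-1} almost immediately from \cref{equiv-th-inf}, and then to bootstrap \crefpart{cycl-prop-inf*2}{cycl-prop-inf*2-2} from \crefpart{cycl-prop-inf*2}{cycl-prop-inf*2-1} by first manufacturing a strong limit via \cref{fejer-opial-lemma}.

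For \crefpart{cycl-prop-inf*2}{cycl-prop-inf*2-1}: the standing hypotheses (distinct consecutive terms, $\ri(M)\neq\varnothing$, and $x_n\to z\in\overline{\aff}(M)$) are precisely those of \cref{equiv-th-inf}. That result supplies an $N\in\NN$ with $x_n\neq z$ for all $n\geq N$, and it gives $\liminf_{n}d_{\mathcal{K}}(x_n-z)/\|x_n-z\|>0$ with $\mathcal{K}=\overline{\operatorname{cone}}(\overline{M}-z)$ (equivalently one may invoke \cref{equiv-th-inf-not-a-goal}). Since $\|x_n-z\|>0$ eventually, this forces $d_{\mathcal{K}}(x_n-z)>0$ for all $n$ sufficiently large, and because $\mathcal{K}$ is closed, $d_{\mathcal{K}}(x_n-z)>0$ is equivalent to $x_n-z\notin\mathcal{K}$, i.e., $x_n\notin z+\mathcal{K}$ for all large $n$.

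For \crefpart{cycl-prop-inf*2}{cycl-prop-inf*2-2}: I would argue by contradiction, assuming $x_n\in\overline{M}$ for infinitely many $n$. Applying \cref{fejer-opial-lemma} with the nonempty closed convex set $C:=\overline{M}\subseteq\overline{\aff}(M)$, the distances $(d_{\overline{M}}(x_n))_{n\in\NN}$ converge, and either this limit is positive or $(x_n)_{n\in\NN}$ converges to some $z\in\overline{M}$. But $d_{\overline{M}}(x_n)=0$ along the assumed infinite subsequence, so the limit is $0$; hence the dichotomy forces $x_n\to z$ for some $z\in\overline{M}\subseteq\overline{\aff}(M)$. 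Now \crefpart{cycl-prop-inf*2}{cycl-prop-inf*2-1} applies to this $z$: writing $\mathcal{K}=\overline{\operatorname{cone}}(\overline{M}-z)$, we get $x_n\notin z+\mathcal{K}$ for all large $n$. Since $\overline{M}-z\subseteq\operatorname{cone}(\overline{M}-z)\subseteq\mathcal{K}$, i.e., $\overline{M}\subseteq z+\mathcal{K}$, this yields $x_n\notin\overline{M}$ for all large $n$, contradicting the assumption. Hence $x_n\notin\overline{M}$ eventually.

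The one genuine subtlety is that \crefpart{cycl-prop-inf*2}{cycl-prop-inf*2-2} carries no convergence hypothesis, so the real work is to produce one; the dichotomy of \cref{fejer-opial-lemma} (itself powered by the finite-length shadow statement \cref{raik-*}) is exactly the tool that converts ``$x_n\in\overline{M}$ infinitely often'' into ``$x_n\to z\in\overline{M}$'', after which \crefpart{cycl-prop-inf*2}{cycl-prop-inf*2-1} closes the argument. I do not anticipate any obstacle beyond correctly assembling these already-established pieces.
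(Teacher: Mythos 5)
Your proposal is correct and follows essentially the same route as the paper: part \cref{cycl-prop-inf*2-1} from the eventual non-coincidence $x_n\neq z$ (via \cref{cycl-prop-inf*-2cases}, which is already built into \cref{equiv-th-inf}) together with the bound \cref{equiv-th-inf-not-a-goal}, and part \cref{cycl-prop-inf*2-2} by contradiction through the dichotomy of \cref{fejer-opial-lemma} applied with $C=\overline{M}$, then feeding the resulting limit back into \cref{cycl-prop-inf*2-1} using $\overline{M}\subseteq z+\overline{\operatorname{cone}}(\overline{M}-z)$. No gaps; your write-up is just a slightly more detailed version of the paper's argument.
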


\begin{proof}
    ``\cref{cycl-prop-inf*2-1}'': \cref{cycl-prop-inf*-2cases} yields that $x_n \neq z$ for all $n$ sufficiently large. Hence \cref{equiv-th-inf-not-a-goal} implies \cref{cycl-prop-inf*2-1}.
    
    ``\cref{cycl-prop-inf*2-2}'': Assume the opposite holds; that is, there exists a subsequence $(n_k)_{k \in \mathbb{N}}$ such that $x_{n_k} \in \overline{M}$. Then, \cref{fejer-opial-lemma} 
    (with $C=\overline{M}$) implies that $x_n \to z$ for some $z \in \overline{M}$. But this leads to a contradiction when considered in light of \cref{cycl-prop-inf*2}\cref{cycl-prop-inf*2-1} as $\overline{M} \subseteq z + \overline{\operatorname{cone}}(\overline{M} - z)$.
\end{proof}

\begin{remark}
    If $\ri(M) = \varnothing$, then \cref{cycl-prop-inf*2}\cref{cycl-prop-inf*2-2} may fail to hold as demonstrated by \cref{example-no_ineq}.
\end{remark}

\begin{remark}\label{closure-cor-rem}
    The fact that $x_n \not \in \overline{M}$ for all $n \in \NN$ is trivial for standard \fejer\ sequences $(x_n)_{n \in \NN}$ with distinct consecutive terms as it follows directly from \cref{def-classic}: indeed, suppose to the contrary that $x_n = y \in \overline{M}$ for some $n \in \NN$. Then, in particular, since $\overline{M} \subseteq \cap_{n \in \NN} C_n$, we have \begin{equation*}
        (\forall k \in \NN) \quad 0 = \|x_n - y\| \geq \|x_{n+k} - y\|,
    \end{equation*} which contradicts distinct consecutiveness. However, in the \fejer* case, we \emph{cannot} guarantee 
    that 
    \begin{equation*}
        P_{\overline{M}}(x_n) \in \bigcap_{k \geq n} C_k \quad \text{for all $n$ sufficiently large.}
    \end{equation*} (Indeed, one can show that \cref{example-non-decr} is 
    a counterexample.) Therefore, a more subtle approach is needed.
\end{remark}

Next, we prove the following inequality \cref{add-equiv-2}, 
which will be further discussed in \cref{section6}.

\begin{corollary}\label{add-equiv}
    In the setting of \cref{equiv-th-inf}, we additionally have: \begin{equation}\label{add-equiv-2}
        \|z - y\| \leq (\Gamma+2)\|x_n - y\|
    \end{equation} for all $n \geq N$ and all $y \in \overline{M}$.
\end{corollary}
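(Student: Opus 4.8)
The plan is to derive \cref{add-equiv-2} directly from \cref{equiv-th-inf} together with a single triangle inequality, exploiting the fact that every $y\in\overline{M}$ lies in the translated cone $z+\mathcal{K}$. Concretely, I would proceed in three short steps: first bound $d_{\mathcal{K}}(x_n-z)$ above by $\|x_n-y\|$ for each $y\in\overline{M}$; then insert this into the inequality \cref{equiv-th-inf-not-a-goal} to obtain $\|x_n-z\|\le\Gamma\|x_n-y\|$; and finally apply $\|z-y\|\le\|z-x_n\|+\|x_n-y\|$.

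For the first step, fix $y\in\overline{M}$. Since $\mathcal{K}=\overline{\operatorname{cone}}(\overline{M}-z)\supseteq\overline{M}-z$, we have $y-z\in\mathcal{K}$, and therefore
\begin{equation*}
    d_{\mathcal{K}}(x_n-z)\le\|(x_n-z)-(y-z)\|=\|x_n-y\|\qquad(\forall n\in\NN).
\end{equation*}
For the second step, take $N\in\NN$ and $\Gamma=\Gamma(x_\NN,M)\ge 1$ as furnished by \cref{equiv-th-inf}, so that $\|x_n-z\|\le\Gamma\,d_{\mathcal{K}}(x_n-z)$ for all $n\ge N$; chaining this with the first step yields $\|x_n-z\|\le\Gamma\|x_n-y\|$ for all $n\ge N$ and all $y\in\overline{M}$.

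For the last step, the triangle inequality gives, for $n\ge N$ and $y\in\overline{M}$,
\begin{equation*}
    \|z-y\|\le\|z-x_n\|+\|x_n-y\|\le\Gamma\|x_n-y\|+\|x_n-y\|=(\Gamma+1)\|x_n-y\|\le(\Gamma+2)\|x_n-y\|,
\end{equation*}
which is exactly \cref{add-equiv-2}. I do not anticipate any genuine obstacle here: the only point requiring (minimal) attention is the inclusion $\overline{M}-z\subseteq\mathcal{K}$ used in the first step, and in fact the argument delivers the slightly sharper constant $\Gamma+1$, the stated $\Gamma+2$ merely leaving a little room to spare.
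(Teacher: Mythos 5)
Your proof is correct, and it takes a genuinely different --- and in fact shorter --- route than the paper's. The paper first invokes the exclusion property (\cref{cycl-prop-inf*2}\cref{cycl-prop-inf*2-2}) to ensure $x_n \notin \overline{M}$ eventually, then splits $\|z-y\|$ through the shadow point $P_{\overline{M}}(x_n)$, bounding $\|P_{\overline{M}}(x_n)-y\|/\|x_n-y\|$ by $1$ via nonexpansiveness of the projection and $\|z-P_{\overline{M}}(x_n)\|/d_{\overline{M}}(x_n)$ by $\Gamma+1$ via \cref{equiv-th-inf} and \cref{e:251210a}; summing the two pieces yields the constant $\Gamma+2$. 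You instead observe directly that $y-z\in\overline{M}-z\subseteq\mathcal{K}$ gives $d_{\mathcal{K}}(x_n-z)\leq\|x_n-y\|$, chain this with \cref{equiv-th-inf-not-a-goal} to get $\|x_n-z\|\leq\Gamma\|x_n-y\|$, and finish with one triangle inequality. This bypasses the projection, the exclusion corollary, and the reduction to distinct consecutive terms entirely (note that \cref{equiv-th-inf-not-a-goal} holds in the degenerate case too, trivially as $0\leq 0$), and it delivers the sharper constant $\Gamma+1$. Both arguments ultimately rest on the same key input, namely \cref{equiv-th-inf}; what the paper's detour through $P_{\overline{M}}(x_n)$ buys is essentially nothing here beyond consistency with the surrounding estimates such as \cref{add-equiv-2-fn-1}, so your version is a clean simplification.
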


\begin{proof} 
    Following \cref{dist-remark} once again, we assume that $(x_n)_{n \in \NN}$ has distinct consecutive terms; otherwise, $x_n=z$ eventually and the inequality turns into 
    $$\|z - y\| \leq (\Gamma+2) \|z - y\|.$$ 
    \cref{cycl-prop-inf*2}\cref{cycl-prop-inf*2-2} implies that $x_n \not \in \overline{M}$ for all $n$ sufficiently large. That said, for all $y \in \overline{M}$, we have \begin{equation*}
        \frac{\|z - y\|}{\|x_n - y\|} \leq \frac{\|P_{\overline{M}}(x_n) - y\|}{\|x_n - y\|} + \frac{\|z - P_{\overline{M}}(x_n)\|}{\|x_n - y\|} \leq \frac{\|P_{\overline{M}}(x_n) - y\|}{\|x_n - y\|} + \frac{\|z - P_{\overline{M}}(x_n)\|}{d_{\overline{M}}(x_n)}.
    \end{equation*} However, since $P_{\overline{M}}$ is a nonexpansive operator and $y = P_{\overline{M}}(y)$, we have \begin{equation}\label{add-equiv-2-fn}
        \|P_{\overline{M}}(x_n) - y\| = \|P_{\overline{M}}(x_n) - P_{\overline{M}}(y)\| \leq \|x_n - y\|.
    \end{equation} 
    Moreover, due to \cref{equiv-th-inf} and \cref{e:251210a},
     we have \begin{equation}\label{add-equiv-2-fn-1}
        \frac{\|z - P_{\overline{M}}(x_n)\|}{d_{\overline{M}}(x_n)} \leq \frac{\|x_n - z\|}{d_{\overline{M}}(x_n)} + \frac{\|x_n - P_{\overline{M}}(x_n)\|}{d_{\overline{M}}(x_n)} 
        \leq \Gamma + 1.
    \end{equation} Altogether, \cref{add-equiv-2-fn} and \cref{add-equiv-2-fn-1} combined imply \cref{add-equiv-2}.
\end{proof}

To conclude this section, we provide the \fejer* counterpart of \cite[Theorem 5.12]{BC2017}.

\begin{corollary}\label{linear-conv*}
Let $(x_n)_{n \in \NN}$ be \fejer* monotone with respect to a nonempty convex subset $M$ of $X$ such that $\ri(M) \neq \varnothing$. Suppose that, for some $r \in [0, 1)$, \begin{equation}\label{kappa-ineq-dist1}
(\forall n \in \NN) \quad d_{\overline{M}} (x_{n+1}) \leq r d_{\overline{M}} (x_{n}).
\end{equation} 
Then $(x_n)_{n \in \NN}$ converges linearly to a point in $\overline{M}$ in the sense that, 
for some $\Gamma = \Gamma(x_{\NN}, M) \geq 1$ and some $z \in \overline{M}$, \begin{equation}\label{kappa-ineq-dist2}
(\exists N \in \NN)(\forall n \geq N) \quad \|x_n - z\| 
\leq \Gamma r^n d_{\overline{M}} (x_{0}).
\end{equation}
\end{corollary}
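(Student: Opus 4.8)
The plan is to reduce the statement to the combination of \cref{fejer-opial-lemma} and \cref{equiv-th-inf}, which between them already do almost all of the work; essentially no new estimate is needed. First I would iterate the contraction hypothesis \cref{kappa-ineq-dist1} to get $d_{\overline{M}}(x_n) \leq r^n d_{\overline{M}}(x_0)$ for every $n \in \NN$; since $r \in [0,1)$, this forces $\lim_{n\to\infty} d_{\overline{M}}(x_n) = 0$.

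Next, observe that $\overline{M}$ is a nonempty closed convex subset of $\overline{\aff}(M)$ and $\ri(M) \neq \varnothing$, so \cref{fejer-opial-lemma} applies with $C = \overline{M}$. Its dichotomy says that either $\lim_{n\to\infty} d_{\overline{M}}(x_n) > 0$ or $(x_n)_{n\in\NN}$ converges to some $z \in \overline{M}$; having just ruled out the first alternative, we conclude $x_n \to z$ for some $z \in \overline{M} \subseteq \overline{\aff}(M)$. In particular $(x_n)_{n\in\NN}$ converges to a point of $\overline{M}$, which is part of what is claimed.

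Now I would invoke \cref{equiv-th-inf} with this limit point $z$: it produces a constant $\Gamma = \Gamma(x_{\NN}, M) \geq 1$ and an index $N \in \NN$ such that $\|x_n - z\| \leq \Gamma\, d_{\mathcal{K}}(x_n - z)$ for all $n \geq N$, where $\mathcal{K} = \overline{\operatorname{cone}}(\overline{M} - z)$. Since $z \in \overline{M}$, every $m \in \overline{M}$ satisfies $m - z \in \overline{M} - z \subseteq \mathcal{K}$, so $\overline{M} \subseteq z + \mathcal{K}$ and hence $d_{\mathcal{K}}(x_n - z) = d_{z + \mathcal{K}}(x_n) \leq d_{\overline{M}}(x_n)$ (cf.\ \cref{e:251210a}). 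Chaining these bounds with the first step yields, for all $n \geq N$, $\|x_n - z\| \leq \Gamma\, d_{\overline{M}}(x_n) \leq \Gamma r^n d_{\overline{M}}(x_0)$, which is exactly \cref{kappa-ineq-dist2}.

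There is no genuinely hard step here; the difficulty was front-loaded into the auxiliary results. If anything deserves to be called the main point, it is the recognition that the geometric decay of the distances is precisely the condition that drives \cref{fejer-opial-lemma} into its strong-convergence branch, after which \cref{equiv-th-inf} upgrades strong convergence of the shadow-distance to genuine linear convergence of $(x_n)_{n\in\NN}$ itself. The only bookkeeping worth spelling out is the degenerate case $d_{\overline{M}}(x_0) = 0$: then $d_{\overline{M}}(x_n) = 0$ for all $n$, so the chain of inequalities forces $x_n = z$ for $n \geq N$ and \cref{kappa-ineq-dist2} holds trivially as $0 \leq 0$.
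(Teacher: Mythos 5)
Your proposal is correct and follows essentially the same route as the paper's own proof: iterate \cref{kappa-ineq-dist1}, use \cref{fejer-opial-lemma} to obtain $x_n\to z\in\overline{M}$, then chain \cref{equiv-th-inf} with \cref{e:251210a}. The only difference is that you spell out the inclusion $\overline{M}\subseteq z+\mathcal{K}$ and the degenerate case explicitly, which the paper leaves implicit.
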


\begin{proof}
Indeed, due to \cref{kappa-ineq-dist1}, we have \begin{equation*}
(\forall n \in \NN) \quad d_{\overline{M}} (x_{n}) \leq r^n d_{\overline{M}} (x_{0})
\end{equation*} and $\lim_{n \to \infty} d_{\overline{M}} (x_n) = 0$. In particular, \cref{fejer-opial-lemma} implies that $x_n \rightarrow z \in \overline{M}$. 
Therefore, due to \cref{equiv-th-inf} and \cref{e:251210a}, we have \begin{equation*}
(\exists N \in \NN)(\forall n \geq N) \quad \|x_n - z\| 
\leq \Gamma d_{\mathcal{K}}(x_n-z) \leq \Gamma d_{\overline{M}}(x_n) 
\leq \Gamma r^n d_{\overline{M}}(x_0),
\end{equation*} which yields \cref{kappa-ineq-dist2}. \end{proof}

\begin{remark}\label{QR-lin-remark}
    As one can see, the inequalities outlined in \cref{equivalency-th} or \cref{equiv-th-inf} allow us to obtain any results on linear convergence similar to \cref{linear-conv*}. Following Behling, Bello-Cruz, Iusem, Liu, and Santos in \cite[Theorem 2.6(ii)]{BBCILS}, we can assert that under the assumptions of \cref{equivalency-th}, as soon as the scalar sequence of distances $(d_{\overline{M}} (x_{n}))_{n \in \NN}$ converges to zero $Q-$ or $R-$linearly with asymptotic constant $\eta$, the sequence of points $(x_{n})_{n \in \NN}$ converges to $z \in \overline{M}$ $Q-$ or $R-$linearly, respectively, with the same asymptotic constant (for the definitions of these types of linear convergence, see \cite[Appendix A.2, Rates of Convergence]{NoWr}.)
\end{remark}

\begin{remark}[patching a gap in the literature]
\label{r:patch}
    In our view, the proof of \cite[Theorem 2.6(ii)]{BBCILS} contains a significant flaw. To avoid any confusion, we recall the notation and assumptions made by the authors. Specifically, their sequence $(y^k)_{k \in \NN}$ is assumed to be \fejer* monotone with respect to $\operatorname{int}(C)$, where $C \subseteq \mathbb{R}^n$ is nonempty, closed, and convex, and having a limit $y^* \in C$. Then, the step where the existence of some $\hat{y} \in \operatorname{int}(C)$ satisfying \begin{equation}\label{authors-SIAM-problem}
        \|\hat{y} - z^k\| \leq \|y^k - z^k\|,
    \end{equation} with $z^k := P_C(y^k)$ is asserted seems ambiguous to us.
    
    Indeed, it is not clear whether such choice of $\hat{y} \in \operatorname{int}(C)$ has to depend on $k$. 
    
    \textit{Case 1}: If $\hat{y} \in \operatorname{int}(C)$ is allowed to vary with $k$, then the choice of $\hat{y} = \hat{y}(k)$ is indeed possible and not unique. However, the following arguments break down: the key inequalities in the proof (see \cite[(2.4)–(2.5)]{BBCILS}) are stated for $k \geq N(\hat{y})$, but it is not clear whether $k \geq N(\hat{y}(k))$ holds for all sufficiently large $k$.

    \textit{Case 2}: Conversely, suppose $\hat{y}$ is chosen as in \cref{authors-SIAM-problem} for all sufficiently large $k \in \NN$, i.e., in a way that does not depend on $k$. Then passing to the limit in \cref{authors-SIAM-problem} would give \begin{equation*}
        \|\hat{y} - P_C(y^*)\| = \|\hat{y} - y^*\| \leq \lim_{k \to \infty} \|y^k - z^k\| = 0,
    \end{equation*} implying that $\hat{y} = y^*$ making this choice of $\hat{y}$ predetermined. Moreover, if the sequence is assumed to be converging to $y^* \in C$ from outside of the set $C$, i.e. $y^k \not \in C$, then $y^* \in \operatorname{bdry}(C)$ making the choice of $\hat{y} \in \operatorname{int}(C)$ satisfying \cref{authors-SIAM-problem} impossible at all. 
    
    While the inequality \cref{authors-SIAM-problem} with $\hat{y} = y^*$ and for all sufficiently large $k$ is consistent with \cref{add-equiv-1}, its use in their proof still requires justification. Once established, however, this inequality streamlines their proof and makes the outlined conclusions solid and fully justified.
\end{remark}

\section{On the structure of maximal \fejer* sets  with nonempty interior}\label{section3}

In this section, 
assuming that the \fejer* set $M$ has a nonempty interior, we are able to explicitly characterize the entire maximal \fejer* set $\underline{\lim} \, C_\NN$ up to its boundary. 
We also present an analogous result for the maximal Opial set. 

\begin{definition}\label{def-non-repeat}
    For a given sequence $(x_n)_{n \in \NN}$ of points in $X$, we introduce the following notation: 
    \begin{empheq}[box=\mybluebox]{equation}\label{cone-notation}
        (\forall n\in \NN) \quad v_n := \begin{cases} 
        \displaystyle \frac{x_n - x_{n+1}}{\|x_n - x_{n+1}\|}, & \text { if } x_{n+1} \neq x_n; \\[+5mm] 0, & \text { otherwise, }\end{cases}
    \end{empheq} 
and 
    \begin{empheq}[box=\mybluebox]{equation}
    \label{laterK}
    \mathcal{D} := \mathcal{W}((v_n)_{n \in \mathbb{N}}), 
    \;\;\text{ and }\;\;
    \mathcal{K} := (\overline{\operatorname{cone}}(\mathcal{D}))^{\ominus}. 
    \end{empheq} 
\end{definition}
Note that \cref{laterK} differs from the definition 
\cref{earlierK} in the previous section. 

Prior to presenting \cref{lemma-cone-inf}, we highlight that \cref{direct-th} already provides a localization of the maximal \fejer* set $\underline{\lim} \, C_\NN$ in terms of the directional asymptotics $\mathcal{D}$: 

\begin{corollary}\label{cone-th}
    Let $(x_n)_{n \in \NN}$ be a sequence in $X$ such that $x_n \to z \in X$. Then \begin{equation}\label{cone-inclusion}
            \underline{\lim} \, C_\NN \subseteq (\overline{\operatorname{cone}}(\mathcal{D}))^{\ominus} + z 
            =\mathcal{K}+z.
        \end{equation}
\end{corollary}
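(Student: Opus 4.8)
The plan is to reduce \cref{cone-inclusion} to a single polarity computation. Since the polar cone of a set coincides with the polar cone of its closed conic hull, we have $\mathcal{D}^{\ominus} = (\overline{\operatorname{cone}}(\mathcal{D}))^{\ominus} = \mathcal{K}$, so it suffices to prove that every $y \in \underline{\lim}\,C_\NN$ satisfies $y - z \in \mathcal{D}^{\ominus}$, i.e.\ $\langle w, y - z\rangle \leq 0$ for all $w \in \mathcal{D}$. By \cref{eq-def}, the membership $y \in \underline{\lim}\,C_\NN$ means precisely that there is $N_0 \in \NN$ with $\|x_{n+1} - y\| \leq \|x_n - y\|$ (equivalently $y \in C_n$) for all $n \geq N_0$.

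So fix $y \in \underline{\lim}\,C_\NN$ together with such an $N_0$, and fix $w \in \mathcal{D}$. If $w = 0$ the desired inequality is trivial, so assume $w \neq 0$ and choose a subsequence with $v_{n_k} \rightharpoonup w$. Each $v_{n_k}$ is either $0$ or a unit vector, so weak lower semicontinuity of the norm together with $w \neq 0$ forces $v_{n_k} \neq 0$ for all large $k$; for those $k$ we have $x_{n_k+1} \neq x_{n_k}$ and $v_{n_k} = (x_{n_k} - x_{n_k+1})/\|x_{n_k} - x_{n_k+1}\|$ by \cref{cone-notation}. For $k$ large enough that additionally $n_k \geq N_0$, the identity $\|a\|^2 - \|b\|^2 = \langle a - b, a + b \rangle$ (cf.\ the proof of \cref{hsp-lemma}) gives
\[
0 \;\leq\; \|x_{n_k} - y\|^2 - \|x_{n_k+1} - y\|^2 \;=\; \big\langle x_{n_k} - x_{n_k+1},\; (x_{n_k} - z) + (x_{n_k+1} - z) + 2(z - y) \big\rangle .
\]
Dividing by $\|x_{n_k} - x_{n_k+1}\| > 0$ and letting $k \to \infty$, the term $\langle v_{n_k}, (x_{n_k} - z) + (x_{n_k+1} - z)\rangle$ vanishes (its absolute value is at most $\|(x_{n_k}-z)+(x_{n_k+1}-z)\| \to 0$ since $x_n \to z$), while $\langle v_{n_k}, z - y\rangle \to \langle w, z - y\rangle$; hence $\langle w, z - y \rangle \geq 0$, that is, $\langle w, y - z \rangle \leq 0$.

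Since $w \in \mathcal{D}$ was arbitrary, $y - z \in \mathcal{D}^{\ominus} = \mathcal{K}$, and since $y \in \underline{\lim}\,C_\NN$ was arbitrary this proves $\underline{\lim}\,C_\NN \subseteq \mathcal{K} + z$; the displayed equality in \cref{cone-inclusion} is just \cref{laterK}. I do not anticipate a genuine obstacle: this argument is essentially the proof of \cref{direct-th}\cref{direct-th-1} specialized to the singleton convex set $M = \{y\}$, the only points needing care being the passage to a subsequence with $v_{n_k} \neq 0$ (so that the normalization is legitimate) and the elementary fact that passing to the closed conic hull leaves the polar cone unchanged. One could instead invoke \cref{direct-th} verbatim when $(x_n)_\nnn$ has distinct consecutive terms and dispatch the eventually-constant case separately via \cref{dist-remark} (there $\mathcal{D} = \{0\}$, so $\mathcal{K} = X$ and the inclusion is vacuous), but running the computation directly avoids having to reconcile the normalized-difference sequence of a subsequence with that of the original sequence.
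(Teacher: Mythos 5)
Your proof is correct. It establishes exactly the required inclusion: for $y\in\underline{\lim}\,C_\NN$ and $w\in\mathcal{D}$ you obtain $\langle w,y-z\rangle\leq 0$ from the identity $\|x_{n}-y\|^2-\|x_{n+1}-y\|^2=\langle x_{n}-x_{n+1},x_{n}+x_{n+1}-2y\rangle\geq 0$ (valid eventually by \cref{eq-def}), normalization, and passage to the weak limit; the step ruling out $v_{n_k}=0$ along the subsequence via weak lower semicontinuity of the norm is sound, and $\mathcal{D}^\ominus=(\overline{\operatorname{cone}}(\mathcal{D}))^\ominus$ is standard. The paper instead argues at the level of sets: it disposes of the eventually-constant case (where $\mathcal{K}=X$), reduces to distinct consecutive terms via \cref{dist-remark}, applies \cref{direct-th}\cref{direct-th-1} to $M:=\underline{\lim}\,C_\NN$ to get $\mathcal{D}\subseteq(\overline{M}-z)^\ominus$, and then takes polars. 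The underlying inequality is the same one driving \cref{direct-th}, so the mathematical content coincides; what your inlined, pointwise version buys is that it needs no case analysis and no subsequence reduction --- in particular it avoids having to check that both $\underline{\lim}\,C_\NN$ \emph{and} $\mathcal{D}$ are preserved when passing to the reduced sequence of \cref{dist-remark} (only the former is addressed there), and it handles a possible $0\in\mathcal{D}$ directly. What the paper's version buys is brevity, since \cref{direct-th} is already available at that point in the text.
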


\begin{proof}
    If there exists $N \in \NN$ such that $x_n = x_N$ for all $n \geq N$, then \cref{cone-inclusion} holds indeed since $$\mathcal{D} = \{0\} \, \text{ and } \, \left(\overline{\operatorname{cone}}(\mathcal{D})\right)^{\ominus} = X.$$ Otherwise, following \cref{dist-remark}, assume $(x_n)_{n \in \NN}$ has distinct consecutive terms. Since the inclusion \cref{cone-inclusion} clearly holds for the case when $\underline{\lim} \, C_\NN = \varnothing$, we may assume the opposite, i.e., $M:= \underline{\lim} \, C_\NN \neq \varnothing$. By \cref{eq-def}, $(x_n)_{n \in \NN}$ is \fejer* monotone with respect to $M$. By \cref{direct-th}\cref{direct-th-1}, \begin{equation*}
        \mathcal{D} \subseteq (\overline{M} - z)^{\ominus}.
    \end{equation*} Taking polar cones implies that \begin{equation*}
        M - z \subseteq \overline{\operatorname{cone}}(\overline{M} - z) \subseteq (\overline{M} - z)^{\ominus\ominus} \subseteq \mathcal{D}^{\ominus} = (\overline{\operatorname{cone}}(\mathcal{D}))^{\ominus}.
    \end{equation*} Thus, $M \subseteq (\overline{\operatorname{cone}}(\mathcal{D}))^{\ominus} + z$ and \cref{cone-inclusion} holds. \end{proof}

\begin{theorem}\label{lemma-cone-inf}
    Let $(x_n)_{n \in \NN}$ be a sequence of points in $X$ with distinct consecutive terms, and recall \cref{laterK}.
    Then TFAE:
    
    \begin{enumerate}
        \item\label{lemma-cone-inf1} $(x_n)_{n \in \NN}$ is \fejer* monotone with respect to a convex set $M$ with $\operatorname{int}(M) \neq \varnothing$.
        \item\label{lemma-cone-inf2} $(x_n)_{n \in \NN}$ is bounded, $\mathcal{K}$ is solid, and $0 \not \in \mathcal{D}$.
    \end{enumerate} Moreover, if 
    \cref{lemma-cone-inf1} or \cref{lemma-cone-inf2} holds, 
    then $(x_n)_{n \in \NN}$ has a limit $z \in X$ and \begin{equation}\label{cone-int-equal-inf}
        \operatorname{int}(\underline{\lim} \, C_\NN) = \operatorname{int}(\mathcal{K}) + z,\quad \overline{\underline{\lim} \, C_\NN} = \mathcal{K} + z.
    \end{equation}
\end{theorem}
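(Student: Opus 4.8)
The plan is to prove $\cref{lemma-cone-inf1}\Leftrightarrow\cref{lemma-cone-inf2}$ simultaneously with the convergence of $(x_n)_\nnn$ and the two displayed identities, using three ingredients. First, by \cref{hsp-lemma} and \cref{hsp-lemma-remark}, and since $(x_n)_\nnn$ has distinct consecutive terms (so $v_n=(x_n-x_{n+1})/\|x_n-x_{n+1}\|$ always, the ``otherwise'' case of \cref{cone-notation} being vacuous), each $C_n$ is the halfspace $\{y\mid\langle y-m_n,v_n\rangle\le 0\}$ with $m_n:=(x_n+x_{n+1})/2$; note $m_n\to z$ once a limit $z$ of $(x_n)_\nnn$ is available. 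Second, \cref{incr_inter_l-inf} with $A:=X$ lets me pass between $\operatorname{int}(\underline{\lim}\,C_\NN)$ and the $\operatorname{int}(\bigcap_{n\ge N}C_n)$. Third, \cref{int-dual-cone-prop} translates solidity of $\mathcal{K}=(\overline{\operatorname{cone}}(\mathcal{D}))^{\ominus}$ into the quantitative fact that $u\in\operatorname{int}(\mathcal{K})$ iff $\langle k,u\rangle\le-\delta\|k\|$ for all $k\in\overline{\operatorname{cone}}(\mathcal{D})$ and some $\delta>0$. I also record that $(v_n)_\nnn$ lies in the unit sphere, so $\mathcal{D}$ is a nonempty subset of $B[0,1]$.

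For $\cref{lemma-cone-inf1}\Rightarrow\cref{lemma-cone-inf2}$: boundedness is \cref{b-prop}\cref{b-prop1}. Since $\operatorname{int}(M)\neq\varnothing$ forces $\overline{\aff}(M)=X$ and $\ri(M)=\operatorname{int}(M)\neq\varnothing$, \cref{raik-*} gives $x_n=P_{\overline{\aff}(M)}(x_n)\to z$ for some $z\in X$. Fixing $y_0\in\operatorname{int}(M)\subseteq\operatorname{int}(\underline{\lim}\,C_\NN)$ (via \cref{eq-def}), \cref{incr_inter_l-inf} yields $N\in\NN$, $\rho>0$ with $B[y_0,\rho]\subseteq\bigcap_{n\ge N}C_n$; testing the halfspace inequality at $y_0+\rho v_n$ gives $\langle z-y_0,v_n\rangle\ge\rho-\|m_n-z\|$ for $n\ge N$, hence $\langle z-y_0,v\rangle\ge\rho>0$ for every $v\in\mathcal{D}$. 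In particular $0\notin\mathcal{D}$; and since $\|v\|\le 1$, the bound passes (conically, then by continuity to the closed conic hull) to $\langle z-y_0,k\rangle\ge\rho\|k\|$ for all $k\in\overline{\operatorname{cone}}(\mathcal{D})$, so $y_0-z\in\operatorname{int}(\mathcal{K})$ by \cref{int-dual-cone-prop} and $\mathcal{K}$ is solid.

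For $\cref{lemma-cone-inf2}\Rightarrow\cref{lemma-cone-inf1}$ and the ``moreover'' part, the heart is the claim that, once $x_n\to z$,
\[
z+\operatorname{int}(\mathcal{K})\subseteq\operatorname{int}\big(\underline{\lim}\,C_\NN\big).
\]
Given $u\in\operatorname{int}(\mathcal{K})$, \cref{int-dual-cone-prop} gives $\delta>0$ with $\langle v,u\rangle\le-\delta\|v\|$ for all $v\in\mathcal{D}$; together with $0\notin\mathcal{D}$ and a weak-compactness/subsequence argument this forces $\limsup_n\langle v_n,u\rangle\le-\varepsilon<0$ for some $\varepsilon>0$, so $\langle v_n,u\rangle\le-\varepsilon$ for $n\ge N_1$. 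Picking $N_2$ with $\|m_n-z\|\le\varepsilon/2$ for $n\ge N_2$ and setting $r:=\varepsilon/2$, one checks from the halfspace description that $B[z+u,r]\subseteq C_n$ for all $n\ge\max\{N_1,N_2\}$, proving the claim via \cref{incr_inter_l-inf}. To obtain convergence from \cref{lemma-cone-inf2} alone, pick $u\in\operatorname{int}(\mathcal{K})$ and run the same $\limsup$ argument; then $\langle x_{N_1}-x_{n+1},u\rangle=\sum_{k=N_1}^{n}\|x_k-x_{k+1}\|\langle v_k,u\rangle\le-\varepsilon\sum_{k=N_1}^{n}\|x_k-x_{k+1}\|$, and boundedness of $(x_n)_\nnn$ bounds the left side, so $\sum_n\|x_n-x_{n+1}\|<\infty$ and $(x_n)_\nnn$ is Cauchy. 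With the claim, $M:=z+\operatorname{int}(\mathcal{K})$ is a nonempty open convex subset of $\underline{\lim}\,C_\NN$, so \cref{eq-def} gives \cref{lemma-cone-inf1}. Finally, \cref{cone-th} gives $\underline{\lim}\,C_\NN\subseteq\mathcal{K}+z$, hence $\operatorname{int}(\underline{\lim}\,C_\NN)\subseteq\operatorname{int}(\mathcal{K})+z$; combined with the claim this is the first identity, and taking closures together with $\overline{\operatorname{int}(\mathcal{K})}=\mathcal{K}$ yields the second.

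The main obstacle is the convergence step under \cref{lemma-cone-inf2}: solidity of $\mathcal{K}$ and $0\notin\mathcal{D}$ must jointly be leveraged to get $\sum_n\|x_n-x_{n+1}\|<\infty$, and the delicate point is justifying $\limsup_n\langle v_n,u\rangle<0$ — precisely where $0\notin\mathcal{D}$ is indispensable, as it rules out a weak cluster point $v$ of $(v_n)_\nnn$ with $\langle v,u\rangle=0$ (which, by the bound $\langle v,u\rangle\le-\delta\|v\|$, would force $v=0\in\mathcal{D}$). Everything else is bookkeeping with the halfspaces $C_n$ and the dual-cone characterization, though one must be careful about the orientation of the normals $v_n$ in $C_n=\{y\mid\langle y-m_n,v_n\rangle\le 0\}$.
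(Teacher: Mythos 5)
Your proposal is correct and follows essentially the same route as the paper's proof: both directions rest on \cref{incr_inter_l-inf} and \cref{int-dual-cone-prop}, convergence under \cref{lemma-cone-inf2} comes from the same telescoping estimate forcing $\sum_{n}\|x_n-x_{n+1}\|<+\infty$, and \cref{cone-int-equal-inf} is assembled from the inclusion $z+\operatorname{int}(\mathcal{K})\subseteq\operatorname{int}(\underline{\lim}\,C_\NN)$ combined with \cref{cone-th}. Your direct halfspace computation in the forward direction is simply an unrolled version of the paper's appeal to \cref{raiks-ineqs}, and your subsequence justification that $\limsup_{n}\langle v_n,u\rangle<0$ (the precise point where $0\notin\mathcal{D}$ enters) is, if anything, slightly more careful than the paper's assertion of the explicit constant $\delta/2$ at that step.
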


\begin{proof} ``\cref{lemma-cone-inf1}$\Rightarrow$\cref{lemma-cone-inf2}'':
\cref{raik-*} implies $x_n \to z \in X$. 
Let $y \in \operatorname{int}(M)$. 
Dividing \cref{raiks-ineqs} 
by $\|x_{n+1} - x_n\| > 0$ gives
\begin{equation}\label{lemma-cone-inf-1}
    \left\langle \frac{x_{n} - x_{n+1}}{\|x_{n} - x_{n+1}\|}, x_n + x_{n+1} - 2y\right\rangle = \frac{1}{\|x_{n} - x_{n+1}\|} (\|x_n - y\|^2 - \|x_{n+1} - y\|^2) \geq 2\rho > 0.
\end{equation} Now, by taking the limits over all possible subsequences $n_k \to \infty$ in the left hand side of \cref{lemma-cone-inf-1}, we conclude that $\langle w, z - y\rangle \geq \rho$ for all $w \in \mathcal{D}$. Therefore, $0 \not \in \mathcal{D}$, $y-z\in\inte(\mathcal{K})$ by 
\cref{int-dual-cone-prop}, and $\mathcal{K}$ is solid.

``\cref{lemma-cone-inf2}$\Rightarrow$\cref{lemma-cone-inf1}'': 
Let $u \in \operatorname{int}(\mathcal{K})$, i.e., $\langle k, u\rangle \geq \delta \|k\|$ for all $k \in \mathcal{K}^{\oplus} = -\mathcal{K}^{\ominus}$ and some $\delta \in \mathbb{R}_{++}$ according to \cref{int-dual-cone-prop}. Because $0 \not \in \mathcal{D}$ and $\mathcal{K} = (\overline{\operatorname{cone}}(\mathcal{D}))^{\ominus}$, we conclude \begin{equation*}
    \left\langle x_{n + 1} - x_n, u\right\rangle \geq \frac{\delta}{2} \|x_{n + 1} - x_{n }\|
\end{equation*} for all $n$ sufficiently large, say $n \geq N$. 
Consequently, 
\begin{equation*}
        (\forall n \geq N) \quad \langle x_{n+1} - x_{N}, u \rangle = \sum_{\ell = N}^{n}\langle x_{\ell+1} - x_{\ell}, u\rangle \geq \frac{\delta}{2} \sum_{\ell = N}^{n} \|x_{\ell+1} - x_{\ell}\|.
    \end{equation*} 
    Next, as $(x_n)_{n \in \NN}$ is bounded, pick any weakly convergent subsequence $x_{n_k} \rightharpoonup z$. Then \begin{equation*}
        \frac{\delta}{2} \sum_{\ell = N}^{\infty} \|x_{\ell+1} - x_{\ell}\| \leq \lim_{k \to \infty}\sum_{\ell = N}^{n_k - 1}\langle u, x_{\ell+1} - x_{\ell}\rangle = \lim_{k \to \infty} \langle u, x_{n_k} - x_{N}\rangle = \langle u, z - x_N\rangle < +\infty,
    \end{equation*} which implies that $x_n \to z$. Finally, for all $b \in B[0, 1]$, we have \begin{equation*}
        \left\langle x_{n+1} - x_n , \, u + \frac{\delta}{4} b + z -  \frac{x_{n+1} + x_n}{2}\right\rangle \geq \left(\frac{\delta}{4} - \frac{\|x_{n+1} + x_{n} - 2z\|}{2} \right)\|x_{n+1} - x_n\| > 0
    \end{equation*} for all $n$ sufficiently large. In particular, due to \cref{hs} and \cref{hsp-lemma}, the sequence $(x_n)_{n \in \NN}$ is \fejer* monotone with respect to $M := u + B[0, \delta/4]+ z$.

    That said, as evident from the proof, the same reasoning used to establish the implication ``\cref{lemma-cone-inf2}$\Rightarrow$\cref{lemma-cone-inf1}'' also verifies the inclusion
    \begin{equation*}
        \operatorname{int}(\mathcal{K}) + z \subseteq \operatorname{int}(\underline{\lim} \, C_\NN),
    \end{equation*} which, combined with \cref{cone-th}, consequently establishes \cref{cone-int-equal-inf}. \end{proof}

    \begin{remark}
        Recalling \cref{limsup-sup-prop}, consider the support function of the set $\mathcal{D}$ given by \begin{equation*}
        \sigma_\mathcal{D}(x) = \sup_{w \in \mathcal{D}} \langle w, x\rangle = \limsup_{n \to \infty} \langle x, v_n\rangle.
    \end{equation*} Then the closed convex cone $\mathcal{K}$ in \cref{cone-int-equal-inf} can be found as \begin{equation*}
        \mathcal{K} = (\overline{\operatorname{cone}}(\mathcal{D}))^{\ominus} = \operatorname{lev}_{\leq 0}\sigma_\mathcal{D} = \{y \in X \mid \limsup_{n \to \infty} \langle y, v_n\rangle\leq 0\}.
    \end{equation*}
    \end{remark}

\begin{remark}
The assumption of distinct consecutiveness imposed on $(x_n)_{n \in \NN}$ is purely decorative and can be neglected. Specifically, if $x_n = x_N$ for all $n \geq N$, then \cref{cone-int-equal-inf} clearly holds with $\mathcal{K}=X$. 
Furthermore, 
if $X$ is finite-dimensional, then distinct consecutiveness always implies that $0 \not \in \mathcal{D}$, 
and, in light of \cref{pointed-sph-cond-remark}, it suffices to assume that $\mathcal{K}^{\ominus} = \operatorname{cone}\operatorname{conv} \mathcal{D}$ is pointed instead of the solidness of $\mathcal{K}$. 
\end{remark}

\begin{remark}
    If a sequence $(x_n)_{n \in \NN}$ fails to satisfy \cref{lemma-cone-inf1} or \cref{lemma-cone-inf2} in \cref{lemma-cone-inf}, the corresponding  maximal \fejer* set may not be a cone at all as demonstrated next.
\end{remark}

\begin{example}\label{example3}
    Consider the sequence $(x_n)_{n \in \NN}$ defined recurrently in $\mathbb{R}^2$ as follows: 

    Set $x_0 := (0, 1)$. Then, depending on the currently given index $n \in \NN$: \begin{itemize}
            \item If $n \equiv 0 \: (\operatorname{mod} 3)$, construct a circle $S_n$ centered at the point $z_n :=((-1)^n, 0)$ such that it passes through the point $x_n$. When $n$ is odd, let $y_n$ be the rightmost point where $S_n$ intersects the $x$-axis; if $n$ is even, let $y_n$ be the leftmost such point. Then, define $x_{n+1} \in S_n$ as the unique point on the internal angle bisector of the angle $\angle x_nz_ny_{n}$ that lies on the circle. Equivalently, $x_{n+1}$ is the midpoint of the minor arc between $x_n$ and $y_n$ on $S_n$.

            \item otherwise, set $x_{n+1}$ to be the reflection of the previous point $x_{n}$ with respect to the $x$-axis.
        \end{itemize} Then the following hold: \begin{enumerate}
        \item\label{example3-prop-1} $\underline{\lim} \, C_\NN = \cap_{n \in \NN} C_n= [-1, 1] \times \{0\}$.
        \item\label{example3-prop-2} $x_n \to z \in \ri(\underline{\lim} \, C_\NN)$ and $\mathcal{K} = \mathbb{R} \times \{0\}$. In particular, $\operatorname{int}(\mathcal{K}) = \varnothing$.
    \end{enumerate}
\end{example}

\begin{proof}
    See Appendix~\ref{app-example3-prop}.
\end{proof}

\begin{figure}[t]
    \centering
    \includegraphics[width=.90\linewidth]{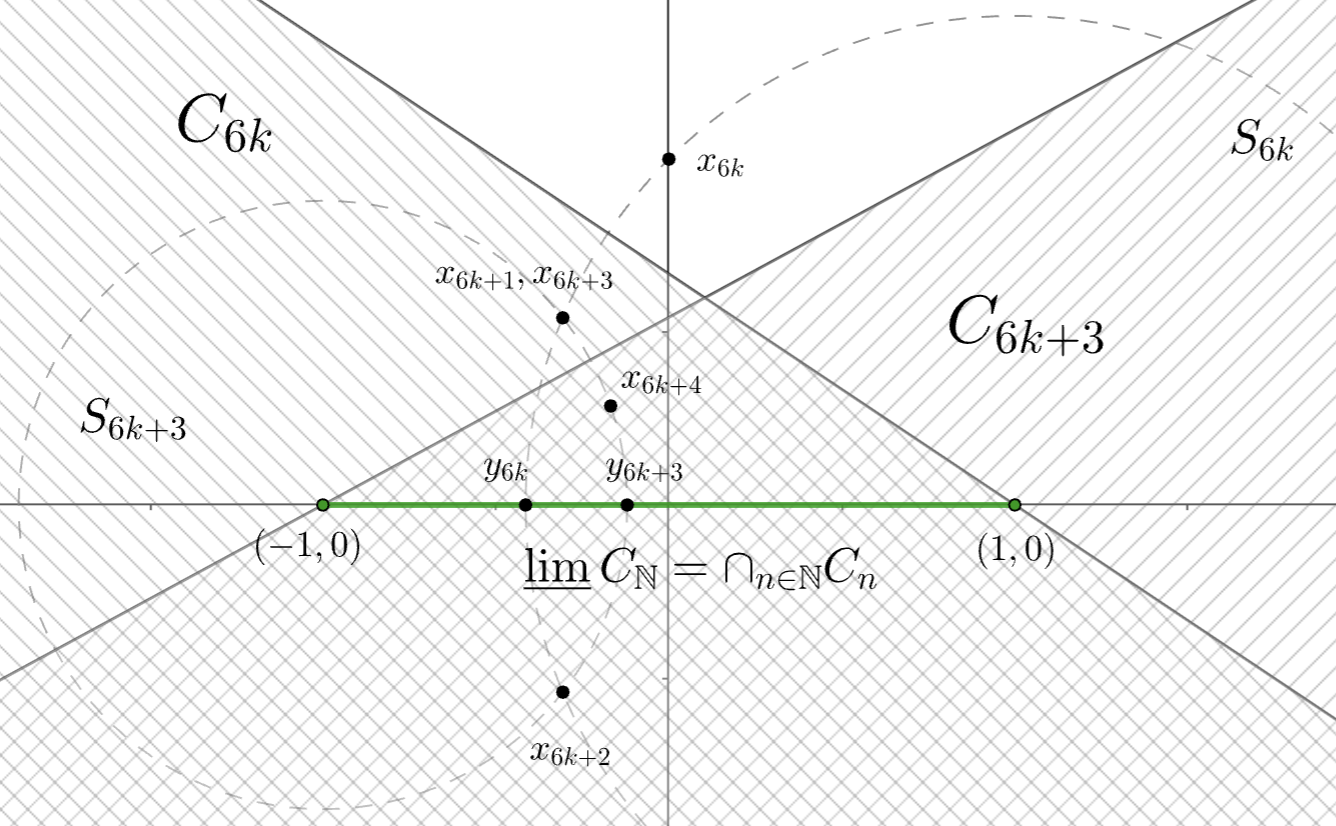}
    \caption{First four iterations of the sequence $(x_n)_{n \in \NN}$ as introduced in \cref{example3}.}
    \label{fig:examples3-pic}
\end{figure}

\begin{remark}\label{example3-remark}
    \cref{fig:examples3-pic} shows the first four iterations (i.e., for $k = 0$) of the sequence $(x_n)_{n \in \NN}$, as introduced in \cref{example3}. The hatched regions represent the halfspaces $C_{6k}$ and $C_{6k+3}$, with different hatching angles used to distinguish them. As in \cref{fig:examples1a-pic}, the dark green line depicts the maximal \fejer* set $\underline{\lim} \, C_\NN$, which, in this case, coincides with the maximal \fejer\ set $\cap_{n \in \NN} C_n$.
\end{remark}

\begin{example}\label{closure-ex-cone}
    For the sequence $(x_n)_{n \in \NN}$ given by \cref{closure_example}, according to \cref{lemma-cone-inf}, we have $\lim_{n \to \infty} x_n = (1, 0)$ and \begin{equation*}
        \underline{\lim} \, C_\NN = (\mathbb{R}\times\mathbb{R}_{++}) \cup (\mathbb{R}_+ \times\{0\}), \quad \mathcal{K} = \overline{\underline{\lim} \, C_\NN} - (1, 0) = \{(x, y) \in \mathbb{R}^2 \mid y \geq 0\}.
    \end{equation*} Here, $\lim_{n \to \infty} x_n = (1, 0)$ belongs to the maximal \fejer* set $\underline{\lim} \, C_\NN$.
\end{example}

\begin{example}\label{authors-example}
    In the paper \cite{BBCIARS}, where the concept of \fejer* monotonicity was originally analyzed, the authors provided a different interesting example \cite[Example 3.1]{BBCIARS} to demonstrate the extension issue as outlined in \cref{problem-closure}. The sequence $(x_{n})_{n \in \NN}$ with distinct consecutive terms was given as follows: 
    
    Define the sequence $\left(x_n\right)_{n \in \mathbb{N}}$ in $\mathbb{R}^2$ by $x_0:=(0,2)$ and for $\ell \in \mathbb{N}$ $$
    x_{2 \ell+1}:=x_{2 \ell}+\left(1 / 2^{\ell}, 0\right), \quad x_{2 \ell+2}:=\left(0, \sqrt{\lVert x_{2 \ell+1}-(1,0)\rVert^2-1}\right)
    $$ If $n$ is even, $x_{n+1}$ is obtained by moving $1 / 2^{(n / 2)}$ horizontally to the right. If $n$ is odd, $x_{n+1}$ is the intersection between the vertical axis and the arc centered in $(1,0)$ passing through $x_n$.

    Then, the sequence $(x_n)_{n \in \mathbb{N}}$ is well-defined, and the maximal \fejer* set $\underline{\lim}\,C_{\NN}$ is given by \begin{equation*}
    \underline{\lim} \, C_\NN  = \left\{ (0, 2/\sqrt{3}) + \operatorname{int}(\mathcal{K}) \right\} \cup \left((0, 2/\sqrt{3}), \left(1,0\right)\right],
    \end{equation*} where the closed convex cone $\mathcal{K} = \left(\overline{\operatorname{cone}}(\mathcal{D})\right)^{\ominus}$ satisfies \begin{equation*}
        \mathcal{K} = \left\{(x, y) \in \mathbb{R}^2 \,\middle| \, x \geq 0, \, y \leq -\frac{2}{\sqrt{3}} x \right\}.
    \end{equation*} In particular, in contrast to \cref{closure-ex-cone}, $\lim_{n \to \infty} x_n = (0, 2/\sqrt{3})$ does not belong to the maximal \fejer* set $\underline{\lim} \, C_\NN$ in this case.
\end{example}

\begin{figure}[t]
    \centering
    \includegraphics[width=.70\linewidth]{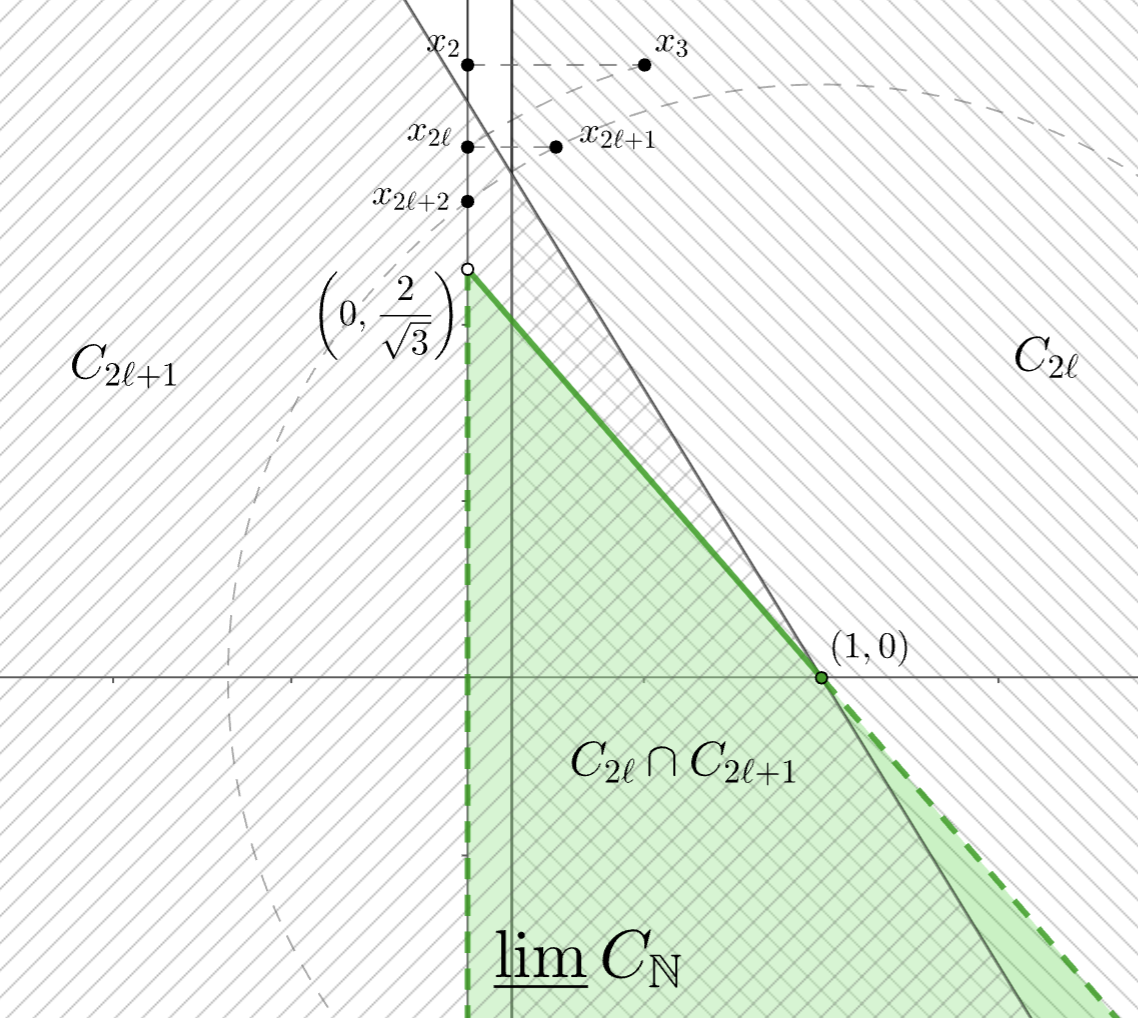}
    \caption{Intermediate iterations of the sequence $(x_n)_{n \in \NN}$ defined in \cref{authors-example}.}
    \label{fig:examples4-pic}
\end{figure}

\begin{proof} The fact that $(x_n)_{n \in \NN}$ is well-defined was established in \cite[Example 3.1]{BBCIARS}, where it was also shown that $\lim_{n \to \infty} x_n = (0, 2/\sqrt{3})$.

That said, the description of its maximal \fejer* set $\underline{\lim} \, C_\NN$ follows directly from the geometric construction of the sequence $(x_n)_{n \in \NN}$ as the halfspaces $C_n$ and their corresponding bounding hyperplanes $\operatorname{bdry}(C_n)$ are explicitly identifiable; see \cref{fig:examples4-pic}. 
\end{proof}

\begin{remark}
    \cref{fig:examples4-pic} shows a few iterations (i.e., for $\ell = 2$) of the sequence $(x_n)_{n \in \NN}$, as introduced in \cref{authors-example}. Similar to \cref{example3-remark}, the hatched regions represent the halfspaces $C_{2\ell}$ and $C_{2\ell + 1}$, with different hatching angles used to distinguish them. As in \cref{fig:examples1a-pic}, the green region illustrates the maximal \fejer* set $\underline{\lim} \, C_\NN$, while the solid and dashed dark green lines on its boundary indicate which boundary points are included in the sets, as specified in \cref{authors-example}.
\end{remark}

To conclude \cref{section3}, we address the Opial counterpart of \cref{lemma-cone-inf}. Set $0$ to be the origin in $X$ and denote by $\mathcal{W} := \mathcal{W}((x_n)_{n \in \NN})$ the set of weak cluster points for the given sequence $(x_n)_{n \in \NN}$. Let $\mathcal{L} := \overline{\aff}(\mathcal{W}) - \overline{\aff}(\mathcal{W})$ to be the closed linear subspace associated with $\overline{\aff}(\mathcal{W})$, i.e. \begin{empheq}[box=\mybluebox]{equation}\label{cluster-aff-decomp}
    \overline{\aff}(\mathcal{W}) = \mathcal{L} + a, \quad \text{where} \; a \in \overline{\aff}(\mathcal{W}).
\end{empheq}

\begin{fact}[location of weak cluster points]\label{opial-wperp}
    Let $(x_n)_{n \in \NN}$ be a sequence in $X$ that is Opial with
respect to a nonempty subset $C$ of $X$. Then \begin{equation*}
    (\forall w_1, w_2 \in \mathcal{W}) \quad w_1 - w_2 \in (C-C)^{\perp}.
\end{equation*}
\end{fact}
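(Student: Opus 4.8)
The plan is to reduce everything to a one-line computation showing that, for each pair $y,y'\in C$, the scalar sequence $(\langle x_n,\,y-y'\rangle)_{n\in\NN}$ converges, and then to evaluate its limit along two weakly convergent subsequences converging to $w_1$ and $w_2$, respectively.

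First I would expand the squared norms: for every $y\in C$ and every $n\in\NN$,
\[
\|x_n-y\|^2=\|x_n\|^2-2\langle x_n,y\rangle+\|y\|^2 .
\]
Subtracting the analogous identity for another point $y'\in C$ cancels the (possibly uncontrollable) term $\|x_n\|^2$ and yields
\[
\|x_n-y\|^2-\|x_n-y'\|^2=\|y\|^2-\|y'\|^2-2\langle x_n,\,y-y'\rangle .
\]
By the Opial property of $(x_n)_{n\in\NN}$ with respect to $C$, both $\|x_n-y\|$ and $\|x_n-y'\|$ converge, so the left-hand side converges; hence $(\langle x_n,\,y-y'\rangle)_{n\in\NN}$ converges, say to some $L(y,y')\in\RR$.

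Next, given $w_1,w_2\in\mathcal{W}$, I would choose subsequences $(x_{n_k})_{k\in\NN}$ and $(x_{m_k})_{k\in\NN}$ with $x_{n_k}\weakly w_1$ and $x_{m_k}\weakly w_2$. Weak convergence gives $\langle x_{n_k},\,y-y'\rangle\to\langle w_1,\,y-y'\rangle$ and $\langle x_{m_k},\,y-y'\rangle\to\langle w_2,\,y-y'\rangle$; since both are subsequences of a convergent sequence, both limits equal $L(y,y')$, whence $\langle w_1-w_2,\,y-y'\rangle=0$. As $y,y'\in C$ were arbitrary and every element of $C-C$ is of the form $y-y'$, this gives $w_1-w_2\in(C-C)^{\perp}$, as claimed.

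I do not anticipate a genuine obstacle here; the only point worth flagging is that no convexity, closedness, or boundedness of $C$ is required — the argument uses the defining Opial property only pointwise on $C$ — and that subtracting the two squared norms is precisely what eliminates the term $\|x_n\|^2$, which one cannot track individually.
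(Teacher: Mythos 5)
Your argument is correct and complete. The paper itself gives no proof of this fact --- it only cites \cite[Theorem~2.2]{Opial} --- but your computation (subtracting the two squared-norm expansions to cancel $\|x_n\|^2$, deducing that $(\langle x_n, y-y'\rangle)_{n\in\NN}$ converges from the Opial property at $y$ and $y'$, and evaluating the common limit along subsequences converging weakly to $w_1$ and $w_2$) is the standard and presumably intended argument, and it indeed uses no convexity, closedness, or boundedness of $C$.
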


\begin{proof}
See \cite[Theorem 2.2]{Opial}.
\end{proof}

\begin{theorem}[maximal Opial set]\label{max-opial-set}
    Let $(x_n)_{n \in \NN}$ be an Opial sequence with respect to a nonempty subset $C$ of $X$. Then its maximal Opial set $\mathcal{O}$ is given by the closed affine subspace: \begin{equation*}
        \mathcal{O} = \mathcal{L}^{\perp} + z,
    \end{equation*} where $z = P_{\mathcal{O}}(0) + P_{\overline{\aff}(\mathcal{W})}(0)$ is the unique point of $\overline{\aff}(\mathcal{W})$ such that $(x_n)_{n \in \NN}$ is Opial with respect to~$z$.
\end{theorem}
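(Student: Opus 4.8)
The plan is to identify the maximal Opial set $\mathcal{O}=\{y\in X\mid \lim_n\|x_n-y\|\ \text{exists}\}$, which contains $C$ and hence is nonempty, and with respect to which $(x_n)_{n\in\NN}$ is by construction Opial. First I would record two preliminary facts: since $\lim_n\|x_n-y\|$ exists for (at least one) $y\in C$, the sequence $(x_n)_{n\in\NN}$ is bounded, so $\mathcal{W}\neq\varnothing$; and $\mathcal{O}$ is a closed affine subspace. The latter follows from \cref{opial-ext-aff}: $(x_n)_{n\in\NN}$ is Opial with respect to $\mathcal{O}$, hence also with respect to $\overline{\aff}(\mathcal{O})$, so $\overline{\aff}(\mathcal{O})\subseteq\mathcal{O}$ by maximality of $\mathcal{O}$, and the reverse inclusion is trivial. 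Write $V:=\mathcal{O}-\mathcal{O}$ for its direction subspace (a closed linear subspace), so that $\mathcal{O}=V+z_0$ for every $z_0\in\mathcal{O}$.

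The core of the argument is to show $V=\mathcal{L}^{\perp}$. For the inclusion $V\subseteq\mathcal{L}^{\perp}$: \cref{opial-wperp}, applied with Opial set $\mathcal{O}$, gives $\mathcal{W}-\mathcal{W}\subseteq(\mathcal{O}-\mathcal{O})^{\perp}=V^{\perp}$; since $V^{\perp}$ is a closed subspace and $\mathcal{L}$ is the closed linear span of $\mathcal{W}-\mathcal{W}$ (being the direction subspace of $\overline{\aff}(\mathcal{W})$), it follows that $\mathcal{L}\subseteq V^{\perp}$, i.e. $V\subseteq\mathcal{L}^{\perp}$. For the reverse inclusion $\mathcal{L}^{\perp}\subseteq V$, fix $u\in\mathcal{L}^{\perp}$ and I would argue that the real sequence $(\langle x_n,u\rangle)_{n\in\NN}$ converges: it is bounded, and any subsequence has a further subsequence along which $x_{n_k}\rightharpoonup w$ for some $w\in\mathcal{W}$, whence $\langle x_{n_k},u\rangle\to\langle w,u\rangle$; moreover $\langle w,u\rangle$ is independent of the choice of $w\in\mathcal{W}$, because $w-w'\in\overline{\aff}(\mathcal{W})-\overline{\aff}(\mathcal{W})=\mathcal{L}$ is orthogonal to $u$. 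Thus $\langle x_n,u\rangle\to c$ for some constant $c$. Picking any $z_0\in\mathcal{O}$ and expanding $\|x_n-z_0-u\|^2=\|x_n-z_0\|^2-2\langle x_n-z_0,u\rangle+\|u\|^2$ shows $\lim_n\|x_n-z_0-u\|$ exists, so $z_0+u\in\mathcal{O}$ and $u\in\mathcal{O}-z_0=V$. Hence $V=\mathcal{L}^{\perp}$ and $\mathcal{O}=\mathcal{L}^{\perp}+z_0$.

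It remains to pin down $z$. Because $\mathcal{L}$ is a closed subspace of the Hilbert space $X$, we have $X=\mathcal{L}\oplus\mathcal{L}^{\perp}$, so the affine subspace $\mathcal{O}=\mathcal{L}^{\perp}+z_0$ and $\overline{\aff}(\mathcal{W})=\mathcal{L}+a$ (for any $a\in\overline{\aff}(\mathcal{W})$) intersect in exactly one point, namely $z:=P_{\mathcal{L}}(z_0)+P_{\mathcal{L}^{\perp}}(a)$: indeed $z-z_0=P_{\mathcal{L}^{\perp}}(a)-P_{\mathcal{L}^{\perp}}(z_0)\in\mathcal{L}^{\perp}$ and $z-a=P_{\mathcal{L}}(z_0)-P_{\mathcal{L}}(a)\in\mathcal{L}$, while uniqueness follows from $\mathcal{L}\cap\mathcal{L}^{\perp}=\{0\}$. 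Thus $z$ is the unique point of $\overline{\aff}(\mathcal{W})$ at which $(x_n)_{n\in\NN}$ is Opial, and $\mathcal{O}=\mathcal{L}^{\perp}+z$. Finally, $P_{\mathcal{O}}(0)=P_{\mathcal{L}^{\perp}+z}(0)=P_{\mathcal{L}}(z)$ and $P_{\overline{\aff}(\mathcal{W})}(0)=P_{\mathcal{L}+a}(0)=P_{\mathcal{L}^{\perp}}(a)=P_{\mathcal{L}^{\perp}}(z)$ (the last equality because $z\in\overline{\aff}(\mathcal{W})$), so $P_{\mathcal{O}}(0)+P_{\overline{\aff}(\mathcal{W})}(0)=P_{\mathcal{L}}(z)+P_{\mathcal{L}^{\perp}}(z)=z$, completing the proof.

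I expect the main obstacle to be the inclusion $\mathcal{L}^{\perp}\subseteq V$, i.e. proving that $\langle x_n,u\rangle$ actually \emph{converges} (and not merely clusters) for each $u\perp\mathcal{L}$ — this is what forces $\mathcal{O}$ to be at least as large as $\mathcal{L}^{\perp}+z$; everything else is bookkeeping with the orthogonal decomposition $X=\mathcal{L}\oplus\mathcal{L}^{\perp}$ together with \cref{opial-ext-aff} and \cref{opial-wperp}.
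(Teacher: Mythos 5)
Your proposal is correct and follows essentially the same route as the paper's proof: both establish that $\mathcal{O}$ is a closed affine subspace via \cref{opial-ext-aff}, identify its direction space $V$ with $\mathcal{L}^\perp$ (using \cref{opial-wperp} for $V\subseteq\mathcal{L}^\perp$ and the convergence of $\langle x_n,u\rangle$ for $u\perp\mathcal{L}$ for the reverse inclusion, which the paper phrases contrapositively and you phrase directly via a subsequence argument), and then locate $z$ as the unique point of $\mathcal{O}\cap\overline{\aff}(\mathcal{W})$ through the orthogonal decomposition $X=\mathcal{L}\oplus\mathcal{L}^\perp$. The only cosmetic difference is that the paper verifies $z=P_{\mathcal{O}}(0)+P_{\overline{\aff}(\mathcal{W})}(0)$ by checking membership in both affine subspaces directly, while you compute the two projections explicitly in terms of $P_{\mathcal{L}}(z)$ and $P_{\mathcal{L}^\perp}(z)$; both are sound.
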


\begin{proof}
    For brevity, we let $\mathcal{A} := \overline{\aff}(\mathcal{W})$. Note that, in light of \cref{opial-ext-aff}, $\mathcal{O}$ is a closed affine subspace of $X$. That said, let $V:= \mathcal{O}-\mathcal{O}$ to be the closed linear subspace of $X$ parallel to $\mathcal{O}$, i.e., \begin{equation*}\label{max-opial-set-eq}
        \mathcal{O} = \{y \in X \mid \lim_{n \to \infty} \|x_n - y\| \, \text{exists}\} =  V + b, \quad \text{for some} \; b \in \mathcal{O}.
    \end{equation*} Then, since $\|x_n - y\|^2 = \|x_n - b\|^2 - 2\langle x_n, y-b \rangle + \|y\|^2 - \|b\|^2$, where $\lim_{n \to \infty}\|x_n - b\|^2$ exists, we have \begin{equation*}
        V = \{v \in X \mid \lim_{n \to \infty} \langle x_n, v\rangle \; \text{exists}\}.
    \end{equation*} That said, note that \cref{opial-wperp} implies $\mathcal{L} \subseteq V^{\perp}$. We now show that $\mathcal{L}^\perp \subseteq V$; equivalently, $X \setminus V \subseteq X\setminus \mathcal{L}^\perp$. Take $v \in X\setminus V$. Then \begin{equation*}
        (\exists w_1, w_2 \in \mathcal{W}) \quad \langle w_1, v \rangle = \liminf_{n \to \infty}\langle x_n, v\rangle < \limsup_{n \to \infty}\langle x_n, v\rangle = \langle w_2, v \rangle. 
    \end{equation*} Hence $\langle w_2 - w_1, v\rangle > 0$, and so $v \not \in \mathcal{L}^{\perp}$. We have shown that \begin{equation}\label{max-opial-set-claim-proved}
        V = \mathcal{L}^{\perp}.
    \end{equation} 
    Finally, $\mathcal{O}$ and $\mathcal{A}$ are two closed affine subspaces of $X$ with parallel linear subspaces $V$ and $\mathcal{L}^{\perp}$ being orthogonal complements of each other. Next, we show that $\mathcal{O} \cap \mathcal{A}$ is the singleton $\{P_{\mathcal{O}}(0) + P_{\mathcal{A}}(0)\}$. Indeed, note that \begin{equation}\label{max-opial-origin-proj}
        P_{\mathcal{A}}(0) \in \mathcal{L}^{\perp}, \quad P_{\mathcal{O}}(0) \in \mathcal{L}.
    \end{equation} That said, the point $z := P_{\mathcal{O}}(0) + P_{\mathcal{A}}(0)$ lies in $\mathcal{O}\cap\mathcal{A}$ because \begin{equation*}
        \begin{aligned}
            P_{\mathcal{O}}(0) + P_{\mathcal{A}}(0) &\in \mathcal{L} + \mathcal{A} = \mathcal{A} - \mathcal{A} + \mathcal{A} = \mathcal{A}, \\
            P_{\mathcal{O}}(0) + P_{\mathcal{A}}(0) &\in \mathcal{L}^\perp + \mathcal{O} = V + \mathcal{O} = \mathcal{O} - \mathcal{O} + \mathcal{O} = \mathcal{O}.
        \end{aligned}
    \end{equation*} Now pick $p \in \mathcal{O}\cap\mathcal{A}$. Then, due to \cref{cluster-aff-decomp} and \cref{max-opial-set-claim-proved}, \begin{equation*}
        (\exists v \in \mathcal{L})(\exists v^{\perp} \in \mathcal{L}^{\perp}) \quad p = v + P_{\mathcal{A}}(O) = v^{\perp} + P_{\mathcal{O}}(O).
    \end{equation*} Hence, by applying \cref{max-opial-origin-proj} together with the uniqueness of the orthogonal decomposition, it follows that $v = P_{\mathcal{O}}(0)$ and $v^{\perp} = P_{\mathcal{A}}(0)$. Altogether, $p = z = P_{\mathcal{O}}(0) + P_{\mathcal{A}}(0)$. 
    \end{proof}

\section{Relationship to quasi-\fejer\ types}
\label{section6}

The question of relationships between quasi-\fejer\ types of monotonicity, analyzed by Combettes in his well-known paper \cite{Comb}, and the recently introduced \fejer* monotonicity was originally posed and partially analyzed by Behling, Bello-Cruz, Iusem, Liu, and Santos in \cite{BBCIARS}. In this section, we address these relationships highlighting previously-done progress in \cite{BBCIARS} as well as providing new results and counterexamples.

\begin{definition}[quasi-\fejer\ types] \label{def-quasi-types} Let $M$ be a nonempty subset of $X$. With respect to $M$, a given sequence $(x_n)_{n \in \mathbb{N}}$ of points in $X$ is called \emph{quasi-Fejér monotone of} \begin{itemize}
    \item \emph{Type I} if there exists $(\varepsilon_n)_{n \in \mathbb{N}} \in \ell^1_{+}(\mathbb{N})$ such that \begin{equation*}
(\forall y \in M)(\forall n \in \NN) \quad \|x_{n+1}-y\| \leq \|x_n-y\| + \varepsilon_n,
\end{equation*}

    \item \emph{Type II} if there exists $(\varepsilon_n)_{n \in \mathbb{N}} \in \ell^1_{+}(\mathbb{N})$ such that \begin{equation*}
(\forall y \in M)(\forall n \in \NN) \quad \|x_{n+1}-y\|^2 \leq \|x_n-y\|^2 + \varepsilon_n,
\end{equation*}
    \item \emph{Type III}  if for any given $y \in M$ there exists $(\varepsilon_n(y))_{n \in \mathbb{N}} \in \ell^1_{+}(\mathbb{N})$ such that \begin{equation*}
(\forall n \in \NN) \quad \|x_{n+1}-y\|^2 \leq \|x_n-y\|^2 + \varepsilon_n(y).
\end{equation*}
\end{itemize}
\end{definition}

\begin{remark}\label{connections-quasi}
    Some connections between quasi-types are known. For instance, Type III is the most general one, as both Type I and Type II imply Type III. Moreover, if the set $M$ under consideration is bounded, then Type I implies Type II. Standard \fejer\ monotonicity is clearly stronger than any of these quasi-types. For more details, see \cite[Proposition 3.2]{Comb}.  In addition, we highlight that every property established in \cite{Comb} for quasi-\fejer\ Type III sequences is also valid for Opial sequences.
\end{remark}

\begin{definition}[tolerance functions]\label{def-tol-fn}
    Let $(x_n)_{n \in \mathbb{N}}$ be a sequence of points in $X$ and consider functions $\varepsilon_{n}^{(1)}, \varepsilon_{n}^{(2)}: X \rightarrow \mathbb{R}_{+}$ given for all $n \in \NN$ by \begin{empheq}[box=\mybluebox]{equation*}\begin{aligned}
        \varepsilon_n^{(1)}(y) &:= \max \{0, \|x_{n+1} - y\| - \|x_n - y\|\}, \\ \varepsilon_n^{(2)}(y) &:= \max \{0, \|x_{n+1} - y\|^2 - \|x_n - y\|^2\}.
    \end{aligned}
\end{empheq}
\end{definition}

\begin{remark}\label{remark-tol-fn}
    It is natural to consider these tolerance functions since, due to their definition, we have \begin{equation*}
    (\forall y \in X\setminus C_n)(\forall n \in \NN) \quad \|x_{n+1} - y\| = \|x_n - y\| + \varepsilon_n^{(1)}(y),
\end{equation*} and, moreover, \begin{equation*}
    y \in C_n \quad \text{if and only if} \quad \varepsilon_n^{(1)}(y) = 0 \; (\text{or, equivalently,} \; \varepsilon_n^{(2)}(y) = 0)
\end{equation*} Naturally, the same applies to $\varepsilon_n^{(2)}$ for the corresponding equalities with squared norms instead.
\end{remark}

\begin{remark}\label{remark-eps-1}
For any chosen set $M$ and a sequence $(x_n)_{n \in \NN}$, one always has \begin{equation*}
(\forall n \in \NN) \quad \sup_{y \in M} \varepsilon_n^{(1)}(y) \leq \|x_{n+1} - x_n\| < + \infty,
\end{equation*} which is entirely contrary to what may occur with $\varepsilon_n^{(2)}$ as demonstrated by \cref{count-type2-prop}.
\end{remark}

\begin{proposition}\label{prop-rewritten-tol}
Let $M$ be a nonempty subset of $X$ and $(x_n)_{n \in \mathbb{N}}$ be a sequence of points in $X$. Then: \begin{enumerate}
\item\label{fejer*-rewritten} $(x_n)_{n \in \mathbb{N}}$ is \fejer* monotone with respect to $M$ if and only if \begin{equation*}
        (\forall y \in M)(\exists N(y) \in \NN)(\forall n \geq N(y)) \quad \varepsilon_n^{(1)}(y) = \varepsilon_n^{(2)}(y) = 0.
    \end{equation*}
\item\label{12-rewritten} $(x_n)_{n \in \mathbb{N}}$ is quasi-\fejer\ monotone of Type I (of Type II, respectively) with respect to $M$ if and only if \begin{equation*}
        \sum_{n = 0}^{\infty} \; \sup_{y\in M}\varepsilon_n^{(1)}(y) < +\infty \quad \left(\sum_{n = 0}^{\infty} \; \sup_{y\in M}\varepsilon_n^{(2)}(y) < +\infty, \; \text{respectively}\right).
    \end{equation*}
\item\label{3-rewritten} $(x_n)_{n \in \mathbb{N}}$ is quasi-\fejer\ monotone of Type III with respect to $M$ if and only if \begin{equation*}
        (\forall y \in M) \quad \sum_{n = 0}^{\infty} \varepsilon_n^{(2)}(y) < +\infty.
    \end{equation*}
\end{enumerate}
\end{proposition}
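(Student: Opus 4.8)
The plan is to prove all three equivalences by directly unwinding \cref{def}, \cref{def-quasi-types}, and \cref{def-tol-fn}. The single elementary fact underlying everything is that, since norms are nonnegative, for a fixed $n$ and $y$ one has $\|x_{n+1}-y\| \leq \|x_n-y\| \Leftrightarrow \|x_{n+1}-y\|^2 \leq \|x_n-y\|^2$; equivalently, $\varepsilon_n^{(1)}(y) = 0 \Leftrightarrow \varepsilon_n^{(2)}(y) = 0 \Leftrightarrow y \in C_n$ (as recorded in \cref{remark-tol-fn}), and moreover $\|x_{n+1}-y\|^i - \|x_n-y\|^i \leq \varepsilon_n^{(i)}(y)$ always holds for $i \in \{1,2\}$ simply by the definition of the maximum.

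For \cref{fejer*-rewritten}, fix $y \in M$. By \cref{def}, \fejer* monotonicity with respect to $M$ asserts precisely that there is some $N(y) \in \NN$ with $\|x_{n+1}-y\| \leq \|x_n-y\|$ for all $n \geq N(y)$; by the observation above this inequality is equivalent to $\varepsilon_n^{(1)}(y) = 0$ and to $\varepsilon_n^{(2)}(y) = 0$. Quantifying over $y \in M$ yields the stated characterization.

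For \cref{12-rewritten}, it suffices to treat Type~I, since Type~II is obtained verbatim upon replacing every norm by its square and $\varepsilon^{(1)}$ by $\varepsilon^{(2)}$. If $(x_n)_{n\in\NN}$ is quasi-\fejer\ monotone of Type~I with associated $(\varepsilon_n)_{n\in\NN} \in \ell^1_+(\NN)$, then for all $y \in M$ and all $n$ we have $\|x_{n+1}-y\| - \|x_n-y\| \leq \varepsilon_n$, whence $\varepsilon_n^{(1)}(y) = \max\{0, \|x_{n+1}-y\| - \|x_n-y\|\} \leq \varepsilon_n$; taking the supremum over $y \in M$ and then summing over $n$ gives $\sum_{n} \sup_{y\in M}\varepsilon_n^{(1)}(y) \leq \sum_n \varepsilon_n < +\infty$. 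Conversely, assuming $\sum_n \sup_{y\in M}\varepsilon_n^{(1)}(y) < +\infty$, put $\varepsilon_n := \sup_{y\in M}\varepsilon_n^{(1)}(y)$; each term is finite (indeed $\varepsilon_n \leq \|x_{n+1}-x_n\|$ by \cref{remark-eps-1}), so $(\varepsilon_n)_{n\in\NN} \in \ell^1_+(\NN)$, and for every $y \in M$ and $n$ we get $\|x_{n+1}-y\| \leq \|x_n-y\| + \varepsilon_n^{(1)}(y) \leq \|x_n-y\| + \varepsilon_n$, so $(x_n)_{n\in\NN}$ is of Type~I. Part \cref{3-rewritten} is the same computation with the supremum over $y$ deleted: if $(x_n)_{n\in\NN}$ is of Type~III, then for each $y \in M$ its associated $(\varepsilon_n(y))_{n\in\NN} \in \ell^1_+(\NN)$ dominates $\varepsilon_n^{(2)}(y)$ termwise, so $\sum_n \varepsilon_n^{(2)}(y) < +\infty$; conversely, $\varepsilon_n(y) := \varepsilon_n^{(2)}(y)$ works.

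The statement is essentially a bookkeeping reformulation, so there is no genuine obstacle; the only point deserving care is the well-definedness of the candidate tolerance sequence in the ``if'' direction of \cref{12-rewritten}. For $\varepsilon^{(1)}$ the suprema $\sup_{y\in M}\varepsilon_n^{(1)}(y)$ are automatically finite by \cref{remark-eps-1}, whereas for $\varepsilon^{(2)}$ the finiteness of each $\sup_{y\in M}\varepsilon_n^{(2)}(y)$ is not automatic in general, but it is subsumed by the finiteness of the sum that is being assumed.
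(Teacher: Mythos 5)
Your proposal is correct and matches the paper's approach: the paper's own proof simply states that the result follows from \cref{def-quasi-types}, \cref{def-tol-fn}, and \cref{remark-tol-fn}, and your argument is exactly that unwinding, written out in full (including the correct handling of the finiteness of $\sup_{y\in M}\varepsilon_n^{(2)}(y)$ in the converse direction of \cref{12-rewritten}).
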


\begin{proof}
The result follows from \cref{def-quasi-types}, \cref{def-tol-fn}, and \cref{remark-tol-fn}. \end{proof}

Following Behling, Bello-Cruz, Iusem, Liu, and Santos (see \cite[Theorem 4.1]{BBCIARS}), we highlight the following observation.

\begin{corollary}\label{*implies3}
    Let $(x_n)_{n \in \mathbb{N}}$ be a \fejer* monotone sequence with respect to a nonempty subset $M$ of $X$. Then $(x_n)_{n \in \mathbb{N}}$ is quasi-\fejer\ of Type III with respect to $M$.
\end{corollary}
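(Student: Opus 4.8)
The plan is to reduce everything to the reformulations recorded in \cref{prop-rewritten-tol} in terms of the tolerance functions of \cref{def-tol-fn}. The decisive point is that \cref{prop-rewritten-tol}\cref{fejer*-rewritten} expresses \fejer* monotonicity as: for each $y \in M$ the nonnegative scalar sequence $(\varepsilon_n^{(2)}(y))_{n \in \NN}$ is \emph{eventually zero}, whereas \cref{prop-rewritten-tol}\cref{3-rewritten} expresses quasi-\fejer\ Type III as: for each $y \in M$ one has $\sum_{n \in \NN} \varepsilon_n^{(2)}(y) < +\infty$. Since a nonnegative real sequence with only finitely many nonzero terms is automatically summable, the implication is immediate once these two reformulations are in place.

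Concretely, first I would fix an arbitrary $y \in M$. Applying \cref{prop-rewritten-tol}\cref{fejer*-rewritten} to the \fejer* sequence $(x_n)_{n \in \NN}$ furnishes some $N(y) \in \NN$ with $\varepsilon_n^{(2)}(y) = 0$ for all $n \geq N(y)$. Each remaining term $\varepsilon_n^{(2)}(y)$ ($n < N(y)$) is a finite nonnegative real number, being the positive part of a difference of two finite squared distances in $X$, so
\[
\sum_{n \in \NN} \varepsilon_n^{(2)}(y) = \sum_{n = 0}^{N(y) - 1} \varepsilon_n^{(2)}(y) < +\infty,
\]
that is, $(\varepsilon_n^{(2)}(y))_{n \in \NN} \in \ell^1_{+}(\NN)$. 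As $y \in M$ was arbitrary, \cref{prop-rewritten-tol}\cref{3-rewritten} yields that $(x_n)_{n \in \NN}$ is quasi-\fejer\ monotone of Type III with respect to $M$.

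There is essentially no obstacle in this argument: all of the content has already been front-loaded into \cref{prop-rewritten-tol}, and the only step left is the trivial remark that a finitely-supported nonnegative sequence lies in $\ell^1_{+}(\NN)$. An alternative, but less self-contained, route would be to invoke \cref{b-prop}\cref{b-prop2} together with \cref{opial-prop} and \cref{connections-quasi} to transfer the quasi-\fejer\ Type III property from Opial sequences; one could also simply cite \cite[Theorem 4.1]{BBCIARS}, to which the statement is attributed. I would nonetheless prefer the tolerance-function proof above, as it is the shortest and keeps \cref{section6} self-contained.
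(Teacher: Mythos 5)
Your proposal is correct and follows exactly the paper's own argument: fix $y\in M$, use \cref{prop-rewritten-tol}\cref{fejer*-rewritten} to see that $(\varepsilon_n^{(2)}(y))_{n\in\NN}$ has at most $N(y)$ nonzero (finite, nonnegative) terms, hence is summable, and conclude via \cref{prop-rewritten-tol}\cref{3-rewritten}. No differences worth noting.
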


\begin{proof}
    Indeed, the series \begin{equation*}
        \sum_{n = 0}^{\infty} \varepsilon_n^{(2)}(y) = \sum_{n < N(y)}\varepsilon_n^{(2)}(y)
    \end{equation*} converges for all $y \in M$ as it contains at most $N(y)$ nonzero terms (see \cref{prop-rewritten-tol}\cref{fejer*-rewritten}). Therefore, the result follows due to \cref{prop-rewritten-tol}\cref{3-rewritten}.
\end{proof}

Next, due to \cref{raik-*}, we highlight the following particular case in which \fejer* monotonicity implies quasi-\fejer\ Type II.

\begin{proposition}\label{*implies2}
    Let $(x_n)_{n \in \NN}$ be a \fejer* monotone sequence with respect a nonempty convex subset $M$ of $X$. Suppose that $M$ is bounded and $\ri(M) \neq \varnothing$. Then $(x_n)_{n \in \NN}$ is quasi-\fejer\ monotone of Type II with respect to $M$.
\end{proposition}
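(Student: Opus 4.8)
The plan is to invoke the reformulation of quasi-\fejer\ Type~II from \cref{prop-rewritten-tol}\cref{12-rewritten}: it suffices to show $\sum_{n\in\NN}\sup_{y\in M}\varepsilon_n^{(2)}(y)<+\infty$. I set $A:=\overline{\aff}(M)$ and $a_n:=P_A(x_n)$. Since $A$ is a closed affine subspace, for every $y\in A$ one has the Pythagorean splitting $\|x_n-y\|^2=d_A(x_n)^2+\|a_n-y\|^2$; hence, writing $P_n(y):=\|a_{n+1}-y\|^2-\|a_n-y\|^2$ and $Q_n:=d_A(x_{n+1})^2-d_A(x_n)^2$ (note that $Q_n$ is \emph{independent of $y$}), for $y\in M\subseteq A$ we get $\|x_{n+1}-y\|^2-\|x_n-y\|^2=P_n(y)+Q_n$, and therefore $\varepsilon_n^{(2)}(y)\le|P_n(y)|+\max\{0,Q_n\}$ by the elementary inequality $\max\{0,s+t\}\le\max\{0,s\}+\max\{0,t\}$. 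So the whole estimate splits into bounding $\sum_n\sup_{y\in M}|P_n(y)|$ and $\sum_n\max\{0,Q_n\}$ separately.

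For the first sum I would use that $(x_n)_\nnn$ is bounded (\cref{b-prop}\cref{b-prop1}) and that $P_A$ is nonexpansive, together with boundedness of $M$, to see that $C:=\sup_{n\in\NN,\,y\in M}\|a_{n+1}+a_n-2y\|<+\infty$; then $|P_n(y)|=|\langle a_{n+1}-a_n,\,a_{n+1}+a_n-2y\rangle|\le C\|a_{n+1}-a_n\|$, so the finite-length conclusion of \cref{raik-*} gives $\sum_n\sup_{y\in M}|P_n(y)|\le C\sum_n\|a_{n+1}-a_n\|<+\infty$. For the second sum I would fix one point $y_0\in\ri(M)$ and apply \cref{raiks-ineqs-statem} with $m=0$: there exist $N(y_0)\in\NN$ and $\rho>0$ with $\|x_{n+1}-y_0\|^2-\|x_n-y_0\|^2\le-2\rho\|a_{n+1}-a_n\|\le0$ for all $n\ge N(y_0)$. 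Since the left-hand side equals $P_n(y_0)+Q_n$, this forces $Q_n\le-P_n(y_0)\le|P_n(y_0)|\le C\|a_{n+1}-a_n\|$, hence $\max\{0,Q_n\}\le C\|a_{n+1}-a_n\|$ for $n\ge N(y_0)$ and $\sum_{n\ge N(y_0)}\max\{0,Q_n\}<+\infty$ again by \cref{raik-*}; the finitely many remaining terms $n<N(y_0)$ are each finite because $M$ is bounded. Combining the two bounds with \cref{prop-rewritten-tol}\cref{12-rewritten} then yields the claim.

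The step I expect to be the main obstacle is the ``$Q_n$ part''. Here $Q_n$ is the increment of the squared distance of $x_n$ to $\overline{\aff}(M)$, and the positive variation of $(d_A(x_n)^2)_\nnn$ is not controlled by \fejer* monotonicity by itself, so a naive argument based on mere convergence of the distances fails; one genuinely has to exploit the \emph{quantitative} Raik inequality \cref{raiks-ineqs} at a relative-interior point $y_0$ to trade $Q_n$ against the summable series $\sum_n\|a_{n+1}-a_n\|$ furnished by \cref{raik-*}. I would also stress that this argument deliberately does \emph{not} route through quasi-\fejer\ Type~I --- that would require $\sum_n\|x_{n+1}-x_n\|<+\infty$, which need not hold under the sole hypothesis $\ri(M)\neq\varnothing$ --- and it is precisely because Raik's inequality controls \emph{squared} distances that one lands on Type~II rather than Type~I.
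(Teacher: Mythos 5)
Your proof is correct, but it organizes the estimate differently from the paper. The paper's proof also reduces to \cref{prop-rewritten-tol}\cref{12-rewritten} and also draws its summability from \cref{raik-*}, but instead of a Pythagorean splitting it uses a single reference point $y_0\in M$: since $\varepsilon_n^{(2)}(y_0)=0$ for $n\geq N(y_0)$ and $t\mapsto\max\{0,t\}$ is $1$-Lipschitz, one gets
$0\leq\varepsilon_n^{(2)}(y)=\varepsilon_n^{(2)}(y)-\varepsilon_n^{(2)}(y_0)\leq 2\lvert\langle y-y_0,\,x_{n+1}-x_n\rangle\rvert\leq 2\operatorname{diam}(M)\,\lVert P_{\overline{\aff}(M)}(x_{n+1})-P_{\overline{\aff}(M)}(x_n)\rVert$,
because $y-y_0$ lies in the linear subspace parallel to $\overline{\aff}(M)$; the finitely many indices $n<N(y_0)$ are handled by weak compactness of $\overline{M}$ and weak continuity of $\varepsilon_n^{(2)}$. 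This disposes of the $y$-dependence and of what you call the $Q_n$-part in one stroke, and the reference point only needs to lie in $M$, not in $\ri(M)$. Your route --- splitting $\|x_{n+1}-y\|^2-\|x_n-y\|^2$ into the tangential increment $P_n(y)$ and the $y$-independent normal increment $Q_n$, and then trading $\max\{0,Q_n\}$ against the summable series $\sum_n\|a_{n+1}-a_n\|$ via \cref{raiks-ineqs-statem} at a point of $\ri(M)$ --- is longer but makes explicit \emph{where} the possible increase of the squared distance to $\overline{\aff}(M)$ is absorbed, which is a point the paper's one-line subtraction hides. Your closing remarks (that Type~I is genuinely out of reach here, and that the quantitative Raik inequality is the indispensable input) are consistent with \cref{nonep-int-type1}, \cref{*implies1}, and the counterexamples \cref{count-type2-prop} and \cref{prop-ind-count-quasi}.
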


\begin{proof}
    To see this, rewrite the definition of $\varepsilon_n^{(2)}$ as follows:
    \begin{equation}
    \label{e:251211a}
        \varepsilon_n^{(2)}(y) = \max\{0, \langle x_{n+1} - x_n, \,  x_{n+1} + x_n - 2y \rangle\}.
    \end{equation}
    Pick any $y_0 \in M$. Then, for all $n \geqslant N := N(y_0)$ and all $y \in M$, \cref{prop-rewritten-tol}\cref{fejer*-rewritten} implies
    \begin{equation*}
    \begin{aligned}
        0 \leq \varepsilon_n^{(2)}(y) = \varepsilon_n^{(2)}(y) - \varepsilon_n^{(2)}(y_0) &\leq 2 |\langle y - y_0, \, x_{n+1} - x_n\rangle| \\ &\leq 2 \operatorname{diam}(M)\|P_{\overline{\aff}(M)}(x_{n+1}) - P_{\overline{\aff}(M)}(x_n)\|,
    \end{aligned}
    \end{equation*}
    where the last step is due to the fact that $y, y_0 \in \overline{\aff}(M)$ and $\|y - y_0\| \leq \operatorname{diam}(M)$. Hence \cref{raik-*} implies that \begin{equation*}
        (\exists N \in \NN) \quad \sum_{n \geq N} \, \sup_{y \in M} \varepsilon_{n}^{(2)}(y) \leq 2 \operatorname{diam}(M) \sum_{n \geq N} \|P_{\overline{\aff}(M)}(x_{n+1}) - P_{\overline{\aff}(M)}(x_n)\| < +\infty.
    \end{equation*} However, as the given set $M$ is assumed to be convex and bounded (so $\overline{M}$ is weakly compact) and the tolerance functions $\varepsilon_{n}^{(2)}$ are weakly continuous (see \cref{e:251211a}), we get for all $n \in \{0, \ldots, N-1\}$ \begin{equation*}
        \sup_{y \in M} \varepsilon_{n}^{(2)}(y) < +\infty. 
    \end{equation*} Therefore, the result follows due to \cref{prop-rewritten-tol}\cref{12-rewritten}.
\end{proof}

\begin{remark}\label{count-type2-remark}
    As \cref{count-type2-prop} and \cref{prop-ind-count-quasi} demonstrate next, both of the assumptions, on the boundedness of $M$ and nonemptiness of $\ri(M)$, are essential for the conclusion of \cref{*implies2} to hold.
\end{remark}

\begin{example}\label{count-type2-prop}
    For the sequence $(x_n)_{n \in \NN}$ given by \cref{closure_example}, we have \begin{equation*}
        (\forall n \in \NN) \quad \sup_{y \in \underline{\lim} \, C_\NN} \varepsilon_n^{(2)} (y) = + \infty.
    \end{equation*} In particular, $(x_n)_{n \in \NN}$ is \emph{not} quasi-\fejer\ monotone of Type II with respect to its \emph{unbounded} maximal \fejer* set $\underline{\lim} \, C_\NN$, which has nonempty interior.
\end{example}

\begin{proof}
     For $\rho > 0$ and $n \in \NN$ fixed, set \begin{equation}\label{parr-lines}
         l_n(\rho) := \{y \in X \mid \|x_{n+1} - y\|^2 = \|x_n - y \|^2 + \rho\}.
     \end{equation} We claim that $l_n(\rho) \cap \underline{\lim} \, C_\NN \neq \varnothing$ for any $\rho > 0$ and $n \in \NN$ chosen. Indeed, by taking squares and rearranging the terms in \cref{parr-lines}, we conclude that \begin{equation*}
         l_n(\rho) = \left\{ y \in X \,\middle|\, \langle y, x_{n+1} - x_{n} \rangle = \frac{1}{2}(\|x_{n+1}\|^2 - \|x_n\|^2) - \frac{\rho}{2}\right\},
     \end{equation*} which represents a line parallel to the line $\operatorname{bdry}(C_n)$ but shifted, according to \cref{hsp-lemma}. In particular, as $l_n(\rho)$ has the same negative slope, no matter how big the gap between these two lines is, $l_n(\rho)$ will always intersect $\mathbb{R}\times\mathbb{R_{++}} \subset \underline{\lim} \, C_\NN$, where the last inclusion is due to \cref{closure_example}. Therefore, our claim is proved, and \begin{equation*}
         (\forall n \in \NN) \quad \sup_{y \in \underline{\lim} \, C_\NN} \varepsilon_n^{(2)} (y) \geq \rho.
     \end{equation*} However, as the parameter $\rho > 0$ was chosen arbitrarily, the result follows.
\end{proof}

\begin{example}\label{prop-ind-count-quasi}
    For the sequence $(x_n)_{n \in \NN}$ given in \cref{infdim-countex}, consider the same target set \begin{equation*}
        M = \operatorname{span}\,\{e_n\}_{n \geq 1} = \{\lambda_1e_1 + \lambda_2e_2 + \cdots + \lambda_me_m \mid m \in \NN_{++}, \; \lambda_1, \ldots , \lambda_m \in \mathbb{R}\}.
    \end{equation*} Then $M = \underline{\lim} \, C_\NN$. However, \begin{equation}\label{tol-func-inf-count}
        (\forall n \in \NN) \quad \max_{y \in B[0, 1] \cap M} \varepsilon_n^{(2)} (y) = 2.
    \end{equation} In particular, $(x_n)_{n \in \mathbb{N}}$ is neither quasi-\fejer\ monotone of Type I nor II with respect to $B[0, 1] \cap M$, which is a bounded set with empty relative interior.
\end{example}

\begin{proof}
    The equality $M = \underline{\lim} \, C_\NN$ was covered previously in \cref{infdim-countex}. Now, take any $y \in M$. Repeating the same arguments, we get \begin{equation*}
    (\forall k \in \NN) \quad \begin{aligned}
        &\|x_{2k+1} - y\|^2 - \|x_{2k} - y\|^2 
    = 1 + y_k^2 - (1 - y_k)^2 = 2y_k, \\ &\|x_{2(k+1)} - y\|^2 - \|x_{2k+1} - y\|^2 
    = (1 - y_{k+1})^2 - 1 - y_{k+1}^2 = -2y_{k+1}.
    \end{aligned}
    \end{equation*} Hence, for all $k \in \NN$ and $n = 2k, 2k+1$, the maximum in \cref{tol-func-inf-count} is equal to $2$ and attained at $y = e_k, -e_{k+1} \in M$, respectively. Consequently, the final claim follows immediately by combining \cref{connections-quasi} with \cref{prop-rewritten-tol}\cref{12-rewritten}.
\end{proof}

In view of \cref{connections-quasi}, \cref{*implies2} indicates that \fejer* monotonicity may be related in some way to Type I of quasi-\fejer\ monotonicity.

\begin{corollary}\label{nonep-int-type1}
    Let $(x_n)_{n \in \NN}$ be a \fejer* monotone sequence with respect to a nonempty convex subset $M$ of $X$. Suppose that $\operatorname{int}(M) \neq \varnothing$. Then $(x_n)_{n \in \NN}$ is quasi-\fejer\ monotone of Type I with respect to $X$. In particular, $(x_n)_{n \in \NN}$ is quasi-\fejer\ monotone of Type II with respect to any bounded subset $C$ of $X$.
\end{corollary}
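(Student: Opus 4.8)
The plan is to deduce from $\operatorname{int}(M)\neq\varnothing$ that $(x_n)_{n\in\NN}$ has a finite-length trajectory, i.e.\ $\sum_{n\in\NN}\|x_{n+1}-x_n\|<+\infty$, and then to translate this into quasi-\fejer\ Type~I monotonicity with respect to $X$ via the tolerance-function reformulation of \cref{prop-rewritten-tol}.

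The first --- and essentially only --- point to nail down is that a nonempty interior of $M$ forces $\overline{\aff}(M)=X$: any closed ball contained in $M$ already affinely spans $X$, so $\aff(M)=X$ and hence $\overline{\aff}(M)=X$. In particular $\ri(M)=\operatorname{int}(M)\neq\varnothing$ (recall \cref{ri-defs}) and $P_{\overline{\aff}(M)}=\operatorname{Id}$, so the hypotheses of \cref{raik-*} are met and its conclusion \cref{raik-*-series} reads
\begin{equation*}
\sum_{n\in\NN}\|x_{n+1}-x_n\| \;=\; \sum_{n\in\NN}\bigl\|P_{\overline{\aff}(M)}(x_{n+1})-P_{\overline{\aff}(M)}(x_n)\bigr\| \;<\; +\infty.
\end{equation*}

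To finish, I would use the triangle inequality $\|x_{n+1}-y\|-\|x_n-y\|\le\|x_{n+1}-x_n\|$, valid for every $y\in X$ (this is exactly the estimate behind \cref{remark-eps-1}), to obtain $\sup_{y\in X}\varepsilon_n^{(1)}(y)\le\|x_{n+1}-x_n\|$ for all $n\in\NN$; summing and using the display above gives $\sum_{n\in\NN}\sup_{y\in X}\varepsilon_n^{(1)}(y)<+\infty$, which by \cref{prop-rewritten-tol}\cref{12-rewritten} is precisely quasi-\fejer\ Type~I monotonicity with respect to $X$. For the ``in particular'' clause, Type~I monotonicity with respect to $X$ trivially restricts to Type~I monotonicity with respect to any $C\subseteq X$, and Type~I implies Type~II on bounded sets by \cref{connections-quasi}. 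I do not foresee a genuine obstacle: the argument is short, and the only thing to be careful about is the reduction to $P_{\overline{\aff}(M)}=\operatorname{Id}$ that makes \cref{raik-*} directly applicable --- this is also what justifies stating the hypothesis as $\operatorname{int}(M)\neq\varnothing$ rather than merely $\ri(M)\neq\varnothing$.
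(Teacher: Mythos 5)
Your proposal is correct and follows essentially the same route as the paper: invoke \cref{raik-*} (noting that $\operatorname{int}(M)\neq\varnothing$ forces $\overline{\aff}(M)=X$, so the projections are the identity and the trajectory has finite length), bound $\sup_{y}\varepsilon_n^{(1)}(y)$ by $\|x_{n+1}-x_n\|$ as in \cref{remark-eps-1}, and conclude via \cref{prop-rewritten-tol} and \cref{connections-quasi}. The only difference is that you spell out the reduction $P_{\overline{\aff}(M)}=\operatorname{Id}$, which the paper leaves implicit.
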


\begin{proof}
    Since $\operatorname{int}(M) \neq \varnothing$, \cref{raik-*} together with \cref{remark-eps-1} implies \begin{equation*}
       \sum_{n = 0}^{\infty} \, \sup_{y \in M} \varepsilon_n^{(1)}(y) \leq \sum_{n = 0}^{\infty} \|x_{n+1} - x_n\| < +\infty.
    \end{equation*} Consequently, the results follow directly from \cref{prop-rewritten-tol}\cref{12-rewritten} and \cref{connections-quasi}.
\end{proof}

\begin{remark}
    In light of \cref{lemma-cone-inf}, we can use any sequence $(x_n)_{n \in \NN}$ with distinct consecutive terms that is \fejer* monotone with respect to the set $M$ with $\operatorname{int}(M) \neq \varnothing$ to get counterexamples showing that quasi-\fejer\ monotonicity of Types I or II does not generally imply \fejer* monotonicity. Indeed, take any such sequence $(x_n)_{n \in \NN}$ (for instance, as presented in \cref{authors-example}). Then, due to \cref{nonep-int-type1}, we have: \begin{equation*}
    \begin{aligned}
        (x_n)_{n \in \mathbb{N}} \text{ is quasi-\fejer\ monotone of Type I with respect to } 
        X \setminus \overline{\underline{\lim} \, C_\NN} \neq \varnothing \\
        \text{but \emph{not} \fejer* monotone with respect to any point } x \in X \setminus \overline{\underline{\lim} \, C_\NN}.
    \end{aligned}
\end{equation*} Moreover, for any nonempty \emph{bounded} subset $C$ of $X\setminus\overline{\underline{\lim} \, C_\NN}$, we get: \begin{equation*}
    \begin{aligned}
        (x_n)_{n \in \mathbb{N}} \text{ is quasi-\fejer\ monotone of Type II with respect to } 
        C \subsetneq X \setminus \overline{\underline{\lim} \, C_\NN} \\
        \text{but \emph{not} \fejer* monotone with respect to any point } x \in C.
    \end{aligned}
\end{equation*} 
We highlight that, in the setting of \cref{authors-example}, the sequence $(x_n)_{n \in \NN}$ is quasi-\fejer\ Type II with respect to $M := (0,1]\times{0}$, even though $\overline{M} \not\subseteq \underline{\lim} \, C_{\NN}$. This does not lead to any contradiction, contrary to the claim made by Behling et al. in \cite{BBCIARS}.
\end{remark}

By combining Raik’s proposition with the corollaries obtained from directional asymptotics, we now show that \fejer* monotonicity implies Type I under a fairly general condition:

\begin{theorem}\label{*implies1}
    Let $(x_n)_{n \in \NN}$ be a \fejer* monotone sequence with respect to a nonempty convex subset $M$ of $X$ such that $\ri(M) \neq \varnothing$. Suppose that \begin{equation*}
        \text{either} \quad \lim_{n \to \infty} d_{\overline{M}}(x_n) > 0 \quad \text{or} \quad (x_n)_\nnn\; \text{ converges to some } z \in \overline{M} \cap \underline{\lim} \, C_{\NN}.
    \end{equation*} Then $(x_n)_{n \in \NN}$ is quasi-\fejer\ monotone of Type I with respect to $M$. 
\end{theorem}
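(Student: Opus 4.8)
The plan is to bound $\sum_n \sup_{y\in M}\varepsilon_n^{(1)}(y)$ and conclude via \cref{prop-rewritten-tol}\cref{12-rewritten}. Write $A := \overline{\aff}(M)$. First I would record the basic estimate: for any $y\in M$ and $n\in\NN$,
\begin{equation*}
  \varepsilon_n^{(1)}(y) \leq \|x_{n+1}-y\|-\|x_n-y\| \;\text{ when positive},
\end{equation*}
but the sharper route is to write, for $y\in M\subseteq A$ (so $P_A(x_n)=P_A$ applied and $P_A(y)=y$),
\begin{equation*}
  \varepsilon_n^{(1)}(y)\bigl(\|x_{n+1}-y\|+\|x_n-y\|\bigr)\leq \varepsilon_n^{(2)}(y)
  = \max\{0,\langle x_{n+1}-x_n, x_{n+1}+x_n-2y\rangle\}.
\end{equation*}
Since $(x_n)$ is bounded (\cref{b-prop}\cref{b-prop1}) and $M$ is nonempty, $\|x_{n+1}-y\|+\|x_n-y\|$ is uniformly bounded; the genuine difficulty is the \emph{lower} bound, i.e.\ ensuring this sum does not degenerate to $0$, which is exactly where the dichotomy hypothesis enters. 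Fix $y_0\in\ri(M)$ and let $N(y_0),\rho$ be as in \cref{raiks-ineqs-statem}. By \cref{lemma4raik} applied along the orbit, $\langle x_{n+1}-x_n, x_{n+1}+x_n-2y_0\rangle \geq 2\rho\|P_A(x_{n+1})-P_A(x_n)\|\geq 0$ for $n\geq N(y_0)$; subtracting the $y_0$-version from the $y$-version gives, for all $y\in M$ and $n\geq N(y_0)$,
\begin{equation*}
  \varepsilon_n^{(2)}(y) = \varepsilon_n^{(2)}(y)-\varepsilon_n^{(2)}(y_0)
  \leq 2\bigl|\langle x_{n+1}-x_n,\ y-y_0\rangle\bigr|
  = 2\bigl|\langle P_A(x_{n+1})-P_A(x_n),\ y-y_0\rangle\bigr|,
\end{equation*}
using $x_{n+1}-x_n$ need not lie in $A-A$, so one must first project: $\langle x_{n+1}-x_n, y-y_0\rangle = \langle P_{A-A}(x_{n+1}-x_n), y-y_0\rangle = \langle P_A(x_{n+1})-P_A(x_n), y-y_0\rangle$ because $y-y_0\in A-A$. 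So far this is just the argument of \cref{*implies2}, but $M$ is not assumed bounded, so $|y-y_0|$ is not controlled — this is the main obstacle, and it is resolved by combining with $\varepsilon_n^{(1)}$ instead of $\varepsilon_n^{(2)}$.

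The key idea for the unbounded case: estimate $\varepsilon_n^{(1)}(y)$ directly. For $y\in M\setminus C_n$ write $\varepsilon_n^{(1)}(y) = \|x_{n+1}-y\|-\|x_n-y\| = \varepsilon_n^{(2)}(y)/(\|x_{n+1}-y\|+\|x_n-y\|)$, and bound the numerator by $2|\langle P_A(x_{n+1})-P_A(x_n), y-y_0\rangle|\leq 2\|P_A(x_{n+1})-P_A(x_n)\|\cdot\|y-y_0\|$; meanwhile bound the denominator below by $\|x_n - y\|\geq \|y-y_0\| - \|x_n-y_0\|$, which is $\geq \tfrac12\|y-y_0\|$ once $\|y-y_0\|\geq 2\sup_n\|x_n-y_0\| =: 2R$. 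Hence for $y\in M$ with $\|y-y_0\|\geq 2R$ and $n\geq N(y_0)$,
\begin{equation*}
  \varepsilon_n^{(1)}(y)\leq \frac{2\|P_A(x_{n+1})-P_A(x_n)\|\cdot\|y-y_0\|}{\tfrac12\|y-y_0\|} = 4\|P_A(x_{n+1})-P_A(x_n)\|,
\end{equation*}
which is summable by \cref{raik-*}. For $y\in M$ with $\|y-y_0\|\leq 2R$ — a \emph{bounded} piece of $M$ — the argument of \cref{*implies2} applies verbatim: $\varepsilon_n^{(1)}(y)\leq \tfrac12\varepsilon_n^{(2)}(y)/\inf(\cdots)$ and one must check the denominator $\|x_{n+1}-y\|+\|x_n-y\|$ stays bounded away from $0$ uniformly over this bounded piece and all large $n$ — this is precisely where the dichotomy is used: if $\liminf_n d_{\overline M}(x_n)=\delta>0$ then $\|x_n-y\|\geq\delta$ for every $y\in\overline M\supseteq M$ and all large $n$, giving the uniform lower bound $\geq 2\delta$; if instead $x_n\to z\in\overline M\cap\underline{\lim}\,C_\NN$, then one handles $y$ near $z$ separately — but since $z\in\underline{\lim}\,C_\NN$, eventually $y=z\in C_n$ so $\varepsilon_n^{(1)}(z)=0$, and for $y$ in the bounded piece bounded away from $z$ the denominator is again uniformly positive for large $n$. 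Thus $\sum_{n\geq N(y_0)}\sup_{y\in M}\varepsilon_n^{(1)}(y)<+\infty$; the finitely many initial terms $n<N(y_0)$ contribute $\leq \sum_{n<N(y_0)}\|x_{n+1}-x_n\|<\infty$ by \cref{remark-eps-1}.

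Assembling, $\sum_{n=0}^\infty\sup_{y\in M}\varepsilon_n^{(1)}(y)<+\infty$, so by \cref{prop-rewritten-tol}\cref{12-rewritten} the sequence $(x_n)_{n\in\NN}$ is quasi-\fejer\ of Type~I with respect to $M$. The step I expect to be most delicate is the uniform lower bound on $\|x_{n+1}-y\|+\|x_n-y\|$ over the bounded part $\{y\in M: \|y-y_0\|\leq 2R\}$ for large $n$ — it genuinely requires the stated hypothesis (without it, $x_n$ could approach a boundary point of $\overline M$ that sits outside $\underline{\lim}\,C_\NN$, making the ratio $\varepsilon_n^{(1)}/\varepsilon_n^{(2)}$ blow up, as in \cref{example-non-decr}). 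Everything else is bookkeeping with Raik's inequality and the Cauchy–Schwarz split of $M$ into a bounded core plus a far-away tail.
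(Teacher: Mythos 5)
Your overall strategy matches the paper's: reduce to $\sum_n\sup_{y\in M}\varepsilon_n^{(1)}(y)<+\infty$ via \cref{prop-rewritten-tol}\cref{12-rewritten}, split $M$ into a far-away tail (where the denominator $\|x_{n+1}-y\|+\|x_n-y\|$ grows like $\|y-y_0\|$ and cancels the numerator) and a bounded core, and sum against the finite-length projected trajectory from \cref{raik-*}. The far part and the case $\lim_n d_{\overline{M}}(x_n)>0$ are handled correctly and essentially as in the paper.

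There is, however, a genuine gap in the convergent case, exactly at the point you flag as delicate. When $x_n\to z\in\overline{M}\cap\underline{\lim}\,C_\NN$, the set of problematic $y$ is not the single point $y=z$ (where indeed $\varepsilon_n^{(1)}(z)=0$ eventually) together with points uniformly far from $z$: it is the whole punctured neighbourhood $0<\|y-z\|$ small. For such $y$ the denominator $\|x_{n+1}-y\|+\|x_n-y\|$ is only bounded below by $d_{\overline{M}}(x_n)+d_{\overline{M}}(x_{n+1})\to 0$, while your numerator bound $2\|P_{A}(x_{n+1})-P_{A}(x_n)\|\,\|y-y_0\|$ does \emph{not} shrink as $y\to z$ (it is of order $\|z-y_0\|$). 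This yields only $\sup_y\varepsilon_n^{(1)}(y)\lesssim\|P_A(x_{n+1})-P_A(x_n)\|/d_{\overline{M}}(x_n)$, which is not summable in general since the denominators tend to $0$. The paper closes this by changing the reference point from $y_0$ to the limit $z$ itself --- using $z\in\underline{\lim}\,C_\NN$ to get $\langle x_{n+1}-x_n,\,x_{n+1}+x_n-2z\rangle\leq 0$ eventually, hence $\varepsilon_n^{(2)}(y)\leq 2\langle P_A(x_{n+1})-P_A(x_n),\,z-y\rangle\leq 2\|P_A(x_{n+1})-P_A(x_n)\|\,\|z-y\|$ --- and then invoking \cref{add-equiv}, i.e.\ the uniform comparison $\|z-y\|\leq(\Gamma+2)\|x_n-y\|$ for all $y\in\overline{M}$ and all large $n$. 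That comparison is the nontrivial quantitative input (it rests on the directional asymptotics of \cref{equiv-th-inf}, which is where $\ri(M)\neq\varnothing$ is really used), and it is what makes the numerator vanish at the same rate as the denominator near $z$. Without it, or some equivalent substitute, your bounded-core estimate does not go through.
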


\begin{proof}
According to \cref{remark-eps-1} and \cref{prop-rewritten-tol}\cref{12-rewritten}, it suffices to show that \begin{equation}\label{*implies1-series}
\sum_{n \geq N} \, \sup_{y \in M} \varepsilon_{n}^{(1)}(y) < +\infty
\end{equation} for some $N \in \NN$. In view of \cref{dist-remark}, we assume WLOG that $(x_n)_{n \in \NN}$ has distinct consecutive terms (for otherwise $\varepsilon_n^{(1)}\equiv 0$ eventually and we are done). Next, \cref{cycl-prop-inf*2}\cref{cycl-prop-inf*2-2} provides some $N$ such that $x_n \not \in \overline{M}$ for all $n \geq N$. That said, for all $n \geq N$ and all $y \in \overline{M} \setminus C_n$, \begin{equation*}
0 < \varepsilon_n^{(1)}(y) = \frac{\|x_{n+1} - y\|^2 - \|x_{n} - y\|^2}{\|x_{n+1} - y\| + \|x_{n} - y\|} = \frac{\langle x_{n+1} - x_n, x_{n+1} + x_n - 2y \rangle}{\|x_{n+1} - y\| + \|x_{n} - y\|}.
\end{equation*} 

\textit{Case 1}: Suppose $$x_n \to z \in \overline{M} \cap \underline{\lim} \, C_{\NN}.$$ Then, due to \cref{def}, \cref{hsp-lemma}, and \cref{hsp-lemma-remark}, \begin{equation}\label{type1-th-ineq1}
(\exists N(z) \in \NN)(\forall n \geq  N(z)) \quad \langle x_{n+1} - x_n, x_{n+1} + x_n - 2z \rangle \leq 0.
\end{equation} Now, adding and subtracting $2z$ yields for all $n \geq N$ and all $y \in \overline{M} \setminus C_n$ \begin{equation*}\label{type1-th-ineq2}
0 < \varepsilon_n^{(1)}(y) = \frac{2\langle x_{n+1} - x_n, z - y \rangle}{\|x_{n+1} - y\| + \|x_{n} - y\|} + \frac{\langle x_{n+1} - x_n, x_{n+1} + x_n - 2z \rangle}{\|x_{n+1} - y\| + \|x_{n} - y\|}.
\end{equation*} Without loss of generality, let $N \geq N(z)$. Then \cref{type1-th-ineq1} implies \begin{equation*}
0 < \varepsilon_n^{(1)}(y) \leq \frac{2\langle x_{n+1} - x_n, z - y \rangle}{\|x_{n+1} - y\| + \|x_{n} - y\|} = \frac{2\langle P_{\overline{\aff}(M)}(x_{n+1}) - P_{\overline{\aff}(M)}(x_n), z - y \rangle}{\|x_{n+1} - y\| + \|x_{n} - y\|}.
\end{equation*} 
Finally, \cref{add-equiv} provides the following upper bound 
for some $\Gamma \in \mathbb{R}_{++}$: \begin{equation}\label{*implies1-ub}
    (\exists \widetilde{N} \in \NN)(\forall n \geq \widetilde{N})(\forall y \in \overline{M}) \quad \frac{\|z - y\|}{\|x_{n+1} - y\| + \|x_n - y\|} \leq \frac{\|z - y\|}{\|x_n - y\|} \leq \Gamma,
\end{equation} where we similarly may let $N \geq \widetilde{N}$ by taking $N$ even larger if needed. Then, the Cauchy-Schwarz inequality combined with \cref{raik-*} yields 
\begin{equation*}
\sum_{n \geq N} \, \sup_{y \in M} \varepsilon_{n}^{(1)}(y) \leq 2\Gamma  \sum_{n \geq N} \|P_{\overline{\aff}(M)}(x_{n+1}) - P_{\overline{\aff}(M)}(x_n)\| < + \infty
\end{equation*} completing the proof in this case.

\textit{Case 2}: Let $$2\delta := \lim_{n \to \infty} d_{\overline{M}}(x_n) > 0$$ and pick any $z \in M \subseteq\underline{\lim} \, C_\NN$. Proceeding as in the first case, we can likewise verify the convergence of the series \cref{*implies1-series}. What remains 
to be done is to establish an upper bound analogous to \cref{*implies1-ub} for this case as well. To achieve this, suppose that, after enlarging $N$ if needed, \begin{equation*}
(\forall n \geq N)(\forall y \in \overline{M}) \quad \|x_n - y\| \geq \delta > 0.
\end{equation*} Then, \cref{b-prop}\cref{b-prop1} implies that $R := \sup\{(2\|x_n\|)_{n \in \NN}, \|z\|\} < +\infty$, and 
\begin{equation*}
(\forall y \in X\setminus B[0, R]) \quad \frac{\|z - y\|}{\|x_{n+1} - y\| + \|x_n - y\|} \leq \frac{\|z - y\|}{\|x_n - y\|} \leq \frac{\|z\| + \|y\|}{\|y\| - \|x_n\|} \leq \frac{2\|y\|}{\|y\|/2} = 4
\end{equation*} for all $n \geq N$. However, inside the chosen ball $B[0, R]$, we use a different upper bound: \begin{equation*}
(\forall y \in \overline{M} \cap B[0, R]) \quad \frac{\|z - y\|}{\|x_{n+1} - y\| + \|x_n - y\|} \leq \frac{\operatorname{diam}(B[0, R] \cup \{z\})}{2\delta} < + \infty.
\end{equation*} To summarize, for 
\begin{equation*}
\Gamma := \max\left\{\frac{\operatorname{diam}(B[0, R] \cup \{z\})}{2\delta}, \,4\right\},
\end{equation*} 
the upper bound \cref{*implies1-ub} holds in the second case as well completing the proof. \end{proof}

\begin{corollary}\label{c:*implies1}
    Let $(x_n)_{n \in \NN}$ be a \fejer* monotone sequence with respect to a nonempty closed convex subset $M$ of $X$ with 
    $\ri(M) \neq \varnothing$. Then $(x_n)_{n \in \NN}$ is quasi-\fejer\ monotone of Type I with respect to $M$. 
\end{corollary}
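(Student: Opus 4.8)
The plan is to reduce the corollary to \cref{*implies1} by verifying its hypothesis via \cref{fejer-opial-lemma}. Since $M$ is closed, $\overline{M}=M$, and of course $M$ is a nonempty closed convex subset of $\overline{\aff}(M)$, so \cref{fejer-opial-lemma} applies with $C:=M$. It yields that $(d_M(x_n))_{n\in\NN}$ converges and that either $\lim_{n\to\infty} d_M(x_n)>0$ or $(x_n)_{n\in\NN}$ converges to some $z\in M$.

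Next I would handle the convergent case: suppose $x_n\to z\in M=\overline{M}$. Because $(x_n)_{n\in\NN}$ is \fejer* monotone with respect to $M$, \cref{eq-def} gives $M\subseteq\underline{\lim}\,C_\NN$, and hence $z\in M\subseteq\underline{\lim}\,C_\NN$. Therefore $z\in\overline{M}\cap\underline{\lim}\,C_\NN$, which is exactly the second alternative in the hypothesis of \cref{*implies1}.

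Combining the two cases, the dichotomy ``$\lim_{n\to\infty} d_{\overline{M}}(x_n)>0$ or $(x_n)_{n\in\NN}$ converges to some $z\in\overline{M}\cap\underline{\lim}\,C_\NN$'' holds. Since $M$ is moreover convex with $\ri(M)\neq\varnothing$, \cref{*implies1} applies and gives that $(x_n)_{n\in\NN}$ is quasi-\fejer\ monotone of Type~I with respect to $M$, as desired. There is essentially no obstacle here: the only things to check are that $\overline{M}=M$ (immediate from closedness) and that the limit point, when it exists, lands in $\underline{\lim}\,C_\NN$ (immediate from \cref{eq-def}); the real work is already done in \cref{fejer-opial-lemma} and \cref{*implies1}.
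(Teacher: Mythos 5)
Your proposal is correct and follows essentially the same route as the paper: both apply \cref{fejer-opial-lemma} with $C=M$ to obtain the dichotomy, note that closedness gives $\overline{M}=M\subseteq\underline{\lim}\,C_\NN$ via \cref{eq-def}, and then invoke \cref{*implies1}. No gaps.
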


\begin{proof}
Because $M$ is closed, we know that 
$\overline{M} = M \subseteq \underline{\lim} \, C_{\NN}$. 
\cref{fejer-opial-lemma} yields that $\lim_{n \to \infty} d_{\overline{M}}(x_n)$ always exists. If $\lim_{n \to \infty} d_{\overline{M}}(x_n) > 0$, then the conclusion follows by \cref{*implies1}. However, if $\lim_{n \to \infty} d_{\overline{M}}(x_n) = 0$, then it still follows as $x_n \to z \in \overline{M} = M = M \cap \underline{\lim} \, C_{\NN} = \overline{M} \cap \underline{\lim} \, C_{\NN}$.
\end{proof}

\begin{remark}
    Without the assumptions of \cref{nonep-int-type1} or \cref{*implies1}, \fejer* monotonicity may not imply quasi-\fejer\ Type I, even in finite dimensions, as shown next, whereas the necessity of the condition $\ri(M)\neq\varnothing$ was previously examined in \cref{prop-ind-count-quasi}.
\end{remark}

\begin{example}\label{type1-countex}
    Suppose $X = \mathbb{R}^2$ and define the sequence $(x_n)_{n \in \NN}$ in $X$ by \begin{equation*}
        (\forall k \in \NN) \quad x_{2k} := \left(\frac{1}{k+1}, 0\right), \; x_{2k + 1} := \left(\frac{1}{k+2}, \sqrt{\frac{2}{(k+1)(k+2)\ln(k+1)}}\right).
    \end{equation*}

    Then the following hold: \begin{enumerate}
        \item\label{type1-countex-prop1} $\underline{\lim} \, C_\NN = \mathbb{R}_{--} \times\{0\}$ and $x_n \to (0, 0) \in \operatorname{bdry}(\underline{\lim} \, C_\NN) \setminus \underline{\lim} \, C_\NN.$
        \item\label{type1-countex-prop2} $(x_n)_{n \in \NN}$ is \emph{not} quasi-\fejer\ monotone of Type I with respect to $\underline{\lim} \, C_\NN$.
    \end{enumerate}
\end{example}

\begin{proof}
    Indeed, fix any $x \in \mathbb{R}$ and denote $M := \mathbb{R}_{--} \times\{0\}$. Then $(x,0) \in C_{2k+1}$ and \begin{equation*}
        \|x_{2k+1} - (x,0)\|^2 - \|x_{2k} - (x,0)\|^2 = \frac{2}{(k+1)(k+2)}\left(\frac{1}{\ln(k+1)} - \frac{2k + 3}{2(k+1)(k+2)} + x\right).
    \end{equation*} In particular, for all $k$ large enough, $\operatorname{sign}(\|x_{2k+1} - (x,0)\|^2 - \|x_{2k} - (x,0)\|^2) = \operatorname{sign}(x)$; hence $(x,0) \in \underline{\lim} \, C_\NN$ if and only if $x \in \mathbb{R}_{--}$, and $M = \mathbb{R} \times \{0\} \cap \underline{\lim} \, C_\NN$. Note that, the maximum in \cref{type1-countex-prop-eq1} over all $x \in \mathbb{R}_{-}$ is attained at $x = 0$; specifically, \begin{equation}\label{type1-countex-prop-eq1}\begin{aligned}
        \max_{x \in \mathbb{R}_{-}}\, (\|x_{2k+1} - (x, 0)&\|^2 - \|x_{2k} - (x, 0)\|^2) = \|x_{2k+1}\|^2 - \|x_{2k}\|^2 \\ &=  \frac{2}{(k+1)(k+2)}\left(\frac{1}{\ln(k+1)} - \frac{2k + 3}{2(k+1)(k+2)}\right) \sim \frac{2}{k^2\ln(k)}.
    \end{aligned} 
    \end{equation} Moreover, by plugging in the same $x = 0$, we additionally achieve \begin{equation}\label{type1-countex-prop-eq2}\begin{aligned}
        \min_{x \in \mathbb{R}_{-}} (\|x_{2k+1} - (x, 0)\| + \|&x_{2k} - (x, 0)\|) = \|x_{2k+1}\| + \|x_{2k}\| \\ &=  \sqrt{\frac{2}{(k+1)(k+2)\ln(k+1)} + \frac{1}{(k+2)^2}} + \frac{1}{k + 1} \sim \frac{2}{k}.
    \end{aligned} 
    \end{equation} That said, \cref{type1-countex-prop-eq1} and \cref{type1-countex-prop-eq2} combined yield for some $N \in \NN$ and all $k \geq N$  \begin{equation}\label{type1-countex-prop-eq3}
        0 < \sup_{x \in \mathbb{R}_{--}} (\|x_{2k+1} - (x, 0)\| - \|x_{2k} - (x, 0)\|) = \frac{\|x_{2k+1}\|^2 - \|x_{2k}\|^2}{\|x_{2k+1}\| + \|x_{2k}\|} \sim \frac{1}{k \ln(k)},
    \end{equation} which consequently implies \begin{equation*}
         \sum_{n = 0}^{\infty} \; \sup_{y\in M}\varepsilon_n^{(1)}(y) \geq  \sum_{k \geq N} \; \sup_{x \in \mathbb{R}_{--}} (\|x_{2k+1} - (x, 0)\| - \|x_{2k} - (x, 0)\|) = + \infty,
    \end{equation*} where the divergence of the last series follows from \cref{type1-countex-prop-eq3} and the fact that $$\sum_{k \geq 2} \frac{1}{k \ln(k)} = +\infty.$$ Finally, \cref{prop-rewritten-tol}\cref{12-rewritten}, together with the equality $M = \mathbb{R} \times \{0\} \cap \underline{\lim} \, C_\NN$, implies that $(x_n)_{n \in \NN}$ fails to be quasi-\fejer\ monotone of Type~I with respect to $M$ or $\underline{\lim} \, C_\NN$. Specifically, due to \cref{nonep-int-type1}, we derive that $\operatorname{int}(\underline{\lim} \, C_\NN) = \varnothing$ and, moreover, $M = \underline{\lim} \, C_\NN$, 
    and we're done. \end{proof}

\section*{Acknowledgments}
The research of HHB was supported by a Discovery Grant from 
 the Natural Sciences and Engineering Research Council of Canada.

\begin{appendices}

\section{The proof for \cref{aff-count}}\label{app-aff-count-p}
\begin{proof}
     First, we prove by induction that all points of the sequence $(x_n)_{n \in \NN}$ belong to $\mathbb{R}_{+}\times\mathbb{R}_{++}$. Indeed, one can verify that the initial points $x_0$ and $x_1$ lie in $\mathbb{R}_{+}\times\mathbb{R}_{++}$. Assume that $x_n$ belongs to $\mathbb{R}_{+}\times\mathbb{R}_{++}$ for some $n \in \NN$. Then, as the right point of $S_n \cap M$, $y_n$ lies in the positive $x$-axis, whereas the point $z_n =(-n, 0)$ belongs to the negative $x$-axis. Consequently, the angle bisector of $\angle x_nz_ny_n$ intersects $S_n$ within the first quadrant, which completes the inductive step and proves the claim.
    
    Next, to simplify our following notes, denote $a_n := \|P_M(x_n)\|$ and note that, by our construction, \begin{equation*}
        (\forall n \in \NN) \quad x_n = (a_n, d_M(x_n)) \quad \text{and} \quad d_M(x_{n + 1}) < d_M(x_{n}).
    \end{equation*} However, since the angle $\alpha_n := \angle x_nz_ny_n$ does not exceed $\pi/2$ in its absolute value and \begin{equation*}
            \left(\forall x \in \left[-\frac{\pi}{2}, \frac{\pi}{2}\right]\right) \quad \frac{\sin(x/2)}{\sin(x)} = \frac{1}{2\cos(x/2)} \leq \frac{1}{2\cos(\pi/2)} = \frac{1}{\sqrt{2}},
        \end{equation*} we have \begin{equation*}
        (\forall n \in \NN) \quad d_M(x_{n + 1}) = \frac{\sin(\alpha_n/2)}{\sin(\alpha_n)}d_M(x_{n}) \leq \frac{1}{\sqrt{2}}d_M(x_{n}),
    \end{equation*} which implies that $d_M(x_{n}) \leq 2^{-n/2} \to 0$.

    ``\cref{aff-count-p1}'': Since $y_n$ is the right point of $S_n \cap M$, and $x_{n+1}$ is defined as the midpoint of the minor arc connecting $x_n$ and $y_n$ on $S_n$, it follows that the projection of $x_{n+1}$ onto $M$ lies strictly between $P_M(x_n)$ and $P_M(y_n)$. More specifically, $a_n < a_{n+1} < \|P_M(y_n)\|$.

    ``\cref{aff-count-p2}'': Note that, by \cref{hsp-lemma}, since $x_n \neq x_{n+1}$ for all $n \in \mathbb{N}$, the set $C_n$ is a closed 
    halfspace whose boundary is the line passing through the midpoint $(x_n + x_{n+1})/2$ and orthogonal to the difference $x_{n+1} - x_n$. However, since both points $x_n$ and $x_{n+1}$ lie on the circle $S_n$ centered at $z_n = (-n, 0)$, this boundary line coincides then with the angle bisector of $\angle x_nz_nx_{n+1}$. In particular, for every $n \in \mathbb{N}$, the boundary line $\operatorname{bdry}(C_n)$ has positive slope since both points $x_n$ and $x_{n+1}$ belong to the first quadrant and $z_n = (-n, 0)$.

    That said, we conclude that $[-k, +\infty)\times\mathbb{R}_- \subseteq C_n$ for any $k \in \NN$ and $n \geq k$ (see \cref{fig:examples1b-pic}). Therefore, \begin{equation}\label{aff-count-incl}
        M \subset \mathbb{R}\times\mathbb{R}_- \subseteq \underline{\lim} \, C_\NN,
    \end{equation} where $\operatorname{int}(\mathbb{R}\times\mathbb{R}_-) \neq \varnothing$. In particular, due to \cref{eq-def} and \cref{raik-*}, the sequence $(x_n)_{n \in \NN}$ is \fejer* monotone with respect to $M$ and $\mathbb{R}\times\mathbb{R}_-$, and $(x_n)_{n \in \NN}$ has a finite-length trajectory, i.e. $$\sum_{n\in \NN} \|x_n - x_{n+1}\| < +\infty.$$ But, since $d_M(x_n) \leq 2^{-n/2} \to 0$, $(x_n)_{n\in \NN}$ converges to some point $z = (\zeta,0) \in M$, where $\zeta>0$. 
    
    ``\cref{aff-count-p3}'': To show the reverse for the right inclusion in \cref{aff-count-incl}, denote $\delta_n := a_{n+1} - a_n$. Since $z_n \in \operatorname{bdry}(C_n)$, note that \begin{equation}\label{aff_count-perp}
        x_n + x_{n+1} - 2z_n  \; \perp \; x_n - x_{n+1},
    \end{equation} where \begin{equation*}
    \begin{aligned}
        &x_n + x_{n+1} - 2z_n = (a_n + a_{n+1} + 2n, d_M(x_n) + d_M(x_{n+1})), \\ &x_n - x_{n+1} = (-\delta_n, d_M(x_n) - d_M(x_{n+1})).
    \end{aligned}
    \end{equation*} Consequently, solving $\langle x_n + x_{n+1} - 2z_n, x_n - x_{n+1}\rangle = 0$, we conclude \begin{equation*}
        (\forall n \in \NN) \quad \delta_n = \frac{d^2_M(x_n) - d^2_M(x_{n+1})}{a_n + a_{n+1} + 2n}.
    \end{equation*} Due to \cref{aff_count-perp}, by adding and subtracting $2z_n$, we can rewrite the explicit form for $C_n$ in \cref{hsp-lemma} as \begin{equation*}
        C_n = \{y \in X \mid \langle y - z_n, x_{n+1} - x_n \rangle \geq 0\}.
    \end{equation*} However, pick any $y = (a, b)$ with $b > 0$, then \begin{equation*}
    \begin{aligned}
        \langle y - z_n, x_{n+1} - x_n \rangle &= \delta_n(a + n) - b(d_M(x_n) - d_M(x_{n+1})) \\ &= (d_M(x_n) - d_M(x_{n+1}))\left[\frac{(a + n)(d_M(x_n) + d_M(x_{n+1}))}{a_n + a_{n+1} + 2n} - b\right] < 0,
    \end{aligned}
    \end{equation*} when $n$ becomes sufficiently large since $d_M(x_n) - d_M(x_{n+1}) > 0$ and \begin{equation*}
        \frac{a + n}{a_n + a_{n+1} + 2n} \rightarrow \frac{1}{2}, \quad d_M(x_n) + d_M(x_{n+1}) \rightarrow 0.
    \end{equation*} Consequently, $y \not \in C_n$ for all $n$ sufficiently large. Finally, we conclude that $y \not \in \underline{\lim} \, C_\NN$, and, since the point $y \in \mathbb{R}\times\mathbb{R}_{++}$ was chosen arbitrarily, the converse to the right inclusion in \cref{aff-count-incl} holds indeed. \end{proof}

\section{The proof for \cref{example-non-decr}}\label{app-example-non-decr-prop}
\begin{proof}
    Denote $O := (0, 0)$. In light of \cref{example-non-decr-def}, one can notice that $(x_n)_{n \in \NN}$ is well-defined and has distinct consecutive terms. Moreover, $x_n \in \mathbb{R}_{++}^2$ for all $n \in \NN$.

    That said, according to \cref{hsp-lemma}, $\operatorname{bdry}(C_n)$ is a line passing through $z_n$ and having a positive slope $c_n:= \tan((\angle x_{n+1}z_nO + \angle x_{n}z_nO)/2)$ tending to $+ \infty$ as $n \to \infty$. To see the last part, note that $\operatorname{bdry}(C_n)$ intersects the circle $S_n$ between $x_n$ and $x_{n + 1}$ (see \cref{fig:examples5-pic}), due to which, for all $n \geq 1$, \cref{example-non-decr-def} implies: \begin{equation}\label{example-non-decr-ineqs}
        \tan^2(\angle x_{n}z_nO) = \frac{R^2_{n - 1} - a^2_{n - 1}2^{-2(n - 1)}}{(2^{-(n-1)}a_{n - 1} + a_n)^2} \leq c^2_n \leq \frac{R^2_n - a^2_n2^{-2n}}{(1 + 2^{-n})^2a^2_n} = \tan^2(\angle x_{n + 1}z_nO),
    \end{equation} However, from our construction, it follows that $O \not \in C_n$, i.e., $\|x_n\| < \|x_{n+1}\|$ for all $n \in \NN$. Therefore, since $z_n = (-a_n, 0) \to O$, for all $n$ sufficiently large, \begin{equation*}\label{example-non-decr-ineqs1}
        R_n = \|x_n - z_n\| \geq \|x_n\| - \|z_n\| > \|x_0\| - \|z_n\| \geq \frac{1}{2}\|x_0\| > 0,
    \end{equation*} and all the quantities of \cref{example-non-decr-ineqs} tend to $+\infty$ as announced.

    ``\cref{example-non-decr-prop1}'': It follows that \begin{equation}\label{example-non-decr-lines}
    \begin{aligned}
        C_n &= \{(x,y) \in \mathbb{R}^2 \mid y \geq c_n(x + a_n)\}, \\ \operatorname{bdry}(C_n) &= \{(x,y) \in \mathbb{R}^2 \mid y = c_n(x + a_n)\}.
    \end{aligned}
    \end{equation} Pick any $y := (a, b) \in \mathbb{R}_{--}\times \mathbb{R}$, then $y \in C_n$ for all $n$ sufficiently large as \begin{equation*}
        b \geq c_n(a + a_n) = -c_n(|a| - a_n) \to -\infty.
    \end{equation*} In particular, $\mathbb{R}_{--}\times \mathbb{R} \subseteq \underline{\lim}\,C_\NN$ by \cref{example-non-decr-lines}, and, in light of \cref{raik-*}, $$x_n \to z \in X.$$ But, since $d_{\{0\}\times\mathbb{R}}(x_{n+1}) = a_n 2^{-n} \to 0$, and $0<\|x_0\| < \|x_n\|$, we have $z \in \{0\}\times\mathbb{R}_{++}$. Consequently, note that, in light of \cref{example-non-decr-ineqs}, \begin{equation}\label{example-non-decr-equiv} R_n = \|x_n - z_n\| \sim \|z\|, \quad \text{and} \quad c_n \sim \frac{\|z\|}{a_n}. \end{equation} Similarly, if $y = (a, b) \in \mathbb{R}_{++}\times \mathbb{R}$, then, due to \cref{example-non-decr-equiv}, \begin{equation*} c_n(a + a_n) \sim \|z\| \frac{(a + a_n)}{a_n} \to +\infty.\end{equation*} In particular, $y \not \in C_n$ for all $n$ sufficiently large as $b < c_n(a + a_n)$. Thus $y \not \in \underline{\lim}\,C_\NN$. 
    
    Finally, consider $y = (0, b) \in \{0\}\times\mathbb{R}$. Then, due to \cref{example-non-decr-ineqs}, \begin{equation}\label{example-non-decr-ineqs-3}
        c_na_n \leq \frac{\sqrt{R^2_n - a^2_n2^{-2n}}}{1 + 2^{-n}} \leq \frac{R_n}{1 + 2^{-n}} \leq \frac{\|x_n\| - \|z_n\|}{1 + 2^{-n}} \leq \|z\|.
    \end{equation} Therefore, \cref{example-non-decr-equiv} and \cref{example-non-decr-ineqs-3} combined, imply that $y = (0, b) \in \underline{\lim}\,C_\NN$ if and only if $b \geq \|z\|$, which completes the proof of \cref{example-non-decr-prop1}.

    ``\cref{example-non-decr-prop2}'': Indeed, note that, by our construction, $$x_n \in \mathbb{R}_{++}^2 \subseteq N_{\overline{M}}(O) = M^{\ominus},$$ which implies that $P_{\overline{M}}(x_n) = O \not \in C_n$ for all $n \in \NN$. Therefore, $d_M(x_n) = \|x_n\|$ and $(d_M(x_n))_{n \in \NN}$ is strictly increasing. \end{proof}

\section{The proof for \cref{example3}}\label{app-example3-prop}
\begin{proof}
    As in the proof for \cref{aff-count}, we define $M := \mathbb{R} \times \{0\}$, and let $a_n$ and $b_n$ represent the $x$-coordinates of the points $x_{3n}$ and $y_{3n}$, respectively. Specifically, we have: \begin{equation*}
    x_{3n} = (a_n, d_M(x_{3n})), \quad y_{3n} = (b_n, 0).
    \end{equation*} Note that, by our construction, \begin{equation}\label{example3-prop-claim0}\begin{aligned}
        &(\forall n \in \NN) \quad x_{3n + 1} = x_{3n + 3}, \quad x_{3n + 2} = (a_{n+1}, -d_M(x_{3(n+1)})), \\ 
        &(\forall k \in \NN) \quad b_{2k} < a_{2k + 1} < a_{2k} \quad \text{and} \quad a_{2k + 1} < a_{2(k+1)} < b_{2k+1}.
    \end{aligned}\end{equation} Then, by induction, one can prove that \begin{equation}\label{example3-prop-claim1}
        (\forall k \in \NN) \quad -1 < 1 - \sqrt{2} \leq b_{2k} < b_{2(k+1)} < b_{2(k+1)+1} < b_{2k+1} < 0.\end{equation} Indeed, the point $y_{6k} = (b_{2k}, 0)$ represents the leftmost intersection between the circle $S_{6k}$, which is centered at $z_{6k} = z_{6(k+1)} = (1, 0)$ and passes through the point $x_{6k}$, and the $x$-axis (i.e., the set $M$). According to our construction, the point $x_{6k + 4} = x_{6(k + 1)}$ must lie within the circle $S_{6k}$, since it belongs to the minor arc between $x_{6k + 1} = x_{6k + 3} \in S_{6k}$ and $y_{6k + 3}$ which is entirely included inside the circle $S_{6k}$ (see \cref{fig:examples3-pic}). In particular, \begin{equation*}
            d(x_{6(k+1)},(1,0)) < d(x_{6k},(1,0)),
        \end{equation*} which leads to the inequality \begin{equation*} 1 - d(x_{6k},(1,0)) = b_{2k} < b_{2(k+1)} = 1 - d(x_{6(k+1)},(1,0)).\end{equation*} Likewise, one can show that $b_{2(k+1)+1} < b_{2k+1}$. Altogether, \cref{example3-prop-claim1} follows as \begin{equation*}
            (b_{2k})_{k \in \NN} \; \text{is strictly increasing,} \quad (b_{2k + 1})_{k \in \NN} \; \text{is strictly decreasing,}
        \end{equation*} $b_0 = 1 - \sqrt{2} > -1$, $b_1 < 0$ (see \cref{fig:examples3-pic}), and \cref{example3-prop-claim0} implies that $b_{2k} < b_{2k + 1}$ for all $k \in \NN$. That said, \cref{example3-prop-claim0} and \cref{example3-prop-claim1} combined show: \begin{equation}\label{example3-prop-claim2}
            (P_{M}(x_n))_{n \in \NN} \subseteq \left[1 - \sqrt{2}, 0\right]\times\{0\} \subset (-1, 1)\times\{0\}.
        \end{equation} Note that \cref{example3-prop-claim2} guarantees that the whole sequence $(x_{3n})_{n \in \NN}$ with reflections excluded is localized in $[1-\sqrt{2}, 0]\times\mathbb{R}_+$, which allows us to conclude that $\alpha_{n}:=\angle x_{3n}z_{3n}y_{3n}$ does not exceed $\pi/2$ in its absolute value. Thus, since \begin{equation*}
            \left(\forall x \in \left[-\frac{\pi}{2}, \frac{\pi}{2}\right]\right) \quad \frac{\sin(x/2)}{\sin(x)} = \frac{1}{2\cos(x/2)}\leq \frac{1}{\sqrt{2}},
        \end{equation*} we have \begin{equation*}
            d_M(x_{3n + 3}) = d_M(x_{3n+2}) = d_M(x_{3n+1}) = \frac{\sin(\alpha_n/2)}{\sin(\alpha_n)}d_M(x_{3n}) \leq\frac{1}{\sqrt{2}}d_M(x_{3n}),
        \end{equation*} which, in particular, implies that $d_M(x_{3n}) \leq 2^{-n/2} \to 0$.
        
        ``\cref{example3-prop-1}'': As for reflections with respect to the $x$-axis, $C_n$ becomes the closed upper or lower halfplane, with the $x$-axis serving as the hyperplane $\operatorname{bdry}(C_n)$ (see \cref{hsp-lemma}), we conclude that (see \cref{fig:examples3-pic}) \begin{equation*}\begin{aligned}
            (\forall k \in \NN) \quad &C_{6k} \cap C_{6k + 1} \cap C_{6k + 2} = (-\infty, 1]\times\{0\}, \\ &C_{6k + 3} \cap C_{6k + 4} \cap C_{6k + 5} = [-1, +\infty)\times\{0\},
        \end{aligned}\end{equation*} due to which, \cref{example3-prop-1} follows.

        ``\cref{example3-prop-2}'': In view of \cref{raik-*}, the inclusion \cref{example3-prop-claim2}, and the fact that $d_M(x_n) \to 0$, we deduce that \begin{equation*}
            x_n \to z \in [1 - \sqrt{2}, 0] \times \{0\} \subsetneq \ri(\underline{\lim} \, C_\NN).
        \end{equation*} Nevertheless, since $\underline{\lim} \, C_\NN = [-1, 1] \times \{0\}$ is not a closure of any shifted cone, it follows that $\operatorname{int}(\mathcal{K}) = \varnothing$ in light of \cref{lemma-cone-inf}. Yet, by \cref{cone-th}, the same closed convex cone $\mathcal{K}$ must satisfy the inclusion \begin{equation*}
            \operatorname{cone}(\underline{\lim} \, C_\NN - z) = \mathbb{R} \times \{0\} \subseteq \mathcal{K}, \quad \text{where } \lim_{n \to \infty}x_n = z \in \ri(\underline{\lim} \, C_\NN).
        \end{equation*} Altogether, the only possible cone to fit these restrictions is $\mathcal{K} = \mathbb{R} \times \{0\}$. \end{proof}
\end{appendices}


\begin{thebibliography}{999}
 \seppthree

 \small

\bibitem{Opial}
A.\ Arakcheev and H.H.\ Bauschke:
On Opial's Lemma, arXiv, \url{https://arxiv.org/abs/2503.22004}

% \bibitem{AD}
% W.N\ Anderson, and R.J\ Duffin:
% Series and parallel addition of matrices,
% \emph{Journal of Mathematical Analysis and Applications}~26 (1969) 576--594.
% \url{https://doi.org/10.1016/0022-247X(69)90200-5}


% \bibitem{ABB}
% F.J.\ Arag{\'o}n-Artacho, R.I.\ Bo{\c t}, D.\ Torregrosa-Bel{\'e}n:
% A primal-dual splitting algorithm for composite monotone inclusions with minimal lifting. \emph{Numerical Algorithms}~93 (2023), 103--130.
% \url{https://doi.org/10.1007/s11075-022-01405-9}

% \bibitem{JPA}
% J.P.\ Aubin: 
% \emph{Optima and equilibria: an introduction to nonlinear analysis}, 
% 2nd edition, Springer Berlin, 1998.

% \bibitem{AT}
% H.\ Attouch, and  M.\ Th{\' e}ra:
% A general duality principle for the sum of two operators,
% \emph{Journal of Convex Analysis}~3 (1996): 1--24.
% \url{https://www.heldermann.de/JCA/jca03.htm}

% \bibitem{BBHM}
% H.H.\ Bauschke, R.I.\ Bo\c{t}, W.L.\ Hare, and W.M.\ Moursi:
% Attouch-Th\'era duality revisited:
% paramonotonicity and operator splitting,
% \emph{Journal of Approximation Theory}~164 (2012), 1065--1084. 
% \url{https://doi.org/10.1016/j.jat.2012.05.008}

\bibitem{BC2017}
H.H.\ Bauschke and P.L.\ Combettes: 
\emph{Convex Analysis and Monotone Operator Theory in Hilbert Spaces},
2nd edition, Springer, 2017.
\url{https://doi.org/10.1007/978-3-319-48311-5}

% \bibitem{39}
% H.H.\ Bauschke, P.L.\ Combettes, and D.R.\ Luke:
% A strongly convergent reflection method for finding the projection onto the intersection of two closed convex sets in a Hilbert space.
% \emph{Journal of Approximation Theory}~141 (2006), 63--69.
% \url{https://doi.org/10.1016/j.jat.2006.01.003}

\bibitem{BKX}
H.H.\ Bauschke, M.\ Krishan Lal, and X.\ Wang:
Directional asymptotics of \fejer\ monotone sequences
\emph{Optimization Letters}~17 (2023), 531--544.
\url{https://doi.org/10.48550/arXiv.2106.15037}

% \bibitem{BMBook}
% H.H.\ Bauschke and W.M.\ Moursi:
% \emph{An Introduction to Convexity, Optimization, and Algorithms},
% Society for Industrial and Applied Mathematics, 2023.
% \url{https://doi.org/10.1137/1.9781611977806}


% \bibitem{BMSW}
% H.H.\ Bauschke and W.M.\ Moursi and S.\ Singh and X.\ Wang:
% On the Bredies-Chenchene-Lorenz-Naldi algorithm: linear relations and strong convergence:
% \emph{arXiv}, 
% \url{https://doi.org/10.48550/arXiv.2307.09747}

% \bibitem{BWY2014}
% H.H.\ Bauschke, X.\ Wang, and L.\ Yao:
% Rectangularity and paramonotonicity of maximally monotone operators, 
% \emph{Optimization}~63 (2014), 487--504.
% \url{https://doi.org/10.1080/02331934.2012.707653}

\bibitem{BBCIARS}
R.\ Behling, Y.\ Bello-Cruz, A.N.\ Iusem,
A.\ Alves Ribeiro, and L.-R.\ Santos:
Fej\'er* monotonicity in optimization algorithms,
arXiv, \url{https://arxiv.org/abs/2410.08331}

\bibitem{BBCILS}
R.\ Behling, Y.\ Bello-Cruz, A.N.\ Iusem,
D.\ Liu, and L.-R.\ Santos:
A finitely convergent circumcenter method for the convex feasibility problem,
\emph{SIAM Journal on Optimization}~34 (2024), 2535--2556. 
\url{https://doi.org/10.1137/23M1595412}

% \bibitem{BBH}
% A.\ Berk, S.\ Brugiapaglia, and T.\ Hoheisel:
% LASSO reloaded: a variational analysis perspective 
% with applications to compressed sensing,
% \emph{SIAM Journal on Mathematics of Data Science}~5 (2023), 829--1190.
% \url{https://doi.org/10.1137/22M1498991}


% \bibitem{BCLN}
% K.\ Bredies, E.\ Chenchene, D.A.\ Lorenz,
% and E.\ Naldi:
% Degenerate preconditioned proximal point algorithms,
% \emph{SIAM Journal on Optimization}~32 (2022), 2376--2401. 
% \url{https://doi.org/10.1137/21M1448112}

% \bibitem{BruckPara}
% R.E.\ Bruck:
% An iterative solution of a variational inequality for certain monotone operators in Hilbert space,
% \emph{Bulletin of the American Mathematical Society}~81 (1975) 890--892.
% \url{http://doi.org/10.1090/S0002-9904-1975-13874-2}

% \bibitem{Paramonotone}
% Y.\ Censor, A.N.\ Iusem, S.A.\ Zenios: 
% An interior point method with Bregman functions for the variational inequality problem with paramonotone operators. 
% \emph{Mathematical Programming}~81 (1998), 373--400. \url{https://doi.org/10.1007/BF01580089}

% \bibitem{CP}
% A.\ Chambolle and T.\ Pock:
% A First-Order Primal-Dual Algorithm for Convex Problems with Applications to Imaging,
% \emph{Journal of Mathematical Imaging and Vision}~40 (2011), 120--145. \url{https://doi.org/10.1007/s10851-010-0251-1}

% \bibitem{CPsurvey}
% A.\ Chambolle and T.\ Pock:
% An introduction to continuous optimization in imaging, 
% \emph{Acta Numerica}~25 (2016), 161--319.
% \url{https://doi.org/10.1017/S096249291600009X}

\bibitem{Berman}
A.\ Berman:
\emph{Cones, Matrices, and Mathematical Programming}, 
Springer, 1973. 

\bibitem{Comb}
P.L.\ Combettes: Quasi-Fej\'erian analysis of some optimization algorithms. In \emph{Inherently Parallel Algorithms in Feasibility and Optimization and Their Applications}, 
(D.\ Butnariu, Y.\ Censor, and S.\ Reich, editors), 
pp.~115--152. Elsevier, 2001.

\bibitem{GubPolRaik}
L.G.\ Gurin, B.T.\ Polyak, and E.V.\ Raik: Projection methods for finding a common point of convex sets, \emph{\v Zurnal Vy\v cislitelno\u i\ Matematiki i Matemati\v cesko\u i\ Fiziki}, 7 (1967), 1211--1228.


% \bibitem{Condat}
% L.\ Condat, D.\ Kitahara, A.\ Contreras, and A.\ Hirabayashi:
% Proximal splitting algorithms for convex optimization, 
% \emph{SIAM Review}~65 (2023), 375--435. 
% \url{https://doi.org/10.1137/20M1379344}


% \bibitem{DR} 
% J.\ Douglas and H.H.\ Rachford,
% On the numerical solution of heat conduction problems in two
% and three space variables,
% \emph{Transactions of the AMS}~82, 421--439, 1956.
% \url{https://doi.org/10.1090/S0002-9947-1956-0084194-4}

% \bibitem{EF}
% J.\ Eckstein and  M.\ Ferris:
% Smooth methods of multipliers for complementarity problems.
% \emph{Mathematical Programming}~86 (1999), 65--90. 
% \url{https://doi.org/10.1007/s101079900076}

% \bibitem{HUL}
% J.-B.  Hiriart-Urruty, C. Lemar{\' e}chal:
% \emph{Fundamentals of Convex Analysis}, Springer Berlin,
% \url{https://doi.org/10.1007/978-3-642-56468-0}


% \bibitem{IusPara}
% A.N.\ Iusem:
% On some properties of paramonotone operators,
% \emph{Journal of Convex Analysis}~5 (1998), 269--278.
% \url{https://www.heldermann.de/JCA/jca05.htm#jca052}

% \bibitem{LFLL}
% C.\ Li, D.\ Fang, G.\ L\'opez, and M.A.\ L\'opez, 
% Stable and total Fenchel duality for convex 
% optimization problems in locally convex spaces, 
% \emph{SIAM Journal on Optimization}~20(4) (2009), 1032--1051. 
% \url{https://doi.org/10.1137/080734352}

% \bibitem{LM} % cited 
% P.-L.\ Lions and B.\ Mercier,
% Splitting algorithms for the sum of two nonlinear operators,
% \emph{SIAM Journal on Numerical Analysis}~16, 964--979, 1979.
% \url{https://doi.org/10.1137/0716071}

% \bibitem{WMThesis}
% W.M.\ Moursi: The Douglas-Rachford operator in the possibly inconsistent case : 
% static properties and dynamic behaviour 
% \emph{Electronic Theses and Dissertations (ETDs) 2008+. University of British Columbia} (2016)
% \url{https://dx.doi.org/10.14288/1.0340501}


% \bibitem{NePo}
% M.\ Neri and T.\ Powell:
% A quantitative Robbins-Siegmund theorem,
% \url{https://arxiv.org/abs/2410.15986}.


% \bibitem{Passty}
% G.B.\ Passty:
% The parallel sum of nonlinear monotone operators, 
% \emph{Nonlinear Analysis: Theory, Methods \& Applications}~10 (1986), 215--227.
% \url{https://doi.org/10.1016/0362-546X(86)90001-5}


% \bibitem{Pennanen}
% T.\ Pennanen:
% Dualization of Generalized Equations of Maximal Monotone Type,
% \emph{SIAM Journal on Optimization}~10 (2000) 809--835.
% \url{https://doi.org/10.1137/S1052623498340448}


% \bibitem{Polyak}
% B.T.\ Polyak:
% \emph{Introduction to Optimization}, 
% Optimization Software, 1987.
% Available for download as djvu file 
% \url{http://drive.google.com/uc?export=download&id=16eacYys8m4oFzdv9xxHVzo1TbTc-u6dl}
% from \url{https://sites.google.com/site/lab7polyak/}.

\bibitem{LoMordNam}
V.S.T.\ Long, B.S.\ Mordukhovich, and N.M.\ Nam: 
Generalized relative interiors and generalized convexity in 
infinite-dimensional spaces, 
\emph{Optimization}~73:12 (2024), 3667--3697.  \url{https://doi.org/10.1080/02331934.2024.2356205}

\bibitem{NoWr}
J.\ Nocedal and S.J.\ Wright: \emph{Numerical Optimization}, Springer, 2006.

\bibitem{Raik}
E.V.\ Raik: A class of iterative methods with \fejer-monotone sequences, \emph{Eesti NSV Tead. Akad. Toimetised F\"u\"us.-Mat.}, 18 (1969), 22--26. .

\bibitem{Rockafellar}
R.T.\ Rockafellar: \emph{Convex Analysis}, Princeton University Press, 1970. 

% \bibitem{RoSi}
% H.\ Robbins and D.\ Siegmund:
% A convergence theorem for non negative almost supermartingales
% and some applications. 
% In \emph{Optimizing Methods in Statistics}, Elsevier, 1971, 
% pp.~233--257.


% \bibitem{Robinson}
% S.M.\ Robinson:
% Composition duality and maximal monotonicity,
% \emph{Mathematical Programming}~85 (1999) 1--13.
% \url{https://doi.org/10.1007/s101070050043}

% \bibitem{RyuYin}
% E.K.\ Ryu and W.\ Yin:
% \emph{Large-Scale Convex Optimization}, 
% Cambridge University Press, 2023. 
% \url{https://doi.org/10.1017/9781009160865}

% \bibitem{Tib}
% R.J.\ Tibshirani:
% Regression shrinkage and selection via the lasso, 
% \emph{Journal of the Royal Statistical Society Series B}~58 
% (1996), 267--288. 

% \bibitem{Tibuni}
% R.J.\ Tibshirani:
% The lasso problem and uniqueness,
% \emph{Electronic Journal on Statistics}~7(2013), 1456--1490. 
% \url{https://doi.org/10.1214/13-EJS815} 

\bibitem{Zalinescu}
C.\ Zalinescu: \emph{Convex Analysis in General Vector Spaces}, World Scientific Publishing, 2002.

\bibitem{Zarantonello}
E.H.\ Zarantonello: Projections on Convex Sets in Hilbert
Space and Spectral theory, \emph{Proceedings of the Eighth Brazilian Mathematical Colloquium (Po¸cos de Caldas, 1971)(Portuguese)}, pp. 61–73. Inst. Mat. Pura Apl. (IMPA), Rio de Janeiro, 1977.


\end{thebibliography}
\end{document}